\documentclass[11pt]{amsart} 
\usepackage{amsmath, amsxtra, amsthm, amsfonts, amssymb,color}
\usepackage{tikz}
\usetikzlibrary{calc,arrows.meta,decorations.pathreplacing,positioning}

\usepackage{graphicx}
\usepackage[hidelinks,backref=page]{hyperref} 
\hypersetup{
	colorlinks,
	citecolor=magenta,
	filecolor=magenta,
	linkcolor=blue,
	urlcolor=black
}
\usepackage[top=2cm, bottom=2cm, right=2cm, left=2cm]{geometry}

\usepackage[capitalize]{cleveref}

\def\P{{\mathbb P}}
\def\R{{\mathbb R}}
\def\C{{\mathbb C}}
\def\Z{{\mathbb Z}}
\def\T{{\mathbb{T}}}
\def\B{{\mathcal{B}}}
\def\F{{\mathcal{F}}}

\def\r{{\mathbf{r}}}
\def\x{{\mathbf{x}}}
\def\y{{\mathbf{y}}}
\def\v{{\mathbf{v}}}

\def\h{{\mathfrak{h}}}

\def\conv{{\rm conv}}

\def\Trop{{\rm Trop}}
\def\Gr{{\rm Gr}}

\def\GL{{\rm GL}}
\def\sp{{\rm span}}

\def\tvn{\tilde \vn}
\def\vn{\mathfrak{t}}
\def\Tkn{\T^{k,n}}
\def\Rkn{\tilde{\T}^{k,n}}
\def\Lkn{L_{k,n}}
\def\wt{{\rm wt}}

\def\D{D}

\def\cR{{\mathcal{R}}}
\def\val{{\rm val}}
\def\t{{\mathfrak{t}}}
\def\hp{{\hat \tropP}}
\newcommand{\tropP}{{\pi}}  

\def\corank{{\rm corank}}

\def\PK{{\rm PK}}
\def\ncyc{{\rm ncyc}}
\def\WS{{\rm WS}}
\def\NC{{\rm NC}}
\def\wt{{\rm wt}}

\def\cJ{{\mathcal{J}}}

\def\bncyc{{\binom{[n]}{k}^\text{ncyc}}}

\def\cK{{\mathcal{K}}}

\newcommand{\one}{\mathbf{1}}

\newtheorem{theorem}{Theorem}
\newtheorem{lemma}[theorem]{Lemma}
\newtheorem{proposition}[theorem]{Proposition}
\newtheorem{thm}[theorem]{Theorem}
\newtheorem{cor}[theorem]{Corollary}
\newtheorem{corollary}[theorem]{Corollary}
\newtheorem{lem}[theorem]{Lemma}
\newtheorem{example}[theorem]{Example}
\newtheorem{definition}[theorem]{Definition}
\newtheorem{defn}[theorem]{Definition}

\newtheorem{conjecture}[theorem]{Conjecture}

\newtheorem{rem}[theorem]{Remark}
\newtheorem{remark}[theorem]{Remark}
\newtheorem{prop}[theorem]{Proposition}
\newtheorem{hypothesis}[theorem]{Hypothesis}
\newenvironment{nouppercase}{%
	\renewcommand{\uppercasenonmath}[1]{}}{}

\numberwithin{theorem}{section}
\numberwithin{equation}{section}
\author{Nick Early}
\author{Thomas Lam}
\address{School of Natural Sciences, Institute for Advanced Study, 1 Einstein Dr. Princeton, NJ 08540, USA.}
\email{\href{mailto:earlnick@ias.edu}{earlnick@ias.edu}}
\address{Department of Mathematics, University of Michigan, 2074 East Hall, 530 Church Street, Ann Arbor, MI 48109-1043, USA}
\email{\href{mailto:tfylam@umich.edu}{tfylam@umich.edu}}
\begin{document}
	\title{Noncrossing Duality and the Geometry of Positive Tropical Linear Spaces}

	\begin{nouppercase}
		\maketitle
	\end{nouppercase}
	
	\begin{abstract}

While the positive Grassmannian is deeply understood through the rich combinatorics of plabic graphs and positroid cells, its tropical counterpart, the positive tropical Grassmannian $\Trop_{>0}\Gr(k,n)$, has lacked a comparable structural framework for general $k$.  Both the global face structure of $\Trop_{>0}\Gr(k,n)$ and the internal metric geometry of the tropical linear spaces it parametrizes have remained largely uncharted. This paper develops a systematic algebraic and polyhedral foundation that resolves this gap.  
	
	The engine of our framework is a fundamental tropical duality, analogous to the duality between cluster variables (or more precisely, their $u$-coordinates) and $\mathbf{g}$-vectors, pairing two families of objects introduced by the first author: the planar basis of tropical Pl\"ucker vectors and the planar cross-ratios on the positive configuration space.  We prove that this duality links the fan structure of the positive tropical Grassmannian to the noncrossing fan of Santos, Stump, and Welker, yielding a global bijection between integer points of $\Trop_{>0}\Gr(k,n)$ and noncrossing tableaux.  
	
	We then study how this discrete combinatorial data controls the continuous metric geometry of positive tropical linear spaces.  We realize the bounded complex of an integer positive tropical linear space as the subdifferential of a central roof function on the hypersimplex, and use this realization to embed it into a dilate of the fundamental alcoved simplex.  The dilation factor, and hence the geometric diameter of the complex, is governed by a single invariant, the planar kinematics ($\PK$) weight, which we show equals the number of columns in the associated noncrossing tableau.  
	
	The results of this work are applied in our parallel work~\cite{EL} on scaffolds for higher tropical Grassmannians. 
	\end{abstract}

	\tableofcontents

\section{Introduction}\label{sec:introduction}

The tropical Grassmannian $\Trop \, \Gr(k,n)$ is a polyhedral fan introduced by Speyer and Sturmfels \cite{SS} and the positive tropical Grassmannian $\Trop_{>0} \Gr(k,n)$ is a subfan introduced by Speyer and Williams \cite{SW05}.  The case $k=2$ is classical and well understood: the tropical Grassmannian $\Trop \, \Gr(2,n)$ is the space of phylogenetic trees, with the positive part $\Trop_{>0} \Gr(2,n)$ given by the planar trees.  Through the identification of $\Trop_{>0} \Gr(2,n)$ with the tropical moduli space $\Trop_{>0} M_{0,n}$, the $k=2$ theory connects to the geometry of genus zero curves, open string amplitudes and biadjoint scalar $\phi^3$ amplitudes, and the combinatorics of the associahedron.

For general $k > 2$, the positive tropical Grassmannian similarly connects to the geometry of the configuration spaces of $n$ points on $\P^{k-1}$ \cite{ALS}, and to the CEGM generalized biadjoint scalar amplitudes \cite{CEGM}.  There are deep connections to cluster algebras and cluster fans \cite{SW05}, to the symbol alphabet of $N=4$ SYM amplitudes \cite{ALS2, DFGK, HP}, and to tilings of the $m=2$ amplituhedron \cite{LPW}.  While the positive Grassmannian $\Gr_{>0}(k,n)$ admits a rich combinatorial theory via plabic graphs and positroid cells \cite{Pos06}, there is no analogue of this machinery on the tropical side.  The face structure of $\Trop_{>0} \Gr(k,n)$, the internal geometry of the tropical linear spaces it parametrizes, and the factors controlling their size have remained largely unexplored.

This paper develops a systematic framework for these questions, and uses it to give an explicit geometric description of positive tropical linear spaces.  The starting point are the \emph{planar basis} $\{\h_J\}$ and the \emph{planar cross-ratios} $\{u_J\}$ introduced by the first author \cite{E19,E22}.  The planar basis elements are rays of the positive tropical Grassmannian, while the planar cross-ratios form a dual basis of torus-invariant functions on the positive configuration space $X(k,n)$.  

\medskip
\noindent
{\bf Noncrossing combinatorics of the positive tropical Grassmannian.} Our main result compares positive configuration space $\Trop_{>0}X(k,n)$ with the \emph{noncrossing fan} of Santos--Stump--Welker \cite{SSW17}.  Noncrossing tableaux are a variant on semistandard Young tableaux first introduced in \cite{PKPS10}, and whose polyhedral geometry was systematically studied in \cite{SSW17}.  We show that the projection of the planar basis onto the rays of the noncrossing fan induces a bijection between $\Trop_{>0}X(k,n)$ and the noncrossing fan $\NC_{k,n}$.  In particular, we deduce a bijection between noncrossing tableaux and integer points of positive configuration space.  The key computation is the duality theorem between tropical planar cross-ratios $u_J^t$ and the rays $\vn_J$ of the noncrossing fan.  This duality also plays a key role in our companion paper \cite{EL}.

\begin{figure}[h!]
	\centering
	\includegraphics[width=1\linewidth]{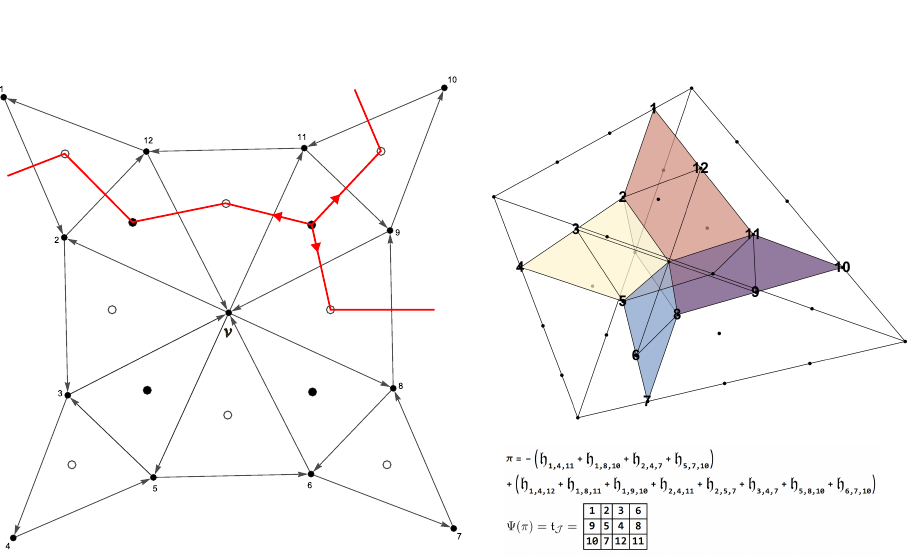}
	\caption{
		A scaffold (left), defined in our companion paper \cite{EL}, embeds isometrically in the tropical linear space, illustrated on the right by framing it with an embedded tetrahedron in $\mathbb{R}^3$ with all triangles equilateral.  The red strand triple $(1,8,10)$ (left) encodes the summand $-\h_{1,8,10}$ in the planar basis expansion on the bottom right. According to Theorem \ref{thm:global_bounding_simplex}, the bounded complex of the tropical linear space embeds in the fourth dilate of an $11$-dimensional simplex; and here, the edge length of the bounding tetrahedron is four, which is the number of columns in the noncrossing tableau representation of the positive tropical Pl\"ucker vector.  See Theorem \ref{thm:f-pk-equals-nc} and Theorem \ref{thm:global_bounding_simplex} for the relation between the number of columns and the diameter, and Example \ref{example: 312 embedding}, where the positive tropical Pl\"ucker vector illustrated here is examined.}
	\label{fig:312canvas}
\end{figure}

\medskip
\noindent
{\bf Geometry of positive tropical linear spaces.}
 Each positive tropical Pl\"ucker vector $\tropP_\bullet$ determines a tropical linear space $L(\tropP_\bullet)$, and we study the \emph{bounded subcomplex} $B(\tropP_\bullet)\subset L(\tropP_\bullet)$ consisting of the bounded faces.  Each face of $B(\tropP_\bullet)$ is a polypositroid, a special class of alcoved polytopes \cite{LP07,LP}.  For each $\tropP_\bullet \in \Trop_{>0}\Gr(k,n)$, we construct a \emph{central} piecewise-linear function $\hp(x)$ on the hypersimplex $\Delta(k,n)$ and show that the subdifferential of $\hp(x)$ recovers the bounded complex: $B(\tropP_\bullet) = \bigcup_{x \in \operatorname{int}(\Delta(k,n))} \widetilde\partial\hp(x)$ (Proposition~\ref{prop:bounded-subdiff}).  
 
Our results on noncrossing tableaux are applied to the computation of the diameter of the bounded complex.  We define an invariant $\wt_\PK \in \Z_{\geq 0}$, the \emph{weight}, that stratifies the positive tropical Grassmannian; roughly speaking, the higher the weight the more complicated the corresponding point.  We show that under the correspondence with noncrossing tableau $\cJ$, the weight agrees with the size (number of columns) of $\cJ$.  We then show that $B(\tropP_\bullet)$ is contained in the $\wt_P(\tropP_\bullet)$-fold dilate of  the fundamental alcoved simplex (Theorem~\ref{thm:global_bounding_simplex}).  In other words, the combinatorial invariant $\wt_\PK(\tropP_\bullet)$ controls the diameter of the tropical linear space $L(\tropP_\bullet)$.  This confirms our intuition that weight reflects complexity.

\subsection*{Structure of the paper}
In Section~\ref{sec: main results} we state the main results.  Section~\ref{sec: cross-ratios} collects background on planar cross-ratios and the planar basis.  Section~\ref{sec:noncrossing} develops the combinatorics of noncrossing tableaux and the noncrossing fan.  Sections~\ref{sec:dualityproof} and~\ref{sec:f-tropical} prove the duality theorem and the weight identity $\wt_\PK = \wt_\NC$.  Section~\ref{sec: weight strat} develops the weight stratification.  

Sections~\ref{sec:trop-linear-spaces} and~\ref{sec:global-bounds} studies tropical linear spaces and the diameter of the bounded complex. 	

\subsection*{Outlook}
In our parallel work \cite{EL}, we use the tools developed here to introduce scaffolds for higher tropical Grassmannians.  In joint work of the first author with Pierre-Guy Plamondon and Hugh Thomas, the noncrossing complex, the associated u-variables and u-equations are interpreted in the setting of the representation theory of gentle algebras \cite{EPT}.  
It would also be interesting to apply our results in the context of scattering amplitudes and quantum affine algebras \cite{DGL,ELi,ELi25}. 

\medskip
\noindent
{\bf Acknowledgements.}
N.E. was funded by the European Union (ERC, UNIVERSE PLUS, 101118787). \begin{tiny}
	Views and opinions expressed are however those of the author(s) only and do not necessarily reflect those of the European Union or the European Research Council Executive Agency. Neither the European Union nor the granting authority can be held responsible for them.
\end{tiny}
T.L. was supported by the National Science Foundation under Grant No. DMS-2348799.

	\section{Main results}\label{sec: main results}	
	Throughout the paper, we fix integers $2 \le k < n$.  We write $[n] := \{1, \ldots, n\}$ 
	and $\binom{[n]}{k}$ for the set of $k$-subsets of $[n]$.  Indices in $[n]$ 
	are taken modulo $n$ when working with cyclic structures.  A \emph{cyclic 
		interval} is a subset of the form $\{a, a+1, \ldots, a+m-1\} \pmod{n}$.  A subset 
	is \emph{non-cyclic} if it is not a cyclic interval.  We write 
	$\bncyc$ for the set of non-cyclic $k$-subsets; note that 
	$\left|\bncyc\right| = \binom{n}{k} - n$.  For $J \subseteq [n]$, 
	we set $e_J = \sum_{j \in J} e_j \in \R^n$, where $e_1, \ldots, e_n$ are the 
	standard basis vectors.  When there is no ambiguity, we write $k$-subsets 
	without braces or commas (e.g., $ijk$ for $\{i,j,k\}$).
	
	For $N \ge 1$, let 
	$\T^{N-1} := \R^N / \R(1, \ldots, 1)$ denote the quotient vector space.  We let $\Z^N \subset \R^N$ and $\T_{\Z}^{N-1}:= \Z^N / \Z(1, \ldots, 1)$ denote the sets of integer points.
	Define product spaces 
	$$\Rkn := (\R^{n-k})^k, \qquad \text{and} \qquad 
	\Tkn := (\T^{n-k-1})^{k-1},$$ which have integer points $\Rkn_\Z=(\Z^{n-k})^k$ and $\Tkn_\Z$ respectively.
	\subsection{Tropical Grassmannian}\label{ssec:tropGr}
	Let $\Gr(k,n)$ denote the Grassmannian of $k$-planes in $\C^n$, and let $\Gr^\circ(k,n) \subset \Gr(k,n)$ denote the subspace where all Pl\"ucker coordinates are non-vanishing.  The torus $T \subset \GL(n)$ acts on $\Gr(k,n)$ preserving $\Gr^\circ(k,n)$.  The quotient $X(k,n) := \Gr^\circ(k,n)/T$ is the \emph{configuration space} of $n$ points in $\P^{k-1}$ in general linear position.
	
	Let $\cR$ denote the ring of Puiseux series over $\C$, with valuation $\val: \cR \to \mathbb{Q} \cup \{\infty\}$.  The \emph{tropical Pl\"ucker vector} $\tropP_\bullet(V)$ of $V \in \Gr^\circ(k,n)(\cR)$ is given by
	$$
	\tropP_I(V) = \val(p_I(V)) \in \mathbb{Q}, \qquad I \in \binom{[n]}{k},
	$$
	where $p_I(V)$ denotes the $I$-th Pl\"ucker coordinate of $V$.  The \emph{tropical Grassmannian} $\Trop\,\Gr(k,n)$ is
	$$ 
	\Trop\,\Gr(k,n) := \overline{\bigl\{\tropP_\bullet(V) \mid V \in \Gr^\circ(k,n)(\cR)\bigr\}} \subset \R^{\binom{[n]}{k}},
	$$
	which we view as a polyhedral fan of dimension $k(n-k)+1$.  The \emph{positive tropical Grassmannian} $\Trop_{>0}\Gr(k,n) \subset \Trop\,\Gr(k,n)$ is the subfan consisting of those tropical Pl\"ucker vectors arising from points in $\Gr^\circ(k,n)(\cR_{>0})$, where $\cR_{>0}$ denotes the semifield of Puiseux series with positive leading coefficient.  It has an alternative description in terms of positive tropical Pl\"ucker relations.
	
	A vector $\tropP_\bullet \in \R^{\binom{[n]}{k}}$ is a \emph{positive tropical Pl\"ucker vector} if it satisfies the \emph{positive tropical Pl\"ucker relation}:
	\begin{equation}\label{eq:posTropPlucker}
	\tropP_{Sac} + \tropP_{Sbd} = \min\bigl(\tropP_{Sab} + \tropP_{Scd},\ \tropP_{Sad} + \tropP_{Sbc}\bigr),
	\end{equation}
	for every $S \in \binom{[n]}{k-2}$ and $a < b < c < d$ in $[n] \setminus S$.  
	
	\begin{theorem}[\cite{ALS,SW21}]
	The set of positive tropical Pl\"ucker vectors coincides with the positive tropical Grassmannian $\Trop_{>0}\Gr(k,n)$.
	\end{theorem}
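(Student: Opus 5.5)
The statement has two inclusions, and I would prove them separately.

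\emph{Easy inclusion.} I first show that every point of $\Trop_{>0}\Gr(k,n)$ is a positive tropical Pl\"ucker vector. Take $V \in \Gr^\circ(k,n)(\cR_{>0})$. The three-term Pl\"ucker relation
$$p_{Sac}\,p_{Sbd} = p_{Sab}\,p_{Scd} + p_{Sad}\,p_{Sbc}, \qquad a<b<c<d,$$
holds with these signs in the cyclically sorted convention. Both summands on the right have positive leading coefficients, so no cancellation of leading terms can occur. The valuation of the sum is therefore the minimum of the valuations, and \eqref{eq:posTropPlucker} holds. The set of positive tropical Pl\"ucker vectors is closed, since it is cut out by equalities of continuous piecewise-linear functions. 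Hence it contains the closure $\Trop_{>0}\Gr(k,n)$.

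\emph{Hard inclusion.} Conversely, let $\tropP_\bullet$ satisfy all relations \eqref{eq:posTropPlucker}. I would first reduce to rational $\tropP_\bullet$. The solution set is a rational polyhedral fan, so rational points are dense in it, and $\Trop_{>0}\Gr(k,n)$ is closed.

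Next I use a positive parametrization. Fix a plabic graph, or the Le-diagram network, for the top cell of $\Gr_{>0}(k,n)$, with $k(n-k)$ edge (or face) weights $x_e$. Every Pl\"ucker coordinate is a subtraction-free polynomial in the $x_e$. By Postnikov's theory \cite{Pos06}, these coordinates can be written as subtraction-free Laurent polynomials in the Pl\"ucker coordinates $p_{I}$ indexed by a single cluster, e.g.\ the rectangle seed (Le-diagram) cluster. Tropicalizing gives a piecewise-linear map
$$\Phi: \R^{k(n-k)+1} \to \R^{\binom{[n]}{k}}.$$
Every rational point in the image of $\Phi$ is realized: choose $x_e = c_e t^{a_e}$ with $c_e>0$. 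Subtraction-freeness then guarantees that the valuation of each $p_I$ equals the tropicalized polynomial. So the image of $\Phi$ lies in $\Trop_{>0}\Gr(k,n)$.

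It remains to show that every solution of \eqref{eq:posTropPlucker} lies in the image of $\Phi$. Given $\tropP_\bullet$, I read off its values on the cluster, which recovers the edge weights tropically. I then show that the remaining $\tropP_I$ are forced to equal the tropicalized expansions. The plan is to do this by induction on a sequence of three-term exchanges (square moves) connecting the initial cluster to one containing $I$. Each three-term Pl\"ucker exchange is exactly an instance of \eqref{eq:posTropPlucker}. Since such exchange relations generate every $p_I$ from the initial cluster by subtraction-free rational substitutions, the tropical relations determine $\tropP_I$ uniquely from the cluster values. The same substitutions determine the values of $\Phi$. Hence $\tropP_\bullet = \Phi(\text{cluster values})$.

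\emph{Main obstacle.} The key step is this last one: checking that every $I \in \binom{[n]}{k}$ can be reached by a chain of three-term relations of the exact form \eqref{eq:posTropPlucker}, each of which expresses a new Pl\"ucker coordinate using only previously determined ones. This requires that every Pl\"ucker coordinate be a cluster variable obtained from the rectangle seed by mutations involving only Pl\"ucker coordinates, i.e.\ square moves between plabic graphs. For Pl\"ucker coordinates this holds, since every $k$-subset lies in some weakly separated maximal collection. Maximal weakly separated collections are connected by square moves (Oh--Postnikov--Speyer), and each square move is a three-term relation with $a<b<c<d$ relative to a common $S$. I expect the bookkeeping of this connectivity, together with verifying that uniqueness propagates, to be the main work. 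It is essentially the argument of Speyer--Williams, completed in \cite{ALS,SW21}.
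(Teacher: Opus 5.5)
The paper gives no proof of this statement; it is quoted from \cite{ALS,SW21}. Your argument is correct and is essentially the Speyer--Williams proof from \cite{SW21}. The easy inclusion follows because the positively signed three-term relations admit no cancellation, and the set of positive tropical Pl\"ucker vectors is closed. For the hard inclusion, you realize arbitrary rational values on a maximal weakly separated collection through the subtraction-free cluster chart. You then show that a positive tropical Pl\"ucker vector is determined by its values on that collection, by propagating along square moves: every $k$-subset lies in some maximal weakly separated collection, and these collections are connected by square moves (Oh--Postnikov--Speyer).

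One cosmetic point: you do not need the network edge weights at all. Set $p_J = t^{\tropP_J}$ on the cluster and extend by the Laurent expansions. This directly produces a point of $\Gr^\circ(k,n)(\cR_{>0})$ whose valuation agrees with $\tropP_\bullet$ on the cluster.
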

	The \emph{tropical configuration space} $\Trop\,X(k,n)$ is the $(k-1)(n-k-1)$-dimensional quotient of $\Trop\,\Gr(k,n)$ by its lineality space \eqref{eq:lineality}.  The \emph{positive tropical configuration space} $\Trop_{>0}X(k,n)$ is the image of $\Trop_{>0}\Gr(k,n)$ in $\Trop\,X(k,n)$.  A point in $\Trop\,\Gr(k,n)$ is \emph{integral} if it belongs to $\Z^{\binom{[n]}{k}}$.  A point in $\Trop\,X(k,n)$ is integral if it admits an integral representative.
	
	For a rational function $f$ on $\Gr(k,n)$, we let $f^t$ denote its tropicalization on $\Trop\,\Gr(k,n)$, a piecewise-linear function.  The tropical Pl\"ucker coordinates are $p_I^t : \R^{\binom{[n]}{k}} \to \R$, and they are the coordinate linear functions.

	For a positive tropical Pl\"ucker vector $\tropP_\bullet$ and $\x \in \R^n$, define $\tropP^{\x}_\bullet$ by
	$$
	\tropP^{\x}_I := \tropP_I - \sum_{i \in I} x_i, \qquad I \in \binom{[n]}{k},
	$$
	which is again a positive tropical Pl\"ucker vector.  We write $\tropP_\bullet \equiv \tropP^{\x}_\bullet$ to denote tropical Pl\"ucker vectors that are equal modulo $\Lkn$.

	\subsubsection*{Positive parametrization}
	We will work with a distinguished positive parametrization (Definition~\ref{def:positive-param}) of positive configuration space
	\begin{equation}\label{eq:Mx}
	M(\x) : (\R_{>0}^{n-k}/\R_{>0})^{k-1} \xrightarrow{\,\cong\,} X_{>0}(k,n),
	\end{equation}
	a variation on Postnikov's parametrization of the positive Grassmannian \cite{Pos06}.  Tropicalizing gives a homeomorphism
	\begin{equation}\label{eq:rho}
	\rho : \Tkn \xrightarrow{\,\cong\,} \Trop_{>0}X(k,n),
	\end{equation}
	a variant of the parametrization of \cite{SW05}.  Different tropical positive parametrizations of $\Trop_{>0}\Gr(k,n)$ are related by piecewise linear isomorphisms; the choice $M(\x)$ is distinguished by its relation to the noncrossing fan, stated in \cref{thm:noncrossingmain} below.

	\subsection{Noncrossing fan}
	
	Following \cite{CE24,E21}, for $J = \{j_1 < j_2 < \cdots < j_k\} \in \binom{[n]}{k}$, define the integer points 
	$$
	\vn_J = \sum_{i=1}^{k-1} \sum_{s = j_i - (i-1)}^{j_{i+1} - (i+1)} e_{i,s} \in \Tkn_\Z.
	$$
	
	\begin{defn}[Weak separation and noncrossing]\label{defn:weak-sep-nc}
		A pair $I, J \in \binom{[n]}{k}$ is \emph{weakly separated} if $e_I - e_J$ has at 
		most two sign changes when read cyclically \cite{LZ98}.  The pair is 
		\emph{noncrossing} (with respect to the linear order $1 < \cdots < n$) if 
		for each $1 \le a < b \le k$, writing $I = \{i_1 < \cdots < i_k\}$ and 
		$J = \{j_1 < \cdots < j_k\}$, either
		\begin{enumerate}
			\item the subpair $(\{i_a, \ldots, i_b\}, \{j_a, \ldots, j_b\})$ is 
			weakly separated, or 
			\item the interiors $\{i_{a+1}, \ldots, i_{b-1}\}$ and 
			$\{j_{a+1}, \ldots, j_{b-1}\}$ differ.
		\end{enumerate}
		The pair $(I, J)$ is \emph{crossing} if it is not noncrossing.\end{defn}
	
	Every weakly separated pair is noncrossing, but not conversely.  Denote by $\NC_{k,n}$ the simplicial complex of all collections of pairwise 
	noncrossing \textit{noncyclic} $k$-element subsets, ordered by inclusion.  This is a pure simplicial complex of dimension $(k-1)(n-k-1)-1$ \cite{PKPS10,SSW17}; see Section~\ref{sec:noncrossing}.  For each face $\cK \in \NC_{k,n}$, we define $C_{\cK}$ to be the cone in $\Tkn_\Z$ generated by the vectors $\{\vn_J \mid J \in \cK\}$.  If $\cK$ is the empty set, then $C_\emptyset = \{0\}$.
	
	\begin{rem}
		The complex $\WS_{k,n}$ of weakly separated collections is a simplicial subcomplex of $\NC_{k,n}$, and it is the intersection of the $n$ cyclic relabelings of $\NC_{k,n}$, see \cite{SSW17}.
	\end{rem}

	\begin{defn}
		The \emph{noncrossing fan} $F_{\NC_{k,n}}$ is the collection of cones $C_\cK$, as $\cK$ varies over the faces of $\NC_{k,n}$.
	\end{defn}
	
	The following result is deduced from the results of \cite{SSW17}; see Section~\ref{sec:noncrossing}.
	\begin{theorem}\label{thm:NCfan}
		The noncrossing fan $F_{\NC_{k,n}}$ is a complete unimodular fan in $\Tkn$.
	\end{theorem}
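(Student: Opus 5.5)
The plan is to transport the results of Santos--Stump--Welker \cite{SSW17} into the coordinates $\Tkn = (\T^{n-k-1})^{k-1}$ used here. In \cite{SSW17} the noncrossing complex is realized as a triangulation of a product of simplices. Concretely, it is a flag regular unimodular triangulation of $\Delta_{n-k}\times\cdots$, that is, of a product of $k-1$ simplices of dimension $n-k-1$, equivalently of a suitable order polytope / Gelfand--Tsetlin type polytope. Each noncyclic $k$-subset $J$ corresponds to a lattice point of that polytope, and faces of $\NC_{k,n}$ correspond to simplices of the triangulation.

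The first step is to identify $\vn_J$ with the image of this lattice point under the standard linear map from the product of simplices to $\Tkn$. The coordinate block $i$ of $\vn_J$ is the indicator of the interval $[j_i-(i-1),\, j_{i+1}-(i+1)]$ in $\Z^{n-k}$, taken modulo $(1,\dots,1)$. For a cyclic interval $J$, every block interval is empty, so $\vn_J=0$; this is why cyclic sets are excluded. I will verify that the vectors $\vn_J$ are exactly the rays obtained by coning the triangulation from an interior point. Alternatively, they are the lattice vectors $v - v_0$, where $v_0$ is the vertex attached to the cone point, with $v_0$ corresponding to the cyclic interval vertices of the Cayley-type structure.

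The second step is to deduce completeness and unimodularity. A unimodular triangulation of a lattice polytope whose interior contains a chosen lattice point, or more precisely the standard ``fan over a triangulation'' construction, yields a complete fan. Specifically, if the triangulation is of a polytope $P$ and we cone over the faces of $P$ not containing a fixed point, the resulting cones are simplicial and cover the space. Each maximal cone is spanned by $(k-1)(n-k-1)$ vectors $\vn_J$ forming a $\Z$-basis of $\Tkn_\Z$, because the corresponding maximal simplex is unimodular. Purity of dimension $(k-1)(n-k-1)-1$ gives the correct count. The fan property, that cones meet along common faces, then follows from the fact that the triangulation is a simplicial complex.

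The main obstacle is the coordinate bookkeeping. It must be checked that the SSW realization, after quotienting each factor by $\R(1,\dots,1)$, sends the noncrossing vertices to exactly the $\vn_J$, including the shifts $j_i-(i-1)$ and $j_{i+1}-(i+1)$. The first thing I would try is checking small cases ($k=2$, where the fan should be the type $A$ Cambrian/associahedral fan in $\T^{n-3}$), and then proving the general identification blockwise, using that noncrossing is defined via consecutive subpairs $(j_i,j_{i+1})$.

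If the direct dictionary fails, the fallback is a self-contained argument. Unimodularity of each maximal cone can be shown by an explicit triangular ordering of the $\vn_J$ in a maximal noncrossing collection. Completeness can be shown by exhibiting, for each generic $x\in\Tkn$, a unique noncrossing multiset decomposition $x=\sum c_J\vn_J$ with $c_J\ge0$, via a greedy tableau-reading algorithm.
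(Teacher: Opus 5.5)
Your overall strategy, deducing the theorem from \cite{SSW17}, is the paper's. However, the input you quote is not what Santos--Stump--Welker prove, and the step that actually produces a fan in $\Tkn$ is missing. SSW show that the complex $\widetilde{\NC}_{k,n}$ on \emph{all} of $\binom{[n]}{k}$, cyclic intervals included, is a regular unimodular triangulation of the order polytope $O_{k,n}=\conv(\tvn_J)$ of $[k]\times[n-k]$. This polytope lives in $\Rkn=(\R^{n-k})^k$ and has dimension $k(n-k)$, not $(k-1)(n-k-1)$. They give no triangulation of a product of $k-1$ simplices of dimension $n-k-1$, and the dictionary of your Step 1 cannot work as stated, since the blocks of $\vn_J$ are indicators of intervals rather than simplex vertices. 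Already for $(k,n)=(2,5)$ the vectors $\vn_{13},\vn_{14},\vn_{24},\vn_{25},\vn_{35}$ are $e_{1,1}$, $e_{1,1}+e_{1,2}$, $e_{1,2}$, $e_{1,2}+e_{1,3}\equiv -e_{1,1}$ and $e_{1,3}\equiv -e_{1,1}-e_{1,2}$ in $\T^2$. These are the five vertices of a pentagon surrounding the origin (as they must be, the fan being the associahedral fan), not lattice points of a unimodular triangle.

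The missing idea is how to descend from the $k(n-k)$-dimensional triangulation to $\Tkn$. In the paper this goes as follows.
\begin{itemize}
\item Each of the $n$ cyclic intervals is noncrossing with every $k$-subset, so all $n$ cyclic vertices lie in \emph{every} maximal simplex of the SSW triangulation.
\item The map $\Phi:\Rkn\to\Tkn$ satisfies $\Phi(\tvn_J)=\vn_J$, and by Proposition~\ref{prop:tJ} its kernel is spanned by the cyclic $\tvn_J$ (one of which is $\tvn_{[1,k]}=0$).
\item So $\Phi$ collapses this common face to the origin. It sends each maximal simplex to a full-dimensional simplex with vertex $0$, whose other vertices are the $\vn_J$ of a maximal face of $\NC_{k,n}$.
\item Because the common face is not in the boundary of $O_{k,n}$ (equivalently $0\in\operatorname{int}\Phi(O_{k,n})$), these images cover a neighbourhood of $0$. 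Coning gives a complete fan whose cones are indexed by the link of the common face, i.e.\ by the faces of $\NC_{k,n}$.
\item Unimodularity descends: in each unimodular simplex the cyclic vectors span $\ker\Phi$, and $\Phi$ maps $\Rkn_\Z$ onto $\Tkn_\Z$. Hence the remaining $\vn_J$ form a $\Z$-basis of $\Tkn_\Z$.
\end{itemize}

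Your Step 2 construction (coning from a point over the faces not containing it) is valid only when that point is a vertex of every maximal simplex of a triangulation of a polytope in $\Tkn$ containing it in its interior. Otherwise the cones sit over boundary simplices, are not indexed by the faces of $\NC_{k,n}$, and need not be unimodular. Producing such a triangulation is exactly what the common cyclic face and the projection $\Phi$ accomplish, and your plan never does this. Your fallback (a triangular ordering plus a greedy decomposition) only restates the theorem. Existence and uniqueness of a noncrossing decomposition for every point of $\Tkn$ is precisely the completeness and fan property that has to be proved.
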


	A \emph{noncrossing tableau} $\cJ$ is a multiset $\cJ = \{J_1,J_2,\ldots,J_r\}$ of elements in $\bncyc$ such that every pair in $\cJ$ is noncrossing.  Given a noncrossing tableau $\cJ$, we define the point $\vn_{\cJ} \in \Tkn$ by
	$$
	\vn_{\cJ}:= \sum_{J \in \cJ} \vn_J \in \Tkn_\Z.
	$$

				\begin{example}
		The 14 maximal noncrossing collections in $\NC_{2,6}$ modulo cyclic relabeling are:
		$$(13,15,35),\ (13,14,15),\ (14,24,46).$$
		The noncrossing fan $F_{\NC_{2,6}}$ is the normal fan to the associahedron, with f-vector $(14,21,9)$.
		
		On the other hand, among the 42 maximal noncrossing collections in $\NC_{3,6}$, there are 34 maximal weakly separated collections and eight noncrossing collections which are not weakly separated, containing either $(124,356)$ or $(145,236)$.  Representatives from each orbit of the maximal weakly separated collections are given by 
		$$\begin{array}{cccc}
			(124,125,134,145) & (125,134,135,145) & (134,135,136,145) & (124,134,145,146) \\
			(134,136,145,146) & (135,136,145,235) & (124,125,145,245) & (146,236,245,246) \\
		\end{array}
		$$
		The eight additional noncrossing but not weakly separated collections are 
		$$\begin{array}{cccc}
			(145,146,236,245)& (136,145,146,236)&(136,145,235,236)&(145,235,236,245) \\
			(124,256,346,356)&(124,134,346,356)&(124,125,256,356)&(124,125,134,356) \\
		\end{array}
		$$
		This accounts for the 42 maximal cones in the noncrossing fan $F_{\NC_{3,6}}$. 
		\end{example}
	We show that the combinatorics of noncrossing tableaux interacts well with the fan structure of the positive tropical Grassmannian.

	\subsection{Planar basis}

	Let $\Delta(k,n)$ denote the hypersimplex with vertices $\{e_I \in \R^{n} \mid I \in \binom{[n]}{k}\}$, where $e_I:= e_{i_1} + \cdots + e_{i_k}$.  Let the \emph{directed distance} $d(e_I, e_J)$ on $\Delta(k,n)$ be the minimal number of steps of the form $e_i - e_{i+1}$ (indices mod $n$)  required to travel from $e_I$ to $e_J$ along edges of $\Delta(k,n)$.
	
	\begin{defn}[Planar basis \cite{E22}]\label{def:planar-basis}
		For $J \in \binom{[n]}{k}$, the  
		\emph{planar basis element} $\h_J \in \R^{\binom{[n]}{k}}$ is
		\[
		\h_J := \frac{1}{n} \sum_{I \in \binom{[n]}{k}} d(e_J, e_I) \, e^I.
		\]
	\end{defn}
	For example, we have
	$$
	\h_{13} = \frac{1}{4} \left( e^{12}+ 3e^{14} + 3e^{23}+2e^{24}+e^{34} \right).
	$$

	The elements $\{\h_J\}$ form a basis of $\R^{\binom{[n]}{k}}$, and the elements $\{\h_J : J \text{ cyclic}\}$ 
	span the lineality space $\Lkn$; see Theorem~\ref{thm:planarbasis}.  We define a map 
	\begin{align*}
		\Psi: \R^{\binom{[n]}{k}} &\longrightarrow \Tkn \\
		\h_J &\longmapsto \vn_J.
	\end{align*}

	\begin{theorem}\label{thm:noncrossingmain}
		The linear map $\Psi$ descends to $\R^{\binom{[n]}{k}}/L_{k,n}$ and restricts to a bijection $\Psi: \Trop_{>0} X(k,n) \cong \Tkn$, inverse to $\rho$ in \eqref{eq:rho}.   It induces a bijection
		$$
		\Trop_{>0}(X(k,n))(\Z) \,\longleftrightarrow\, \{\text{noncrossing tableaux}\}
		$$
		given by $\tropP_\bullet \mapsto \cJ$ if $\Psi(\tropP_\bullet) = \vn_{\cJ}$.  
	\end{theorem}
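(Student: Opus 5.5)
The plan is to reduce everything to a duality statement between tropical planar cross-ratios and the rays $\vn_J$, and then read off the bijection from the unimodularity of the noncrossing fan (\cref{thm:NCfan}).

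\emph{Step 1: $\Psi$ descends to the quotient.} By the planar basis theorem (Theorem~\ref{thm:planarbasis}), $\{\h_J : J \text{ cyclic}\}$ spans $\Lkn$. The vector $\vn_J$ is the zero class in $\Tkn$ when $J$ is a cyclic interval, and I will check this directly. If $J=\{a,\dots,a+k-1\}$ is an interval not wrapping around, then $j_{i+1}-(i+1)=j_i-i-1+1-1$ gives an empty range, so $\vn_J=0$. In the wrapping case, each nonempty block is a full row $\sum_s e_{i,s}$, which is zero in $\T^{n-k-1}$. Hence $\Psi$ kills $\Lkn$ and induces a linear isomorphism $\R^{\binom{[n]}{k}}/\Lkn\to\Tkn$. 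The dimensions agree, since $\binom nk-n=(k-1)(n-k)$ counts the coordinates, and these match after taking quotients in each factor.

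\emph{Step 2: duality.} For non-cyclic $I,J$ I will establish
\[
u_I^t(\rho(\vn_J))=\delta_{IJ}
\]
for the tropical planar cross-ratios $u_I$. Two facts should follow from the earlier section on cross-ratios. First, the $u_I^t$ restricted to the lineality quotient give the linear coordinates dual to $\{\h_J\}$, so that $\pi\equiv\sum_J u_J^t(\pi)\,\h_J$ on $\Trop_{>0}X(k,n)$; this expansion is linear only on cones. Second, each $\h_J$ with $J$ non-cyclic is a ray of $\Trop_{>0}\Gr(k,n)$. Using the explicit parametrization $M(\x)$, I will compute $u_I^t\circ\rho$ as a piecewise-linear function on $\Tkn$. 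The aim is to show:
\begin{enumerate}
\item $u_I^t\circ\rho$ is linear on each cone $C_\cK$;
\item $\rho(\vn_J)=\h_J$ modulo $\Lkn$.
\end{enumerate}
The second item follows from the first together with the duality, because the dual-basis property then identifies the image.

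\emph{Step 3: conclude.} Given (1) and (2), for $\x=\sum_{J\in\cK} c_J\vn_J\in C_\cK$ we get
\[
\rho(\x)=\sum_{J\in\cK} c_J\h_J \pmod{\Lkn}.
\]
Applying $\Psi$ returns $\x$, so $\Psi\circ\rho=\mathrm{id}$ on every cone, and hence on all of $\Tkn$ by completeness (\cref{thm:NCfan}). Since $\rho$ is a homeomorphism onto $\Trop_{>0}X(k,n)$, the restriction of $\Psi$ to $\Trop_{>0}X(k,n)$ is its inverse and is a bijection.

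\emph{Integrality.} $\rho$ and $\Psi$ preserve integer points: $\rho$ is a tropicalized subtraction-free parametrization with integer coefficients, and $\Psi$ sends the integral lattice onto $\Tkn_\Z$ by the above. By unimodularity, each integer point of $\Tkn$ lies in a unique minimal cone $C_\cK$ and is a unique $\Z_{\ge0}$-combination of $\{\vn_J:J\in\cK\}$. This is exactly a noncrossing tableau $\cJ$ with $\vn_\cJ=\Psi(\pi_\bullet)$, which gives the stated bijection.

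\emph{Main obstacle.} The hard part is Step 2: proving that $u_I^t\circ\rho$ is linear on the noncrossing cones and takes the value $\delta_{IJ}$ at $\vn_J$. My first approach is to express $u_I$ via $M(\x)$ as a ratio of products of Plücker coordinates. Each Plücker coordinate of $M(\x)$ tropicalizes to a minimum over noncrossing lattice-path families, so $u_I^t\circ\rho$ is an alternating sum of such minima. I would then show that on $C_\cK$ the minimizing path families can be chosen consistently, using the noncrossing condition of Definition~\ref{defn:weak-sep-nc}. Finally, I would evaluate at $\vn_J$, where $\vn_J$ is a sum of row-interval indicator vectors, by a direct count of the staircase shape of $J$.
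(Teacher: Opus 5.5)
Your overall skeleton matches the paper's: descend $\Psi$, prove ray duality, deduce $\Psi\circ\rho=\mathrm{id}$, then use unimodularity. The gap is in the mechanism you propose for passing from the rays to all of $\Tkn$.

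\textbf{Item~(1) of your Step~2 is false.} You claim each $u_I^t\circ\rho$ is linear on every noncrossing cone $C_\cK$, so that $\rho(\sum_{J\in\cK}c_J\vn_J)\equiv\sum_{J\in\cK}c_J\h_J$ in Step~3. Take $(k,n)=(3,6)$ and $\cK=\{145,236\}$. This pair is noncrossing but not weakly separated, since $e_{145}-e_{236}$ has four cyclic sign changes.
\begin{enumerate}
\item Ray duality gives $\rho(\vn_{145})\equiv\h_{145}$ and $\rho(\vn_{236})\equiv\h_{236}$.
\item Linearity on $C_\cK$ would then force $\rho(\vn_{145}+\vn_{236})\equiv\h_{145}+\h_{236}$.
\item By Theorem~\ref{thm:weak-separation-blade-arrangement}, no element of $\h_{145}+\h_{236}+\Lkn$ lies in $\Trop_{>0}\Gr(3,6)$, yet $\rho$ takes values in $\Trop_{>0}X(3,6)$.
\end{enumerate}
In fact $\rho(\vn_{145}+\vn_{236})\equiv-\h_{135}+\h_{235}+\h_{145}+\h_{136}$, one of the weight-two rays in the paper's example. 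So $u^t_{135}\circ\rho$ takes the values $0$, $0$, $-1$ at $\vn_{145}$, $\vn_{236}$, $\vn_{145}+\vn_{236}$.

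The same argument shows $\rho$ is not linear on $C_{\{I,J\}}$ for any noncrossing pair that is not weakly separated. The fan structures agree for $k=2$, but not in general. So the consistent choice of minimizing path families on $C_\cK$ that you plan cannot exist, and Step~3 fails as written.

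\textbf{Step~1 is also wrong, and the error hides this.} We have $\dim\R^{\binom{[n]}{k}}/\Lkn=\binom nk-n$ but $\dim\Tkn=(k-1)(n-k-1)$; for $(3,6)$ that is $14$ versus $4$. So $\Psi$ is a surjection with a large kernel, not an isomorphism. The nonzero vector $\h_{145}+\h_{236}-\rho(\vn_{145}+\vn_{236})$ lies in that kernel. This kernel is exactly what lets a linear $\Psi$ invert a non-linear $\rho$.

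\textbf{What you need is a linear left inverse of $\rho$, not linearity of $\rho$.}
\begin{enumerate}
\item The positive parametrization is inverted by Laurent monomials in Pl\"ucker coordinates. For $(3,6)$, on $M(\x)$ one has $x_{2,t}=p_{1,t+2,t+3}$, $x_{1,1}=p_{234}/p_{134}$ and $x_{1,2}=p_{345}/(p_{134}p_{145})$.
\item After fixing torus weights with Pl\"ucker coordinates equal to $1$ on $M(\x)$, the coordinates of $(\R^{n-k}_{>0}/\R_{>0})^{k-1}$ are cross-ratios. By Proposition~\ref{prop:u-basis} they are Laurent monomials in the $u_J$.
\item Tropicalization is exact on monomials. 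Hence there are fixed $\lambda_J\in\Tkn$ with $\vn=\sum_{J\in\bncyc}u^t_J(\rho(\vn))\,\lambda_J$ for all $\vn\in\Tkn$.
\item Evaluating at $\vn=\vn_{J'}$ and using Theorem~\ref{thm:duality} gives $\lambda_{J'}=\vn_{J'}$. Therefore $\Psi\circ\rho=\mathrm{id}$ on all of $\Tkn$, using only ray duality.
\end{enumerate}
The paper makes this passage in one line from Theorem~\ref{thm:planarbasis}(4) and Theorem~\ref{thm:duality}. The left-inverse argument is one way to justify it without any linearity on cones.

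\textbf{On ray duality itself,} the paper does not evaluate the alternating sums $u^t_I$ on path families. It proves $\tropP_I(\vn_K)=\corank_{(\r_K,\mathbf{S}_K)}(I)$ for all $I$ (Proposition~\ref{prop: tropical weight is corank}). It then identifies this corank function with $\h_K$ modulo $\Lkn$ (Theorem~\ref{thm:h=corank}), so duality follows from Theorem~\ref{thm:planarbasis}(4).

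Your integrality and noncrossing-tableau paragraph is fine once $\Psi\circ\rho=\mathrm{id}$ is established.
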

	
	The bijection of Theorem~\ref{thm:noncrossingmain} provides a purely combinatorial parametrization of integer points 
	in the positive tropical Grassmannian.
	
	\subsection{Planar cross-ratios}
	For 
	$J = \{j_1 < \cdots < j_k\} \in \binom{[n]}{k}$, define the set $I_J$ of 
	\emph{cyclic endpoints} of $J$ by
	\[
	I_J = \bigl\{ j \in J : j+1 \notin J \bigr\} 
	= \bigl\{ j \in [n] : (j, j+1) \in J \times J^c \bigr\},
	\]
	where indices are taken modulo $n$.
	
	\begin{definition}\label{def:cubical-array}
		For $J \in \binom{[n]}{k}$, the \emph{cubical array} $C_J$ is the 
		collection of $k$-subsets
		\[
		C_J = \left\{ e_J + \sum_{m \in M} (-e_m + e_{m+1}) : M \subseteq I_J \right\},
		\]
		where $e_J = \sum_{j \in J} e_j$ denotes the indicator vector of $J$, and 
		$M = (m_1, m_2, \ldots)$.
	\end{definition}
	
	By a \emph{cross-ratio} we mean a Laurent monomial in Pl\"ucker coordinates that is torus-invariant, that is, descends to the configuration space $X(k,n)$.
	
	\begin{definition}[Planar cross-ratios \cite{E19}]\label{def:u-variable}
		For $J \in \binom{[n]}{k}$, the \emph{planar cross-ratio} $u_J$ is defined by
		\[
		u_J = \prod_{e_M \in C_J} p_M^{(-1)^{|J \cap M| - k - 1}},
		\]
		where the product is over all $k$-subsets $M$ in the cubical array $C_J$, 
		and $p_M$ denotes the Pl\"ucker coordinate indexed by $M$.
	\end{definition}
	Note that $u_J$ is a rational function on $\Gr(k,n)$, and its tropicalization $u_J^t$ is a \emph{linear} function on $\R^{\binom{[n]}{k}}$.
	
	We may view $u_J$ as a rational function on $(\R_{>0}^{n-k}/\R_{>0})^{k-1}$ by pulling back along the positive parametrization \eqref{eq:Mx}.  Tropicalizing, we obtain the tropical planar cross-ratio $u_J^t$, a piecewise-linear function on $\Tkn$.
	\begin{proposition}
		The tropical positive parametrization $\rho: \Tkn \to \Trop_{>0}X(k,n)$ can be given explicitly as
		\begin{equation}\label{eq:rho}
		\rho(\vn) = \sum_{J \in \bncyc} u^t_J(\vn)\h_J.
		\end{equation}
	\end{proposition}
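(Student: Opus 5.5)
The plan is to read the formula as a change of basis and to use the duality between planar cross-ratios and the planar basis. By Theorem~\ref{thm:planarbasis}, the $\h_J$ form a basis of $\R^{\binom{[n]}{k}}$, and the cyclic ones span $\Lkn$. So every $\tropP_\bullet$ has a unique expansion
\[
\tropP_\bullet = \sum_{J \in \binom{[n]}{k}} c_J(\tropP_\bullet)\, \h_J,
\]
and modulo $\Lkn$ only the coefficients $c_J$ with $J \in \bncyc$ matter. The statement therefore reduces to identifying the coordinate functionals: for $J$ non-cyclic, we want $c_J = u^t_J$ on $\R^{\binom{[n]}{k}}$. Once that holds, we get $\tropP_\bullet \equiv \sum_{J\in\bncyc} u^t_J(\tropP_\bullet)\h_J$ for every $\tropP_\bullet$. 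Setting $\tropP_\bullet=\rho(\vn)$ and using that $u^t_J(\vn)$ is by definition the pullback $u^t_J(\rho(\vn))$ (tropicalization commutes with the subtraction-free parametrization $M(\x)$) gives \eqref{eq:rho}.

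\textbf{Step 1: well-definedness modulo lineality.} Each $u_J$ is torus-invariant, so $u^t_J$ vanishes on $\Lkn$. Concretely, at each endpoint $m \in I_J$ the cubical array exchanges $m$ for $m+1$, and the signs $(-1)^{|J\cap M|}$ make every index occur with total exponent zero. Hence $u^t_J$ is a linear functional on $\R^{\binom{[n]}{k}}/\Lkn$.

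\textbf{Step 2: the key duality computation.} We need to show
\[
u^t_J(\h_I) = \delta_{IJ} \qquad \text{for } I, J \in \bncyc .
\]
Since $u^t_J$ is an alternating sum over the cube $C_J$, we get
\[
u^t_J(\h_I) = \pm\frac{1}{n}\sum_{M\subseteq I_J} (-1)^{|M|}\, d\bigl(e_I, e_J + \textstyle\sum_{m\in M}(e_{m+1}-e_m)\bigr),
\]
which is an $|I_J|$-fold mixed finite difference of the function $d(e_I,\cdot)$. We will write the directed distance as
\[
d(e_I, e_K) = \sum_{a} \max\text{-type expression},
\]
namely as the cyclic transport distance, where one minimizes over the cyclic shift $\lambda$ of $\sum_a |\lambda + (\text{partial sums of } e_K - e_I)_a|$. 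Equivalently, $n\,d$ is a piecewise linear convex function of the prefix sums of $e_K$. A move $e_m \to e_{m+1}$ changes exactly one prefix sum, at position $m$, by $-1$. The mixed difference over independent positions in $I_J$ therefore vanishes unless the function fails to be separable in those coordinates. We will show that this nonseparability occurs only when $K$ ranges over the cube centered at $I$ itself, giving the value $1$ after accounting for the $1/n$ normalization and the sign convention $(-1)^{|J\cap M|-k-1}$.

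An alternative route, which may be cleaner, is to use the known statement from \cite{E19,E22} that $\{u_J^t\}$ is dual to $\{\h_J\}$ and simply cite it as part of Theorem~\ref{thm:planarbasis}. If it is available there, Step 2 is immediate.

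\textbf{Step 3: conclude.} From Step 2 and the basis property, $c_J = u^t_J$ on the quotient. Writing $\tropP_\bullet=\rho(\vn)$ finishes the argument.

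The main obstacle is Step 2, the explicit finite-difference evaluation of the directed distance over the cubical array. The difficulty is the cyclic minimization in $d$, which makes it only piecewise linear. The approach will be to fix an optimal shift locally and check that the $2^{|I_J|}$ cube vertices are all computed by compatible shifts except at $J=I$.
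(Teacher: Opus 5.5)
Your proposal is correct and is essentially the paper's argument. The duality $u^t_J(\h_K)=\delta_{J,K}$ of Theorem~\ref{thm:planarbasis}(4), together with $u^t_J$ vanishing on $\Lkn$, gives $\tropP_\bullet \equiv \sum_{J\in\bncyc} u^t_J(\tropP_\bullet)\h_J$ for every $\tropP_\bullet$, and then $u^t_J(\vn)=u^t_J(\rho(\vn))$ holds by definition. The directed-distance finite-difference computation you sketch in Step 2 is unnecessary, and you do not actually carry it out, because that duality is already Theorem~\ref{thm:planarbasis}(4), which is exactly what the paper cites.
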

	\begin{proof}
	The equality $\tropP_\bullet \equiv \sum_{J \in \bncyc} u^t_J(\tropP_\bullet) \h_J$ holds for any $\tropP_\bullet$.  By definition, $u^t_J(\vn) = u^t_J(\rho(\vn))$.  
	\end{proof}
	
	\begin{remark}
	\cref{thm:duality} should be compared with \cite[Theorem 6.6]{AHL} which states a duality between tropical cross-ratios of cluster variables (called $u$-coordinates) and $\mathbf{g}$-vectors, for any cluster algebra of finite type.
	\end{remark}
	
	\begin{remark}
		The formula \eqref{eq:rho} is denoted $\F$ and called the Global Schwinger Parameterization in \cite{CE24}, where it is used to compute generalized biadjoint amplitudes.  In the same paper, the PK weight, introduced in the next section, was discovered while studying critical points of mirror superpotentials.
	\end{remark}
	
	\begin{theorem}\label{thm:duality}
		For all $k \geq 2$, $n \geq k+2$, and all $J, J' \in \bncyc$, we have
		$$u^t_J(\vn_{J'}) = \delta_{J,J'}.$$
	\end{theorem}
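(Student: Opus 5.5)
To prove \cref{thm:duality}, we compute $u^t_J\circ\rho$ directly on the point $\vn_{J'}$. Here $u^t_J(\vn)$ means the tropicalization of $u_J$ pulled back along $M(\x)$. Write $M(\x)$ as a $k\times n$ matrix whose Pl\"ucker coordinates $p_I(M(\x))$ are subtraction-free polynomials in the variables $x_{i,s}$. Concretely, these are generating functions for families of noncrossing lattice paths (Lindstr\"om--Gessel--Viennot) in a $(k-1)\times(n-k)$ grid. Then $p_I^t(\vn)=\min$ over path families of the sum of the $\vn$-coordinates they pick up. Since $u_J$ is a Laurent monomial in the $p_M$ for $M\in C_J$, we get
$$u^t_J(\vn)=\sum_{M\in C_J}(-1)^{|J\cap M|-k-1}p^t_M(\vn),$$
an alternating sum of minima over a cube of $k$-subsets indexed by $I_J$.

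\textbf{Step 1: identify the minimizing families.} For the indicator-type vector $\vn_{J'}$, which is $1$ on row $i$ in the column interval $[j'_i-(i-1),\,j'_{i+1}-(i+1)]$ and $0$ elsewhere, find explicitly a minimizing path family for each $p_M$. I expect $p^t_M(\vn_{J'})$ to count how many rows $i$ force the $i$-th path to cross the marked interval. This should be governed by an interlacing comparison between $M$ and $J'$: the $i$-th path of $M$ must traverse row $i$ between columns determined by $m_i,m_{i+1}$, and it is forced to pay a $1$ exactly when that window lies inside the marked interval. This should produce a closed formula of the form
$$p^t_M(\vn_{J'})=\#\{i: m_i\le j'_i \text{ and } m_{i+1}\ge j'_{i+1}\},$$
possibly with cyclic corrections. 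I will first verify this against small cases ($k=2$, where $\vn_{J'}$ is a single interval and $p^t$ reduces to the standard tree-metric computation) before committing to it.

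\textbf{Step 2: evaluate the alternating sum.} The cube $C_J$ is a product over $m\in I_J$ of the choice of shifting the endpoint $m$ to $m+1$. An alternating sum over a cube of a function that depends additively on the individual coordinates vanishes unless every coordinate contributes. Decompose the count from Step 1 as a sum over rows $i$. Each row's indicator depends only on the shifts of boundedly many endpoints (those of $m_i,m_{i+1}$), so its alternating sum over the cube vanishes unless $I_J$ consists only of those endpoints. This forces $J$ to have at most two cyclic blocks relevant to row $i$ and the inequalities to be tight. Hence $u^t_J(\vn_{J'})$ is nonzero only when $J$ aligns exactly with $J'$. In the aligned case a single row family survives with sign $+1$; the sign convention $(-1)^{|J\cap M|-k-1}$ should make the coefficient exactly $1$ at $J=J'$. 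Noncyclicity of $J,J'$ guarantees that $I_J$ is nonempty and that $\vn_{J'}\neq0$.

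\textbf{Main obstacle.} The hard part is Step 1: determining the tropical Pl\"ucker coordinates at $\vn_{J'}$ uniformly. Minimizing families are not unique, and the boundary rows ($i=1$, $i=k-1$, and the wrap-around) behave differently. If a direct path argument is messy, my fallback is to use the identity $\tropP_\bullet\equiv\sum_J u^t_J(\tropP_\bullet)\h_J$ from the proof of the Proposition. It then suffices to show $\rho(\vn_{J'})\equiv\h_{J'}$, which is the same as verifying that $\h_{J'}$ satisfies the positive tropical Pl\"ucker relations and that its parametrizing coordinates are $\vn_{J'}$. That verification reduces to a distance computation with $d(e_{J'},e_I)$ on the hypersimplex, and it gives the duality at once because $\{\h_J\}$ is a basis.
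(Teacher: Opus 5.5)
Your main line has a genuine gap. It sits where you suspected (Step~1), and it also breaks Step~2.

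\emph{The conjectured formula.} It is wrong already for $k=2$. For the $0/1$ representative one has $p^t_{ab}(\vn_{ij})=1$ iff $i<a<b\le j$ (cf.\ \eqref{eq:hij}), so your inequalities point the wrong way.

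\emph{Why no row-additive formula can work.} Any formula that is a sum over rows, with each row depending only on the shifts at the endpoints of $m_i,m_{i+1}$, makes Step~2 kill the diagonal. Take $(k,n)=(3,7)$ and $J=J'=\{2,4,6\}$. Then $I_J=\{2,4,6\}$ and no shift wraps around. In your model the row-$1$ term depends only on the shifts at $2,4$, and the row-$2$ term only on those at $4,6$. The top mixed difference over all three shifts therefore vanishes, and you would conclude $u^t_{246}(\vn_{246})=0$.

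A direct ladder computation at $\vn_{246}=e_{1,2}+e_{2,3}$ gives $p^t_M=1$ for $M\in\{346,256,356\}$ and $p^t_M=0$ for $M\in\{246,247,347,257,357\}$. As a function of the shift indicators $(m_2,m_4,m_6)$ this is $(m_2\vee m_4)\wedge\neg m_6$. Its signed sum is $1$, and it is not a sum of functions each depending on two of the three variables. The corrected count $\#\{i: m_i>j'_i,\ m_{i+1}\le j'_{i+1}\}$ also fails: it gives $1$ at $M=347$, where the true value is $0$.

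The missing idea is the correct closed form, which is a maximum, not a sum:
$p^t_I(\vn_{J'})=\corank_{(\r_{J'},\mathbf{S}_{J'})}(I)=\max_a\max\bigl(0,\rho_a-|I\cap T_a|\bigr)$, where $\rho_a$ are the partial capacities.
This is the corank function of the positroid whose bases satisfy $|B\cap T_a|\ge\rho_a$ on the cyclic prefixes of the decorated ordered set partition of $J'$. It is Proposition~\ref{prop: tropical weight is corank}, proved by identifying zero-cost routings with bases and inducting on corank.

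The max couples all the prefix constraints, and that coupling is what makes the mixed difference over $C_J$ nonzero at $J=J'$ (an OR-type function as above) and zero otherwise. This is the content of Theorem~\ref{thm:h=corank}. Combined with Theorem~\ref{thm:planarbasis}(4), it gives the duality.

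Your fallback reduction to $\rho(\vn_{J'})\equiv\h_{J'}$ is correct, and it is precisely what the paper establishes. As written, though, it only restates the problem. To read off the ``parametrizing coordinates'' of $\h_{J'}$ you need an explicit inverse of $M(\x)$, for instance each $x_{i,t}$ as a Laurent monomial in Pl\"ucker coordinates having unique path families. You supply none, so you are back to computing $\rho(\vn_{J'})$, i.e.\ Step~1. Had you produced such an inverse, a genuinely different and legitimate route would be available: the linear check $\rho^{-1}(\h_{J'})=\vn_{J'}$, together with $\h_{J'}\in\Trop_{>0}\Gr(k,n)$ and injectivity of $\rho$.
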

	
	\subsection{Weight filtration of tropical Grassmannian}
	We study two notions of weight, PK weight and NC weight.  The weight function stratifies the tropical Grassmannian: higher weight typically means more complicated.

	\begin{definition}[PK Weight]\label{defn:f-pk-weight}
		Define the \emph{PK (planar kinematics) weight} as the linear map $\wt_{\PK}: \R^{\binom{[n]}{k}} \to \R$ given by
		$$\wt_\PK(\h_J)  = \begin{cases}
			1, & J \in \bncyc \\
			0, & J \text{ cyclic}.
		\end{cases}$$
	\end{definition}

	Since the $\h_J$ with $J$ cyclic span $L_{k,n}$, the function $\wt_\PK$ descends to the quotient $\R^{\binom{[n]}{k}}/L_{k,n}$.
	
	\begin{definition}[NC Weight]\label{defn:f-extended-nc}
		For $\vn \in \Tkn$, write
		$$\vn = \sum_{j} \mu_j \vn_{K_j}$$
		where $\mu_j \geq 0$ and $\{K_j\}$ is a face of the noncrossing complex. Define the NC weight (noncrossing weight) by
		$$\wt_{\NC}(\vn) := \sum_j \mu_j.$$
	\end{definition}
	
	Our main result on weights is that PK weight and NC weight agree.
	\begin{theorem}[PK Weight Equals NC Weight]\label{thm:f-pk-equals-nc}
		For any $\vn \in \Tkn$,
		$$\wt_{\PK}(\rho(\vn)) = \wt_{\NC}(\vn).$$
	\end{theorem}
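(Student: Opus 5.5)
We plan to compute both sides by expanding $\rho(\vn)$ in the planar basis and using the duality \cref{thm:duality}. By \eqref{eq:rho},
\[
\rho(\vn) \equiv \sum_{J \in \bncyc} u^t_J(\vn)\,\h_J \pmod{L_{k,n}},
\]
and since $\wt_\PK$ is linear, vanishes on $L_{k,n}$, and takes the value $1$ on every $\h_J$ with $J$ noncyclic, we get
\[
\wt_\PK(\rho(\vn)) = \sum_{J\in\bncyc} u^t_J(\vn).
\]
So the theorem is equivalent to the identity $\sum_{J} u^t_J(\vn) = \wt_\NC(\vn)$ for all $\vn \in \Tkn$. The strategy is to verify this identity cone by cone in the noncrossing fan.

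\textbf{Step 1 (well-definedness of $\wt_\NC$).} By \cref{thm:NCfan}, $F_{\NC_{k,n}}$ is a complete simplicial (indeed unimodular) fan. Hence every $\vn$ lies in some cone $C_\cK$ and has a unique expression $\vn=\sum_{K\in\cK}\mu_K\vn_K$ with $\mu_K\ge 0$, where we may take $\cK$ to be the minimal face containing $\vn$. Expressions over different faces containing $\vn$ agree after dropping zero coefficients, because the fan is simplicial. So $\wt_\NC$ is a well-defined function, and it is linear on each cone $C_\cK$.

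\textbf{Step 2 (linearity of the $u^t_J$ on each cone).} This is the main obstacle. The functions $u^t_J\circ\rho$ are only piecewise linear on $\Tkn$, so we must show each is linear on every cone $C_\cK$ of the noncrossing fan. Equivalently, the domains of linearity of the tropical positive parametrization refine the noncrossing fan. The first approach to try is to use \cref{thm:noncrossingmain}: $\Psi$ is linear and is inverse to $\rho$. On each maximal cone of $\Trop_{>0}X(k,n)$ (with its fan structure), $\rho$ is then the inverse of a linear map, hence linear. Each $u^t_J$ is linear on $\R^{\binom{[n]}{k}}$, so $u^t_J\circ\rho$ is linear on the image under $\Psi$ of each cone of $\Trop_{>0}X(k,n)$. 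It then remains to show that every $C_\cK$ lies inside one such image. Noncrossing vectors $\vn_K$, $K\in\cK$, should correspond under $\rho$ to rays $\h_K$ lying in a common cone of $\Trop_{>0}X(k,n)$. To establish this, I would check that $\sum_K\mu_K\h_K$ satisfies the positive tropical Plücker relations \eqref{eq:posTropPlucker} for $\mu\ge0$ whenever $\cK$ is noncrossing. This would be a pairwise compatibility statement, presumably available from the analysis of Section~\ref{sec:noncrossing}. Given it, $\rho(\sum\mu_K\vn_K)=\sum\mu_K\h_K$ by injectivity of $\Psi$, so $\rho$ is linear on $C_\cK$.

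\textbf{Step 3 (conclusion).} On $C_\cK$, linearity together with \cref{thm:duality} gives
\[
\sum_J u^t_J\Big(\sum_{K}\mu_K\vn_K\Big)=\sum_K\mu_K\sum_J u^t_J(\vn_K)=\sum_K\mu_K\sum_J\delta_{J,K}=\sum_K \mu_K=\wt_\NC(\vn).
\]
Alternatively, once Step 2 gives $\rho(\vn)\equiv\sum_K\mu_K\h_K$, the theorem follows directly: $\wt_\PK(\h_K)=1$ for each noncyclic $K$, so $\wt_\PK(\rho(\vn))=\sum_K\mu_K=\wt_\NC(\vn)$. In this form duality is only needed to identify the coefficients. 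The real content is therefore the linearity of $\rho$ on noncrossing cones.
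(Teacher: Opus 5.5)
Your reduction $\wt_\PK(\rho(\vn))=\sum_{J\in\bncyc}u^t_J(\vn)$ and Step 1 are fine and match the paper. Step 2, however, is false in general, and both versions of your Step 3 depend on it. The noncrossing fan is not refined by the $\Psi$-image of the fan structure of $\Trop_{>0}X(k,n)$: for $k\ge 3$, $\Psi$ is in general only a piecewise-linear bijection, not a fan isomorphism.

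A counterexample already occurs for $(k,n)=(3,6)$ with $\cK=\{145,236\}$, which is noncrossing but not weakly separated. By \cref{thm:duality}, $\rho(\vn_{145})\equiv\h_{145}$ and $\rho(\vn_{236})\equiv\h_{236}$. But by \cref{thm:weak-separation-blade-arrangement}, $\h_{145}+\h_{236}\notin\Trop_{>0}\Gr(3,6)$, so $\rho$ is not linear on $C_{\cK}$. The pairwise compatibility you propose to check with the positive tropical Pl\"ucker relations holds exactly for weakly separated pairs, not for noncrossing ones. Indeed, the paper's $(3,6)$ example gives $\rho(\vn_{145}+\vn_{236})\equiv-\h_{135}+\h_{235}+\h_{145}+\h_{136}$. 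So $u^t_{135}$ is $-1$ on the sum but $0$ on each summand, and $u^t_{236}$ is $0$ on the sum but $1$ on $\vn_{236}$. The individual coefficients change across a noncrossing cone; only their total ($2$) is preserved. That total is precisely the content of the theorem, so a proof must treat $\sum_J u^t_J\circ\rho$ as a single function rather than term by term.

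This is what the paper does, using the bridge function $H=\sum_{J\in\bncyc}u^t_J$. From $\prod_{\text{all }J}u_J=1$ and the two-term form $u_{\mathrm{cyc}(j)}=p_{\mathrm{gap}(j)}/p_{\mathrm{cyc}(j)}$ for cyclic $J$, one gets $\prod_{J\in\bncyc}u_J=\prod_j p_{\mathrm{cyc}(j)}/p_{\mathrm{gap}(j)}$ (\cref{prop:bridge-identity}). In the positive parametrization this equals $\prod_{i,t}x_{i,t}$ divided by $\prod_i P_i\prod_j Q_j$ (\cref{prop:bridge-closed-form}). After tropicalizing, $H$ is a linear form minus the $P_i^t$ and $Q_j^t$. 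The noncrossing hypothesis is what shows each $Q_j^t$ is linear on every noncrossing cone (\cref{lem:gt}), so $H$ is linear there (\cref{cor:H-bilinear}). Combined with $H(\vn_K)=1$ (\cref{lem:H1}, which also follows from duality), your Step 3 computation goes through verbatim once ``each $u^t_J$ is linear on $C_{\cK}$'' is replaced by ``$H$ is linear on $C_{\cK}$''.
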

	
	\begin{example}
		Suppose that $(k,n) = (2,n)$.  All rays of $\Trop_{>0} X(2,n)$ have PK weight one, since
		$$\wt_{\PK}(\rho(\t_{i,j})) = \wt_{\PK}(\h_{i,j})= 1$$
		and these are known to be all the rays.
	\end{example}
	\begin{example}	
		Suppose that $(k,n) = (3,6)$.  The two rays $\tropP_\bullet = -\h_{135} + \h_{235} + \h_{145} + \h_{136}$ and $\tropP'_\bullet = -\h_{246} + \h_{124} + \h_{256} + \h_{346}$ have PK weight two.  They also have NC weight two, since
		$$\Psi(\tropP_\bullet) = \t_{145} + \t_{236}, \qquad \text{and} \qquad \Psi(\tropP'_\bullet) = \t_{124} + \t_{356},$$
		where by Theorem \ref{thm:noncrossingmain} the projection $\Psi$ is inverse to the positive parameterization $\rho$. 
	\end{example}
	
	We say that a ray $\mathbf{r}$ of $\Trop_{>0} X(k,n)$ has weight $r$ if the primitive integer generator $\v$ of the ray has weight $\wt_{\PK}(\v) = r$.
	We use Theorem~\ref{thm:f-pk-equals-nc} to explore the structure of the low weight part of the positive tropical Grassmannian.  By Theorem~\ref{thm:weak-separation-blade-arrangement}, for non-cyclic $J$, the ray 
	$\R_{\ge 0} \cdot \h_J$ is a ray of $\Trop_{>0} X(k,n)$ \cite{E22}.  The weight-one classification follows from Theorem~\ref{thm:f-pk-equals-nc}; see \cref{prop: PK weight general}.
	
	\begin{theorem}[Weight-One Classification]
		The rays $\R_{\ge 0} \cdot \h_J$ are exactly the weight one rays of $\Trop_{>0} X(k,n)$.
	\end{theorem}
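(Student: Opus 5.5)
The plan is to transport the question to $\Tkn$ through the bijection $\Psi=\rho^{-1}$ of Theorem~\ref{thm:noncrossingmain}, and then to use Theorem~\ref{thm:f-pk-equals-nc}. Rays of $\Trop_{>0}X(k,n)$ are images under $\rho$ of rays of the fan structure on $\Tkn$ induced by $\rho$. This fan is refined by the noncrossing fan $F_{\NC_{k,n}}$, since $\rho$ is linear on each cone $C_\cK$ by \eqref{eq:rho} and Theorem~\ref{thm:duality}. Fix a ray $\r$ with primitive integer generator $\v$, and write $\vn=\Psi(\v)\in\Tkn_\Z$. By Theorem~\ref{thm:noncrossingmain} we have $\vn=\vn_\cJ$ for a unique noncrossing tableau $\cJ$, and Theorem~\ref{thm:f-pk-equals-nc} gives $\wt_\PK(\v)=\wt_\NC(\vn)=|\cJ|$, the number of columns.

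\emph{The rays $\R_{\ge0}\h_J$ have weight one.} For $J\in\bncyc$, Theorem~\ref{thm:weak-separation-blade-arrangement} says $\R_{\ge0}\h_J$ is a ray. By Theorem~\ref{thm:duality} and \eqref{eq:rho}, $\rho(\vn_J)=\sum_{J'}u^t_{J'}(\vn_J)\h_{J'}=\h_J$. Since $F_{\NC_{k,n}}$ is unimodular (Theorem~\ref{thm:NCfan}), $\vn_J$ is primitive in $\Tkn_\Z$. Because $\rho$ and $\Psi$ are inverse bijections on integer points, $\h_J$ is then the primitive integer generator modulo $L_{k,n}$. Its weight is $\wt_\PK(\h_J)=1$ by Definition~\ref{defn:f-pk-weight}.

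\emph{Every weight-one ray is of this form.} Suppose $\wt_\PK(\v)=1$. Then $\wt_\NC(\vn)=1$, so the tableau $\cJ$ has a single column $J$. This gives $\vn=\vn_J$, and hence $\v\equiv\rho(\vn_J)=\h_J$ modulo $L_{k,n}$, so $\r=\R_{\ge0}\h_J$.

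The main subtlety I expect is the primitivity and integrality bookkeeping: the primitive generator of a ray of $\Trop_{>0}X(k,n)$ must correspond under $\Psi$ to an integer point of $\Tkn$. This is exactly the content of the integer bijection in Theorem~\ref{thm:noncrossingmain}, which identifies integer points on both sides. Given that, the remaining steps are formal.
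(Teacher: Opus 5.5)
Your argument is correct and follows the paper's route. The converse direction is exactly Proposition~\ref{prop: PK weight general}(3): an integral point of PK weight one has $\wt_\NC(\Psi(\v))=1$, hence $\Psi(\v)=\vn_J$ and $\v\equiv\h_J$. The forward direction is Theorem~\ref{thm:weak-separation-blade-arrangement} together with $\wt_\PK(\h_J)=1$. Your check that $\h_J$ is the \emph{primitive} generator is something the paper leaves implicit, and it is needed because the weight of a ray is defined through its primitive generator. It can also be read off from the integral duality $u^t_{J'}(\h_J)=\delta_{J,J'}$, which shows that the integer lattice of $\R^{\binom{[n]}{k}}/\Lkn$ is $\bigoplus_{J\in\bncyc}\Z\,\h_J$.

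One sentence in your setup is false and should be deleted. The map $\rho$ is \emph{not} linear on the cones $C_\cK$, so $F_{\NC_{k,n}}$ does not refine the fan structure pulled back from $\Trop_{>0}X(k,n)$. Theorem~\ref{thm:duality} only gives the values of $u^t_J$ at the rays $\vn_{J'}$; it says nothing about linearity on cones.

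Here is a concrete failure. Take $(k,n)=(3,6)$ and the noncrossing, non-weakly-separated pair $\{145,236\}$. Then $\rho(\vn_{145})=\h_{145}$ and $\rho(\vn_{236})=\h_{236}$, but $\rho(\vn_{145}+\vn_{236})=-\h_{135}+\h_{235}+\h_{145}+\h_{136}$, which is the paper's $(3,6)$ example. Indeed $\h_{145}+\h_{236}\notin\Trop_{>0}\Gr(3,6)$ by Theorem~\ref{thm:weak-separation-blade-arrangement}.

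If the refinement you claimed held, every ray of $\Trop_{>0}X(k,n)$ would be some $\R_{\ge0}\h_J$, so there would be no rays of weight at least two. That is false. Theorem~\ref{thm:noncrossingmain} asserts only a bijection, not a fan isomorphism; the isomorphism holds only for $k=2$.

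Fortunately nothing later in your proof uses this claim. The rest needs only that $\Psi$ is linear, that $\rho$ and $\Psi$ are inverse bijections on points and on integer points, and Theorem~\ref{thm:f-pk-equals-nc}. With that sentence removed, your proof stands.
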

	
	The following \Cref{conj:weighttwo} was made in \cite{E22B}.
	
	\begin{conjecture}[Weight-Two Classification]\label{conj:weighttwo}
		Suppose that $I, J \in \bncyc$ are noncrossing but not weakly separated. Then $\rho(\vn_I + \vn_J)$ lies in the direction of a ray of $\Trop_{>0}X(k,n)$.  	All rays of $\Trop_{>0} X(k,n)$ of PK weight $2$ are of this form for a unique noncrossing but not weakly separated pair $\{I, J\}$.
	\end{conjecture}
	
	In \cref{sec: weight strat}, we show that \cref{conj:weighttwo} holds modulo an integrality assumption.

\subsection{Diameter of the bounded complex}

Let $\Delta(k,n) := \conv\bigl(e_I : I \in \binom{[n]}{k}\bigr)$ denote the hypersimplex, where $e_I = \sum_{i \in I} e_i$.  A positive tropical Pl\"ucker vector $\tropP_\bullet$, viewed as a height function on the vertices of $\Delta(k,n)$, induces a regular subdivision $\Delta(\tropP_\bullet)$ of $\Delta(k,n)$ into positroid polytopes \cite{Spe,ALS,SW21}.  For each $\x \in \R^n$, we have a matroid $M(\tropP^\x_\bullet)$, and the matroid polytopes $P_{M(\tropP^\x_\bullet)}$, as $\x$ varies over $\R^n$ give the subdivision $\Delta(\tropP_\bullet)$.
	
	Let $\one := (1, 1, \ldots, 1) \in \Z^n$.
	\begin{definition}\label{def:tropical}
	The \emph{(positive) tropical linear space} of a (positive) tropical Pl\"ucker vector $\tropP_\bullet$ is
		\begin{equation}\label{eq:tropical-linear-space}
	L(\tropP_\bullet) := \Bigl\{ \x \in \R^n/\one : \text{for every } \tau \in \tbinom{[n]}{k+1},\ \min_{i \in \tau}\bigl(\tropP_{\tau \setminus \{i\}} + x_i\bigr) \text{ is achieved at least twice}\Bigr\},
	\end{equation}
	a subspace of $\R^n/\one$.
	\end{definition}
	We have that $\x \in L(\tropP_\bullet)$ if and only if $M(\tropP^\x_\bullet)$ is loopless (\cref{prop:Spe}).  The bounded subcomplex $B(\tropP_\bullet) \subset L(\tropP_\bullet)$ consists of those faces of $L(\tropP_\bullet)$ dual to matroid polytopes in the subdivision $\Delta(\tropP_\bullet)$ that are in the interior of $\Delta(k,n)$. 
	
	The following result states that $\wt_\PK(\tropP_\bullet)$ is a bound on the diameter of $B(\tropP_\bullet)$. 

\begin{theorem}\label{thm:global_bounding_simplex1}
	For any $\tropP_\bullet \in \Trop_{>0}\Gr(k,n)$, let $\hp_\bullet$ denote the \emph{balanced central representative} of \cref{lem:balanced-central}.  Then the bounded complex $B(\hp_\bullet)$ is contained in the $\wt_{\PK}(\tropP_\bullet)$-fold dilate of the standard alcoved simplex (\cref{def:std-simplex}).
\end{theorem}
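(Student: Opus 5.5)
Write $\tropP_\bullet \equiv \sum_{J\in\bncyc} c_J\,\h_J$ modulo $\Lkn$, where $c_J = u^t_J(\tropP_\bullet)$. Then $\wt_\PK(\tropP_\bullet)=\sum_J c_J =: w$. The plan is to reduce the statement to the single-generator case $\h_J$ and then add up using subdifferentials. The balanced central representative $\hp_\bullet$ of \cref{lem:balanced-central} is chosen so that its central piecewise-linear function $\hp(x)$ on $\Delta(k,n)$ is the corresponding combination $\sum_J c_J\,\hat\h_J(x)$ of central representatives of the planar basis elements. Here each $\hat\h_J$ is the piecewise-linear extension of $I\mapsto \frac1n d(e_J,e_I)$, normalized so that the lineality contributions vanish. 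By \cref{prop:bounded-subdiff}, $B(\hp_\bullet)=\bigcup_{x\in \operatorname{int}\Delta(k,n)}\widetilde\partial\hp(x)$. So it suffices to show that every subgradient of $\hp$ at an interior point lies in $w\cdot\Delta_{\rm std}$, the $w$-fold dilate of the standard alcoved simplex of \cref{def:std-simplex}.

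The first step is the weight-one case. For $J$ noncyclic, $\hat\h_J$ is the roof function of the blade. Its linear pieces on the interior of $\Delta(k,n)$ have gradients that, modulo $\one$, are exactly the vertices of the standard alcoved simplex after translation. Concretely, the directed-distance function is a sum of cyclic "staircase" terms $\max$ over $e_i-e_{i+1}$ steps, and its gradients are the $n$ vectors $\frac1n(\,\cdot\,)$ cyclic shifts. This should be checked directly from \cref{def:planar-basis}, using the fact that $d(e_J,x)$ is the tropical distance whose slopes are the cyclic partial-sum vectors. The conclusion of this step is that $\widetilde\partial\hat\h_J(x)\subseteq\Delta_{\rm std}$ for all interior $x$.

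The second step handles general $\tropP_\bullet$, and this is where I expect the main obstacle: the coefficients $c_J$ can be negative (e.g.\ $-\h_{135}$ in the $(3,6)$ example), so the subdifferential of a sum is not a Minkowski sum of subdifferentials. My first attempt would use \cref{thm:noncrossingmain} and \cref{thm:f-pk-equals-nc}. Set $\vn=\Psi(\tropP_\bullet)=\sum_j\mu_j\vn_{K_j}$, a nonnegative combination on a noncrossing face, with $\sum\mu_j=w$. Because $\rho$ is linear on each cone $C_\cK$ (both sides being fans, with $\rho$ piecewise linear compatible with the unimodular fan of \cref{thm:NCfan}), we get $\tropP_\bullet\equiv\sum_j\mu_j\,\rho(\vn_{K_j})=\sum_j\mu_j\h_{K_j}$. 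We need to verify that this holds as a genuine equality of tropical Plücker vectors, with $\rho$ linear on cones and $\rho(\vn_K)=\h_K$ by \cref{thm:duality}. Hmm — the negative coefficients in the example show that $\rho$ is \emph{not} linear across such cones. So I would work instead with height functions: the regular subdivision induced by $\tropP_\bullet$ refines a common coarsening, and the central roof function satisfies $\hp\le\sum_j\mu_j\hat\h_{K_j}$ pointwise. Both agree on vertices modulo lineality, so both are concave roofs with the same boundary data. The fallback argument is to bound subgradients via boundary values. A subgradient $v$ at an interior point of a concave function on $\Delta(k,n)$ satisfies $\langle v,e_i-e_{i+1}\rangle$-type inequalities controlled by the differences of $\hp$ along the $n$ cyclic directions. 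Those differences telescope to the tropical cross-ratio sums $\sum_J c_J(\cdots)$, whose extremal value equals $w$ by the centrality/balancing normalization.

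Finally, I would combine the pieces. Each alcoved-simplex facet inequality $\langle v,\,e_{i}+\dots+e_{j}\rangle\le w\cdot(\text{const})$ is verified by evaluating $\hp$ along the corresponding cyclic segment of the hypersimplex. The weight-one computation is then used as the model case, and linearity of $\wt_\PK$ together with $\wt_\PK=\wt_\NC$ gives the constant $w$.
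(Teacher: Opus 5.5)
Your proposal has a genuine gap in the second step, which is where the theorem is actually proved. You are right to abandon the reduction to the generators $\h_J$: the coefficients $u^t_J(\tropP_\bullet)$ can be negative, and $\rho$ is not linear on noncrossing cones. However, your fallback never states, let alone proves, the inequality that bounds the subgradients. The pointwise comparison $\hp\le\sum_j\mu_j\hat\h_{K_j}$ is not justified, and you do not explain how it would control subgradients. What is needed is the local bound
\[
x_{j+k}-x_{j+k+1}\;\le\;\hp_{\mathrm{cyc}(j)}-\hp_{\mathrm{gap}(j)}\qquad\text{for all } \x\in B(\hp_\bullet),\ j=0,\dots,n-1,
\]
equivalently $\nu_{\mathrm{cyc}(j)}\ge\nu_{\mathrm{gap}(j)}$ for $\nu_\bullet=\hp^{\x}_\bullet$. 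This is not a convexity statement. Being a subgradient at an interior point only says, by \cref{prop:bounded-subdiff} and \cref{prop:Spe}, that $\nu_\bullet$ attains its minimum on a loopless and coloopless positroid. That bounds $\nu_{\mathrm{cyc}(j)}$ below by the minimum, but says nothing about $\nu_{\mathrm{gap}(j)}$, since $\mathrm{gap}(j)$ need not be a basis.

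The paper proves this bound in \cref{lem:cyc-gap-shifted-nonneg}, and positivity is used essentially. Take $j=0$. The Grassmann necklace element $I_{k+1}$ contains $k+1$ (no loops) and omits $k$ (no coloops). So with $D=I_{k+1}\setminus\{k+1\}$ one gets $\nu_{D\cup\{k\}}\ge\nu_{D\cup\{k+1\}}$. Then exchange $\{1,\dots,k-1\}$ into $D$ one element at a time. At each step, the positive three-term relation \eqref{eq:posTropPlucker} applied to $x_i<k<k+1<y_i$ shows that $\nu_{\mathrm{cyc}(0)}-\nu_{\mathrm{gap}(0)}$ dominates $\nu_{D\cup\{k\}}-\nu_{D\cup\{k+1\}}\ge 0$. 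Nothing in your proposal plays this role.

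Once you have the local bound, the rest is what you gesture at, but it needs to be made precise. First, the cyclic sum $\sum_j(\tropP_{\mathrm{cyc}(j)}-\tropP_{\mathrm{gap}(j)})$ is lineality-invariant and equals $H=\sum_{J\in\bncyc}u^t_J=\wt_\PK(\tropP_\bullet)$ by \cref{prop:bridge-identity}. It is nonnegative by \cref{thm:f-pk-equals-nc}. Second, the balanced representative is obtained from the central one by a lineality shift (a translation of $B$) that makes every term equal to $\wt_\PK/n\ge 0$. It is not simply $\sum_J c_J\hat\h_J$, because the central representative is generally unbalanced. For example, for $J=\{2,3,6\}$ (blocks $(123,456)$, decoration $(2,1)$), the six differences for $\eta^J_\bullet$ are $\tfrac13,0,0,\tfrac23,0,0$. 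Third, telescoping $x_u-x_v=\sum_{m=u}^{v-1}(x_m-x_{m+1})$ with nonnegative terms gives $x_u-x_v\le\wt_\PK(\tropP_\bullet)$.

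So your weight-one step is unnecessary, and as written it is also inaccurate. The function $\eta^J$ has only $d$ linear pieces on $\Delta(k,n)$, with gradients $-\tfrac1k W_a$; these are not $n$ cyclic shifts forming the vertices of $\Delta_{\mathrm{std}}$. In addition, the inequalities $\langle v,e_i+\dots+e_j\rangle\le w\cdot(\text{const})$ you propose to check are not well defined on $\R^n/\one$. They are also not the defining inequalities $x_u-x_v\le 1$ of \cref{def:std-simplex}.
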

\cref{thm:global_bounding_simplex1} is made more precise in \cref{thm:global_bounding_simplex}.

	\subsection{Associahedron case}		
	Let $k = 2$.  The pairs $ij \in \binom{[n]}{2}$ can be identified with the sides and diagonals of an $n$-gon.  A pair $ij$ is cyclic if it corresponds to a side and non-cyclic if it corresponds to a diagonal.  The pairs $ab$ and $cd$ are weakly separated if and only if they are noncrossing if and only if the corresponding sides/diagonals do not intersect in the interior of the $n$-gon.  Thus the noncrossing complex $\NC_{2,n}$ can be identified with the simplicial complex of subdivisions of an $n$-gon.  This is exactly the dual of the face poset of the associahedron polytope.  A noncrossing tableau in this case is a multiset of noncrossing diagonals of the $n$-gon.
	
	The set $\{\h_{ij} \mid (ij) \text{ diagonal }\}$ of planar basis elements is exactly the set of rays of $\Trop_{>0} X(2,n)$, and the map of \cref{thm:noncrossingmain} identifies the fan structure of $\Trop_{>0} X(2,n)$ with the noncrossing fan $F_{\NC_{2,n}}$, and is not just a bijection.  Modulo lineality, we have
	\begin{equation}\label{eq:hij}
	\h_{ij} \equiv \sum_{i < a < b \leq j} e^{ab} \equiv \sum_{j < a < b \leq i} e^{ab}.
	\end{equation}
	For example, 
	$$
	\h_{26} \equiv e^{34}+e^{35}+e^{36}+e^{45}+e^{46}+e^{56} \equiv e^{12}.
	$$
	The planar cross-ratios 
	$$
	u_{ij} = \frac{p_{i+1,j} p_{i,j+1}}{p_{ij} p_{i+1,j+1}}
	$$
	are (usual) cross ratios on $M_{0,n}$, called the \emph{dihedral coordinates}; see for example \cite{AHL,LamMod}.  It is straightforward to check using \eqref{eq:hij} that we have $u_{ij}^t(\h_{kl}) = \delta_{ij,kl}$, for two diagonals $ij$ and $kl$.  The result \cref{thm:f-pk-equals-nc} is trivial since the planar basis expansion and the noncrossing fan expansion coincide.

	Every point $\tropP_\bullet \in \Trop_{>0} X(2,n)$ has a \emph{nonnegative} expansion in the planar basis $\{\h_{ij} \mid (ij) \text{ diagonal }\}$.  In this case, the weight of $\tropP_\bullet$ is simply the sum of the coefficients in this expansion.  Suppose that $\tropP_\bullet$ corresponds to the multiset of pairwise noncrossing diagonals $\{(a_1,b_1),(a_2,b_2),\ldots,(a_r,b_r)\}$.  Then the bounded complex $B(\tropP_\bullet)$ is a tree $T = T(\tropP_\bullet)$ with $r$ edges, embedded into $\R^n/\one$.  The set $[n]$ labels the leaves of the tree $T$, and an edge $e$ corresponds to a diagonal $(a,b)$ if it separates labels $[a+1,b]$ from $[b+1,a]$.  In this case, the edge $e$ is a line segment that is a translate of $[0, e_{[a+1,b]}]$.  If a diagonal $(a,b)$ appears $\ell$ times, then $T$ contains a path of length $\ell$ where the intermediate vertices have degree two in $T$, and are unlabeled. 
	
	Using this model, \cref{thm:global_bounding_simplex1} can be checked directly.  The weight $\wt_\PK(\tropP_\bullet)$ is equal to $r$, and the distance between any two vertices on the tree is bounded above by $r$.  It follows that the tree $T$ can be translated to lie inside the $r$-th dilate of the fundamental alcoved simplex.

	\section{Planar cross-ratios and planar basis}\label{sec: cross-ratios}
	In this section, we collect some of the basic properties of the planar cross-ratios $u_J$ of Definition~\ref{def:u-variable} and the planar basis $\h_J$ of Definition~\ref{def:planar-basis}. 
	
	\subsection{Planar basis}
	The lineality space is given by
	\begin{equation}\label{eq:lineality}
	\Lkn := \sp\Bigl(\sum_{I \ni i} e^I : i = 1, 2, \ldots, n\Bigr) \subset \R^{\binom{[n]}{k}},
	\end{equation}
	where $\{e^I : I \in \binom{[n]}{k}\}$ is the standard basis of $\R^{\binom{[n]}{k}}$. 	
	
		We may view the tropical Pl\"ucker function $p_I^t$ as the linear function on $\R^{\binom{[n]}{k}}$ that takes the coefficient of $e^I$.  The tropical function $u_J^t$ is then also a linear function 
	\eqref{eq:tropu}, obtained by the standard tropicalization substitution $(+, \times, \div) \mapsto (\min, +, -)$.  The following fundamental results were established in \cite{E20,E22,E25}.   See also \cite{KLZ} for recent work. 
	
	\begin{theorem}\label{thm:planarbasis}\
		\begin{enumerate}
			\item
			The elements $\{\h_J \mid J \in \binom{[n]}{k}\}$ form a basis of $\R^{\binom{[n]}{k}}$.  
			\item
			The elements $\{\h_J \mid J \text{ cyclic }\}$ form a basis of $L_{k,n}$.
			\item
			The elements $\{\h_J \mid J \in \bncyc\}$ form a basis of $\R^{\binom{[n]}{k}}/L_{k,n}$.
			\item 
			For $J, K \in \bncyc$, we have the duality
			$
			u_J^t(\h_K) = \delta_{J,K}
			$.
		\end{enumerate}
	\end{theorem}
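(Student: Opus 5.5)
The plan is to prove (4) first by an explicit computation, and then get (1)--(3) from (4) by a dimension count.

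\textbf{Step 1: a closed formula for $d$.} Fix $K,I\in\binom{[n]}{k}$. A path from $e_K$ to $e_I$ using steps $e_i-e_{i+1}$ is encoded by the multiplicities $y_i\ge 0$ of the step $e_i-e_{i+1}$. The condition that the path ends at $e_I$ is $y_i-y_{i-1}=(e_I-e_K)_i$ for all $i$ (indices mod $n$). Set $S^{I}_i:=\sum_{m\le i}(e_I-e_K)_m$. Then the solutions are exactly $y_i=S^I_i+c$ for a constant $c$, and the minimal choice is $c=-\min_i S^I_i$. This gives
\[
d(e_K,e_I)=\sum_{i=1}^n S^I_i\;-\;n\min_i S^I_i .
\]
The first term is an affine function of $e_I$.

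\textbf{Step 2: evaluate $u_J^t(\h_K)$.} By Definition~\ref{def:u-variable}, $u_J^t$ is the signed sum of the coordinates $p_M^t$ over the cube $C_J$, with sign determined by $|M|$ (I identify $M\subseteq I_J$ with the $k$-subset $J-M+(M+1)$). This sum is an alternating sum over the vertices of a cube of dimension $|I_J|\ge 1$. Such a sum kills every affine function of $e_M$, so only the term $-n\min_i S^M_i$ survives, and the factor $n$ cancels the $\frac1n$ in $\h_K$.

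Shifting $m\in M$ to $m+1$ changes the partial sums only at index $m$, by $-1$. Hence $S^M_i=a_i-[i\in M]$ with $a:=S^{\emptyset}$, the partial sums of $e_J-e_K$.

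Let $m_0=\min_i a_i$ and $Z:=\operatorname{argmin}(a)\cap I_J$. Then
\[
\min_i S^M_i=m_0-[M\cap Z\neq\emptyset].
\]
The alternating sum over $M\subseteq I_J$ of $[M\cap Z\ne\emptyset]$ vanishes unless $Z=I_J$, in which case it equals $\pm1$. So $u_J^t(\h_K)=\pm1$ if every cyclic endpoint of $J$ is a global minimum of $a$, and $0$ otherwise. For $K=J$ we have $a\equiv 0$, which gives the value $\pm1$; the sign convention $(-1)^{|J\cap M|-k-1}$ should make it $+1$, and I would check this on $\h_{13}$ and $u_{13}$ for $k=2$.

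\textbf{Step 3: the converse (main combinatorial point).} Suppose every $j\in I_J$ is a global minimum of $a$, where $a_i-a_{i-1}=[i\in J]-[i\in K]$. Take consecutive endpoints $j,j'$ of $J$. Going cyclically from $j$ to $j'$, we first traverse a run of $J^c$, where the increments are $\le 0$, and then a run of $J$, where the increments are $\ge 0$.

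Along the $J^c$ run, $a$ starts at the minimum and can only decrease, so it stays constant; hence $K$ misses this run. Along the $J$ run, $a$ is nondecreasing and ends at the minimum, so it is constant; hence $K$ contains this run. Doing this for every pair of consecutive endpoints gives $K\supseteq J$, and since $|K|=|J|$, we get $K=J$. This proves (4).

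\textbf{Step 4: parts (1)--(3).} Each $u_J$ is torus invariant, so $u_J^t$ vanishes on $L_{k,n}$. Together with (4), this shows that the noncyclic $\h_K$ are linearly independent modulo $L_{k,n}$.

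For cyclic $J$, the formula of Step 1 should give $d(e_J,e_I)$ as an affine function of $e_I$; I expect $\min_i S^I_i$ to always be attained at the right endpoint of the interval $J$. Then $\h_J\in L_{k,n}$. I would check that these $n$ vectors are independent by pairing them with suitable test vectors; an alternative is to use the $u_J^t$ for cyclic $J$, which are no longer torus invariant, as a dual family. Since $\dim L_{k,n}=n$ and $n+\left(\binom nk-n\right)=\binom nk$, statements (1), (2) and (3) follow.

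\textbf{Expected obstacles.} The real risks are bookkeeping. First, fixing the sign and orientation conventions in Steps 1 and 2 consistently with Definition~\ref{def:u-variable}. Second, the independence of the cyclic $\h_J$ inside $L_{k,n}$.
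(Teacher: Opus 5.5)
\textbf{Gap: parts (1) and (2).} You never prove that the $n$ vectors $\h_J$ with $J$ cyclic are linearly independent, and the fallback you propose cannot work. For $J=\mathrm{cyc}(j)$ the cube is one-dimensional, so $u^t_J(\tropP)=\tropP_{\mathrm{gap}(j)}-\tropP_{\mathrm{cyc}(j)}$. Your own distance formula then gives $u^t_J(\h_K)=\delta_{J,K}-\tfrac1n$ for cyclic $J,K$, which is a singular matrix. This is forced: $\sum_J u^t_J=0$ by Proposition~\ref{prop:u-product}, and the noncyclic $u^t_J$ vanish on $\Lkn$, so the cyclic ones sum to zero on $\Lkn$.

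\emph{Fix.} Your guess is right that for cyclic $J$ with right endpoint $r$ the minimum of $S^I$ is attained at $i=r$. For $J=[1,k]$ this reads $S^I_i=|I\cap[1,i]|-\min(i,k)\ge S^I_k$, and the argmin is rotation invariant. Hence $d(e_J,e_I)=\langle w^{(J)},e_I\rangle$ for a unique $w^{(J)}\in\R^n$ with
\begin{itemize}
\item $w^{(J)}_{m+1}-w^{(J)}_m=-1+n[m=r]$ (indices cyclic), and
\item $\sum_m w^{(J)}_m=n(n-k)/2$.
\end{itemize}
Thus $n\,\h_J=\sum_m w^{(J)}_m v_m$ with $v_m=\sum_{I\ni m}e^I$. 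The $v_m$ are independent for $1\le k<n$, so a relation $\sum_J c_J\h_J=0$ gives $\sum_J c_J w^{(J)}=0$. Taking cyclic differences gives $n c_J=\sum_{J'}c_{J'}$ for every $J$, so all $c_J$ equal some $c$. Each coordinate of $c\sum_J w^{(J)}$ is then $c\,n(n-k)/2$, which forces $c=0$. With this, (1) and (2) follow from your dimension count.

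\textbf{The rest is sound.} Your proofs of (4) and (3) are correct, and they take a different route from the paper. The paper does not prove this statement; it imports it from \cite{E20,E22,E25}. The closest argument in the paper is the proof of Theorem~\ref{thm:h=corank}. It runs the same mixed-difference computation on the prefix-deficit function $\corank_J$, but must invoke (4) to conclude $\h_J\equiv\corank_J$. Your Step~1 shows directly that $\h_K$ is congruent to $I\mapsto\max_i\bigl(|K\cap[1,i]|-|I\cap[1,i]|\bigr)$ modulo $\Lkn$. So Steps~2--3 establish (4) from scratch and essentially recover Theorem~\ref{thm:h=corank} without circularity. Your Step~3 is also cleaner than the paper's ``tightly packed'' argument.

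\textbf{Small repairs.}
\begin{itemize}
\item The alternating cube sum kills affine functions only in dimension $\ge 2$. This holds because $|I_J|\ge 2$ for noncyclic $J$.
\item For $m=n$ the shift changes the partial sums by $\one-e_n$, not by $-e_n$. This is harmless, since $d$ depends on $S^I$ only up to an additive constant; alternatively, rotate so that $n\notin J$.
\item The sign is not in doubt: your computation gives $\sum_M(-1)^{|M|}\min_i S^M_i=[Z=I_J]$ exactly.
\item The upper bound in Step~1 needs the minimal flow $y$ to be realizable by moves through vertices. To see this, cut the cycle at an index with $y_i=0$ and move tokens leftward one at a time.
\end{itemize}
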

	
	We also recall the following result from \cite{E22}, which states that the planar basis $\{\h_J \mid J \in \bncyc\}$ is among the rays of the positive tropical Grassmannian.
	\begin{thm}\label{thm:weak-separation-blade-arrangement}
		For each $J \in \bncyc$, the image of the one-dimensional cone $\R_{\geq 0} \h_J$ is a ray of $\Trop_{>0} X(k,n)$.  Given 
		$I,J \in \binom{[n]}{k}$, then $\mathfrak{h}_I + \mathfrak{h}_J \in \Trop_{>0} \Gr(k,n)$ 
		if and only if the pair $(I,J)$ is 
		weakly separated.  When $(I,J)$ is weakly separated, no positive linear combination $\alpha \h_I + \beta \h_J$ is a ray of $\Trop_{>0} \Gr(k,n)$.
	\end{thm}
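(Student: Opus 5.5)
The plan is to make $\h_J$ explicit as a height function on $\Delta(k,n)$ and read off its regular subdivision. First I would find a closed formula for the directed distance. Write $S_m(I) := \sum_{i \le m} (e_J - e_I)_i$ for the partial sums, $m = 1, \ldots, n$, so that $S_n = 0$. Moving from $e_J$ to $e_I$ by steps $e_i - e_{i+1}$ (indices mod $n$) is a circulation problem on the $n$-cycle, and the minimal number of steps should be
$$d(e_J,e_I) = \sum_{m=1}^n \bigl(S_m(I) - \min_{1\le m'\le n} S_{m'}(I)\bigr) = \sum_m S_m(I) - n \min_{m'} S_{m'}(I).$$
The term $\sum_m S_m(I)$ is affine in $e_I$, so it lies in $\Lkn$ plus constants. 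This gives $\h_J \equiv -\min_{m} S_m(I)$ modulo lineality, with $S_m(I) = |J\cap[1,m]| - |I\cap[1,m]|$.

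This exhibits $\h_J$ as the restriction to the vertices of $\Delta(k,n)$ of a tropical polynomial whose domains of linearity are cut out by hyperplanes $x_1+\cdots+x_m = \text{const}$. Its regular subdivision is therefore the blade subdivision of $\Delta(k,n)$ at the vertex $e_J$. Every cell of this subdivision is defined by inequalities on cyclic interval sums $x_{[a,b]}$, so every cell is a positroid polytope. By the characterization of $\Trop_{>0}\Gr(k,n)$ through positroidal subdivisions \cite{SW21,ALS}, or directly via the three-term relations \eqref{eq:posTropPlucker} on each octahedron $Sab,Sac,Sad,Sbc,Sbd,Scd$, this shows $\h_J \in \Trop_{>0}\Gr(k,n)$.

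To see that $\R_{\ge0}\h_J$ is a ray, take any $\tropP_\bullet$ in the relatively open cone of $\Trop_{>0}\Gr(k,n)$ containing $\h_J$. Such a $\tropP_\bullet$ induces the same subdivision, so it is affine on each maximal cell. Modulo $\Lkn$, it is therefore determined by the bending amounts along the walls. Around each codimension-two face, and in particular around each octahedron meeting two walls, the three-term relation forces adjacent bends to agree. The walls of a blade subdivision form a connected arrangement through $e_J$, so all bends are equal and the cone is one-dimensional modulo $\Lkn$. For noncyclic $J$ the subdivision is nontrivial, so the ray is nonzero. (Alternatively one can invoke the duality in \cref{thm:planarbasis}(4) to pin down the coefficients.)

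For the pair statement, $\h_I+\h_J$ induces the common refinement of the two blade subdivisions. It lies in $\Trop_{>0}\Gr(k,n)$ if and only if every octahedron satisfies \eqref{eq:posTropPlucker}. Each $\h$ is a min of partial-sum functionals, and the relation fails exactly when, on some octahedron, $\h_I$ and $\h_J$ bend in opposite diagonal directions: one breaks $Sac+Sbd$ against $Sab+Scd$, the other against $Sad+Sbc$. I would translate this into the existence of cyclic intervals witnessing alternation of $e_I - e_J$ in the pattern $+,-,+,-$, that is, more than two cyclic sign changes. This is exactly the failure of weak separation.

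This case analysis is the main obstacle. I would first reduce to $|I \setminus J| \le 2$ using the interval-sum description, and then handle the general case by restricting to the relevant $S$. Finally, when $(I,J)$ is weakly separated, $\alpha\h_I+\beta\h_J$ with $\alpha,\beta>0$ lies in the relative interior of the cone spanned by the two distinct rays $\R_{\ge0}\h_I$ and $\R_{\ge0}\h_J$. These rays are distinct modulo $\Lkn$ by \cref{thm:planarbasis}(3), so that cone is two-dimensional and the combination cannot itself be a ray.
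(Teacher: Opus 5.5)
The decisive step, the equivalence with weak separation, is not actually proved, and this is the real content of the statement. You correctly note that once each $\h_J$ is known to be positive, $\h_I+\h_J$ violates \eqref{eq:posTropPlucker} exactly when some octahedron $(S;a,b,c,d)$ is split in opposite directions by $\h_I$ and $\h_J$. But the translation of this into a $+,-,+,-$ pattern of $e_I-e_J$ is only announced, and the reduction to $|I\setminus J|\le 2$ is unexplained. Both directions need an argument. ``Weakly separated $\Rightarrow$ no octahedron is split oppositely'' must control every $S$. The converse needs a careful choice of $S$: for $I=124$, $J=356$, $(a,b,c,d)=(2,3,4,5)$, the choice $S=\{6\}$ gives opposite splits, while for $S=\{1\}$ the restriction of $\h_J$ is flat.

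The paper does not prove this either. It imports from \cite{E22} the explicit blade subdivisions $\Delta(\h_J)$ and the theorem that $\Delta(\h_I)$ and $\Delta(\h_J)$ have a common positroidal refinement iff $(I,J)$ is weakly separated.

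Your positivity argument for a single $\h_J$ shows why this step cannot be bypassed. The inference ``every cell is cut out by cyclic-interval inequalities, hence is a positroid polytope'' proves too much. The cells $C\cap C'$ of the common refinement are also cut out by such inequalities, yet $\{x\in\Delta(2,4): x_1+x_2\le 1,\ x_2+x_3\le 1\}=\conv(e_{13},e_{14},e_{24},e_{34})$ is such a cell for $\h_{13}+\h_{24}$ and is not a matroid polytope. For a single $\h_J$ the correct reason is different. Each maximal cell is the Schubert matroid polytope $\{x: x_{[m+1,m+j]}\ge |J\cap[m+1,m+j]| \text{ for all } j\}$ attached to a cyclic endpoint $m$ of $J$.

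A workable route for the pair statement uses Corollaries~\ref{cor:multi-del} and~\ref{cor:multi-deletion}. The restriction of $\h_J$ to the face $\{x_s=1\ (s\in S),\ x_t=0\ (t\notin S\cup\{a,b,c,d\})\}\cong\Delta(2,4)$ is, modulo lineality, the $(2,4)$ planar basis element indexed by some $J^*\subset\{a,b,c,d\}$. So the relation fails there iff $\{I^*,J^*\}=\{ac,bd\}$. What remains is to show that such $(S;a,b,c,d)$ exists iff $I$ and $J$ are not weakly separated.

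Three further steps need repair.

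\emph{The last claim.} Lying in the relative interior of the two-dimensional cone $\R_{\ge0}\h_I+\R_{\ge0}\h_J$ does not by itself stop $\alpha\h_I+\beta\h_J$ from spanning a ray. A two-dimensional region of the support can be subdivided by an intermediate ray. What you need is that $\h_I$ and $\h_J$ lie in the closure of the cone containing $\alpha\h_I+\beta\h_J$. This holds because the subdivision induced by $\alpha\h_I+\beta\h_J$ is the common refinement of $\Delta(\h_I)$ and $\Delta(\h_J)$: its cells are cut out by integral cyclic-interval inequalities, so their vertices are among the $e_K$. Both summands then induce coarsenings, which is precisely the paper's one-line argument.

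\emph{The ray argument.} Relations among bends come from codimension-two faces meeting the interior of $\Delta(k,n)$, not from the boundary vertex $e_J$. What works is this: with $(\r_J,\mathbf S_J)=((r_a),(S_a))$, all $\ell$ maximal cells contain the interior point with $x_i=r_a/|S_a|$ for $i\in S_a$. Near that point $\h_J$ is a generic tropical hyperplane in $x_{S_1},\dots,x_{S_\ell}$, so any three cells share an interior codimension-two face.

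\emph{The distance formula.} Your formula has the direction reversed. With your $S_m$, the correct formula is $d(e_J,e_I)=\sum_m\bigl(\max_{m'}S_{m'}(I)-S_m(I)\bigr)$. The paper's example has $d(e_{13},e_{12})=1$, while your formula gives $3$. Hence $\h_J\equiv\max_m\bigl(|J\cap[1,m]|-|I\cap[1,m]|\bigr)$, consistent with Theorem~\ref{thm:h=corank}. Your $-\min_m S_m$ is instead the planar basis element for the reversed cyclic order; for $(k,n)=(2,4)$ and $J=13$ it gives $e^{12}\equiv\h_{24}$. By dihedral symmetry this does not change the qualitative conclusions, but the computations must be redone with the correct sign.
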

	\begin{proof}
	In \cite{E22} the positroid subdivisions $\Delta(\h_I)$ of $\Delta(k,n)$ induced by $\h_I$ are described explicitly.  It is shown that the subdivisions $\Delta(\h_I)$ and $\Delta(\h_J)$ can be refined to a common positroid subdivision exactly when $I$ and $J$ are weakly separated.  Any positive linear combination $\alpha \h_I + \beta \h_J$ induces this common positroid subdivision, and cannot be a ray (since it refines another subdivision).
	\end{proof}
	
	\subsection{Planar cross-ratio}
	\begin{proposition}\label{prop:u-torus-invariance}
		The $u$-variable $u_J$ is well-defined on the configuration space 
		$X(k,n) $ if and only if $J$ is not a cyclic interval. 
	\end{proposition}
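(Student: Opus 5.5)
The statement is a torus-weight computation: $u_J$ descends to $X(k,n)=\Gr^\circ(k,n)/T$ exactly when it is homogeneous of degree zero under every $t\in T$. Acting by $t=\mathrm{diag}(t_1,\dots,t_n)$ rescales $p_M$ by $\prod_{m\in M}t_m$. So $u_J$ is torus-invariant if and only if the signed sum
$$
\omega(J):=\sum_{e_M\in C_J}(-1)^{|J\cap M|-k-1}\,e_M\in\Z^n
$$
vanishes. We also need the overall rescaling of Pl\"ucker coordinates to act trivially, which amounts to the signed count $\sum_{M}(-1)^{|J\cap M|-k-1}$ being zero; this follows once $\omega(J)=0$, by pairing with $\one$. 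The plan is to compute $\omega(J)$ explicitly using the product structure of the cubical array.

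Write $I_J=\{m_1,\dots,m_r\}$. The moves $e_{m}\mapsto e_{m+1}$ for distinct cyclic endpoints $m\in I_J$ act on disjoint coordinates, since $m\in J$, $m+1\notin J$, and different endpoints give different pairs. Hence for $M'\subseteq I_J$ the corresponding element is $e_J+\sum_{m\in M'}(e_{m+1}-e_m)$, and $|J\cap M|=k-|M'|$. The sign is therefore $(-1)^{|M'|+1}$, up to a global sign. We then get
$$
\omega(J)=-\sum_{M'\subseteq I_J}(-1)^{|M'|}\Bigl(e_J+\sum_{m\in M'}(e_{m+1}-e_m)\Bigr).
$$
By the standard inclusion–exclusion identity $\sum_{M'\subseteq I}(-1)^{|M'|}=0$ for $I\neq\emptyset$, the constant term $e_J$ cancels whenever $r\ge1$. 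The linear term gives $\sum_{m\in I}(e_{m+1}-e_m)\sum_{M'\ni m}(-1)^{|M'|}$, and the inner sum equals $-\sum_{N\subseteq I\setminus m}(-1)^{|N|}$, which vanishes iff $r\ge2$. In other words, $\omega(J)$ is (up to sign) a second finite difference of the cube, and it vanishes exactly when the cube has dimension at least $2$.

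It remains to translate $r$ into the cyclic-interval condition. Since $1\le k<n$, $J$ is a proper nonempty subset of $[n]$, so $r\ge1$. Moreover, $r=1$ holds exactly when $J$ consists of a single cyclic block, i.e. $J$ is a cyclic interval. Putting the pieces together:
\begin{itemize}
\item if $J$ is non-cyclic, then $r\ge2$, so $\omega(J)=0$ and $u_J$ is invariant;
\item if $J=[a,a+k-1]$ is cyclic, then $u_J=p_{J'}/p_J$ (up to inversion) with $J'=J-e_{a+k-1}+e_{a+k}$, which has nonzero weight $e_{a+k}-e_{a+k-1}$.
\end{itemize}
So $u_J$ is not torus-invariant in the cyclic case and does not descend to $X(k,n)$.

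The only delicate point is bookkeeping, not substance. One must check that the exponent convention $(-1)^{|J\cap M|-k-1}$ really alternates with $|M'|$; this holds because each move removes exactly one element of $J$. One must also verify that the non-invariance in the cyclic case means $u_J$ genuinely fails to be a function on the quotient. For the latter, note that $T$ acts on a generic point with the character $t_{a+k}/t_{a+k-1}$, which is nontrivial on $\Gr^\circ(k,n)$.
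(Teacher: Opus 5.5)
Your proof is correct and follows the same route as the paper: both reduce well-definedness on $X(k,n)$ to invariance under the torus action $p_I \mapsto (\prod_{i\in I} z_i)\,p_I$. The paper only asserts that the invariance is ``verified directly.'' You carry out that check yourself: the endpoint moves act on disjoint coordinates, the sign is $(-1)^{|M'|+1}$, and inclusion--exclusion cancels the weight exactly when $|I_J|\ge 2$. You also give the explicit nontrivial character $t_{a+k}/t_{a+k-1}$ that proves the converse in the cyclic case.
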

	\begin{proof}
		The torus $T = (\C^*)^n$ acts on the Pl\"ucker coordinates $p_I$ by the formula
		$$
		(z_1,\ldots,z_n) \cdot p_I = (\prod_{i\in I} z_i) p_I
		$$
		for $(z_1,\ldots,z_n) \in T$.  A rational function $f$ on $\Gr(k,n)$ descends to $X(k,n)$ if and only if $f$ is torus-invariant. This is verified for $u_J$ with $J \in \bncyc$ directly.
	\end{proof}
	
	\begin{proposition}\label{prop:u-product}
		The $\binom{n}{k}$ $u$-variables satisfy the multiplicative relation:
		\[
		\prod_{J \in \binom{[n]}{k}} u_J = 1.
		\]
	\end{proposition}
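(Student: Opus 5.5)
We must prove $\prod_{J} u_J = 1$ over all $k$-subsets, including the cyclic ones. I would argue at the level of exponents. Write
\[
\prod_{J \in \binom{[n]}{k}} u_J = \prod_{M \in \binom{[n]}{k}} p_M^{c_M}, \qquad c_M := \sum_{J \,:\, M \in C_J} (-1)^{|J\cap M|-k-1}.
\]
Since the Pl\"ucker coordinates are algebraically independent as Laurent monomials (equivalently, their tropicalizations $p_M^t$ are the coordinate linear functions), the identity holds if and only if $c_M = 0$ for every $M$. So the plan is to fix $M$ and show that the signed count of pairs $(J,M')$ with $M' = M$ in the cubical array $C_J$ vanishes.

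Fix $M$. An element of $C_J$ is obtained from $J$ by choosing $S \subseteq I_J$ and moving each $m \in S$ to $m+1$. Since $m \in J$ and $m+1 \notin J$, these moves never collide, so $M = (J \setminus S) \cup (S+1)$ and $|J \cap M| = k - |S|$. The sign attached to $(J,S)$ is therefore $(-1)^{-|S|-1} = -(-1)^{|S|}$.

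Next I would invert this correspondence. Given $M$, the pair $(J,S)$ is recovered from $T := S+1 \subseteq M$ via $J = (M \setminus T) \cup (T-1)$. The condition on $T$ is exactly that each $t \in T$ satisfies $t-1 \notin M$, so that $t-1 \notin J$ after the move and the reverse step is valid. In other words, $T$ is any subset of
\[
I'_M := \{ t \in M : t-1 \notin M \},
\]
the set of cyclic left endpoints of $M$. Conversely, I must check that for every $T \subseteq I'_M$ the resulting $J$ has $S = T-1 \subseteq I_J$. This holds because $s = t-1 \in J$ while $t \notin J$: the element $t$ itself was moved away, and no other element moves onto $t$, since $t$ could only be reached from $t-1 \notin M$.

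With the bijection in hand,
\[
c_M = -\sum_{T \subseteq I'_M} (-1)^{|T|},
\]
and this vanishes as long as $I'_M \neq \emptyset$. That is always true: $M$ has at least one maximal cyclic block, because $M \neq [n]$ (we have $k < n$) and $M \neq \emptyset$. Hence every $c_M = 0$, and the product equals $1$.

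The main obstacle is the bijection step, specifically confirming that the backward moves are well defined and genuinely land in $C_J$. The delicate points are the cyclic wraparound at $n \to 1$ and making sure distinct choices of $T$ never collide. To handle these I would carry out the argument block by block on the cyclic intervals of $M$: each block contributes independently a binary choice of whether to shift its left endpoint, which makes the factorization of the alternating sum transparent.
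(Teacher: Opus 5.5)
Your proof is correct and follows the paper's approach. The paper only asserts that expanding $\prod_J u_J$ as a Laurent monomial in Pl\"ucker coordinates gives $1$; your bijection between pairs $(J,S)$ with $M\in C_J$ and subsets $T\subseteq I'_M$, yielding $c_M=-\sum_{T\subseteq I'_M}(-1)^{|T|}=0$, is exactly the cancellation the paper leaves implicit.

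Two harmless slips. First, the Pl\"ucker coordinates are not algebraically independent, but you only need the trivial direction ``all $c_M=0$ implies the product is $1$''. Second, when checking $t\notin J$, the backward move that could land on $t$ would come from $t+1$, not $t-1$; it is ruled out because $t+1\in I'_M$ would force $t\notin M$.
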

	\begin{proof}
		Follows from expanding $\prod_{J \in \binom{[n]}{k}} u_J$ as a Laurent monomial in Pl\"ucker coordinates.
	\end{proof}
	
	Recall that a cross-ratio is a torus-invariant Laurent monomial in Pl\"ucker coordinates.  
		
	\begin{proposition}\label{prop:u-basis}
		The set of planar cross-ratios $\{u_J : J \in \bncyc\}$ forms a 
		Laurent-monomial basis of cross-ratios on $X(k,n)$. That is, every cross-ratio on $X(k,n)$ can be written uniquely as 
		$\prod_{J \in \bncyc} u_J^{a_J}$ for some $a_J \in \Z$.
	\end{proposition}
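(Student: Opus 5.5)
The plan is to pass to exponent vectors and then use the duality of Theorem~\ref{thm:planarbasis}(4) between $\{u_J^t\}$ and $\{\h_J\}$.

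\textbf{Setting up the lattice.} A Laurent monomial $\prod_I p_I^{c_I}$ is determined by its exponent vector $c=(c_I)\in\Z^{\binom{[n]}{k}}$. Its tropicalization is the linear functional $\pi_\bullet\mapsto \sum_I c_I\pi_I$. Torus invariance means that $\sum_{I\ni i} c_I=0$ for every $i$. Equivalently, the functional vanishes on $L_{k,n}$, since $L_{k,n}$ is spanned by the vectors $\sum_{I\ni i}e^I$. So the group of cross-ratios is identified with the lattice
\[
\Lambda:=\{c\in\Z^{\binom{[n]}{k}} : c \text{ annihilates } L_{k,n}\}=\Z^{\binom{[n]}{k}}\cap L_{k,n}^{\perp}.
\]
This lattice has rank $\binom nk-n$, which equals $|\bncyc|$. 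By Proposition~\ref{prop:u-torus-invariance}, each $u_J$ with $J\in\bncyc$ gives an element $c^{(J)}\in\Lambda$, whose functional is $u_J^t$.

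\textbf{Linear independence and uniqueness.} By Theorem~\ref{thm:planarbasis}(3), the elements $\h_K$ for $K\in\bncyc$ form a basis of $\R^{\binom{[n]}{k}}/L_{k,n}$. By Theorem~\ref{thm:planarbasis}(4), $u_J^t(\h_K)=\delta_{J,K}$. Hence the functionals $u_J^t$ are the dual basis of $(\R^{\binom{[n]}{k}}/L_{k,n})^*=L_{k,n}^\perp$. In particular the $c^{(J)}$ are linearly independent over $\R$, which gives uniqueness of the exponents $a_J$.

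\textbf{Spanning over $\Z$.} Let $c\in\Lambda$. Over $\R$ we may write $c=\sum_J a_J c^{(J)}$, and pairing with $\h_K$ gives $a_K=\langle c,\h_K\rangle$. So it suffices to show that $\langle c,\h_K\rangle\in\Z$ for every $c\in\Lambda$ and $K\in\bncyc$.

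This is the main obstacle, because $\h_K$ has coefficients in $\frac1n\Z$. The plan is to replace $\h_K$ by an integral representative modulo $L_{k,n}$; this does not change the pairing, since $c$ annihilates $L_{k,n}$. For example, in the case $k=2$, equation~\eqref{eq:hij} gives $\h_{ij}\equiv\sum_{i<a<b\le j}e^{ab}$, which is integral.

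For general $k$, I would show directly that $n\h_K-d\cdot v\in nL_{k,n}+n\Z^{\binom{[n]}{k}}$ for a suitable $v\in L_{k,n}$. Concretely, I would compute $d(e_K,e_I)$ through the standard formula for the directed distance as a sum of cyclic partial-sum differences. The aim is to show that $d(e_K,e_I)\equiv \sum_{i\in I}\phi_K(i)\pmod n$ for some integer-valued function $\phi_K$ on $[n]$. Then $\frac1n\sum_I\bigl(d(e_K,e_I)-\sum_{i\in I}\phi_K(i)\bigr)e^I$ is integral and congruent to $\h_K$ modulo $L_{k,n}$.

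This congruence is plausible. The quantity $d(e_K,e_I)$ counts the moves $e_i\to e_{i+1}$ needed, and the total displacement $\sum_{i\in I}i-\sum_{i\in K}i$ is congruent to $d$ modulo $n$, because each step increases the index sum by $1$ except the wrap-around step $n\to1$, which changes it by $1-n$. So one can take $\phi_K(i)=i$, up to a constant depending only on $K$; that constant contributes a multiple of $\sum_I e^I\in L_{k,n}$, after rescaling by $k$.

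I would double-check that this rescaling does not break integrality. If it does, I would absorb the constant into $\phi_K$ on a single index instead. With this established, all $a_K$ are integers, and $\{u_J\}_{J\in\bncyc}$ is a $\Z$-basis of $\Lambda$.
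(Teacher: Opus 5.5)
Your proposal is correct. It has the same skeleton as the paper's proof: identify cross-ratios with the lattice $\Z^{\binom{[n]}{k}}\cap L_{k,n}^{\perp}$ of rank $\binom nk-n$, get independence from the duality $u_J^t(\h_K)=\delta_{J,K}$ of Theorem~\ref{thm:planarbasis}, then argue integrality. The difference is in the last step. The paper only invokes ``the integrality in Theorem~\ref{thm:planarbasis}(4)'' without saying what integrality is needed. You correctly isolate the actual requirement: each $\h_K$ must be congruent modulo $L_{k,n}$ to an integer vector, so that $a_K=\langle c,\h_K\rangle\in\Z$. You then prove this directly by reducing $d(e_K,e_I)$ modulo $n$. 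Within the paper, the same fact could be read off from Theorem~\ref{thm:h=corank}, since $\h_K\equiv\corank_{(\r_K,\mathbf{S}_K)}$ is integer-valued. Your distance-mod-$n$ argument is more elementary and self-contained, and does not depend on that corank computation.

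Two small corrections, neither of which affects validity.
\begin{enumerate}
\item With the paper's convention, a step $e_i-e_{i+1}$ moves an element from $i+1$ to $i$, so it \emph{decreases} the index sum by $1$ modulo $n$. Hence $d(e_K,e_I)\equiv\sum_{i\in K}i-\sum_{i\in I}i \pmod n$; you can check this against $\h_{13}$ with $n=4$. So the right choice is $\phi_K(i)=-i$, which is harmless.
\item Your worry about the constant is unnecessary. Write $s_K$ for that constant and $v_K$ for your integral vector. The difference $\h_K-v_K=\frac1n\sum_I\bigl(\sum_{i\in I}\phi_K(i)+s_K\bigr)e^I$ lies in the \emph{real} span $L_{k,n}$, which $c$ annihilates entirely. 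So nothing about the lineality correction needs to be integral. Your fallback of loading the constant onto a single index would in fact break the congruence for subsets $I$ not containing that index, but you never need it.
\end{enumerate}
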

	\begin{proof}
		The (multiplicative) group of Laurent monomials in Pl\"ucker coordinates is a free abelian group of rank $\binom{n}{k}$ with basis the Pl\"ucker coordinates.  The subgroup of torus-invariant Laurent monomials is free abelian of rank $\binom{n}{k}-n$.  That there are no multiplicative relations between the $\{u_J : J \in \bncyc\}$ follows from Theorem~\ref{thm:planarbasis}(4).  The integrality in Theorem~\ref{thm:planarbasis}(4) further implies the $\{u_J : J \in \bncyc\}$ form a basis of the group of torus-invariant Laurent monomials.
	\end{proof}
	
	For $\tropP_\bullet \in \mathbb{R}^{\binom{[n]}{k}}$, the 
	\emph{tropical $u$-variable} or \emph{tropical cross-ratio} is
	\begin{equation}\label{eq:tropu}
		u^t_J(\tropP_\bullet) = \sum_{M \in C_J} (-1)^{|J \cap M| - k - 1} \tropP_M,
	\end{equation}
	where $C_J$ is as defined in Definition~\ref{def:cubical-array}.  We will mainly evaluate the tropical cross-ratios
	on positive tropical Pl\"ucker vectors $\tropP_\bullet \in \Trop_{>0} \Gr(k,n)$.  By Theorem~\ref{thm:planarbasis}(4), for any $\tropP_\bullet \in \mathbb{R}^{\binom{[n]}{k}}$, we have
	\[
	\tropP_\bullet \equiv \sum_{J \in \bncyc} u^t_J(\tropP_\bullet) \, \h_J 
	\pmod{\Lkn}.
	\]
	
\subsection{Planar Basis is a Corank Function}\label{sec:corank}

In this section, we show that the planar basis can be viewed as the corank function of a particular positroid.  We recall the construction of decorated ordered set partitions from a $k$-element subset \cite{E22}.

\begin{defn}\label{def:DOSP}
	Let $J \in \binom{[n]}{k}$.  Arrange $[n]$ on a circle in the 
	standard cyclic order.  The elements of $J$ form $\ell$ maximal 
	runs of cyclically consecutive integers $I_1, \ldots, I_\ell$, 
	and each $I_a$ is preceded on the circle by a gap 
	$C_a \subset [n] \setminus J$ of cyclically consecutive integers.  
	Label the indices so that $1 \in C_1 \cup I_1$.  
	
	The \emph{decorated ordered set partition} associated to $J$ is 
	$(\r_J, \mathbf{S}_J) = ((r_1, \ldots, r_\ell), 
	(S_1, \ldots, S_\ell))$, where
	\[
	S_a := C_a \cup I_a, \qquad r_a := |I_a|.
	\]
	The blocks $(S_1, \ldots, S_\ell)$ partition $[n]$ into $\ell$ 
	cyclic intervals, and $\sum_{a=1}^\ell r_a = k$.
	
	In the terminology of \cite{E22}, $(\r_J, \mathbf{S}_J)$ \emph{is of type} $\Delta(k,n)$: it satisfies $1\le r_j\le \vert S_j\vert-1$ for all $J$. 
\end{defn}

\begin{example}\label{ex:non-corner}
	For $J = \{2, 5, 8\} \in \binom{[9]}{3}^\text{ncyc}$, we have $I_1 = \{2\}$, $I_2 = \{5\}$, 
	$I_3 = \{8\}$. The gaps are $C_1 = \{9, 1\}$, $C_2 = \{3, 4\}$, 
	$C_3 = \{6, 7\}$, giving sets $S_1 = \{9, 1, 2\}$, $S_2 = \{3, 4, 5\}$, 
	$S_3 = \{6, 7, 8\}$ with composition $\r = (1, 1, 1)$.
\end{example}

\begin{example}\label{ex:corner}
	For $J = \{2, 5, 6\} \in \binom{[6]}{3}^\text{ncyc}$, we have $I_1 = \{2\}$, 
	$I_2 = \{5, 6\}$. The gaps are $C_1 = \{1\}$, $C_2 = \{3, 4\}$, 
	giving sets $S_1 = \{1, 2\}$, $S_2 = \{3, 4, 5, 6\}$ with composition 
	$\r = (1, 2)$.
\end{example}

\begin{defn}\label{def:positroid-matroid}
	Let $(\r, \mathbf{S}) = ((r_1, \ldots, r_\ell), (S_1, \ldots, S_\ell))$ 
	be a decorated ordered set partition with $\sum_{a=1}^\ell r_a = k$ 
	and $\bigsqcup_{a=1}^\ell S_a = [n]$. We define the cyclic prefixes $T_a := S_1 \cup \cdots \cup S_a$ and partial capacities $\rho_a := r_1 + \cdots + r_a$. The matroid
	$M_{(\r, \mathbf{S})}$ on ground set $[n]$ has bases
	\[
	\B_{(\r, \mathbf{S})} = \left\{ B \in \binom{[n]}{k} : 
	|B \cap T_a| \geq \rho_a 
	\text{ for } a = 1, \ldots, \ell-1 \right\}.
	\]
\end{defn}
The matroid $M_{(\r, \mathbf{S})}$ is a \emph{positroid} since it is determined by rank conditions on cyclic intervals. By K\H{o}nig's theorem for transversal matroids, the corank of any subset $X \in \binom{[n]}{k}$ is the maximum capacity deficit across the cyclic prefixes:
\begin{equation}\label{eq:corank-max}
	\corank_{(\r, \mathbf{S})}(X) = \max_{1 \le a \le \ell-1} \max\bigl(0, \, \rho_a - |X \cap T_a|\bigr).
\end{equation}

In the following, we view the corank function as an element of $\R^{\binom{[n]}{k}}$.

\begin{thm}[Planar Basis as Corank Function]\label{thm:h=corank}
	Let $J \in \bncyc$ be a non-cyclic $k$-subset, and let $(\r, \mathbf{S}) = (\r_J, \mathbf{S}_J)$ be the decorated ordered set partition associated to $J$. Then
	\[
	\h_J \equiv \corank_{(\r, \mathbf{S})} \pmod{\Lkn}.
	\]
\end{thm}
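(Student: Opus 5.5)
The plan is to prove the congruence directly from \cref{def:planar-basis}, by finding a closed formula for the directed distance $d(e_J,e_I)$ and then discarding terms that lie in $\Lkn$. A term of the form $c_0+\sum_{i\in I}x_i$, viewed as a function of $I$, is a vector in $\Lkn$: since $|I|=k$, the constant $c_0$ equals $\frac{c_0}{k}\sum_i\sum_{I\ni i}e^I$. So it suffices to show
\[
\tfrac1n\, d(e_J,e_I)=\corank_{(\r,\mathbf{S})}(I)+c_0+\sum_{i\in I}x_i
\]
for some constant $c_0$ and vector $\x\in\R^n$ independent of $I$.

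\emph{Step 1: a flux formula for $d$.} I view $e_J$ and $e_I$ as configurations of $k$ particles on the cycle $\Z/n$. A step $e_i-e_{i+1}$ moves one particle across the edge between $i$ and $i+1$, in a fixed direction. A path from $e_J$ to $e_I$ is recorded by the flux $f_m\ge 0$ across each edge $(m,m+1)$. Conservation of particles forces
\[
f_m=c+|J\cap[1,m]|-|I\cap[1,m]|
\]
for a single constant $c$ (up to the orientation convention, which fixes the sign). Conversely, any nonnegative flux of this form is realized by a path with $\sum_m f_m$ steps; I would argue this by repeatedly moving a particle across an edge of positive flux, which terminates by induction on the total flux.

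The minimum is therefore attained at the smallest admissible $c$, namely $c=\max_{0\le m\le n}\bigl(|I\cap[1,m]|-|J\cap[1,m]|\bigr)$ or its negative, depending on orientation. With the convention where steps $e_i-e_{i+1}$ move a particle backward, this gives
\[
d(e_J,e_I)=n\max_{m}\bigl(|J\cap P_m|-|I\cap P_m|\bigr)+\sum_{m}\bigl(|I\cap P_m|-|J\cap P_m|\bigr),
\]
where $P_m=[1,m]$. The second sum is $\sum_{i\in I}(n-i+1)$ minus a constant, so it is linear in $e_I$ and lies in $\Lkn$. Pinning down this sign convention against the example $\h_{13}$ is the one place where care is needed, and I expect Step 1 as a whole to be the main technical point.

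\emph{Step 2: changing the base point.} The prefixes $T_a$ in \cref{def:positroid-matroid} begin at the first element $s$ of $S_1=C_1\cup I_1$, which need not be $1$. Replacing the prefixes $[1,m]$ by the cyclic intervals $[s,m]$ shifts every term inside the max by the same amount $\pm\bigl(|J\cap[1,s-1]|-|I\cap[1,s-1]|\bigr)$. This shift is linear in $e_I$, so it is absorbed into $\Lkn$. After this change, $\tfrac1n h_J\cdot$ reduces modulo $\Lkn$ to $\max_m g(m)$, where $g(m)=|J\cap P'_m|-|I\cap P'_m|$ and $P'_m$ ranges over the cyclic prefixes starting at $s$, including $\emptyset$ and $[n]$, on both of which $g=0$.

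\emph{Step 3: reduction to the block endpoints.} Inside a block $S_a=C_a\cup I_a$, the function $g$ behaves as follows. Along the gap $C_a$, $|J\cap P'_m|$ is constant while $|I\cap P'_m|$ does not decrease, so $g$ is nonincreasing. Along the run $I_a$, $|J\cap P'_m|$ increases by exactly $1$ per step while $|I\cap P'_m|$ increases by at most $1$, so $g$ is nondecreasing. Hence the maximum of $g$ over each block is attained at one of its endpoints, which are the prefix boundaries $T_{a-1}$ and $T_a$. Since $|J\cap T_a|=\rho_a$, we get
\[
\max_m g(m)=\max\Bigl(0,\ \max_{1\le a\le \ell-1}\bigl(\rho_a-|I\cap T_a|\bigr)\Bigr),
\]
which is exactly $\corank_{(\r,\mathbf{S})}(I)$ by \eqref{eq:corank-max}. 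Combining Steps 1–3 gives $\h_J\equiv\corank_{(\r,\mathbf{S})}\pmod{\Lkn}$.

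As an independent check, one could instead verify that $u_K^t(\corank_{(\r,\mathbf{S})})=\delta_{J,K}$ for all $K\in\bncyc$ and conclude by \cref{thm:planarbasis}(3),(4). I would use the direct argument above because it avoids evaluating the cubical arrays $C_K$ case by case.
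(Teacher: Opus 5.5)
Your proof is correct, and it takes a genuinely different route from the paper. The paper never computes $d(e_J,e_I)$. It uses Theorem~\ref{thm:planarbasis}(3),(4) to reduce the claim to showing $u^t_K(\corank_{(\r,\mathbf{S})})=\delta_{K,J}$ for all $K\in\bncyc$. It then evaluates each $u^t_K$ as a mixed finite difference of the corank over the cube of shifts $C_K$, splitting into three cases: the diagonal case, the case $I'_K\not\subseteq I'_J$ (cancelled by a sign-reversing involution), and the case $I'_K\subseteq I'_J$ (analysed through the shape of the max).

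You instead work directly from Definition~\ref{def:planar-basis}, in three moves:
\begin{enumerate}
\item The flux/conservation argument gives the closed formula $d(e_J,e_I)=n\max_m\bigl(|J\cap P_m|-|I\cap P_m|\bigr)+\sum_m\bigl(|I\cap P_m|-|J\cap P_m|\bigr)$. This matches every coefficient of $\h_{13}$, so your orientation is the right one.
\item The linear part and the base-point shift are visibly in $\Lkn$.
\item The valley shape of $g$ on each block $S_a=C_a\cup I_a$ collapses the max to the boundaries $T_a$, which yields \eqref{eq:corank-max}.
\end{enumerate}

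What your route buys:
\begin{itemize}
\item It is self-contained and does not lean on the cited duality $u^t_J(\h_K)=\delta_{J,K}$.
\item It gives an exact identity $\h_J=\corank_{(\r,\mathbf{S})}+(\text{an explicit element of }\Lkn)$ rather than just a congruence.
\item The same computation with $\ell=1$ shows that $\h_J\in\Lkn$ for cyclic $J$.
\end{itemize}
What the paper's route buys: the identity $u^t_K(\corank_{(\r_J,\mathbf{S}_J)})=\delta_{K,J}$ is exactly what, combined with Proposition~\ref{prop: tropical weight is corank}, gives Theorem~\ref{thm:duality}. With your proof you would still need Theorem~\ref{thm:planarbasis}(4) at that later step.

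Two small points to tighten.
\begin{itemize}
\item In the realization step, say why a movable particle always exists. Take a maximal cyclic run of edges with positive flux. Conservation forces the site at its left end to be empty and the site at its right end to be occupied. So some positive-flux edge inside the run has an occupied right site and an empty left site. If all fluxes are positive, any run boundary of the current configuration works.
\item The phrase ``or its negative'' in the choice of $c$ is inaccurate. For the other orientation the minimal $c$ is $-\min_m\bigl(|I\cap P_m|-|J\cap P_m|\bigr)$, not the negative of the max. The displayed formula you actually use is nonetheless the correct one.
\end{itemize}
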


\begin{proof}
	By Theorem~\ref{thm:planarbasis}(4), an element in $\R^{\binom{[n]}{k}}$ is uniquely determined modulo the lineality space $\Lkn$ by its evaluations on the tropical planar cross-ratios $u^t_K$ for all $K \in \bncyc$. We will prove that $u^t_K(\corank_J) = \delta_{K,J}$, where $\corank_J := \corank_{(\r_J, \mathbf{S}_J)}$.
	
	Let $I'_K = \{m \in K : m+1 \notin K\}$ denote the cyclic endpoints of $K$. By Definition \ref{def:u-variable}, the evaluation is the alternating sum over shifted sets $K_M = (K \setminus M) \cup \{m+1 : m \in M\}$ for $M \subseteq I'_K$:
	\begin{equation}\label{eq: corank-sum}
		u^t_K(\corank_J) = \sum_{M \subseteq I'_K} (-1)^{|M|+1} \corank_J(K_M).
	\end{equation}
	This alternating sum over the Boolean hypercube of shifts $\{0,1\}^{I'_K}$ is exactly the mixed finite difference operator. 
	
	Let $I'_J = \{j_1, \ldots, j_\ell\}$ be the cyclic endpoints of $J$, which correspond exactly to the right-boundaries of the prefix sets $T_a$. A shift $m \to m+1$ alters the intersection size $|K_M \cap T_a|$ if and only if $m = j_a$.
	
	\medskip
	\noindent \emph{The Diagonal Case ($K = J$)}.
	We evaluate $u^t_J(\corank_J)$. Because $J$ tightly satisfies the prefix constraints, $|J \cap T_a| = \rho_a$ and the baseline deficit is $0$.
	
	We partition the shifts $I'_J$ into the wrap-around endpoint $j_\ell$ (which crosses from $S_\ell$ into $S_1$) and the remaining interior endpoints $A' = I'_J \setminus \{j_\ell\}$. 
	
	If $j_\ell \in M$, the element shifts into $S_1 \subseteq T_a$. This strictly increases the intersection $|J_M \cap T_a|$ for all $a < \ell$, pushing the deficit to $\le -1$. No capacity constraints are violated, so $\corank_J(J_M) = 0$. Thus, all subsets containing $j_\ell$ perfectly vanish from the sum.
	
	If $M \subseteq A'$, any shift $j_a \in M$ removes an element from $T_a$, increasing its deficit by $1$. Because the prefixes are nested cyclically, any non-empty combination of these shifts creates a global maximum deficit of exactly $1$ across all constraints. Thus $\corank_J(J_M) = 1$ for all $\emptyset \neq M \subseteq A'$. When $M = \emptyset$, $J \in \B_J$, so $\corank_J(J) = 0$.
	
	The alternating sum collapses via the binomial theorem:
	\[
	u^t_J(\corank) = -(-1)^{0+1}(0) + \sum_{\emptyset \neq M \subseteq A'} (-1)^{|M|+1}(1) = 1 - (1-1)^{|A'|} = 1.
	\]
	(Since $J$ is non-cyclic, $\ell \geq 2$, so $|A'| \geq 1$ and the expansion is valid).
	
	\medskip
	
	\noindent \emph{Off-diagonal Case 1 ($K \neq J, I'_K \not\subseteq I'_J$)}.
	Choose $m^* \in I'_K \setminus I'_J$. Because $m^*$ is not a cyclic endpoint of $J$, it is not the boundary of any prefix $T_a$. The elements $m^*$ and $m^*+1$ lie strictly inside the interior of the exact same block. Thus, the intersection size $|K_M \cap T_a|$ is completely independent of whether $m^* \in M$. The fixed-point-free involution $\iota(M) = M \triangle \{m^*\}$ pairs terms with equal coranks and opposite signs, canceling the sum exactly to $0$.
	
	\medskip
	\noindent \emph{Off-diagonal Case 2 ($K \neq J, I'_K \subseteq I'_J$)}.
	By cyclic symmetry, we can select the starting point $1$ so that the wrap-around block $S_1$ has its left boundary at an endpoint of $K$. This guarantees $j_\ell \in I'_K$.
	
	Let $v_a = \rho_a - |K \cap T_a|$ be the unshifted baseline deficits of $K$. We define independent Boolean variables $m_a \in \{0,1\}$ indicating whether $j_a \in M$. A shift at $j_a$ removes an element from $T_a$, increasing its deficit by $m_a$. The shift at $j_\ell$ adds an element to $S_1 \subseteq T_a$, decreasing the deficit by $m_\ell$. The corank function on the hypercube is:
	\[
	f(M) = \max_{1 \le a \le \ell-1} \max\bigl(0, \, v_a + m_a - m_\ell\bigr).
	\]
	We must evaluate the mixed difference of $f$ over the variables $C = \{a : j_a \in I'_K\}$. Because $K$ is non-cyclic, $|C| \ge 2$.
	
	Let $C' = C \setminus \{\ell\}$. The function $f$ depends on $m_\ell$ and the variables in $C'$. By standard properties of mixed finite differences, the alternating sum over $C$ is non-zero only if $f$ has a top-degree cross-term involving all variables in $C$. This requires $v_c$ to equal the maximum $M_{\max} = \max_{a \in C'} v_a$ for all $c \in C'$, otherwise $f$ is independent of the non-maximizing variables.
	
	Assume $v_c = M_{\max}$ for all $c \in C'$. Let $Y = \max_{c \in C'} m_c$. The function collapses to $f(Y, m_\ell) = \max(0, M_{\max} + Y - m_\ell)$. The mixed difference over $m_\ell$ requires evaluating the jump:
	\[
	D(m_\ell) = f(1, m_\ell) - f(0, m_\ell) = \max(0, M_{\max} + 1 - m_\ell) - \max(0, M_{\max} - m_\ell).
	\]
	We evaluate the dependency on $m_\ell$:
	\begin{itemize}
		\item If $M_{\max} \ge 1$: $D(0) = (M_{\max}+1) - M_{\max} = 1$, and $D(1) = M_{\max} - (M_{\max}-1) = 1$. The difference $D(1) - D(0) = 0$.
		\item If $M_{\max} \le -1$: $D(0) = 0 - 0 = 0$, and $D(1) = 0 - 0 = 0$. The difference $D(1) - D(0) = 0$.
		\item If $M_{\max} = 0$: $D(0) = 1 - 0 = 1$, and $D(1) = 0 - 0 = 0$. The difference $D(1) - D(0) = -1 \neq 0$.
	\end{itemize}
	Therefore, the mixed difference is non-zero \emph{if and only if} $M_{\max} = 0$. 
	
	However, $M_{\max} = 0$ implies $v_a \le 0$ for all $a$, meaning $K \in \B_J$. The only basis of $M_J$ whose cyclic endpoints satisfy $I'_K \subseteq I'_J$ is $J$ itself. (Since its runs cannot end outside $I'_J$, its elements must be tightly packed against the right boundaries, recursively forcing $K=J$). 
	
	Since we assumed $K \neq J$, $M_{\max}$ cannot be $0$. Thus, the mixed difference evaluates strictly to $0$.
\end{proof}

\subsection{Face restriction maps}\label{sec:face-restriction}

For $\ell \in [n]$, define $\partial_\ell: \R^{\binom{[n]}{k}} \to \R^{\binom{[n-1]}{k-1}}$ by
\[
\partial_\ell(e^I) = \begin{cases}
	e^{I \setminus \{\ell\}} & \text{if } \ell \in I, \\
	0 & \text{if } \ell \notin I.
\end{cases}
\]
where we identify $[n-1]$ with $[n] \setminus \{\ell\}$ in the natural way.  This map can be viewed as the restriction to the face $x_\ell = 1$ of $\Delta(k,n)$.  We write $\h^{(\ell)}_K$ for the planar basis element in $\R^{\binom{[n-1]}{k-1}}$ indexed by $K$.

\begin{lemma}[{\cite[Lemma 24]{E22}}] \label{lem:pl}
Let $J \in \binom{[n]}{k}^{\ncyc}$ and $\ell \in [n]$.  Let $m \geq 0$ be minimal such that $\ell + m \in J$ (indices mod $n$).  Then
\[
\partial_{\ell}(\h_J) \equiv \h^{(\ell)}_{J'} \pmod{\mathrm{lineality}},
\]
where $J' = J \setminus \{\ell+m\}$.
\end{lemma}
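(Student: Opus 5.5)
The plan is to use the corank description of \cref{thm:h=corank} rather than the distance formula of Definition~\ref{def:planar-basis}. Since $\partial_\ell$ maps $\Lkn$ into the lineality space of $\R^{\binom{[n-1]}{k-1}}$ (it sends $\sum_{I\ni i}e^I$ to the corresponding generator for $i\neq \ell$, and to the all-ones vector $\sum_{I'}e^{I'}$ for $i=\ell$, which is $\frac{1}{k-1}$ times the sum of the generators), it suffices to compare corank functions. So I will show that the restriction of $\corank_{(\r_J,\mathbf{S}_J)}$ to $k$-sets containing $\ell$ agrees, after deleting $\ell$, with $\corank_{(\r_{J'},\mathbf{S}_{J'})}$ on $\binom{[n]\setminus\{\ell\}}{k-1}$, up to lineality. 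A careful check should even give exact equality.

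First I treat the degenerate case where $J'$ is cyclic in $[n]\setminus\{\ell\}$. Then $\h^{(\ell)}_{J'}$ lies in the lineality space, and I must show $\partial_\ell\h_J$ does too. Next, the main case: choose the cyclic starting point as in the proof of \cref{thm:h=corank}. Write $j=\ell+m$. The element $j$ is the first element of $J$ weakly after $\ell$, so $\ell$ lies in the gap-plus-run block $S_a=C_a\cup I_a$, where $j$ is the first element of $I_a$ (or $j=\ell$).

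In $[n]\setminus\{\ell\}$, the set $J'=J\setminus\{j\}$ has decorated set partition obtained by two changes. The block $S_a\setminus\{\ell\}$ gets $r_a$ reduced by one. If $r_a=1$, this block merges with the following block, since the gap $C_a$ joins the next gap. The prefix capacities $\rho_b$ for $b\ge a$ drop by $1$, and the prefix sets $T_b$ for $b\ge a$ lose $\ell$.

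For $X\ni \ell$ I then compare the deficits. For $b\ge a$ we have $\rho_b-|X\cap T_b|=(\rho_b-1)-|(X\setminus\ell)\cap (T_b\setminus\ell)|$, so these constraints match exactly. For $b<a$ the constraint is $\rho_b-|X\cap T_b|$ with $\ell\notin T_b$, unchanged as well. When $r_a=1$, the merged constraint at $b=a$ becomes vacuous or redundant; I expect $\rho_a-1=\rho_{a-1}$ together with $T_{a-1}\subset T_a$ to make the deficit at $a$ dominated by the one at $a-1$. Hence formula \eqref{eq:corank-max} gives equal maxima.

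The main obstacle is the bookkeeping of the cyclic normalization. The labeling convention $1\in C_1\cup I_1$ and the omitted wrap-around constraint ($a$ ranges up to $\ell-1$ only) depend on where $1$ and $\ell$ sit. If $\ell\in S_1$, or if removing $\ell$ changes which block is the wrap-around one, then the prefixes in $[n]$ and in $[n]\setminus\{\ell\}$ are not directly comparable. To handle this, I would first prove that the corank functions for different cyclic choices of starting block differ by an element of $\Lkn$; the affine functions $\rho_a-|X\cap T_a|$ and their complements are exactly of that type. Proving this directly seems hard, so I would instead apply \cref{thm:planarbasis}(4): both sides are determined mod lineality by $u^t_K$-values. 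That reduces everything to the same finite-difference computation as in \cref{thm:h=corank}, and I would fall back on it if the direct comparison breaks.
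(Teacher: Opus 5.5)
The paper gives no proof of this lemma; it quotes it from \cite[Lemma 24]{E22}. Your plan is the contraction counterpart of the paper's proof of \cref{lem:face-restriction}, which uses the corank function from \cref{thm:h=corank} followed by matroid deletion, and it works. Two remarks on the execution follow.

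First, the normalization obstacle you flag is not hard, and you do not need the fallback to $u^t_K$-evaluations. Write $g_b(X)=\rho_b-|X\cap T_b|$, with $g_d=0$ on $k$-sets. Starting the labeling at $S_2$ instead of $S_1$ turns the deficits into $g_{b+1}-g_1$. So the corank changes by $-g_1(X)=|X\cap S_1|-r_1$, which lies in $\Lkn$. This also follows from \cref{thm:h=corank} together with rotation-equivariance of $\h_J$.

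You may therefore label $J$ starting from the block $S_a\ni\ell$, and label $J'$ starting from the block containing $S_a\setminus\{\ell\}$. With this choice your deficit comparison is an exact identity in every case, for $X\ni\ell$:
\begin{itemize}
\item If $r_a\ge 2$, every prefix loses $\ell$ and every capacity drops by one.
\item If $r_a=1$, the first constraint $1-|X\cap S_a|\le 0$ is redundant.
\end{itemize}
The second case also gives $\partial_\ell\corank=0$ directly when $J'$ is cyclic ($d=2$). This settles the degenerate case you left open.

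Second, your expectation of exact equality in the paper's normalization $1\in S_1$ is false. Take $\ell\in S_d$ with $r_d=1$. Then the elements of $S_d\setminus\{\ell\}$ are loops of the contraction $M_J/\ell$, and
$\partial_\ell\corank_J(X')=\corank_{J'}(X')+|X'\cap(S_d\setminus\{\ell\})|$.
For example, $J=246$, $\ell=5$, $n=6$ gives a discrepancy of $|X'\cap\{6\}|$. The extra term is linear, so it lies in the lineality space and the lemma is unaffected.
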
 

More generally, for an order-preserving injection $\iota: [n'] \hookrightarrow [n]$ with image $T$, let $\partial_\iota : \R^{\binom{[n]}{k}} \to \R^{\binom{[n']}{k-(n-n')}}$ send $e^I$ to $e^{\iota^{-1}(I \cap T)}$ if $I \supset [n] \setminus T$ and to $0$ otherwise; equivalently, $\partial_\iota = \prod_{\ell \in [n] \setminus T} \partial_{\ell}$. 

\begin{cor}\label{cor:multi-del}
For $J \in \binom{[n]}{k}^{\ncyc}$,
\[
\partial_{\iota}(\h_J) \equiv \h^{(\iota)}_{\iota^{-1}(J^*)} \pmod{\mathrm{lineality}},
\]
where $J^* \subseteq T$ is obtained by iterating Lemma~\ref{lem:pl} over $[n] \setminus T$.
\end{cor}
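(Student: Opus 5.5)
The plan is to induct on $|[n]\setminus T|$, using Lemma~\ref{lem:pl} at each step. Write $[n]\setminus T = \{\ell_1,\ldots,\ell_s\}$ and factor $\partial_\iota = \partial_{\ell_s}\circ\cdots\circ\partial_{\ell_1}$. At each stage we relabel the surviving ground set order-preservingly. The first thing to check is that the factors commute and that the composite equals $\partial_\iota$ as defined. On a basis vector $e^I$, both sides give $e^{\iota^{-1}(I\cap T)}$ if $I\supseteq [n]\setminus T$, and $0$ otherwise. This is immediate because each $\partial_\ell$ simply tests $\ell\in I$ and deletes it.

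For the inductive step, suppose that after deleting $\ell_1,\ldots,\ell_{r}$ we have $\partial(\h_J)\equiv \h^{(r)}_{J_r}+\lambda_r$ with $\lambda_r$ in the lineality space on the current ground set. We then apply $\partial_{\ell_{r+1}}$ to both sides, which needs two facts.

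The first fact is that $\partial_\ell$ maps $L_{k,n}$ into $L_{k-1,n-1}$. This follows since $\partial_\ell\bigl(\sum_{I\ni i}e^I\bigr)$ equals $\sum_{I'\ni i}e^{I'}$ when $i\neq \ell$. When $i=\ell$, it equals $\sum_{I'}e^{I'}$, which is $\frac1{k-1}\sum_i\sum_{I'\ni i}e^{I'}$ and hence lies in the lineality space. So congruences mod lineality are preserved.

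The second fact is that Lemma~\ref{lem:pl} applies to $\h^{(r)}_{J_r}$. This requires $J_r$ to be non-cyclic. If $J_r$ is cyclic, then $\h^{(r)}_{J_r}$ lies in lineality by Theorem~\ref{thm:planarbasis}(2), and we need its image to be congruent to $\h_{J_{r+1}}$ for the iterated $J_{r+1}$. This is the step I expect to be the main obstacle. I would handle it either by extending Lemma~\ref{lem:pl} to cyclic $J$, checking from Definition~\ref{def:planar-basis} that deleting the "next element" of a cyclic interval leaves a cyclic interval, or by interpreting the statement with $\h$ of a cyclic set meaning $0$ mod lineality. For the extension I would use the directed-distance formula. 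The face $x_\ell=1$ of $\Delta(k,n)$ is $\Delta(k-1,n-1)$, and for $I\ni\ell$, $d(e_J,e_I)$ restricts to the smaller distance up to a term linear in $I$ whenever $\ell+m\in J$ is deleted. That argument is essentially the proof of Lemma~\ref{lem:pl} itself and does not use non-cyclicity.

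With these in hand, induction gives $\partial_\iota(\h_J)\equiv \h_{J_s}$ with $J_s=J^*\subseteq T$. Translating through the order-preserving relabelings yields $\h^{(\iota)}_{\iota^{-1}(J^*)}$.

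A final remark concerns well-definedness: $J^*$ should not depend on the order of deletion. Deleting $\ell$ removes the first element of $J$ cyclically at or after $\ell$. One can check that deleting $\ell,\ell'$ in either order removes the same pair of elements, by a matching argument on the circle: each deleted position is matched to the nearest unmatched element of $J$ clockwise. Alternatively, this independence follows from the commutativity of the $\partial_\ell$ together with Theorem~\ref{thm:planarbasis}(1), since the resulting $\h$ are distinct basis elements mod lineality.
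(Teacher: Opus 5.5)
Your proposal is correct and is essentially the paper's argument: the paper gives no separate proof, and the corollary is meant as exactly what you do, namely factoring $\partial_\iota$ into single deletions, using that each $\partial_\ell$ carries lineality into lineality, and iterating Lemma~\ref{lem:pl}. The cyclic case you single out as the main obstacle needs no directed-distance argument: if $\ell$ lies in a cyclic interval, the rule removes $\ell$ and its two neighbours become adjacent in the smaller ground set, and otherwise it removes the interval's first element, so in both cases a cyclic interval goes to a cyclic interval and both sides stay $\equiv 0$ at every later step.
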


For completeness, we include the other family of face restriction maps.  For $\ell \in [n]$, define $\partial_{\ell_0}: \R^{\binom{[n]}{k}} \to \R^{\binom{[n-1]}{k}}$ by
\[
\partial_{\ell_0}(e^I) = \begin{cases}
	e^{I} & \text{if } \ell \notin I, \\
	0 & \text{if } \ell \in I.
\end{cases}
\] 
where we identify $[n-1]$ with $[n] \setminus \{\ell\}$ in the natural way.  This map can be viewed as the restriction to the face $x_\ell = 0$ of $\Delta(k,n)$.  We write $\h^{(\ell_0)}_K$ for the planar basis element in $\R^{\binom{[n-1]}{k}}$ indexed by $K$.

\begin{lem}\label{lem:face-restriction}
Let $J \in \binom{[n]}{k}^{\ncyc}$ and $\ell \in [n]$.  Let $m \geq 0$ be minimal such that $\ell - m \notin J$ (indices mod $n$).  Then
\[
\partial_{\ell_0}(\h_J) \equiv \h^{(\ell_0)}_{J'} \pmod{\mathrm{lineality}},
\]
where $J' = J$ if $m = 0$, and $J' = (J \setminus \{\ell\}) \cup \{\ell - m\}$ otherwise.
\end{lem}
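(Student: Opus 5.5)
I would prove Lemma~\ref{lem:face-restriction} by reducing it to Lemma~\ref{lem:pl} through the complementation duality $\R^{\binom{[n]}{k}} \cong \R^{\binom{[n]}{n-k}}$, $e^I \mapsto e^{[n]\setminus I}$. Under this map $\partial_{\ell_0}$ becomes $\partial_\ell$ (restricting to $x_\ell=0$ is the same as restricting to $x_\ell = 1$ in the complementary hypersimplex), and the lineality space $L_{k,n}$ goes to $L_{n-k,n}$. The first step is therefore to identify the image of $\h_J$ under complementation. The directed distance is built from steps $e_i - e_{i+1}$. On complements such a step is again a step $e_{i}-e_{i+1}$, but it moves the ``hole'' rather than the particle. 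Because the distance is measured from $e_J$, a reversal of direction is introduced. So I expect $\h_J \mapsto \h'_{J^\vee}$ modulo lineality, for a suitable complemented index $J^\vee$ obtained from $[n]\setminus J$, possibly combined with the reversal $i \mapsto -i$ of the cyclic order.

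It is cleaner to avoid pinning down this identification by a direct distance computation. Instead I would use Theorem~\ref{thm:h=corank}: $\h_J \equiv \corank_{(\r_J,\mathbf{S}_J)}$. The dual of a positroid is a positroid, and the corank function of $M^*$ on $k$-sets corresponds to the corank of $M$ on the complementary sets, up to a lineality term. Dualizing the prefix rank conditions $|B\cap T_a| \ge \rho_a$ turns them into upper bounds on the complement, which are equivalent to lower bounds on the complementary cyclic suffixes. This produces the decorated ordered set partition of the complement read in reversed cyclic order.

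With this established, the proof has three steps.

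\begin{enumerate}
\item Show $\partial_{\ell_0}$ intertwines with $\partial_\ell$ under complementation. This is immediate from the definitions.
\item Apply Lemma~\ref{lem:pl} in the dual setting. There the rule reads: take the first element of the complemented index at or after $\ell$ in the reversed order, and delete it.
\item Translate the result back. In the reversed order, ``minimal $m$ with $\ell+m$ in the dual index'' becomes ``minimal $m\ge 0$ with $\ell - m \notin J$''. If $m=0$, meaning $\ell\notin J$, the deleted element is $\ell$ itself, and undoing the complementation gives $J' = J$ viewed in $[n]\setminus\{\ell\}$. Otherwise $\ell\in J$, and deleting $\ell-m$ from the complement corresponds to replacing $\ell$ by $\ell-m$ in $J$. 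Equivalently, this slides the run of $J$ ending at $\ell$ down by one so that it avoids $\ell$.
\end{enumerate}

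As a fallback check, I would verify the formula directly through duality. Theorem~\ref{thm:planarbasis}(4) reduces the claim to showing $u^t_K(\partial_{\ell_0}\h_J) = \delta_{K,J'}$ for all $K \in \binom{[n-1]}{k}^{\ncyc}$. Since $\partial_{\ell_0}$ only forgets coordinates, this becomes the statement that $u^t_K \circ \partial_{\ell_0}$ is a suitable alternating sum of $u$-variables on $\R^{\binom{[n]}{k}}$.

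The main obstacle is the bookkeeping in the complementation step. I need to confirm that the orientation reversal interacts with cyclic endpoints correctly, and that the cases where $J'$ becomes cyclic in $[n-1]$ are handled. In those cases both sides should vanish modulo lineality, consistent with Theorem~\ref{thm:planarbasis}(2). I would first test the rule on small cases such as $k=2$, using \eqref{eq:hij}, before trusting the general orientation.
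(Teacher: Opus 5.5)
Your argument is correct, but it takes a different route from the paper. The paper works entirely inside the corank picture. By Theorem~\ref{thm:h=corank} it replaces $\h_J$ by $\corank_{M_J}$, notes that on $k$-sets avoiding $\ell$ this equals the corank in the deletion $M_J\setminus\ell$, and asserts that $M_J\setminus\ell$ is the Schubert positroid of $J'$. You instead derive the deletion statement from the contraction statement, Lemma~\ref{lem:pl}, via the symmetry $e^I\mapsto e^{w([n]\setminus I)}$ with $w(i)=n+1-i$ (mod $n$). In effect, you show that Lemma~\ref{lem:face-restriction} is the matroid dual of Lemma~\ref{lem:pl}.

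Contrary to your instinct, the cleanest way to establish the symmetry is the direct distance computation. Adding $e_i-e_{i+1}$ to $e_I$ is the same as adding $e_{i+1}-e_i$ to $e_{[n]\setminus I}$. Hence $d(e_J,e_I)=d(e_{w([n]\setminus J)},e_{w([n]\setminus I)})$, and $\h_J$ is carried to $\h_{w([n]\setminus J)}$ exactly, not just modulo lineality. The dual-positroid route also works, since $\corank_{M^*}([n]\setminus X)=\corank_M(X)$ exactly (with $M^*$ of rank $n-k$, so it is evaluated on $(n-k)$-sets). However, after reversal the dual constraints are prefixes starting at a block other than the one containing $1$. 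You would therefore also need rotation-equivariance of Theorem~\ref{thm:h=corank}.

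The same symmetry applied on the cycle $[n]\setminus\{\ell\}$, on which $w$ still reverses the cyclic order, handles the target. Your index translation is right: the minimal $m$ with $w(\ell)+m\in w([n]\setminus J)$ is the minimal $m$ with $\ell-m\notin J$. Complementing $([n]\setminus J)\setminus\{\ell-m\}$ inside $[n]\setminus\{\ell\}$ then gives $J'$ in both cases. One small correction: $\ell$ need not end its run, so what slides is the segment $[\ell-m+1,\ell]$, not the whole run.

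Your route buys freedom from block bookkeeping. The paper's direct check must treat the case where the gap of the block containing $\ell$ is a single element, so that two blocks merge for $J'$. When the merged block contains $1$, the matroids $M_J\setminus\ell$ and $M_{J'}$ genuinely differ (e.g.\ $J=\{1,3,5\}$, $n=6$, $\ell=6$) and agree only modulo lineality. What your route costs is reliance on Lemma~\ref{lem:pl}, imported from \cite{E22}, whereas the paper's proof uses only Theorem~\ref{thm:h=corank}.
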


\begin{proof}
By cyclic symmetry, assume $\ell = n$.  By Theorem~\ref{thm:h=corank} it suffices to show $\partial_{n_0}(\corank_{(\r, \mathbf{S})}) = \corank_{(\r', \mathbf{S}')}$, where $(\r, \mathbf{S})$ and $(\r', \mathbf{S}')$ are the decorated partitions of $J$ and $J'$.  For any $I \subseteq [n-1]$ with $|I| = k$, matroid deletion gives $\corank_M(I) = \corank_{M \setminus n}(I)$ since $n \notin I$.  The identity $M \setminus n = M_{(\r', \mathbf{S}')}$ is a direct check: in each case of $J'$, the block $S_\ell$ loses one element, with $r'_a = r_a$ and the earlier constraints unchanged.
\end{proof}

More generally, for an order-preserving injection $\iota: [n'] \hookrightarrow [n]$ with image $T$, let $\partial_{\iota_0} : \R^{\binom{[n]}{k}} \to \R^{\binom{[n']}{k}}$ send $e^I$ to $e^{\iota^{-1}(I)}$ if $I \subseteq T$ and to $0$ otherwise; equivalently, $\partial_{\iota_0} = \prod_{\ell \in [n] \setminus T} \partial_{\ell_0}$.

\begin{cor}\label{cor:multi-deletion}
For $J \in \binom{[n]}{k}^{\ncyc}$,
\[
\partial_{\iota_0}(\h_J) \equiv \h^{(\iota_0)}_{\iota^{-1}(J^*)} \pmod{\mathrm{lineality}},
\]
where $J^* \subseteq T$ is obtained by iterating Lemma~\ref{lem:face-restriction} over $[n] \setminus T$.
\end{cor}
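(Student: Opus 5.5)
The plan is to induct on $|[n]\setminus T|$, writing $\partial_{\iota_0}$ as a composite of single-element restrictions $\partial_{\ell_0}$ and applying Lemma~\ref{lem:face-restriction} once per deleted element. The base case $T=[n]$ is trivial. For the inductive step, pick $\ell\in[n]\setminus T$ and factor $\iota=\iota'\circ\iota''$, where $\iota'':[n-1]\hookrightarrow[n]$ omits $\ell$ and $\iota'$ is the remaining order-preserving injection. Then
\[
\partial_{\iota_0}=\partial_{\iota'_0}\circ\partial_{\ell_0},
\]
which is immediate on basis vectors: $e^I$ survives both sides exactly when $I\subseteq T$.

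The first point to check is that each $\partial_{\ell_0}$ sends $L_{k,n}$ into $L_{k,n-1}$, so that congruences modulo lineality are preserved under composition. This is a direct computation on the generators in \eqref{eq:lineality}:
\begin{itemize}
\item for $i\neq \ell$, $\partial_{\ell_0}\bigl(\sum_{I\ni i}e^I\bigr)=\sum_{I\ni i,\ I\not\ni \ell}e^I$, which is the generator of $L_{k,n-1}$ indexed by $i$;
\item for $i=\ell$, the generator is sent to $0$.
\end{itemize}
Given this, Lemma~\ref{lem:face-restriction} yields $\partial_{\ell_0}(\h_J)\equiv \h^{(\ell_0)}_{J'}$. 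Applying $\partial_{\iota'_0}$ and the inductive hypothesis to $J'$ in the smaller ground set then gives $\partial_{\iota_0}(\h_J)\equiv \h^{(\iota_0)}_{\iota^{-1}(J^*)}$, where $J^*$ is obtained by iterating the lemma, exactly as in the statement.

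The main obstacle is that the intermediate set $J'$ may become a cyclic interval of $[n]\setminus\{\ell\}$, so the hypothesis $J'\in\bncyc$ needed to reapply the lemma fails. I would handle this as follows. When $J'$ is cyclic, $\h^{(\ell_0)}_{J'}$ already lies in the lineality space by Theorem~\ref{thm:planarbasis}(2). By the lineality-preservation step, every further restriction also lies in lineality. It then remains to check that the formal iteration rule of Lemma~\ref{lem:face-restriction}, which either leaves $J$ unchanged or slides the element $\ell$ back to the start $\ell-m$ of its run, sends cyclic intervals to cyclic intervals. Granting that, $J^*$ is cyclic, so both sides are $0$ modulo lineality. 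The same observation explains why the corollary is stated with $J^*$ possibly cyclic.

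Finally, $J^*$ should not depend on the order in which the elements of $[n]\setminus T$ are deleted. This follows because the composite $\partial_{\iota_0}$ is independent of order, and the planar basis is a basis modulo lineality (Theorem~\ref{thm:planarbasis}(3)). So any two orders of deletion produce the same class, and hence the same non-cyclic index $J^*$, or a cyclic one in both cases.
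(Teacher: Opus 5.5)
Your proposal is correct and is essentially the paper's argument: the paper treats the corollary as immediate from writing $\partial_{\iota_0}$ as the composite of the single restrictions $\partial_{\ell_0}$ and iterating Lemma~\ref{lem:face-restriction}. Your extra checks fill in details the paper leaves implicit: that each $\partial_{\ell_0}$ maps $L_{k,n}$ into $L_{k,n-1}$, that an intermediate set which becomes cyclic stays cyclic and contributes $0$ modulo lineality, and that $J^*$ does not depend on the deletion order (one small slip: with your maps, the factorization should read $\iota=\iota''\circ\iota'$).
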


\begin{example}[$(k, n, n') = (3, 10, 6)$]
Let $\iota: [6] \hookrightarrow [10]$ have image $T = \{1, 3, 5, 6, 7, 8\}$, and take $J = \{3, 6, 9\}$ with decorated ordered set partition
\[
\big((10,1,2,3)_1, (4,5,6)_1, (7,8,9)_1\big).
\]
The deleted indices $[10] \setminus T = \{2, 4, 9, 10\}$ act on this partition as follows: $2, 4, 10 \in [n] \setminus J$ simply shrink the gaps of their respective blocks, while $9 \in J$ is replaced, by Lemma~\ref{lem:face-restriction}, by its cyclic predecessor $8 \in T \setminus J$.  Iterating gives the partition in $T$
\[
\big((1,3)_1, (5,6)_1, (7,8)_1\big),\qquad \text{equivalently } \big((1,2)_1, (3,4)_1, (5,6)_1\big) \text{ in } [6] \text{ via } \iota^{-1},
\]
so $J^* = \{3, 6, 8\}$, $\iota^{-1}(J^*) = \{2, 4, 6\}$, and
\[
\partial_{\iota_0}(\h_{3,6,9}) \equiv \h^{(\iota)}_{2,4,6} \pmod{\mathrm{lineality}}.
\]
\end{example}

\begin{example}
With the same $\iota$ and $T$, take $J = \{4, 9, 10\}$ with decorated ordered set partition $$((1,2,3,4)_1, (5,6,7,8,9,10)_2).$$  Iterating Lemma~\ref{lem:face-restriction} yields the partition in $T$
\[
\big((1,3)_1, (5,6,7,8)_2\big),\qquad \text{equivalently } \big((1,2)_1, (3,4,5,6)_2\big) \text{ in } [6],
\]
so $\partial_{\iota_0}(\h_{4,9,10}) \equiv \h^{(\iota)}_{2,5,6}$.  Here the deletion of $10 \in J$ cycles back past $8 \in J$ to reach $7 \in T \setminus J$; the decoration $r_2 = 2$ is preserved, with the active element $I_2$ shifting from $\{9, 10\}$ to $\{7, 8\}$ at the expense of the gap $C_2$.
\end{example}

	\section{Noncrossing Combinatorics}\label{sec:noncrossing}
\def\te{{\tilde e}}
\subsection{Preliminaries}
We write $\te_{i,1},\ldots,\te_{i,n-k}$ for the basis vectors in the $i$-th copy of $\R^{n-k}$ in $\Rkn$.
We write $e_{i,1},\ldots,e_{i,n-k}$ for the images of the basis vectors in the $i$-th copy of $\T^{n-k-1}$ in $\Tkn$.
For 
$J = \{j_1 < \cdots < j_k\} \in \binom{[n]}{k}$, define 
$\tvn_J \in \Rkn_\Z$ and $\vn_J \in \Tkn_\Z$ \cite{CE24,E21} by
\[
\tvn_J = \sum_{i=1}^{k} \sum_{s = 1}^{j_i - i} \te_{i,s}, \qquad 
\vn_J = \sum_{i=1}^{k-1} \sum_{s = j_i - (i-1)}^{j_{i+1} - (i+1)} e_{i,s}.
\]
Here, we adopt the convention that $\te_{i,j} = 0$ and $e_{i,j}=0$ unless $j\ge 1$.  Define a linear projection $\Phi: \Rkn \to \Tkn$ by 
\[
\Phi(\te_{i,j})  = \begin{cases}
	-e_{i,j}, & i = 1, \\
	-e_{i,j} + e_{i-1,j}, & 2 \le i \le k-1, \\
	e_{k-1,j}, & i = k.
\end{cases}
\]
\begin{example}
	In the case $(k,n) = (3,5)$ we have
	$$\tvn_{123}=0,\ \tvn_{124} = \te_{3,1},\ \tvn_{125} = \te_{3,1} + \te_{3,2},\ \tvn_{134} = \te_{2,1} + \te_{3,1},\ \tvn_{135} = \te_{2,1} + \te_{3,[1,2]},\ \tvn_{145} = \te_{2,[1,2]} + \te_{3,[1,2]},$$
	$$\tvn_{234} = \te_{1,1} + \te_{2,1} + \te_{3,1},\ \tvn_{235} = \te_{1,1} + \te_{2,1} + \te_{3,[1,2]},\ \tvn_{245} = \te_{1,1} + \te_{2,[1,2]} + \te_{3,[1,2]},\ \tvn_{345} = \te_{1,[1,2]} + \te_{2,[1,2]} + \te_{3,[1,2]},$$
	and 
	$$\t_{123} =0,\ \t_{124} = e_{2,1},\ \t_{125} = e_{2,[1,2]} \equiv 0,\ \t_{134} = e_{1,1},\ \t_{135} = e_{1,1} + e_{2,2},\ \t_{145} = e_{1,[1,2]} \equiv 0,$$
	$$\t_{234} = 0,\ \t_{235} = e_{2,2},\ \t_{245} = e_{1,2},\ \t_{345} = 0.$$
\end{example}

\begin{proposition}\label{prop:tJ}\
	\begin{enumerate}
		\item
		The set $\{\tvn_J \mid J \in \binom{[n]}{k}\}$ spans $\Rkn$.
		\item
		The set $\{\vn_J \mid J \in \binom{[n]}{k}^\ncyc\}$ spans $\Tkn$.
		\item
		We have $\Phi(\tvn_J) = \vn_J.$
		\item
		The kernel of $\Phi$ is spanned by the set of $\tvn_{J}$ where $J$ is a cyclic interval.
	\end{enumerate}
\end{proposition}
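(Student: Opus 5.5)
The plan is to prove part (3) first by a direct computation, and then derive the remaining parts from it together with a dimension count. Throughout, write $J=\{j_1<\cdots<j_k\}$ and set $a_i := j_i - i \in \{0,\ldots,n-k\}$. Then $\tvn_J = \sum_{i=1}^k \sum_{s=1}^{a_i}\te_{i,s}$, so $\tvn_J$ is the $k$-tuple of ``prefix indicator'' vectors $\one_{[1,a_i]}\in\R^{n-k}$, with $0=\one_\emptyset$ and $\one_{[1,n-k]}=\one$. Since $a_1\le a_2\le\cdots\le a_k$, the map $J\mapsto(a_1,\ldots,a_k)$ is the usual bijection between $\binom{[n]}{k}$ and weakly increasing sequences in $\{0,\ldots,n-k\}$.

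\textbf{Step 1 (part (3)).} Apply $\Phi$ componentwise. The $i$-th coordinate of $\Phi(\tvn_J)$ in the $i$-th copy of $\T^{n-k-1}$ is $\one_{[1,a_{i+1}]}-\one_{[1,a_i]}$. This is the sum of $e_{i,s}$ over $a_i< s\le a_{i+1}$, i.e.\ over $j_i-(i-1)\le s\le j_{i+1}-(i+1)$. That matches exactly the summation range in the definition of $\vn_J$, which proves (3). Nothing deep happens here; one only has to be careful with the convention $e_{i,j}=0$ for $j<1$ and with the fact that $\one\equiv 0$ in $\T^{n-k-1}$.

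\textbf{Step 2 (part (1)).} The span of the $\tvn_J$ contains the following differences. Fix $i$ and $s$, and choose $a$ with $a_i=s$ and $a_{i'}=s-1$ for $i'<i$... but such a weakly increasing sequence may fail to exist, so instead I will argue by triangularity. Order the coordinates $\te_{i,s}$ so that the vectors can be peeled off: for $1\le i\le k$ and $1\le s\le n-k$, the sequence $a=(0,\ldots,0,s,\ldots,s)$ (with $s$ starting in position $i$) gives $\tvn = \sum_{i'\ge i}\one_{[1,s]}^{(i')}$. Differences of consecutive such vectors in $i$ and in $s$ recover each $\te_{i,s}$, so these vectors already span $\Rkn$, whose dimension is $k(n-k)$.

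\textbf{Step 3 (parts (4) and (2)).} Surjectivity of $\Phi$ is immediate, since $\Phi(\te_{k,j})=e_{k-1,j}$ and $\Phi(\te_{i,j})=-e_{i,j}+e_{i-1,j}$ triangularly hit every basis vector. Hence $\dim\ker\Phi = k(n-k)-(k-1)(n-k-1) = n-1$. A cyclic interval $J$ has $a$-sequence of the form $(0,\ldots,0,c,\ldots,c)$, with zeros followed by a block of $c$'s. Since $j_{i+1}-j_i=1$ except at a single wrap-around jump, $\vn_J=0$, so these $\tvn_J$ lie in $\ker\Phi$. The $n$ cyclic intervals give $\tvn_{[1,k]}=0$ together with $n-1$ nonzero vectors $\sum_{i'\ge i}\one_{[1,n-k]}^{(i')}$ or $\sum_{i'\ge i}\one_{[1,s]}^{(i')}$-type vectors, which I would check are linearly independent by the same triangular argument as in Step 2. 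This gives (4). Finally, (2) follows from (1), (3) and surjectivity: the $\vn_J$ for all $J$ span $\Tkn$, and the cyclic ones vanish.

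\textbf{Main obstacle.} The step I expect to need the most care is the independence count in Step 3: identifying precisely which $a$-sequences come from cyclic intervals, and verifying that they give $n-1$ independent vectors. If that count fails, I would instead verify directly that $\ker\Phi$ is spanned by the vectors $\sum_{i'\ge i}\te_{i',s}$-type combinations and match them with the cyclic $\tvn_J$.
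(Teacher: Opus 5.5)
Your proof is correct and follows the paper's argument. You prove (1) and (3) by direct computation, get (2) from (1), (3), surjectivity of $\Phi$ and the vanishing of $\vn_J$ for cyclic $J$ (using $\one \equiv 0$ at the wrap-around jump), and get (4) from $\dim\ker\Phi = n-1$ together with independence of the $n-1$ nonzero cyclic $\tvn_J$.

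The step you flag as the main obstacle is immediate. Those vectors are exactly $\sum_{i'\ge 1}\one^{(i')}_{[1,c]}$ for $1\le c\le n-k$ and $\sum_{i'\ge i}\one^{(i')}_{[1,n-k]}$ for $2\le i\le k$. These are distinct members of the basis you built in Step 2, so they are linearly independent.
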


\begin{proof}
	(1) and (3) are checked directly.  (2) follows from (1), (3), the surjectivity of $\Phi$, and that $\vn_J = 0$ for $J$ a cyclic interval.  It follows that $\tvn_J \in \ker(\Phi)$ for $J$ a cyclic interval.  We have $\tvn_{[1,k]} = 0$.  The remaining $n-1$ vectors in the set $\{\tvn_J \mid J \text{ a cyclic interval}\}$ are linearly independent.  Since $\dim(\ker(\Phi)) = n-1$, we obtain (4).
\end{proof}

	\subsection{Noncrossing fan}
	\def\tNC{{\widetilde{\NC}}}

	In this subsection we prove Theorem~\ref{thm:NCfan}.
	
	Denote by $\tNC_{k,n}$ the poset of all collections of pairwise 
	noncrossing $k$-element subsets, ordered by inclusion; 
	again, it is known that the maximal (by inclusion) collections each have 
	exactly $k(n-k)+1$ $k$-element subsets; see \cite{PKPS10}, \cite{SSW17}. 
	One of the main results of \cite{SSW17} is the following.
	\begin{theorem}
		The maximal faces of $\tNC_{k,n}$ give a regular unimodular triangulation $\mathcal{T}_{k,n}$ of the order polytope $O_{k,n} := \conv(\tvn_J \mid J \in \binom{[n]}{k})$.
	\end{theorem}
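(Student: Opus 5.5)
The plan is to work entirely inside $\Rkn = (\R^{n-k})^k$, viewed as functions on the product of chains $P = [k] \times [n-k]$, and to prove the three properties of the triangulation in turn: unimodularity of the simplices, covering without overlap, and regularity.

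\textbf{The vertices.} First I will identify the vertex set. By definition, $\tvn_J$ has coordinate $1$ at $(i,s)$ exactly when $s \le j_i - i$. Since $j_1 - 1 \le j_2 - 2 \le \cdots \le j_k - k$, the support of $\tvn_J$ is a Young-diagram-shaped order ideal, with the reversed convention, of the poset $P$ ordered by $(i,s) \le (i',s')$ iff $i \le i'$ and $s \ge s'$. Conversely, every such ideal arises from a unique $J$. Hence the $\tvn_J$ are exactly the vertices of the order polytope $O_{k,n}$ of $P$, whose normalized volume is the number of linear extensions of $P$, namely $f^{(n-k)^k}$, the number of standard Young tableaux of rectangular shape.

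\textbf{Unimodularity and counting.} Next, for a maximal noncrossing collection $\cK$ (of size $k(n-k)+1$), I will show that the $\tvn_J$, $J \in \cK$, form a lattice basis after translation. The approach is to order $\cK$ by a linear order compatible with containment of diagrams along the noncrossing structure. I would aim to show that consecutive differences $\tvn_{J'} - \tvn_J$ can be arranged as a triangular system, using an inductive flip/deletion argument: remove a suitable $J$ and replace it by its unique flip partner. This gives determinant $\pm 1$. Given unimodularity, the covering and non-overlap statement follows from two facts. The first is that the simplices pairwise intersect in common faces. The second is the count $|\{\text{maximal faces of }\tNC_{k,n}\}| = f^{(n-k)^k}$, which matches the normalized volume. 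The count I would obtain from the known bijection (Pylyavskyy) between maximal noncrossing collections and standard tableaux of rectangular shape.

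\textbf{Regularity and proper intersection.} For proper intersection and regularity together, I will construct an explicit height function $\omega(J)$. The plan is to prove that the induced regular subdivision has, as minimal non-faces, exactly the crossing pairs. This means two things must be verified:
\begin{enumerate}
\item For every crossing pair $(I,J)$ there is a noncrossing pair $(I',J')$ with $\tvn_I + \tvn_J = \tvn_{I'} + \tvn_{J'}$ and $\omega(I)+\omega(J) > \omega(I')+\omega(J')$. Here $(I',J')$ comes from an uncrossing exchange on the subpair $(\{i_a,\ldots,i_b\},\{j_a,\ldots,j_b\})$ with equal interiors, which preserves the multiset of coordinates.
\item The uncrossing rewriting terminates and yields a unique noncrossing multiset.
\end{enumerate}
I would try a strictly concave function of a quadratic statistic, e.g.\ $\omega(J) = -\sum_{a<b} (j_b - j_a)^2$ or the squared norm of a suitable linear image of $\tvn_J$, and check that each uncrossing move strictly decreases $\sum \omega$. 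Termination then comes for free. Uniqueness of the normal form, i.e.\ that each lattice point of $mO_{k,n}$ is uniquely a sum of a noncrossing multiset, gives flagness and shows that the regular triangulation is exactly $\tNC_{k,n}$. Unimodularity then also follows from the normal form statement via the Ehrhart polynomial comparison.

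\textbf{Main obstacle.} The hard step is the regularity/normal-form step: producing a single height function that strictly decreases under every local uncrossing, including those on inner subpairs $a < b$ where the ends agree. Non-strict decrease or cycling of moves is the likely failure mode. If the naive quadratic height fails, I would fall back on the construction of \cite{SSW17}. There, the noncrossing condition is encoded by noncrossing lattice paths in a grid, and the triangulation is realized as a pulling/flow triangulation whose regularity comes from a generic linear functional on path families.
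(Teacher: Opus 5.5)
The paper does not prove this statement. It quotes it as one of the main results of \cite{SSW17} (the result goes back to \cite{PKPS10}), so your fallback is exactly the paper's treatment. As a self-contained proof, however, your outline has a genuine gap, located where you suspected.

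Several parts are fine: the identification of the vertices, the volume count, and the deduction of proper intersection, covering, flagness and unimodularity from a normal form. But that normal form rests on two inputs you never supply:
\begin{enumerate}
\item a height $\omega$ under which every crossing pair is strictly higher than some uncrossing of it;
\item \emph{uniqueness} of the noncrossing decomposition of each lattice point of $mO_{k,n}$.
\end{enumerate}
Termination of uncrossing gives only existence. Two different noncrossing multisets with the same sum would both be terminal, so no rewriting argument yields uniqueness by itself.

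Given a height as in the first item, uniqueness says exactly that noncrossing monomials are the standard monomials of an initial ideal of the toric ideal of $O_{k,n}$. That is the noncrossing standard monomial theorem. Proving it needs either an S-pair/confluence analysis or the enumerative fact that noncrossing tableaux are equinumerous with semistandard tableaux. That fact is of the same depth as the statement, and it is also the real source of the facet count you want to import from Pylyavskyy.

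The first input also fails for your candidate height. Take $k=3$, $n=8$, $I=\{4,5,7\}$, $J=\{1,6,8\}$, which cross at $(a,b)=(2,3)$. The point $\tvn_I+\tvn_J$ only remembers the row multisets $\{1,4\},\{5,6\},\{7,8\}$. So its two-vertex decompositions are exactly $\{157,468\}$, $\{158,467\}$, $\{167,458\}$ and $\{168,457\}$, and only $\{167,458\}$ is noncrossing.

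With $\Sigma(X)=\sum_{p<q}(x_q-x_p)^2$, these four pairs have total $\Sigma$ equal to $80,88,88,92$. Hence $\omega=-\Sigma$ makes the crossing pair $\{168,457\}$ strictly lowest, and $\omega=+\Sigma$ makes the crossing pair $\{157,468\}$ strictly lowest. In neither case is $\{167,458\}$ an edge of the regular subdivision. In general, swapping the first $a$ rows changes $\sum\Sigma$ by $2\bigl(\sum_{p\le a}(i_p-j_p)\bigr)\bigl(\sum_{q>a}(i_q-j_q)\bigr)$, which has no fixed sign.

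The flip route to unimodularity is not automatic either. For $k,n-k\ge 3$ the poset $[k]\times[n-k]$ contains a $3$-element antichain, so $O_{k,n}$ has a $3$-cube face and therefore admits non-unimodular lattice triangulations. A flip $J\leftrightarrow J'$ preserves normalized volume only when $\tvn_J$ and $\tvn_{J'}$ enter the affine dependence of the two simplices with coefficients of equal absolute value, as in a square move $\tvn_J+\tvn_{J'}=\tvn_A+\tvn_B$. You give no reason why every noncrossing flip has this property.

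Without these ingredients, your argument reduces to the same citation the paper uses.
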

	
	\begin{proof}[Proof of Theorem~\ref{thm:NCfan}]
		
		By Proposition~\ref{prop:tJ}, the kernel of $\Phi: \Rkn \to \Tkn$ is spanned by the set $\{\tvn_J : J \text{ is a cyclic interval}\}$.  The $n$ cyclic intervals form a noncrossing pair with every $I \in \binom{[n]}{k}$.  Therefore, every maximal simplex $\sigma$ of $\mathcal{T}_{k,n}$ contains all $n$ cyclic vertices $\{\tvn_J : J \text{ cyclic}\}$.
		
		We consider the image polytope $\Phi(O_{k,n})$.  The order polytope $O_{k,n}$ has a unique facet $F = \{x_{k,1} = 1\}$ not passing through the origin; see \cite[Section 4]{SSW17}, noting that our conventions differ.  The normal vector to this facet is contained in the kernel of $\Phi$.  It follows that the image $\Phi(O_{k,n})$ contains the origin in its interior.  The projection $\Phi$ maps each maximal simplex $\sigma$ of $\mathcal{T}_{k,n}$ to a simplex $\Phi(\sigma)$ of full dimension $(k-1)(n-k-1)$. The collection $\{\Phi(\sigma) : \sigma \in \mathcal{T}_w \text{ maximal}\}$ forms a unimodular triangulation of the image $\Phi(O_{k,n})$, which has vertices 
		$\{\vn_J = \Phi(\tvn_J) : J \in \bncyc\}$.  Each simplex $\Phi(\sigma)$ of this triangulation has one vertex at the origin, and by taking the cones spanned by these simplices, we obtain a complete, unimodular fan $F_{\NC_{k,n}}$ in $\Tkn$ with cones indexed by faces of the noncrossing complex $\NC_{k,n}$.  This proves Theorem~\ref{thm:NCfan}.	\end{proof}

	We call $F_{\NC_{k,n}}$ the \emph{noncrossing fan}.  We record the following for future use.

	\begin{corollary}\label{cor:nc-fan-rays}
	The rays of the noncrossing fan $F_{\NC_{k,n}}$ are exactly
		\[
		\left\{\, \mathbb{R}_{\geq 0} \cdot \vn_J : J \in \bncyc \,\right\}.
		\]
	\end{corollary}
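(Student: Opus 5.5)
The plan is to read the rays off the proof of Theorem~\ref{thm:NCfan}. There the fan $F_{\NC_{k,n}}$ is obtained by coning over the simplices $\Phi(\sigma)$, one for each maximal simplex $\sigma$ of the unimodular triangulation $\mathcal{T}_{k,n}$ of $O_{k,n}$. Every such $\Phi(\sigma)$ has the origin as a vertex, namely the common image of the $n$ cyclic vertices. Its remaining vertices are the $\vn_J$ with $J \in \sigma$ noncyclic. So every cone is $C_\cK$, generated by $\{\vn_J : J \in \cK\}$ for a face $\cK$ of $\NC_{k,n}$. Since the fan is simplicial and unimodular, each maximal cone is generated by exactly $(k-1)(n-k-1)$ linearly independent vectors $\vn_J$. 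Its one-dimensional faces are therefore exactly the rays $\R_{\ge0}\vn_J$ for $J$ in the corresponding maximal noncrossing collection.

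Next, I would check that every $J \in \bncyc$ actually gives a ray, i.e.\ that $\vn_J \neq 0$ and that $\{J\}$ is a face of $\NC_{k,n}$. The second point holds because a singleton is trivially pairwise noncrossing, so it extends to a maximal collection. For $\vn_J \ne 0$ I would use unimodularity: $J$ lies in some maximal collection $\cK$ together with the $n$ cyclic sets, and $\Phi(\sigma)$ is a full-dimensional simplex with vertex set $\{0\}\cup\{\vn_{J'} : J' \in \cK\}$. These vertices are distinct and the nonzero ones are linearly independent. In particular $\vn_J\neq 0$. (In the $(k,3,5)$ example, entries like $e_{2,[1,2]}\equiv 0$ are cyclic sets, consistent with this.)

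Finally, I need distinct noncyclic $J \ne J'$ to give distinct rays. This follows from the fan being simplicial with cones indexed by faces of $\NC_{k,n}$, i.e.\ from the map $\cK \mapsto C_\cK$ being injective. If $\R_{\ge0}\vn_J = \R_{\ge0}\vn_{J'}$, then the cones $C_{\{J\}}$ and $C_{\{J'\}}$ coincide. Hence $\{J\}$ and $\{J'\}$ would be the same face, because the triangulation $\Phi(\mathcal{T}_{k,n})$ has vertex set in bijection with $\bncyc$.

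The main obstacle is justifying this vertex bijection, namely that $\Phi$ is injective on the noncyclic vertices and does not collapse any simplex. I would derive it from the full-dimensionality of each $\Phi(\sigma)$ already asserted in the proof of Theorem~\ref{thm:NCfan}. If that alone is insufficient for two $J, J'$ lying in no common simplex, I would compare $\vn_J$ and $\vn_{J'}$ directly from the explicit formula. That formula determines the gaps $j_{i+1}-j_i$ and the positions $j_i$ for $1\le i\le k$, up to the cyclic ambiguity killed by $\T^{n-k-1}$. From this one recovers $J$ whenever $J$ is noncyclic.
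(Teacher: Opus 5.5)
Your proposal is correct and takes essentially the same route as the paper. The paper records this corollary without a separate proof, as an immediate consequence of the construction in the proof of Theorem~\ref{thm:NCfan}: the fan is obtained by coning over the triangulation $\Phi(\mathcal{T}_{k,n})$, whose simplices all have the origin (the image of the cyclic vertices) as a vertex and whose other vertices are the $\vn_J$ with $J \in \bncyc$. Your distinctness step is not needed for the set equality as stated, but your fallback check via the explicit formula does work: for noncyclic $J$, each level component of $\vn_J$ is either zero or the class of a proper nonempty interval in $[1,n-k]$, and this data determines $J$; moreover, two such interval classes are positive multiples of one another only when they are equal.
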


	\subsection{Proof of Theorem~\ref{thm:noncrossingmain}}
	Since $\h_J$ for $J$ cyclic span $L_{k,n}$ (Theorem~\ref{thm:planarbasis}) and $\Phi(\h_J) =\vn_J = 0$ for $J$ cyclic, the map $\Psi: \R^{\binom{[n]}{k}} \to \Tkn$ descends to a map $\Psi: \R^{\binom{[n]}{k}}/L_{k,n} \to \Tkn$.  The vectors $\h_J$ span $\R^{\binom{[n]}{k}}$ and the vectors $\vn_J$ span $\Tkn$.  Comparing Theorem~\ref{thm:planarbasis}(4) with Theorem~\ref{thm:duality}, we deduce that $\rho \circ \Psi$ is the identity.  Since the positive parametrization \eqref{eq:Mx} is injective, so is $\rho$, and we conclude that the maps $\Psi$ and $\rho$ are inverse bijections between $\Trop_{>0} X(k,n)$ and $\Tkn$.
	
	The integrality in Theorem~\ref{thm:planarbasis}(4) and Theorem~\ref{thm:duality} implies that $\Psi$ and $\rho$ restrict to inverse bijections between $(\Trop_{>0} X(k,n))(\Z)$ and $\Tkn_\Z$.  Composing with \cref{thm:NCfan} we obtain the stated bijection between $(\Trop_{>0} X(k,n))(\Z)$ and noncrossing tableaux.

Define $\psi: \R^{\binom{[n]}{k}} \to \Rkn$ by $\psi(\h_J) = \tvn_J$.  Thus $\Psi = \Phi \circ \psi$.

\begin{corollary}\label{cor: trivial intersection general}
	We have $\Trop_{>0}\Gr(k,n) \cap \ker(\psi) = \Lkn$.
\end{corollary}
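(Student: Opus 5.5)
The plan is to deduce the statement from Theorem~\ref{thm:noncrossingmain} together with the factorization $\Psi = \Phi \circ \psi$. I will take the intersection inside $\R^{\binom{[n]}{k}}$ and use two standard facts. First, $\Trop_{>0}\Gr(k,n)$ is invariant under translation by $\Lkn$, because $\tropP^{\x}_\bullet$ is again a positive tropical Pl\"ucker vector. Second, $\Lkn$ is spanned by $\{\h_J : J \text{ cyclic}\}$ (Theorem~\ref{thm:planarbasis}(2)).

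\emph{Inclusion $\subseteq$.} Let $\tropP_\bullet \in \Trop_{>0}\Gr(k,n)$ with $\psi(\tropP_\bullet)=0$. Then $\Psi(\tropP_\bullet) = \Phi(\psi(\tropP_\bullet)) = 0$.
\begin{enumerate}
\item By Theorem~\ref{thm:noncrossingmain}, $\Psi$ descends to $\R^{\binom{[n]}{k}}/\Lkn$.
\item Its restriction to $\Trop_{>0}X(k,n)$ is a bijection onto $\Tkn$, inverse to $\rho$.
\item The class of $0$ (that is, $\Lkn$) lies in $\Trop_{>0}X(k,n)$ and maps to $0$.
\item Injectivity therefore forces the class of $\tropP_\bullet$ in $\Trop_{>0}X(k,n)$ to be that of $0$, so $\tropP_\bullet \in \Lkn$.
\end{enumerate}
One can also see this directly: by \eqref{eq:rho}, $\tropP_\bullet \equiv \rho(\Psi(\tropP_\bullet)) = \rho(0) = \sum_J u^t_J(0)\h_J = 0$ modulo $\Lkn$.

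\emph{Inclusion $\supseteq$.} This step needs $\Lkn \subseteq \ker(\psi)$. Since $\Lkn \subseteq \Trop_{>0}\Gr(k,n)$ automatically, it would suffice to show $\psi(\h_J)=0$ for every cyclic $J$. This is the step I expect to be the real obstacle, and it depends on conventions.

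With $\psi(\h_J) = \tvn_J$ and $\tvn_J$ as defined above, Proposition~\ref{prop:tJ} gives only $\tvn_{[1,k]} = 0$. The other $n-1$ vectors $\tvn_J$ with $J$ cyclic are linearly independent and span $\ker(\Phi)$. So literally $\psi(\Lkn) = \ker(\Phi) \neq 0$, and only the span of $\h_{[1,k]}$ is killed.

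I would therefore first recheck which convention is intended, and read the statement in the form in which it holds. One option is to read $\ker(\psi)$ as $\psi^{-1}(\ker\Phi)$, i.e.\ $\ker(\Psi)$. Under this reading both inclusions follow from the argument above together with $\Psi(\Lkn)=0$. The other option is to normalize $\psi$ on the cyclic part, declaring $\psi(\h_J)=0$ for $J$ cyclic so that $\psi$ factors through $\R^{\binom{[n]}{k}}/\Lkn$. Then $\Lkn \subseteq \ker\psi$ is immediate.

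Under either reading, the substantive content is the inclusion $\subseteq$. That inclusion follows from the injectivity of $\Psi$ on the positive part, Theorem~\ref{thm:noncrossingmain}, which in turn rests on the duality Theorem~\ref{thm:duality}.
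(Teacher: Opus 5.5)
Your argument for $\subseteq$ is exactly the paper's proof: $\psi(\tropP_\bullet)=0$ gives $\Psi(\tropP_\bullet)=\Phi(\psi(\tropP_\bullet))=0$, and injectivity of $\Psi$ on $\Trop_{>0}X(k,n)$ (Theorem~\ref{thm:noncrossingmain}) forces $\tropP_\bullet\in\Lkn$; the paper's proof stops there and never addresses $\supseteq$. Your caveat about $\supseteq$ is correct: with $\psi(\h_J)=\tvn_J$ taken literally, $\h_{\{2,\ldots,k+1\}}\in\Lkn$ while $\psi(\h_{\{2,\ldots,k+1\}})=\sum_{i=1}^{k}\tilde e_{i,1}\neq 0$, so the literal intersection is only $\Lkn\cap\ker\psi=\R\,\h_{[1,k]}$, and the stated equality holds only under one of your readings (for instance, with $\ker\Psi$ in place of $\ker\psi$).
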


\begin{proof}
	Suppose $\tropP_\bullet \in \Trop_{>0}\Gr(k,n) \cap \ker(\psi) $.  We have $\psi(\tropP_\bullet) = 0$, hence $\Psi(\tropP_\bullet) = \Phi(\psi(\tropP_\bullet)) = 0$.
	
	By Theorem~\ref{thm:noncrossingmain}, $\Psi$ restricts to a bijection on $\Trop_{>0}X(k,n)$.  The only element mapping to $0$ is the equivalence class of the lineality space.  Thus $\tropP_\bullet \in \Lkn$.
\end{proof}

\section{The duality theorem}\label{sec:dualityproof}

\subsection{The Positive Parameterization}

The positive configuration space $X(k,n)_{>0} = \Gr(k,n)_{>0}/(\R^n_{>0})$ has dimension $(k-1)(n-k-1)$. We parameterize it using variables $\{x_{\ell,t} : \ell \in [k-1], \, t \in [n-k]\}$, which we arrange into a $(k-1) \times (n-k)$ array.

\begin{definition}[Positive Parameterization Matrix]\label{def:positive-param}
	Define a $(k-1) \times (n-k)$ matrix $M_{k,n}$ with polynomial entries
	\[
	m_{i,j} = (-1)^{k-i} \sum_{1 \leq b_i \leq b_{i+1} \leq \cdots \leq b_{k-1} \leq j} x_{i,b_i} \, x_{i+1,b_{i+1}} \cdots x_{k-1,b_{k-1}}.
	\]
	The full $k \times n$ parameterization matrix $M$ is constructed by embedding $M_{k,n}$ as the upper-right $(k-1) \times (n-k)$ block:
	\[
	M = \begin{pmatrix}
		1 & 0 & \cdots & 0 & 0&m_{1,1} & m_{1,2} & \cdots & m_{1,n-k} \\
		0 & 1 & \cdots & 0 & 0&m_{2,1} & m_{2,2} & \cdots & m_{2,n-k} \\
		\vdots & \vdots & \ddots& \vdots & \vdots & \vdots & \vdots & & \vdots \\
		0 & 0 & \cdots & 1 & 0&m_{k-1,1} & m_{k-1,2} & \cdots & m_{k-1,n-k} \\
		0 & 0 & \cdots & 0 & 1 & 1 & \cdots & 1
	\end{pmatrix}.
	\]
	The first $k$ columns form the identity matrix $I_k$, and the bottom row of the right block consists entirely of $1$'s.
\end{definition}

\begin{example}[The case $(k,n) = (3,6)$]\label{ex:param-36}
	For $(k,n) = (3,6)$, the parameterization involves a $2 \times 3$ array of variables $\{x_{1,1}, x_{1,2}, x_{1,3}, x_{2,1}, x_{2,2}, x_{2,3}\}$. The matrix entries are:
	\begin{align*}
		m_{1,1} &= x_{1,1} x_{2,1}, \\
		m_{1,2} &= x_{1,1} x_{2,1} + x_{1,1} x_{2,2} + x_{1,2} x_{2,2}, \\
		m_{1,3} &= x_{1,1}(x_{2,1} + x_{2,2} + x_{2,3}) + x_{1,2}(x_{2,2} + x_{2,3}) + x_{1,3} x_{2,3}, \\
		m_{2,j} &= -(x_{2,1} + x_{2,2} + \cdots + x_{2,j}).
	\end{align*}
	The full $3 \times 6$ matrix is:
	\[
	M = \begin{pmatrix}
		1 & 0 & 0 & m_{1,1} & m_{1,2} & m_{1,3} \\
		0 & 1 & 0 & m_{2,1} & m_{2,2} & m_{2,3} \\
		0 & 0 & 1 & 1 & 1 & 1
	\end{pmatrix}.
	\]
\end{example}

The Pl\"ucker coordinates $p_I = p_I(\x)$ for $I \in \binom{[n]}{k}$ are the $k \times k$ minors of $M(\x)$.  By the Lindstr\"om--Gessel--Viennot lemma, or by Postnikov's theory \cite{Pos06}, each $p_I$ is a weighted sum over non-intersecting path families in a ladder network.

\begin{prop}
	The matrix $M(\x)$ induces a homeomorphism
	$$
	(\R_{>0}^{n-k}/\R_{>0})^{k-1} \xrightarrow{\,\cong\,} X(k,n)_{>0}.
	$$
\end{prop}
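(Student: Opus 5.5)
The plan is to identify $M(\x)$ with a reduced plabic graph (Le-diagram) parametrization of the top positroid cell and then quotient by the torus. We use coordinates $x_{\ell,t}$ modulo a positive scalar in each row $\ell$. First we need a network model for the entries: write each $m_{i,j}$, up to the sign $(-1)^{k-i}$, as a sum over lattice paths in a $(k-1)\times(n-k)$ grid. In this grid the weight $x_{\ell,b}$ is collected when the path is in row $\ell$ at column $b$, and the condition $b_i\le\cdots\le b_{k-1}\le j$ encodes monotone staircase paths. The signs $(-1)^{k-i}$ and the all-ones last row are exactly those required so that, after the usual sign twist between a network boundary measurement matrix and a matrix with identity in columns $1,\dots,k$, the Lindstr\"om--Gessel--Viennot lemma expresses every maximal minor $p_I(\x)$ as a sum of products of the $x$'s with nonnegative coefficients. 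We would then check that each such sum is nonzero: for every $I$, some noncrossing family of paths exists. This would show that $M(\x)$ lands in $\Gr_{>0}(k,n)$ with all Pl\"ucker coordinates positive.

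Next we analyze how the torus acts. The network has $k(n-k)$ faces, while our parametrization uses $(k-1)(n-k)$ variables. The missing $n-k$ parameters, which would sit in the bottom row where all entries are $1$, together with the $k$ row scalings, account for the $n$-dimensional torus $\R_{>0}^n$ acting on $\Gr_{>0}(k,n)$. We would show that scaling row $\ell$ of $x_{\ell,\bullet}$ by $\lambda>0$ multiplies every $p_I$ by a factor of the form $\prod_{i\in I} z_i$ for an explicit torus element. For example, scaling all $x_{\ell,t}$ for a fixed $\ell$ multiplies $m_{i,j}$ for $i\le \ell$ by $\lambda$, which amounts to rescaling rows $1,\dots,\ell$ and the corresponding identity columns. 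Hence the map descends to the quotient $(\R_{>0}^{n-k}/\R_{>0})^{k-1}\to X(k,n)_{>0}$.

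For bijectivity, we invoke Postnikov's theorem that the boundary measurement map of a reduced plabic graph, taken from face weights modulo the global scalar, is a homeomorphism onto the positroid cell. In our setting this gives $\R_{>0}^{k(n-k)}\cong\Gr_{>0}(k,n)$. We then quotient both sides by $\R_{>0}^n$, which acts freely on $\Gr_{>0}(k,n)$, modulo the diagonal. Alternatively, the inverse can be written explicitly: each $x_{\ell,t}$ is recovered as a ratio of Pl\"ucker coordinates of the form (chamber minors), by successively solving from the last column. This shows that the map is injective and has a continuous inverse defined on all of $X(k,n)_{>0}$. Since both sides are cells of dimension $(k-1)(n-k-1)$, continuity together with a continuous inverse gives the homeomorphism.

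The main obstacle is the first step. We must pin down exactly which planar network produces the entries $m_{i,j}$ with the given signs, and verify that the sign conventions make all minors subtraction-free rather than merely nonnegative in some entries. We would attempt this first for $(k,n)=(3,6)$, using \cref{ex:param-36}, and then induct on $k$ by deleting the top row of variables.
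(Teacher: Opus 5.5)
Your proposal is correct and is essentially the route the paper intends. The paper gives no separate proof: it presents $M(\x)$ as a variant of Postnikov's parametrization and then describes the ladder network, which is exactly the network you are looking for. Up to a mirror image and a gauge transformation of edge weights, it is Postnikov's $\Gamma$-network for the full $k\times(n-k)$ rectangle with the $n-k$ exit weights set to $1$, and $(-1)^{k-i}$ is his boundary-measurement sign, which counts the sources strictly between $i$ and $j$.

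One small slip is in your torus count. After normalizing the $n-k$ exit weights, only $k-1$ row scalings remain (not $k$). So the effective torus $\R^n_{>0}/\R_{>0}$ is accounted for by $(n-k)+(k-1)=n-1$ parameters, matching the $k-1$ quotients in $(\R_{>0}^{n-k}/\R_{>0})^{k-1}$.
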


The ladder network for $(k,n)$ is a planar directed graph with $(k-1)$ horizontal levels, indexed by $\ell = 1, 2, \ldots, k-1$, and $(n-k)$ horizontal positions, indexed by $t = 1, 2, \ldots, n-k$.  There are $k$ source nodes on the left, one for each row $r \in [k]$, and $(n-k)$ sink nodes on the right, at exit positions $t = 1, \ldots, n-k$ (corresponding to columns $k+1, \ldots, n$ of the matrix $M$).  At each lattice point $(\ell, t)$ there is a vertical edge from level $\ell$ to level $\ell+1$, weighted by $x_{\ell,t}$; all horizontal edges have weight~$1$.  Source $r$ enters the network at level~$r$.  See Figures~\ref{fig:ladder-network} and \ref{fig:ladder-37}.

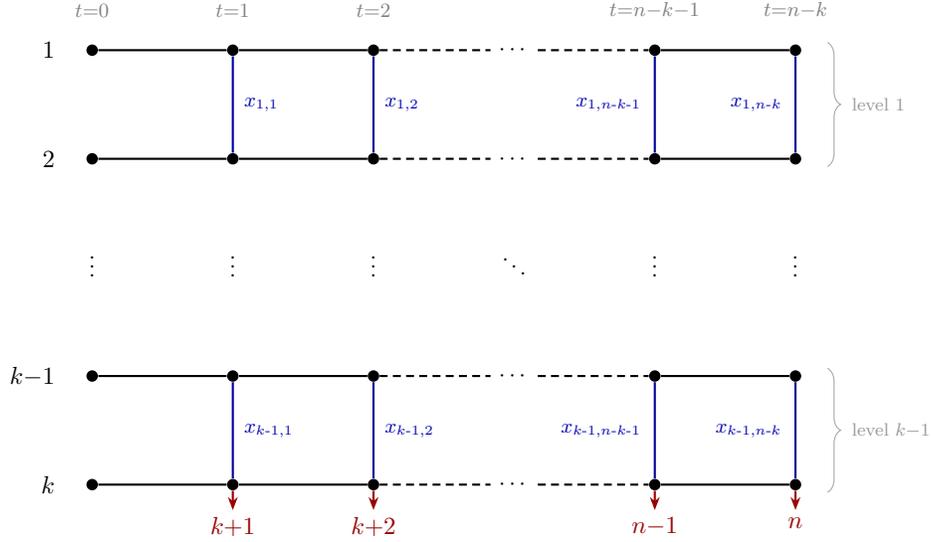
\begin{figure}[ht]
	\centering
	\begin{tikzpicture}[
		every node/.style={font=\small},
		nd/.style={circle, fill=black, inner sep=1.8pt},
		hedge/.style={thick},
		vedge/.style={thick, blue!60!black},
		wlabel/.style={font=\footnotesize, text=blue!70!black},
		scale=0.85, transform shape
		]
		
		\def\hs{2.2}
		\def\vs{1.7}
		
		\node[nd] (A0) at (0,0) {};
		\node[nd] (A1) at (\hs,0) {};
		\node[nd] (A2) at (2*\hs,0) {};
		\node     (Ad) at (3*\hs,0) {$\cdots$};
		\node[nd] (A4) at (4*\hs,0) {};
		\node[nd] (A5) at (5*\hs,0) {};
		\draw[hedge] (A0) -- (A1);
		\draw[hedge] (A1) -- (A2);
		\draw[hedge, densely dashed] (A2) -- (Ad);
		\draw[hedge, densely dashed] (Ad) -- (A4);
		\draw[hedge] (A4) -- (A5);
		
		\node[nd] (B0) at (0,-\vs) {};
		\node[nd] (B1) at (\hs,-\vs) {};
		\node[nd] (B2) at (2*\hs,-\vs) {};
		\node     (Bd) at (3*\hs,-\vs) {$\cdots$};
		\node[nd] (B4) at (4*\hs,-\vs) {};
		\node[nd] (B5) at (5*\hs,-\vs) {};
		\draw[hedge] (B0) -- (B1);
		\draw[hedge] (B1) -- (B2);
		\draw[hedge, densely dashed] (B2) -- (Bd);
		\draw[hedge, densely dashed] (Bd) -- (B4);
		\draw[hedge] (B4) -- (B5);
		
		\foreach \x in {0,1,2,4,5} {
			\pgfmathsetmacro{\xc}{\x*\hs}
			\node at (\xc, -2*\vs+0.12) {$\vdots$};
		}
		\node at (3*\hs, -2*\vs+0.12) {$\ddots$};
		
		\node[nd] (C0) at (0,-3*\vs) {};
		\node[nd] (C1) at (\hs,-3*\vs) {};
		\node[nd] (C2) at (2*\hs,-3*\vs) {};
		\node     (Cd) at (3*\hs,-3*\vs) {$\cdots$};
		\node[nd] (C4) at (4*\hs,-3*\vs) {};
		\node[nd] (C5) at (5*\hs,-3*\vs) {};
		\draw[hedge] (C0) -- (C1);
		\draw[hedge] (C1) -- (C2);
		\draw[hedge, densely dashed] (C2) -- (Cd);
		\draw[hedge, densely dashed] (Cd) -- (C4);
		\draw[hedge] (C4) -- (C5);
		
		\node[nd] (D0) at (0,-4*\vs) {};
		\node[nd] (D1) at (\hs,-4*\vs) {};
		\node[nd] (D2) at (2*\hs,-4*\vs) {};
		\node     (Dd) at (3*\hs,-4*\vs) {$\cdots$};
		\node[nd] (D4) at (4*\hs,-4*\vs) {};
		\node[nd] (D5) at (5*\hs,-4*\vs) {};
		\draw[hedge] (D0) -- (D1);
		\draw[hedge] (D1) -- (D2);
		\draw[hedge, densely dashed] (D2) -- (Dd);
		\draw[hedge, densely dashed] (Dd) -- (D4);
		\draw[hedge] (D4) -- (D5);
		
		\draw[vedge] (A1) -- (B1) node[midway, right=1pt, wlabel] {$x_{1,1}$};
		\draw[vedge] (A2) -- (B2) node[midway, right=1pt, wlabel] {$x_{1,2}$};
		\draw[vedge] (A4) -- (B4) node[midway, left=2pt, wlabel] {$x_{1,n\text{-}k\text{-}1}$};
		\draw[vedge] (A5) -- (B5) node[midway, left=2pt, wlabel] {$x_{1,n\text{-}k}$};
		
		\draw[vedge] (C1) -- (D1) node[midway, right=1pt, wlabel] {$x_{k\text{-}1,1}$};
		\draw[vedge] (C2) -- (D2) node[midway, right=1pt, wlabel] {$x_{k\text{-}1,2}$};
		\draw[vedge] (C4) -- (D4) node[midway, left=2pt, wlabel] {$x_{k\text{-}1,n\text{-}k\text{-}1}$};
		\draw[vedge] (C5) -- (D5) node[midway, left=2pt, wlabel] {$x_{k\text{-}1,n\text{-}k}$};
		
		\node[left=10pt of A0, font=\normalsize] {$1$};
		\node[left=10pt of B0, font=\normalsize] {$2$};
		\node[left=10pt of C0, font=\normalsize] {$k{-}1$};
		\node[left=10pt of D0, font=\normalsize] {$k$};
		
		\draw[-{Stealth[length=5pt]}, red!60!black, thick] (D1) -- ++(0,-0.4);
		\draw[-{Stealth[length=5pt]}, red!60!black, thick] (D2) -- ++(0,-0.4);
		\draw[-{Stealth[length=5pt]}, red!60!black, thick] (D4) -- ++(0,-0.4);
		\draw[-{Stealth[length=5pt]}, red!60!black, thick] (D5) -- ++(0,-0.4);
		\node[below=8pt of D1, font=\normalsize, red!60!black] {$k{+}1$};
		\node[below=8pt of D2, font=\normalsize, red!60!black] {$k{+}2$};
		\node[below=8pt of D4, font=\normalsize, red!60!black] {$n{-}1$};
		\node[below=8pt of D5, font=\normalsize, red!60!black] {$n$};
		
		\node[above=8pt of A0, font=\footnotesize, gray] {$t{=}0$};
		\node[above=8pt of A1, font=\footnotesize, gray] {$t{=}1$};
		\node[above=8pt of A2, font=\footnotesize, gray] {$t{=}2$};
		\node[above=8pt of A4, font=\footnotesize, gray] {$t{=}n{-}k{-}1$};
		\node[above=8pt of A5, font=\footnotesize, gray] {$t{=}n{-}k$};
		
		\draw[decorate, decoration={brace, amplitude=5pt}, gray!70]
		($(A5)+(0.5,0.12)$) -- ($(B5)+(0.5,-0.12)$)
		node[midway, right=7pt, font=\scriptsize, gray!80] {level $1$};
		\draw[decorate, decoration={brace, amplitude=5pt}, gray!70]
		($(C5)+(0.5,0.12)$) -- ($(D5)+(0.5,-0.12)$)
		node[midway, right=7pt, font=\scriptsize, gray!80] {level $k{-}1$};
		
	\end{tikzpicture}
	\caption{The ladder network for $\Gr(k,n)$.
		There are $k$ horizontal rails and $k-1$ levels of vertical edges.
		Horizontal edges have weight $1$; the vertical edge at level $\ell$, position $t$ has weight $x_{\ell,t}$.
		Source $r$ (on the left) traverses levels $r, r{+}1, \ldots, k{-}1$, contributing $(k-r)$ factors.
		Sink $j$ (for $j = k{+}1, \ldots, n$) exits at position $t = j - k$ on the bottom rail.}
	\label{fig:ladder-network}
\end{figure}

\begin{figure}[ht]
	\centering
	\begin{tikzpicture}[
		every node/.style={font=\small},
		nd/.style={circle, fill=black, inner sep=2pt},
		hedge/.style={thick, -{Stealth[length=4pt]}},
		vedge/.style={thick, blue!60!black, -{Stealth[length=4pt]}},
		wlabel/.style={font=\footnotesize, text=blue!70!black},
		scale=0.85, transform shape
		]
		
		\def\hs{2.2}
		\def\vs{2.0}
		
		\node[nd] (A0) at (0,0) {};
		\node[nd] (A1) at (\hs,0) {};
		\node[nd] (A2) at (2*\hs,0) {};
		\node[nd] (A3) at (3*\hs,0) {};
		\node[nd] (A4) at (4*\hs,0) {};
		\draw[hedge] (A0) -- (A1);
		\draw[hedge] (A1) -- (A2);
		\draw[hedge] (A2) -- (A3);
		\draw[hedge] (A3) -- (A4);
		
		\node[nd] (B0) at (0,-\vs) {};
		\node[nd] (B1) at (\hs,-\vs) {};
		\node[nd] (B2) at (2*\hs,-\vs) {};
		\node[nd] (B3) at (3*\hs,-\vs) {};
		\node[nd] (B4) at (4*\hs,-\vs) {};
		\draw[hedge] (B0) -- (B1);
		\draw[hedge] (B1) -- (B2);
		\draw[hedge] (B2) -- (B3);
		\draw[hedge] (B3) -- (B4);
		
		\node[nd] (C0) at (0,-2*\vs) {};
		\node[nd] (C1) at (\hs,-2*\vs) {};
		\node[nd] (C2) at (2*\hs,-2*\vs) {};
		\node[nd] (C3) at (3*\hs,-2*\vs) {};
		\node[nd] (C4) at (4*\hs,-2*\vs) {};
		\draw[hedge] (C0) -- (C1);
		\draw[hedge] (C1) -- (C2);
		\draw[hedge] (C2) -- (C3);
		\draw[hedge] (C3) -- (C4);
		
		\draw[vedge] (A1) -- (B1) node[midway, right=2pt, wlabel] {$x_{1,1}$};
		\draw[vedge] (A2) -- (B2) node[midway, right=2pt, wlabel] {$x_{1,2}$};
		\draw[vedge] (A3) -- (B3) node[midway, right=2pt, wlabel] {$x_{1,3}$};
		\draw[vedge] (A4) -- (B4) node[midway, left=2pt, wlabel] {$x_{1,4}$};
		
		\draw[vedge] (B1) -- (C1) node[midway, right=2pt, wlabel] {$x_{2,1}$};
		\draw[vedge] (B2) -- (C2) node[midway, right=2pt, wlabel] {$x_{2,2}$};
		\draw[vedge] (B3) -- (C3) node[midway, right=2pt, wlabel] {$x_{2,3}$};
		\draw[vedge] (B4) -- (C4) node[midway, left=2pt, wlabel] {$x_{2,4}$};
		
		\node[left=12pt of A0, font=\normalsize] {$1$};
		\node[left=12pt of B0, font=\normalsize] {$2$};
		\node[left=12pt of C0, font=\normalsize] {$3$};
		
		\draw[-{Stealth[length=5pt]}, red!60!black, thick] (C1) -- ++(0,-0.5);
		\draw[-{Stealth[length=5pt]}, red!60!black, thick] (C2) -- ++(0,-0.5);
		\draw[-{Stealth[length=5pt]}, red!60!black, thick] (C3) -- ++(0,-0.5);
		\draw[-{Stealth[length=5pt]}, red!60!black, thick] (C4) -- ++(0,-0.5);
		\node[below=10pt of C1, font=\normalsize, red!60!black] {$4$};
		\node[below=10pt of C2, font=\normalsize, red!60!black] {$5$};
		\node[below=10pt of C3, font=\normalsize, red!60!black] {$6$};
		\node[below=10pt of C4, font=\normalsize, red!60!black] {$7$};
		
		\node[above=10pt of A0, font=\footnotesize, gray] {$t{=}0$};
		\node[above=10pt of A1, font=\footnotesize, gray] {$t{=}1$};
		\node[above=10pt of A2, font=\footnotesize, gray] {$t{=}2$};
		\node[above=10pt of A3, font=\footnotesize, gray] {$t{=}3$};
		\node[above=10pt of A4, font=\footnotesize, gray] {$t{=}4$};
		
		\draw[decorate, decoration={brace, amplitude=6pt}, gray!70]
		($(A4)+(0.5,0.15)$) -- ($(B4)+(0.5,-0.15)$)
		node[midway, right=8pt, font=\small, gray!80] {level $1$};
		\draw[decorate, decoration={brace, amplitude=6pt}, gray!70]
		($(B4)+(0.5,0.15)$) -- ($(C4)+(0.5,-0.15)$)
		node[midway, right=8pt, font=\small, gray!80] {level $2$};
		
	\end{tikzpicture}
	\caption{The ladder network for $(k,n) = (3,7)$, with $k-1 = 2$ levels and $n-k = 4$ positions.
		Source $1$ crosses both levels (contributing factors $x_{1,t_1} \cdot x_{2,t_2}$),
		source $2$ crosses level $2$ only (contributing $x_{2,t_2}$), and
		source $3$ traverses the bottom rail with weight $1$.}
	\label{fig:ladder-37}
\end{figure}
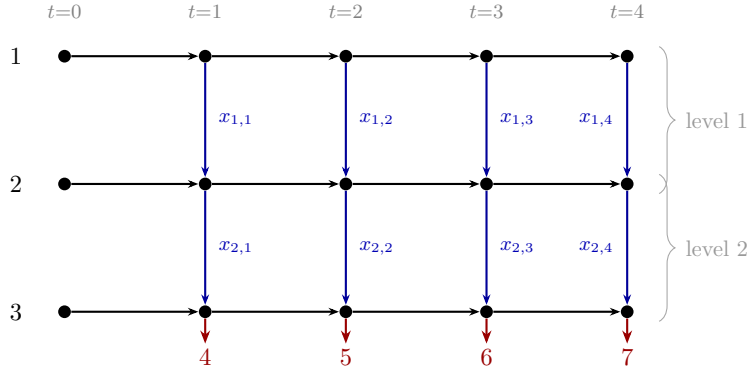

We now describe how the Pl\"ucker coordinates are computed from this network.  Let $I \in \binom{[n]}{k}$, and write $I_{\leq k} = I \cap [k]$ and $I_{>k} = I \cap [k+1, n]$.  Since the first $k$ columns of $M$ form the identity matrix $I_k$, cofactor expansion along these columns shows that the Pl\"ucker coordinate $p_I$ reduces to a minor involving only the active sources $[k] \setminus I_{\leq k}$ and the active columns $I_{>k}$.  By the Lindstr\"om--Gessel--Viennot lemma, this reduced minor equals a sum over non-intersecting path families in the ladder network.

\begin{proposition}\label{prop: pJ formula general}
	For $I \in \binom{[n]}{k}$, the Pl\"ucker coordinate $p_I(\x)$ is given by
	\begin{equation}\label{eq: pJ formula general}
		p_I(\x) = \sum_{P \in \mathcal{P}_I} \wt(P),
	\end{equation}
	where $\mathcal{P}_I$ denotes the set of non-intersecting path families in the ladder network connecting the active sources $[k] \setminus I_{\leq k}$ to the active sinks at positions $\{j - k : j \in I_{>k}\}$, and $\wt(P)$ denotes the product of all vertical edge weights traversed by the paths in~$P$.
\end{proposition}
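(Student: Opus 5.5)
We reduce the $k\times k$ minor $p_I$ of $M(\x)$ to a minor of the $(k-1)\times(n-k)$ block, and then identify that block with a path matrix of the ladder network so that the Lindstr\"om--Gessel--Viennot lemma applies.

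\emph{Step 1: reduction to a small minor.} Write $A=[k]\setminus I_{\le k}$ and $B=I_{>k}$, so $|A|=|B|$. The columns of $M$ indexed by $I_{\le k}$ are the standard basis vectors $e_r$ for $r\in I_{\le k}$. Laplace expansion along these columns gives $p_I=\pm\det M_{A,B}$, where $M_{A,B}$ is the submatrix with rows $A$ and columns $B$ taken from the right block (rows $1,\dots,k-1$ given by the $m_{i,j}$, and row $k$ all ones). The sign is $(-1)^{\sum_{a\in A}a+\sum_{b}\text{pos}(b)}$, a standard shuffle sign. Because of the alternating factors $(-1)^{k-i}$ built into $m_{i,j}$, we expect this sign to be cancelled exactly.

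\emph{Step 2: entries as path sums.} Orient the ladder network so that paths move right along the rails and down through the vertical edges. A path from source $r$, entering at level $r$, to sink position $j$ on the bottom rail must use exactly one vertical edge at each level $r,r+1,\dots,k-1$. It does so at weakly increasing positions $b_r\le b_{r+1}\le\cdots\le b_{k-1}\le j$, because horizontal moves only go right. Its weight is therefore $\prod x_{\ell,b_\ell}$, and the path generating function is
\[
\sum_{b_r\le\cdots\le b_{k-1}\le j}x_{r,b_r}\cdots x_{k-1,b_{k-1}}=(-1)^{k-r}m_{r,j}.
\]
For $r=k$ the path generating function is $1$, matching the bottom row.

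\emph{Step 3: apply LGV and fix signs.} The network is planar, sources lie on the left boundary and sinks on the bottom boundary, so the sources and sinks are compatible. Hence only the identity pairing admits non-intersecting families, and
\[
\det\bigl((-1)^{k-a}m_{a,b}\bigr)_{a\in A,\,b\in B}=\sum_{P\in\mathcal P_I}\wt(P).
\]
Pulling the row signs out gives $\det M_{A,B}=(-1)^{\sum_{a\in A}(k-a)}\sum_P\wt(P)$. It remains to check that this factor multiplies with the Laplace sign from Step 1 to give $+1$.

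The sign computation is the main obstacle. The Laplace sign for the complementary minor involves the positions of $A$ within $[k]$ and of the columns $B$ within $I$. Every element of $I_{>k}$ lies after every element of $I_{\le k}$ in $I$, so the column positions of $B$ in $I$ are $|I_{\le k}|+1,\dots,k$. The resulting sign is $(-1)^{\sum_{a\in A}a+\sum_{i=|I_{\le k}|+1}^{k}i}$. Since $\sum_{a\in A}(k-a)\equiv |A|k+\sum_{a\in A}a$, combining with the row-sign factor gives total parity
\[
|A|k+\sum_{i=k-|A|+1}^{k}i\equiv |A|k+|A|k-\binom{|A|}{2}\equiv\binom{|A|}{2}\pmod 2.
\]
This residual $(-1)^{\binom{|A|}{2}}$ is exactly the sign that LGV produces when sources are listed in increasing order (top to bottom) and sinks in increasing order (left to right). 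With those orderings, the non-intersecting pairing matches the topmost source to the rightmost sink, which is the order-reversing permutation, of sign $(-1)^{\binom{|A|}{2}}$. The two signs cancel, so $p_I=\sum_{P\in\mathcal P_I}\wt(P)$, as claimed. I would confirm this bookkeeping on the $(3,6)$ matrix of Example~\ref{ex:param-36}, for instance with $I=\{1,4,5\}$ and $I=\{4,5,6\}$.
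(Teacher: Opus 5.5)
Your proof is correct and takes the same route as the paper. The paper only remarks that cofactor expansion along the identity columns reduces $p_I$ to the minor with rows $[k]\setminus I_{\le k}$ and columns $I_{>k}$, and then cites Lindstr\"om--Gessel--Viennot; your check that the Laplace sign, the row signs $(-1)^{k-a}$, and the LGV sign $(-1)^{\binom{|A|}{2}}$ of the order-reversing pairing multiply to $+1$ supplies the sign bookkeeping the paper leaves implicit. One correction: the display in Step 3 (``only the identity pairing'') drops the factor $(-1)^{\binom{|A|}{2}}$ when rows and columns are taken in increasing order, so rewrite it with that factor, as your final paragraph already does.
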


Tropicalizing the path formula yields the following.

\begin{lemma}\label{lem: tropical pJ general}
	For any $I \in \binom{[n]}{k}$,
	$$\tropP_I(\y) = \min_{P \in \mathcal{P}_I} \wt^{\mathrm{trop}}(P),$$
	where $\wt^{\mathrm{trop}}(P) = \sum_{(\ell,t) \in E(P)} y_{\ell,t}$ denotes the tropical weight of $P$, obtained by summing $y_{\ell,t}$ over all vertical edges $(\ell,t)$ traversed by the paths in $P$.
\end{lemma}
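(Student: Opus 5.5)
The statement to prove is Lemma~\ref{lem: tropical pJ general}: $\tropP_I(\y)$ is the minimum tropical weight over $\mathcal{P}_I$. The plan is to tropicalize the subtraction-free formula of Proposition~\ref{prop: pJ formula general} term by term.

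First, we make the setting precise. We take $x_{\ell,t} \in \cR_{>0}$ with $\val(x_{\ell,t}) = y_{\ell,t}$. By Proposition~\ref{prop: pJ formula general},
\[
p_I(\x) = \sum_{P \in \mathcal{P}_I} \prod_{(\ell,t)\in E(P)} x_{\ell,t},
\]
which is a sum of monomials with coefficient $+1$. Note that $p_I$ here means the minor of $M$ up to the global sign produced by the cofactor expansion along the identity columns. That sign is a fixed $\pm1$ depending only on $I$, so it does not affect valuations. One should check that Proposition~\ref{prop: pJ formula general} is stated with this sign normalization, so that positivity is meaningful. If $\mathcal{P}_I$ is empty, both sides are $+\infty$, and we adopt that convention.

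Second, we compute the valuation of each term: $\val$ of the product over $E(P)$ is $\wt^{\mathrm{trop}}(P)$.

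Third, we handle the sum, which is the only real point. For any Puiseux series one has $\val(a+b)\ge \min(\val a,\val b)$, with equality when the leading coefficients do not cancel. Every $x_{\ell,t}$ has positive leading coefficient, so every monomial $\wt(P)$ has positive leading coefficient. Hence the lowest-order terms add up to a positive coefficient and do not cancel, which gives
\[
\val p_I(\x) = \min_{P\in\mathcal{P}_I} \wt^{\mathrm{trop}}(P).
\]

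Equivalently, this is the standard fact that tropicalization commutes with subtraction-free expressions under $(+,\times)\mapsto(\min,+)$. Finally, the right-hand side is a piecewise-linear function of $\y$, defined for all real $\y$ by continuity and density of rational points, which identifies it with the tropicalized pullback $\tropP_I(\y)$.
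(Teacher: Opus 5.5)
Your proposal is correct and is essentially the paper's argument: the paper gives no separate proof, only the remark that tropicalizing the subtraction-free path formula of Proposition~\ref{prop: pJ formula general} yields the lemma. You supply the details that remark omits, namely that the global cofactor sign does not affect valuations and that positive leading coefficients prevent the lowest-order terms from cancelling.
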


The next lemma records the support of the $\vn$-vectors in terms of the ladder coordinates.

\begin{lemma}\label{lem: vJ support general}
	For $K = \{k_1 < \cdots < k_k\} \in \bncyc$, the support of $\vn_K$ at level $\ell \in [k-1]$ is the contiguous interval $E_\ell = [L_\ell, R_\ell]$, where 
	$$L_\ell = k_\ell - (\ell-1), \qquad R_\ell = k_{\ell+1} - (\ell+1).$$
\end{lemma}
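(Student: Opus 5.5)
The support claim follows directly from the definition of $\vn_K$. By definition,
\[
\vn_K = \sum_{\ell=1}^{k-1} \sum_{s = k_\ell - (\ell-1)}^{k_{\ell+1}-(\ell+1)} e_{\ell,s},
\]
so the level-$\ell$ component is the indicator of the integer interval $[k_\ell-\ell+1,\,k_{\ell+1}-\ell-1]$. This interval is nonempty and contiguous, but only once we check that its endpoints lie in the admissible range $[1,n-k]$ and that the convention $e_{\ell,s}=0$ for $s<1$ never truncates it.

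The bounds come from strictness of $k_1<\cdots<k_k\le n$, i.e.\ $k_{\ell+1}\ge k_\ell+1$ and $k_\ell\ge\ell$. Hence $L_\ell = k_\ell-\ell+1\ge 1$, so no truncation occurs. Likewise $k_{\ell+1}\le n-(k-\ell-1)$, so $R_\ell = k_{\ell+1}-\ell-1\le n-k$. Finally $R_\ell - L_\ell = k_{\ell+1}-k_\ell-2\ge -1$. Thus $E_\ell=[L_\ell,R_\ell]$ is a (possibly empty) interval of positions $1,\dots,n-k$, and it is empty exactly when $k_{\ell+1}=k_\ell+1$.

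The one subtlety is the interpretation in $\Tkn=(\T^{n-k-1})^{k-1}$, where each factor is a quotient by $\one$. If some $E_\ell$ equals the full range $[1,n-k]$, then the level-$\ell$ component is $\equiv 0$, as with $\t_{125}$ and $\t_{145}$ in the $(3,5)$ example. So the lemma should be read as a statement about the canonical representative $\sum_\ell \sum_{s\in E_\ell} e_{\ell,s}$ in $\Rkn$-style coordinates, i.e.\ as a statement about the ladder coordinates $y_{\ell,t}$ that are paired with $\vn_K$.

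In that form this is exactly what will be used with Lemma~\ref{lem: tropical pJ general}: evaluating $\tropP_I$ at $\y=\vn_K$ counts how many vertical edges of a path family lie in the rectangles $E_\ell$. I expect no real obstacle here. The only care needed is the representative and the boundary inequalities above, and noting that noncyclicity of $K$ is not needed for the support statement itself.
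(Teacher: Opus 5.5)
Your proposal is correct and matches the paper, which states this lemma without proof as an immediate reading of the definition of $\vn_K$; your checks $L_\ell \ge 1$, $R_\ell \le n-k$ and $R_\ell - L_\ell \ge -1$ are exactly what that reading implicitly needs. One small refinement: the degenerate case $E_\ell=[1,n-k]$ you flag cannot occur for $K\in\bncyc$. It would force $K=\{1,\dots,\ell\}\cup\{n-k+\ell+1,\dots,n\}$, which is a cyclic interval, so this is the one place where non-cyclicity of $K$ actually enters.
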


\subsection{Tropical Path Costs and Matroid Corank}

The key technical input to the duality theorem is the identity
\begin{equation}\label{eq:corank_identity}
	\tropP_I(\vn_K) \;=\; \corank_{(\r_K, \mathbf{S}_K)}(I)
\end{equation}
between tropical Pl\"ucker coordinates evaluated at the noncrossing fan vector $\vn_K$ and the corank function of the associated Schubert positroid $M_K$. We prove this in two stages: first, a zero-cost characterization via induction that identifies the bases of $M_K$ with the subsets routable by free-edge path families; second, a graded equality, deduced by bounding the tropical path cost via induction on the corank.

Throughout this subsection we fix $K = \{k_1 < \cdots < k_k\} \in \bncyc$ with associated decorated ordered set partition $(\r, \mathbf{S}) = (\r_K, \mathbf{S}_K)$ possessing $d$ blocks, and its Schubert positroid $M_K$. By Lemma~\ref{lem: vJ support general}, the support of $\vn_K$ at level $\ell \in [k-1]$ is the contiguous interval $E_\ell = [L_\ell, R_\ell]$. 

A vertical edge $(\ell, t)$ is a \emph{support edge} if $t \in E_\ell$ (tropical weight $1$) and a \emph{free edge} otherwise (tropical weight $0$). Horizontal edges always have tropical weight $0$. We denote by $\mathcal{N}_0$ the zero-weight free-routing subnetwork obtained by deleting all support edges from the full ladder network.

\begin{lemma}\label{lem: zero cost iff basis}
	For $I \in \binom{[n]}{k}$,
	\[
	\tropP_I(\vn_K) = 0 \quad \Longleftrightarrow \quad I \in \B_{M_K}.
	\]
\end{lemma}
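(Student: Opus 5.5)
We need to show that a zero-cost path family for $I$ exists in the full ladder network exactly when $I$ satisfies the prefix inequalities $|I\cap T_a|\ge\rho_a$ of Definition~\ref{def:positroid-matroid}. By Lemma~\ref{lem: tropical pJ general}, $\tropP_I(\vn_K)=0$ holds exactly when $\mathcal{P}_I$ contains a family using only free edges, that is, a non-intersecting family in $\mathcal{N}_0$. The problem is therefore a routing problem in the planar network $\mathcal{N}_0$. I would settle it with a Menger/min-cut argument. Planarity and the fixed order of sources and sinks make the non-intersecting condition equivalent to the existence of vertex-disjoint paths, by the standard LGV setup.

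\textbf{Step 1: the geometry of $\mathcal{N}_0$.} By Lemma~\ref{lem: vJ support general}, the support at level $\ell$ is $[L_\ell,R_\ell]$ with $R_\ell+1=L_{\ell+1}$. The support intervals therefore form a staircase. A path from source $r$ uses one vertical edge at each level $\ell\ge r$, at weakly increasing positions. At level $\ell$ it must avoid $[L_\ell,R_\ell]$, so it either stays strictly left ($t<L_\ell$) or has already passed to $t\ge L_{\ell+1}$, which forces $t>R_{\ell+1}$ at the next level, and so on.

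The support interval at level $\ell$ is empty exactly when $k_{\ell+1}=k_\ell+1$, i.e.\ inside a run $I_a$ of $K$. Nonempty intervals occur exactly at the cyclic endpoints of $K$, i.e.\ at the block boundaries of $(\r_K,\mathbf{S}_K)$. The upshot is that the staircase has $d-1$ genuine barriers, one after each non-final run of $K$ (the wrap-around block $S_1$ needs a little care, handled via the convention $1\in C_1\cup I_1$ in Definition~\ref{def:DOSP}).

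\textbf{Step 2: cuts give necessity.} For each $a\le d-1$ I would exhibit an explicit vertex cut in $\mathcal{N}_0$. It consists of the vertices just left of the barrier at the corresponding level, together with the vertices of rails $\rho_a+1,\dots,k$ that lie left of it. Consider the sources in $[k]\setminus I_{\le k}$ lying in $T_a\cap[k]$ and the sinks $j\in I_{>k}$ that lie beyond $T_a$. The free paths from the first set to the second must pass through this cut, which has bounded capacity. Counting then shows that a free non-intersecting family forces $|I\cap T_a|\ge\rho_a$. This is the direction $\tropP_I(\vn_K)=0\Rightarrow I\in\B_{M_K}$.

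\textbf{Step 3: greedy routing gives sufficiency.} Conversely, for $I\in\B_{M_K}$ I would construct the family explicitly. Match active sources to active sinks in order, and route each path as far left as possible: descend immediately at each level, unless the barrier forces it to wait. Then check non-intersection by induction on the number of blocks $d$, or on $n$, using the face restriction maps (Lemma~\ref{lem:pl} and Lemma~\ref{lem:face-restriction}) to peel off the last element. Alternatively, I would invoke max-flow/min-cut directly, once Step 2 shows that the only obstructing cuts are the $d-1$ barrier cuts.

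\textbf{Main obstacle.} The hard step is proving that the barrier cuts are the \emph{only} minimal cuts in $\mathcal{N}_0$. Concretely, the Hall-type deficiency must be exactly $\max_a(\rho_a-|I\cap T_a|)$, and not some mixture of levels. This same identification is what later upgrades the statement to the graded identity \eqref{eq:corank_identity}. I would first try to prove it by a planar-duality argument: minimal cuts in a planar source--sink network are dual paths, and a dual path in $\mathcal{N}_0$ can only cross support edges along the staircase, so it must hug one barrier. If that becomes messy, the fallback is induction on $d$, merging the last two blocks.
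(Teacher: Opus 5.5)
\textbf{The equivalence is false when $\{1,n\}\subseteq K$.} Your argument breaks at the point Step 1 waves through (``the wrap-around block $S_1$ needs a little care''). The obstruction is not technical: in this case the statement itself fails, so no cut analysis can prove it. When $\{1,n\}\subseteq K$, the cyclic run $I_1\ni n,1$ is two runs in the linear order. So there are $d$ nonempty support levels, not $d-1$. Meanwhile $M_K$ still has only $d-1$ prefix conditions, and $T_1$ contains the elements of $K$ near $n$.

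Concretely, take $(k,n)=(3,5)$ and $K=\{1,3,5\}$. Then $\vn_K=e_{1,1}+e_{2,2}$, and $S_1=\{4,5,1\}$ with $r_1=2$, so $\B_{M_K}=\{B:|B\cap\{1,4,5\}|\ge 2\}$.
\begin{itemize}
\item Since $p_{123}=1$ (the empty path family), $\tropP_{123}(\vn_K)=0$, although $|\{1,2,3\}\cap T_1|=1<2$.
\item From the matrix, $p_{145}=m_{2,1}-m_{2,2}=x_{2,2}$, so $\tropP_{145}(\vn_K)=1$, although $\{1,4,5\}\in\B_{M_K}$.
\end{itemize}
Both directions fail. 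In fact $\tropP_I(\vn_K)=\corank_{M_K}(I)+|I\cap\{4,5\}|-1$ for all ten $I$, so the two vectors agree only modulo $L_{3,5}$.

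The same happens for every non-cyclic $K\supseteq\{1,n\}$. On one hand $\tropP_{[k]}(\vn_K)=0$ always. On the other, the condition $|[k]\cap T_1|\ge\rho_1$ always fails, so $[k]\notin\B_{M_K}$. The paper's inductive proof does not handle this case either. Its claim that $T_a\cap[k]=\{1,\dots,\rho_a\}$ ``after cyclic relabeling'' is not available, because the ladder network has no cyclic symmetry.

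\textbf{What you can do instead.} You need one of two repairs.
\begin{itemize}
\item Add the hypothesis $\{1,n\}\not\subseteq K$. There your count of $d-1$ barriers is correct, and in small cases the statement then holds; for example, I checked all twenty $I$ for $K=\{1,3,5\}$, $n=6$.
\item Weaken the target to $\tropP_\bullet(\vn_K)\equiv\corank_{M_K}\pmod{\Lkn}$. This is all the proof of Theorem~\ref{thm:duality} uses, since $u^t_J$ vanishes on $\Lkn$ for non-cyclic $J$.
\end{itemize}

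\textbf{Two gaps remain under the extra hypothesis.}
\begin{itemize}
\item When $n\notin K$, the prefix $T_a$ still wraps through the gap containing $n$, so sinks in that gap lie inside $T_a$. Your Step 2 cut, phrased with sinks ``beyond $T_a$'', must not charge them. For $K=\{1,3,5\}$, $n=6$, the set $\{2,3,6\}$ is freely routable (drops at $t=2,3$). It is a basis only because $6\in T_1=\{6,1\}$.
\item The step you flag as the main obstacle is the whole content of sufficiency, and it is not yet argued. You need that the barrier separators are the \emph{only} obstructing vertex cuts in $\mathcal{N}_0$. Step 2 only shows that barrier cuts are obstructions. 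So the appeal to max-flow/min-cut in Step 3 is currently unsupported, and the greedy-routing alternative is not specified enough to check.
\end{itemize}
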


\begin{proof}
	By Lemma~\ref{lem: tropical pJ general}, $\tropP_I(\vn_K) = 0$ if and only if there exists a path family $P \in \mathcal{P}_I$ all of whose vertical edges are free (i.e., routing occurs entirely within $\mathcal{N}_0$). We show that a free path family for $I$ exists if and only if $I$ satisfies the cyclic rank inequalities $|I \cap T_a| \geq \rho_a$ for $a = 1, \ldots, d - 1$, where $T_a = S_1 \cup \cdots \cup S_a$ and $\rho_a = r_1 + \cdots + r_a$.
	
	We use the encoding of paths by side patterns. For an active source $r \in [k] \setminus I_{\leq k}$ with sink at position $i_r - k$, the path of $r$ visits levels $r, r+1, \ldots, k-1$ at non-decreasing positions $t_{r,r} \leq t_{r,r+1} \leq \cdots \leq t_{r,k-1} \leq i_r - k$ before exiting on the bottom rail. The path utilizes exclusively free edges if and only if at every level $\ell \in \{r, \ldots, k-1\}$, either $t_{r, \ell} \leq L_\ell - 1$ (we say the source drops on the \emph{left} of $E_\ell$) or $t_{r, \ell} \geq R_\ell + 1$ (the source drops on the \emph{right} of $E_\ell$). 
	
	\medskip
	\noindent\emph{Step 1: Boundaries determine block membership.}
	The runs $I_a$ of $K$ strictly define the boundary positions $L_\ell$ and $R_\ell$. Correspondingly, the cyclic sequence of blocks $T_a$ acts as a sequence of network cuts separating sources and sinks into prefix and suffix groups. Specifically, for each $a \in \{1, \ldots, d-1\}$, the cut $T_a$ separates the source set $[k]$ into a prefix $T_a \cap [k] = \{1, 2, \ldots, \rho_a\}$ (after cyclic relabeling so $1 \in S_1$) and a complementary tail.
	
	Translating this to ladder positions: an active sink $j \in [k+1, n]$ lies in the prefix $T_a \cap [k+1, n]$ if and only if its exit position $j - k$ falls to the left of the level boundary $L_{\ell^*} - 1$, where $\ell^*$ is the highest level corresponding to the boundary of $T_a$. Because paths in the ladder can only travel rightward, a path targeting a sink inside $T_a$ is structurally forced to route to the left of the support block boundary $E_{\ell^*}$.
	
	\medskip
	\noindent\emph{Step 2: Free path families via induction on $k$.}
	We construct the free path assignment explicitly by induction on $k$. The base case $k=1$ is trivial since the ladder network has no vertical edges and $M_K = U_{1,n}$, making the equivalence hold unconditionally.
	
	Assume $k \ge 2$. Suppose $I \in \B_{M_K}$. We construct a free path family for $I$ by assigning the drop positions at level $k-1$ first. For each active source $r$, we route the path to the left of $E_{k-1}$ if its sink position satisfies $i_r - k \leq L_{k-1} - 1$, and to the right of $E_{k-1}$ if $i_r - k \geq R_{k-1} + 1$.
	
	The total capacity of free vertical edges to the left of the support $E_{k-1}$ is precisely $L_{k-1}-1$. The basis condition $|I \cap T_{d-1}| \geq \rho_{d-1}$ guarantees that the number of active sources whose sinks force them to drop on the left tightly matches this available free capacity. Thus, non-intersecting free drop positions can be validly assigned at level $k-1$.
	
	Fixing these drop positions geometrically slices the bottom rail, completely decoupling the active paths into a left and right group. This truncates the top $k-2$ levels into a separate free-routing problem on the ladder network for $\Gr(k-1, n-1)$. This reduction corresponds to applying the face restriction maps of Section~\ref{sec:face-restriction} (specifically Corollaries~\ref{cor:multi-del} and \ref{cor:multi-deletion}) to $\vn_K$. 
	
	Under this restriction, the noncrossing fan vector $\vn_K$ projects to $\vn_{K'}$, and the cyclic blocks of $M_K$ restrict to those of $M_{K'}$. By the inductive hypothesis applied to $K'$, the top $k-2$ levels can route the remaining paths using only free edges if and only if the restricted subset satisfies the inherited cyclic rank inequalities of $M_{K'}$. Since $I \in \B_{M_K}$ implies these restricted inequalities already hold, a globally free path family exists.
	
	Conversely, if a free path family $P$ exists for $I$, the strict non-intersection constraint at level $k-1$ obstructs the flow. The number of paths forced to drop to the left, plus the trivial identity-column sources $|I \cap T_{d-1} \cap [k]|$, can be at most the left-hand free capacity $L_{k-1}-1$. By the mapping from Step 1, this forces the prefix $T_{d-1}$ to be properly saturated, giving $|I \cap T_{d-1}| \geq \rho_{d-1}$. Applying the face restriction maps and the inductive hypothesis recursively to the upper levels validates the remaining inequalities of $M_K$, ensuring $I \in \B_{M_K}$.
\end{proof}

\begin{lemma}\label{lem: graded corank}
	For $I \in \binom{[n]}{k}$,
	\[
	\tropP_I(\vn_K) = \corank_{(\r_K, \mathbf{S}_K)}(I).
	\]
\end{lemma}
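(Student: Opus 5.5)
We prove the identity $\tropP_I(\vn_K) = \corank_{(\r_K,\mathbf{S}_K)}(I)$ by establishing two inequalities. The base case, corank $0$, is exactly Lemma~\ref{lem: zero cost iff basis}. By Lemma~\ref{lem: tropical pJ general}, $\tropP_I(\vn_K)$ is the minimum, over non-intersecting path families $P\in\mathcal{P}_I$, of the number of support edges that $P$ uses. We therefore have to show two things: some family uses at most $c(I):=\corank(I)$ support edges, and every family uses at least $c(I)$.

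\emph{Lower bound.} For each $a\in\{1,\dots,d-1\}$ we show that every $P\in\mathcal{P}_I$ uses at least $\rho_a-|I\cap T_a|$ support edges. Taking the maximum over $a$ together with $0$ and using \eqref{eq:corank-max} then gives the bound. Fix $a$ and let $\ell^*$ be the level attached to the boundary of $T_a$ as in Step 1 of the proof of Lemma~\ref{lem: zero cost iff basis}. The support intervals $E_\ell$ for the levels belonging to the run $I_a$ together form a ``wall'' in the ladder. Any path that starts at a source in the prefix $T_a\cap[k]$ and ends at a sink outside $T_a$ must cross this wall. Since paths move only rightward and downward, each crossing uses at least one vertical support edge, and distinct non-intersecting paths use distinct edges. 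The number of such crossing paths is at least $\rho_a-|I\cap T_a|$. This is the same counting argument as the ``conversely'' direction of Lemma~\ref{lem: zero cost iff basis}, except that we now count the surplus instead of concluding it is zero. The delicate point is the cyclic relabeling: the wall for $T_a$ has to be read in the ladder coordinates of the fixed parametrization, not the rotated ones. To handle this we treat separately the block containing $1$ and the blocks that wrap around past $n$.

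\emph{Upper bound, by induction on $c=c(I)\ge1$.} Given $I$ with corank $c$, we look for an exchange $I'=I\setminus\{i\}\cup\{j\}$ with $c(I')=c-1$. We choose $j$ inside the prefix $T_{a}$ that attains the maximal deficit, and we choose $i$ outside it, so that every maximizing deficit drops by exactly one. We also want $i,j$ to be adjacent in the sense of the ladder, meaning that a family for $I'$ can be converted into a family for $I$ by rerouting a single path so that it crosses one extra support edge. The candidate choice is to take $j$ as the largest non-element of $I$ in the block $S_a$ just before the cut, and $i$ as the smallest element of $I$ after the cut. We then reroute the path ending at sink $j$ (or the source path freed by $j\in[k]$) so that it drops through a single support edge of $E_{\ell^*}$ and goes to $i$. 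Non-intersection should still hold because the rerouted path stays between its neighbours. By induction, $\tropP_I\le \tropP_{I'}+1= c$.

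The main obstacle is this rerouting step: guaranteeing a single-edge local modification when several prefixes attain the maximum deficit simultaneously. If the direct construction proves awkward, we would instead run an induction on $k$ using the face restriction maps (Corollaries~\ref{cor:multi-del} and \ref{cor:multi-deletion}), as in Step 2 of the proof of Lemma~\ref{lem: zero cost iff basis}. In that approach we pay support edges only at level $k-1$, exactly $\max(0,\rho_{d-1}-|I\cap T_{d-1}|)$ of them, and then recurse on $K'$. The remaining check is that the two coranks combine by a maximum rather than a sum, which we verify using the nestedness of the prefixes $T_a$.
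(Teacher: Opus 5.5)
There are genuine gaps. The first sits exactly at the step you flagged as delicate: the identity is false as stated when the run of $K$ through $n,1$ wraps around.

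\emph{The statement itself.} Suppose $1,n\in K$ with $K$ non-cyclic. Then $[k]$ is never a basis of $M_K$, yet $p_{[k]}\equiv 1$ on $M(\x)$ forces $\tropP_{[k]}(\vn_K)=0$. Concretely, take $(k,n)=(3,6)$ and $K=\{1,4,6\}$.
\begin{itemize}
\item Here $S_1=\{5,6,1\}$ and $r_1=2$, so $\corank_{(\r_K,\mathbf S_K)}(123)=1\neq 0=\tropP_{123}(\vn_K)$.
\item Likewise $p_{156}=x_{2,3}$ and $\vn_K=e_{1,1}+e_{1,2}+e_{2,3}$ give $\tropP_{156}(\vn_K)=1$, although $156$ is a basis. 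So the zero-cost lemma you use as your base case also fails for this $K$.
\item Checking all twenty $3$-subsets, $\tropP_\bullet(\vn_K)$ is the corank of the matroid $\{B:|B\cap\{1\}|\ge 1,\ |B\cap[1,4]|\ge 2\}$ (the run $\{6,1\}$ split at $n\,|\,1$).
\item It differs from $\corank_{(\r_K,\mathbf S_K)}$ by the lineality vector $I\mapsto\sum_{i\in I}z_i$ with $z=\tfrac13(-1,-1,-1,-1,2,2)$.
\end{itemize}
So no separate treatment of wrap-around blocks can give the exact identity. What you should aim for is the statement modulo $\Lkn$, which is all the proof of Theorem~\ref{thm:duality} uses, or an exact statement with blocks adapted to the linear order of the ladder. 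The paper's proof has the same blind spot: its Step~1 assumes $T_a\cap[k]=\{1,\dots,\rho_a\}$ ``after cyclic relabeling'', which a fixed ladder does not allow.

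\emph{Lower bound.} Even where the prefixes are initial segments $[1,m]$, your argument fails. Vertical support edges do not form a wall, because a path can pass a support interval by running along a rail. Take $K=\{2,4,6\}$ and $I=\{4,5,6\}$. The unique family sends source $2$ to sink $5$ through the free edge $(2,2)$. Its only support edge is $(2,3)$, not the $T_1$-wall edge $(1,2)$, even though $T_1$ has deficit $1$. The cost arises globally, from non-intersection and limited free capacity. The paper instead reduces to the zero-cost lemma:
\begin{itemize}
\item Given $P\in\mathcal P_I$ with $w$ support edges, delete the at most $w$ paths touching support edges.
\item Replace their sinks $Q$ by their sources $R$. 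This gives a $k$-set $I'=(I\setminus Q)\cup R$ with a free family, hence a basis.
\item Then $|I\cap T_a|\ge|I'\cap T_a|-w\ge\rho_a-w$ for every $a$.
\end{itemize}

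\emph{Upper bound.} You follow the paper's exchange-and-reroute strategy, but the step carrying the content is left open, and your recipe fails on the paper's own example (Figure~\ref{fig:routing-corank}).
\begin{itemize}
\item For $I=\{1,5,6\}$ your recipe picks $j=4$ and $i=5$.
\item The path ending at sink $4$ in the family for $I'=\{1,4,6\}$ is source $3$'s bottom-rail path, which crosses no level at all.
\item Extending it to sink $5$ forces source $2$'s path to move its drop from $t=2$ to $t=3$. It is this neighbour, not the rerouted path, that pays $(2,3)$.
\end{itemize}
So ``the rerouted path stays between its neighbours'' is not available. LGV tail-swapping does not help either, since it keeps shared vertices shared.

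\emph{Fallback.} Your fallback is misaligned. The $T_{d-1}$-wall sits at level $\rho_{d-1}=k-r_d$, not at level $k-1$. For $K=\{2,5,6\}$ one has $E_{k-1}=\emptyset$, and the deficit of $T_1$ is paid at level $1$. Moreover, showing that the stagewise costs combine by a maximum rather than a sum is precisely the missing argument.
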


\begin{proof}
	Write $c(I) := \corank_{M_K}(I)$ and $\pi(I) := \tropP_I(\vn_K)$. We show $\pi(I) = c(I)$ by establishing both an upper and lower bound via induction on $c(I)$. 
	
	The base case $c(I) = 0$ is Lemma~\ref{lem: zero cost iff basis}. Suppose $c(I) = c \geq 1$ and the equality holds for all subsets with corank less than $c$. 
	
	\medskip
	\noindent\emph{Upper bound $\pi(I) \leq c$.}
	By the strong matroid basis exchange property, since $I \notin \B_{M_K}$ but a valid basis exists, there is some $j \in I$ and $j' \notin I$ such that $I' := (I \setminus \{j\}) \cup \{j'\}$ satisfies $c(I') = c - 1$. Furthermore, because the rank deficit is governed by the cyclic blocks of a transversal matroid, we can select $j, j'$ such that they bridge exactly one tight capacity bottleneck (i.e., they are separated by a single block boundary corresponding to one specific support region $E_\ell$).
	
	By the inductive hypothesis, $\pi(I') = c - 1$, meaning there exists a valid non-intersecting path family $P'$ for $I'$ traversing exactly $c - 1$ support edges. We modify $P'$ into a path family $P$ for $I$ by altering the routing of a single path: the path with its sink originally at position $j'$ is rerouted to instead exit at $j$. 
	
	Because the elements $j$ and $j'$ were selected to bridge exactly one capacity boundary, rerouting the path forces its required vertical drop position from the free zone across exactly one boundary into the support zone. Thus, the new path transverses at most one additional support edge. If this shifted path intersects any other path in the family, we apply the standard Lindstr\"om--Gessel--Viennot tail-swapping procedure at the intersection points to uncross them. Because tail-swapping strictly exchanges the destination trajectories without altering the set of vertical edges used, it resolves all intersections while strictly preserving the multiset of traversed support edges. Thus a non-intersecting path family $P$ for $I$ exists using at most $(c-1) + 1 = c$ support edges, giving $\pi(I) \leq c$.
	
	\medskip
	\noindent\emph{Lower bound $\pi(I) \geq c$.}
	Let $P \in \mathcal{P}_I$ be an arbitrary non-intersecting path family routing the active sources to the sinks of $I$. Suppose $P$ traverses exactly $w$ total support edges. Each path in $P$ either routes entirely through the free subnetwork $\mathcal{N}_0$, or traverses at least one support edge. Since the total number of traversed support edges is $w$, at most $w$ paths in $P$ can intersect the support region $E_K$. 
	
	If we discard these $\le w$ paths, we are left with a sub-family of at least $k-w$ paths routed completely within $\mathcal{N}_0$. The endpoints of these remaining paths form a subset $I_0 \subseteq I$ of size at least $k-w$ that is routed entirely by free edges. By Lemma~\ref{lem: zero cost iff basis}, this subset $I_0$ must be strictly independent in $M_K$. 
	
	Since the rank of $I$ in $M_K$ is the size of its largest independent subset, we have $\max_{B \in \B_{M_K}} |I \cap B| \geq |I_0| \geq k - w$. By the definition of the corank function,
	\[
	c = c(I) = k - \max_{B \in \B_{M_K}} |I \cap B| \leq k - (k - w) = w.
	\]
	This inequality fundamentally holds because tracking the $w$ exchanges backwards to $I$ can lower the intersection size by at most $w$. Since any valid path family for $I$ must incur a cost of at least $c$, we have $\pi(I) \geq c$.
	
	\medskip
	Combining the two bounds establishes $\pi(I) = c$, completing the induction.
\end{proof}

\begin{proposition}\label{prop: tropical weight is corank}
	For any $K \in \bncyc$ and $I \in \binom{[n]}{k}$,
	\[
	\tropP_I(\vn_K) = \corank_{(\r_K, \mathbf{S}_K)}(I).
	\]
\end{proposition}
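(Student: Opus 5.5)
The statement is exactly Lemma~\ref{lem: graded corank}, so the proof will be a direct invocation, and I would organize it so that it reads as the culmination of the two-stage argument of this subsection. First, use Lemma~\ref{lem: tropical pJ general} to rewrite $\tropP_I(\vn_K)$ as the minimum, over non-intersecting path families $P\in\mathcal{P}_I$, of the number of support edges traversed. This works because evaluating at $\vn_K$ sets $y_{\ell,t}=1$ for $t\in E_\ell$ (Lemma~\ref{lem: vJ support general}) and $y_{\ell,t}=0$ otherwise. The statement is then a purely combinatorial min-cost routing identity.

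Second, establish the zero level set, namely Lemma~\ref{lem: zero cost iff basis}: a family using only free edges exists if and only if $I$ satisfies the prefix inequalities $|I\cap T_a|\ge\rho_a$ defining $\B_{(\r_K,\mathbf S_K)}$. The induction on $k$ goes through the face restriction maps (Corollaries~\ref{cor:multi-del}, \ref{cor:multi-deletion}). At level $k-1$, the free capacity to the left of $E_{k-1}$ is matched to the last prefix constraint. Peeling that level off produces the same problem for a smaller $K'$.

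Third, upgrade to the graded statement by induction on $c=\corank(I)$, as in Lemma~\ref{lem: graded corank}.
\begin{itemize}
\item \emph{Lower bound.} Given a family of cost $w$, delete the at most $w$ paths touching support edges. The surviving endpoints, together with the identity sources, form an independent set of size at least $k-w$, so $c\le w$. A cleaner route I would consider is a cut argument: each violated prefix $T_a$ forces at least $\rho_a-|I\cap T_a|$ paths across the single support block $E_\ell$ attached to that boundary.
\item \emph{Upper bound.} Exchange one element of $I$ across a tight boundary to get $I'$ with corank $c-1$. Reroute a single path of an optimal family for $I'$, which adds one support edge. Any crossings are repaired by tail swaps, which preserve the multiset of used edges.
\end{itemize}

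As a cross-check, both sides are determined modulo lineality by their $u^t_J$ values. Theorem~\ref{thm:h=corank} gives $u^t_J(\corank_K)=\delta_{J,K}$. However, the evaluation $u^t_J(\tropP_\bullet(\vn_K))$ is precisely the content of Theorem~\ref{thm:duality}, so this check cannot replace the path argument without circularity.

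The main obstacle is the upper bound. One must show that the exchange $j\leftrightarrow j'$ can always be chosen so that the rerouted path crosses exactly one support block, and that uncrossing does not add support edges. I would handle this by choosing $j'$ as the element of $I^c$ immediately to the right of the most deficient prefix boundary, which realizes the maximum in \eqref{eq:corank-max}. I would then route the new path greedily as far left as possible at every level, so that it enters only the support interval $E_\ell$ at that boundary.
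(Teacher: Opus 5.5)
\textbf{Verdict: there is a genuine gap, in the upper bound $\tropP_I(\vn_K)\le c(I)$.} Citing Lemma~\ref{lem: graded corank} is exactly the paper's proof of this proposition, and your outline of that lemma follows the paper's argument step for step. But that argument does not prove the upper bound, and the paper's proof of the lemma has the same defect.

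\emph{The tail-swap step fails.} Families in $\mathcal{P}_I$ must be vertex-disjoint. If two paths $A_1vB_1$ and $A_2vB_2$ share a vertex $v$, swapping tails gives $A_1vB_2$ and $A_2vB_1$, and both still pass through $v$. Tail swaps preserve the multiset of visited vertices, so no sequence of them turns a colliding family into a non-intersecting one. In Figure~\ref{fig:routing-corank} the collision is actually resolved by moving a \emph{different} path (source $2$'s drop goes from $t=2$ to $t=3$), and that move is where the support edge is paid.

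\emph{Rerouting a single path is usually not possible.} It only makes sense when $j,j'>k$ and no other sink of $I$ lies between them. Otherwise either the set of active sources changes, or the order-preserving source--sink matching forced by planarity reassigns several paths at once. What really has to be shown is that the resulting cascade of pushes on neighbouring paths enters at most one new support edge in total. Neither your choice of $j'$ next to the most deficient boundary nor a leftmost routing of the moved path controls this.

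\emph{The lower bound needs one more ingredient.} Lemma~\ref{lem: zero cost iff basis} is about full $k$-subsets. To conclude that the endpoints of a \emph{partial} free family, together with the identity columns, are independent in $M_K$, you need that the sets linkable in $\mathcal{N}_0$ form a gammoid; its bases are then $\B_{M_K}$ by that lemma.

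\emph{Your cut argument is false as stated.} Take $(k,n)=(3,6)$ and $K=\{1,3,5\}$, so $E_1=\{1\}$ and $E_2=\{2\}$. For $I=\{2,3,5\}$ only $T_1=\{6,1\}$ is violated. Yet source $1$ reaches sink $5$ optimally with drops $(t_1,t_2)=(1,1)$ or $(2,2)$, paying in $E_1$ in the first case and in $E_2$ in the second. So no single support block is forced.

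\emph{The face restriction maps cannot be used as you propose.} They are statements about the $\h_J$, and transporting them to $\tropP_\bullet(\vn_K)$ presupposes $\tropP_\bullet(\vn_K)\equiv\h_K$. The peeling in the zero-cost step has to be carried out directly in the network.

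\emph{A route that avoids rerouting altogether.}
\begin{enumerate}
\item $\tropP_\bullet(\vn_K)$ satisfies the positive tropical Pl\"ucker relations. So does $\corank_{(\r_K,\mathbf{S}_K)}$: it is $\equiv\h_K$ by Theorem~\ref{thm:h=corank}, $\h_K$ is positive by Theorem~\ref{thm:weak-separation-blade-arrangement}, and the relations are invariant under $\Lkn$.
\item Maximal weakly separated collections are connected by square moves, and each square move is a three-term relation that determines the exchanged coordinate. Hence a positive tropical Pl\"ucker vector is determined by its values on any single maximal weakly separated collection.
\item On the rectangle collection $\{[1,k-a]\cup[b+1,b+a]\}$, every $\mathcal{P}_I$ consists of a single family. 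The identity therefore reduces to counting support edges on explicit paths.
\end{enumerate}
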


\begin{proof}
	This is exactly Lemma~\ref{lem: graded corank}.
\end{proof}

To make the mechanics of the zero-cost induction and the capacity bottlenecks visually transparent, we trace the path flows for a basis and a non-basis in the following example.

\begin{example}[Visualizing Corank on the Ladder Network]\label{ex: visual-routing}
	Consider $(k,n) = (3,6)$ and the noncrossing fan vector $\vn_K$ for $K = \{2,4,6\}$. The associated Schubert positroid $M_K$ has cyclic blocks $S_1 = \{1,2\}$, $S_2 = \{3,4\}$, $S_3=\{5,6\}$. The rank conditions demand that any basis $I$ satisfies $|I \cap S_1| \ge 1$ and $|I \cap (S_1 \cup S_2)| \ge 2$.
	
	\smallskip
	By Lemma~\ref{lem: vJ support general}, evaluating at $\vn_{246} = e_{1,2} + e_{2,3}$ assigns a tropical weight of $1$ to the support edges $x_{1,2}$ (level 1) and $x_{2,3}$ (level 2). These act as red toll bridges. All other vertical edges are free, carrying a weight of $0$.
	
	Figure \ref{fig:routing-corank} compares a basis with a non-basis to visually trace the induction step of Lemma~\ref{lem: graded corank}. It demonstrates how a rank deficit restricts the available free-flowing capacity, forcing paths over the toll bridges to avoid intersection.
	
	\begin{figure}[ht]
		\centering
		\begin{tikzpicture}[scale=1.2, every node/.style={font=\footnotesize}]
			\begin{scope}[shift={(0,0)}]
				\node[font=\small\bfseries] at (1.5, 0.6) {Case A: $I'=\{1,4,6\}$ (Basis, Cost 0)};
				\foreach \y in {0, -1, -2} \draw[gray, thick] (0,\y) -- (3,\y);
				\foreach \x in {1, 2, 3} {
					\draw[gray, thick] (\x,0) -- (\x,-1);
					\draw[gray, thick] (\x,-1) -- (\x,-2);
				}
				\draw[red, line width=2.5pt] (2,0) -- (2,-1); 
				\draw[red, line width=2.5pt] (3,-1) -- (3,-2); 
				\node[right, text=red] at (2,-0.5) {\scriptsize $+1$};
				\node[right, text=red] at (3,-1.5) {\scriptsize $+1$};
				
				\draw[blue, line width=1.8pt, ->] (0,-2) -- (1,-2) -- (1,-2.4); 
				\draw[blue, line width=1.8pt, ->] (0,-1) -- (2,-1) -- (2,-2) -- (3,-2) -- (3,-2.4); 
				
				\node[left] at (0,0) {Source 1};
				\node[left] at (0,-1) {Source 2};
				\node[left] at (0,-2) {Source 3};
				\node[below] at (1,-2.4) {Sink 4};
				\node[below] at (2,-2.4) {Sink 5};
				\node[below] at (3,-2.4) {Sink 6};
				
				\node[above, gray] at (1,0) {$t=1$};
				\node[above, gray] at (2,0) {$t=2$};
				\node[above, gray] at (3,0) {$t=3$};
				
				\foreach \y in {0, -1, -2} {
					\node[circle, fill=black, inner sep=1.2pt] at (0,\y) {};
					\foreach \x in {1, 2, 3} {
						\node[circle, fill=black, inner sep=1.2pt] at (\x,\y) {};
					}
				}
			\end{scope}
			
			\begin{scope}[shift={(6,0)}]
				\node[font=\small\bfseries] at (1.5, 0.6) {Case B: $I=\{1,5,6\}$ (Exchange, Cost 1)};
				\foreach \y in {0, -1, -2} \draw[gray, thick] (0,\y) -- (3,\y);
				\foreach \x in {1, 2, 3} {
					\draw[gray, thick] (\x,0) -- (\x,-1);
					\draw[gray, thick] (\x,-1) -- (\x,-2);
				}
				\draw[red, line width=2.5pt] (2,0) -- (2,-1); 
				\draw[red, line width=2.5pt] (3,-1) -- (3,-2); 
				\node[right, text=red] at (2,-0.5) {\scriptsize $+1$};
				\node[right, text=red] at (3,-1.5) {\scriptsize $+1$};
				
				\draw[blue, line width=1.8pt, ->] (0,-2) -- (2,-2) -- (2,-2.4); 
				\draw[blue, line width=1.8pt, ->] (0,-1) -- (3,-1) -- (3,-2) -- (3,-2.4); 
				
				\draw[orange, thick, dashed] (2,-2) circle (0.2);
				
				\node[left] at (0,0) {Source 1};
				\node[left] at (0,-1) {Source 2};
				\node[left] at (0,-2) {Source 3};
				\node[below] at (1,-2.4) {Sink 4};
				\node[below] at (2,-2.4) {Sink 5};
				\node[below] at (3,-2.4) {Sink 6};
				
				\node[above, gray] at (1,0) {$t=1$};
				\node[above, gray] at (2,0) {$t=2$};
				\node[above, gray] at (3,0) {$t=3$};
				
				\foreach \y in {0, -1, -2} {
					\node[circle, fill=black, inner sep=1.2pt] at (0,\y) {};
					\foreach \x in {1, 2, 3} {
						\node[circle, fill=black, inner sep=1.2pt] at (\x,\y) {};
					}
				}
			\end{scope}
		\end{tikzpicture}
		\caption{Tracing the Induction Step. In Case A, $I' = \{1,4,6\} \in \B_{M_K}$ routes its active sources (Source 2, Source 3) via free gray edges (cost 0). In Case B, swapping sink 4 for 5 yields $I=\{1,5,6\} \notin \B_{M_K}$. Source 3 is extended to $t=2$, occupying the node at (level 3, $t=2$). To avoid an LGV collision (dashed orange circle), Source 2 is forced to shift its drop position to $t=3$, pushing it across the geometric boundary and into the red $+1$ support edge $x_{2,3}$. The tropical weight increments by exactly 1.}
		\label{fig:routing-corank}
	\end{figure}
\end{example}

\subsection{Proof of Theorem~\ref{thm:duality}}

With Proposition~\ref{prop: tropical weight is corank} in hand, the duality theorem reduces swiftly to the planar basis framework of Section~\ref{sec:corank}.

\begin{proof}[Proof of Theorem~\ref{thm:duality}]
	Let $J, K \in \bncyc$. By the definition of the tropical planar cross-ratio~\eqref{eq:tropu}, evaluating $u^t_J$ at $\vn_K$ yields:
	\[
	u^t_J(\vn_K) = \sum_{M \in C_J} (-1)^{|J \cap M| - k - 1} \tropP_M(\vn_K).
	\]
	Substituting the exact equivalence from Proposition~\ref{prop: tropical weight is corank}, we evaluate the tropical weight analytically as the matroid corank:
	\[
	u^t_J(\vn_K) = \sum_{M \in C_J} (-1)^{|J \cap M| - k - 1} \corank_{(\r_K, \mathbf{S}_K)}(M).
	\]
	This expression applies the linear functional $u^t_J$ directly to the discrete corank function of $M_K$, allowing us to write:
	\[
	u^t_J(\vn_K) = u^t_J\bigl(\corank_{(\r_K, \mathbf{S}_K)}\bigr).
	\]
	By Theorem~\ref{thm:h=corank} (Planar Basis as Corank Function), the corank function satisfies the equivalence $\corank_{(\r_K, \mathbf{S}_K)} \equiv \h_K \pmod{\Lkn}$. Since tropical cross-ratios are fundamentally torus-invariant (Proposition~\ref{prop:u-torus-invariance}), the linear functional $u^t_J$ perfectly annihilates any lineality terms. Thus:
	\[
	u^t_J\bigl(\corank_{(\r_K, \mathbf{S}_K)}\bigr) = u^t_J(\h_K).
	\]
	Finally, by the fundamental duality property of the planar basis established in Theorem~\ref{thm:planarbasis}(4), we have exactly $u^t_J(\h_K) = \delta_{J,K}$. Concluding the chain of equalities yields:
	\[
	u^t_J(\vn_K) = \delta_{J,K}. \qedhere
	\]
\end{proof}

We illustrate the duality via a direct $t$-adic evaluation of the cross-ratios.

\begin{example}\label{ex:tadic-computation u}
	We compute both the diagonal and off-diagonal cases for $J = \{1,3,5\}$.  We first record the $u$-variable $u_{135}$ in the positive parameterization: 
	\begin{small}
		\begin{eqnarray*}
			u_{135} & = & \frac{p_{136} p_{145} p_{235} p_{246}}{p_{135} p_{146} p_{236} p_{245}}\\
			& = & \frac{\left(x_{1,1} x_{2,1}+x_{1,1} x_{2,2}+x_{1,2} x_{2,2}\right) \left(x_{2,1}+x_{2,2}+x_{2,3}\right) \left(x_{1,1} x_{2,2}+x_{1,2} x_{2,2}+x_{1,1} x_{2,3}+x_{1,2} x_{2,3}+x_{1,3} x_{2,3}\right)}{\left(x_{1,1}+x_{1,2}\right) \left(x_{2,1}+x_{2,2}\right) \left(x_{2,2}+x_{2,3}\right) \left(x_{1,1} x_{2,1}+x_{1,1} x_{2,2}+x_{1,2} x_{2,2}+x_{1,1} x_{2,3}+x_{1,2} x_{2,3}+x_{1,3} x_{2,3}\right)}.
		\end{eqnarray*}
	\end{small}
	For the diagonal case $K = \{1,3,5\}$, we have $\vn_{135} = e_{1,1} + e_{2,2}$. We compute the leading exponent of $t$ in the evaluation of $u_{135}$ at a point $\x$, along the path where $x_{1,1}(t) = t x_{1,1}$ and $x_{2,2}(t) = t x_{2,2}$, obtaining: 
	\begin{small}
		\begin{eqnarray*}
			& & u_{135}(t)  \\
			& = & \frac{t \left(t x_{1,1} x_{2,2}+x_{1,1} x_{2,1}+x_{1,2} x_{2,2}\right) \left(t x_{2,2}+x_{2,1}+x_{2,3}\right) \left(t^2 x_{1,1} x_{2,2}+t x_{1,2} x_{2,2}+t x_{1,1} x_{2,3}+x_{1,2} x_{2,3}+x_{1,3} x_{2,3}\right)}{\left(t x_{1,1}+x_{1,2}\right) \left(t x_{2,2}+x_{2,1}\right) \left(t x_{2,2}+x_{2,3}\right) \left(t^2 x_{1,1} x_{2,2}+t x_{1,1} x_{2,1}+t x_{1,2} x_{2,2}+t x_{1,1} x_{2,3}+x_{1,2} x_{2,3}+x_{1,3} x_{2,3}\right)}\\
			& = & \frac{t \left(x_{1,1} x_{2,1}+x_{1,2} x_{2,2}\right) \left(x_{2,1}+x_{2,3}\right)}{x_{1,2} x_{2,1} x_{2,3}} + \mathcal{O}(t^2).
		\end{eqnarray*}
	\end{small}
	The exponent of the leading order in $t$ evaluates precisely to $1$, giving $u^t_{135}(\vn_{135}) = 1$.  
	
	On the other hand, for the off-diagonal case $K = \{2,4,6\}$, we have $\vn_{246} = e_{1,2} + e_{2,3}$. Evaluating the series expansion at $x_{1,2}(t) = t x_{1,2}$ and $x_{2,3}(t) = t x_{2,3}$ gives: 
	\begin{eqnarray*}
		u_{135}(t) & = & 1+\frac{t x_{1,3} x_{2,1} x_{2,3}}{x_{1,1} x_{2,2} \left(x_{2,1}+x_{2,2}\right)} + \mathcal{O}(t^2).
	\end{eqnarray*}
	The leading exponent evaluates strictly to $0$, yielding $u^t_{135}(\vn_{246}) = 0$.
\end{example}

\section{Proof of Theorem~\ref{thm:f-pk-equals-nc}}\label{sec:f-tropical}

We prove \cref{thm:f-pk-equals-nc} by using the \emph{bridge function} $H$.
\subsection{The bridge function}\label{subsec:bridge}

For $j \in \{0, 1, \ldots, n-1\}$ (indices cyclic modulo $n$), denote by $\mathrm{cyc}(j) := \{j{+}1, j{+}2, \ldots, j{+}k\}$ the \emph{cyclic interval} starting at $j+1$ and let  $\mathrm{gap}(j) := \{j{+}1, \ldots, j{+}k{-}1, j{+}k{+}1\}$ be the \emph{gap-cyclic interval} obtained by replacing the last element with its successor.  Define the \emph{bridge function} $H : \R^{\binom{[n]}{k}} \to \R$ by
\begin{equation}\label{eq:bridge}
	H(\tropP_\bullet)
	:= \sum_{j=0}^{n-1}
	\bigl(\tropP_{\mathrm{cyc}(j)} - \tropP_{\mathrm{gap}(j)}\bigr).
\end{equation}
We will also write $H(y) := H(\rho(y))$ for $y \in \Tkn$, where $\rho$ is the positive parametrization of \eqref{eq:rho}; equivalently, $H(y) = \sum_{j=0}^{n-1}\bigl(p^t_{\mathrm{cyc}(j)}(y) - p^t_{\mathrm{gap}(j)}(y)\bigr)$.  The function $H$ is linear on $\R^{\binom{[n]}{k}}$, hence linear on every cone of every fan contained in $\R^{\binom{[n]}{k}}$.

\begin{rem}
	The bridge function $H(\tropP_\bullet)$ was originally studied in \cite{CE24} in the context of mirror superpotentials. 
\end{rem}

The next proposition expresses $H$ multiplicatively as an alternating product of cyclic and gap-cyclic Pl\"ucker coordinates, and identifies this product with the corresponding non-cyclic products of $u$-variables.

\begin{prop}[Bridge identity]\label{prop:bridge-identity}
	As rational functions on the positive configuration space $X(k,n)$,
\begin{equation}\label{eq:bridge-identity}
	\prod_{J \in \bncyc} u_J
	\;=\;
	\prod_{j=0}^{n-1} \frac{p_{\mathrm{cyc}(j)}}{p_{\mathrm{gap}(j)}}.
\end{equation}
Thus $H= \sum_{J \in \bncyc} u^t_J$.
\end{prop}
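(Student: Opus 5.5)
The plan is to compare exponents of Pl\"ucker coordinates on both sides, since both sides of \eqref{eq:bridge-identity} are Laurent monomials in the $p_I$. By Proposition~\ref{prop:u-product}, $\prod_{J\in\binom{[n]}{k}} u_J = 1$. Hence $\prod_{J\in\bncyc} u_J = \prod_{j=0}^{n-1} u_{\mathrm{cyc}(j)}^{-1}$, where the product on the right runs over the $n$ cyclic intervals. The identity therefore reduces to computing $u_J$ for a cyclic interval $J$ directly from Definition~\ref{def:u-variable}.

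Let $J=\mathrm{cyc}(j)=\{j+1,\dots,j+k\}$. This set has a single cyclic endpoint, $I_J=\{j+k\}$. Its cubical array $C_J$ therefore has exactly two elements: $J$ itself, and $J\setminus\{j+k\}\cup\{j+k+1\} = \mathrm{gap}(j)$.

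The signs come from the exponent $(-1)^{|J\cap M|-k-1}$. For $M=J$ we have $|J\cap M|=k$, so the exponent is $(-1)^{-1}=-1$. For $M=\mathrm{gap}(j)$ we have $|J\cap M|=k-1$, so the exponent is $+1$. Thus $u_{\mathrm{cyc}(j)} = p_{\mathrm{gap}(j)}/p_{\mathrm{cyc}(j)}$, and inverting and multiplying over $j$ gives exactly the right-hand side of \eqref{eq:bridge-identity}.

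Some care is needed. The individual $u_{\mathrm{cyc}(j)}$ are not torus-invariant (Proposition~\ref{prop:u-torus-invariance}), so the manipulation should be carried out as an identity of Laurent monomials on $\Gr^\circ(k,n)$. Both sides of the final identity are torus-invariant: the left side because each noncyclic $u_J$ is, and the right side because it equals the left side. So the identity descends to $X(k,n)$ and in particular holds on the positive part. It is also worth checking the degenerate edge case $k=n-1$, though the standing hypothesis $n\ge k+2$ in this section excludes it, so $\mathrm{gap}(j)\neq\mathrm{cyc}(j)$.

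For the tropical statement, tropicalize both sides. Each $u_J^t$ is the linear function \eqref{eq:tropu}, and tropicalization turns products and quotients of monomials into sums and differences. This gives $\sum_{J\in\bncyc}u^t_J = \sum_j (p^t_{\mathrm{cyc}(j)}-p^t_{\mathrm{gap}(j)}) = H$ as linear functions on $\R^{\binom{[n]}{k}}$. This holds identically, not only on the tropical Grassmannian, because it is an identity of Laurent monomials.

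The only real obstacle is the bookkeeping of the sign convention $(-1)^{|J\cap M|-k-1}$. I would sanity-check it against the $k=2$ formula $u_{ij}=p_{i+1,j}p_{i,j+1}/(p_{ij}p_{i+1,j+1})$ before trusting the orientation of the final quotient.
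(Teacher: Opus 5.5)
Your proposal is correct and matches the paper's proof: it invokes $\prod_{J} u_J = 1$ from Proposition~\ref{prop:u-product} and observes that for $J=\mathrm{cyc}(j)$ the cubical array is $\{\mathrm{cyc}(j),\mathrm{gap}(j)\}$ with exponents $-1$ and $+1$, so $u_{\mathrm{cyc}(j)} = p_{\mathrm{gap}(j)}/p_{\mathrm{cyc}(j)}$, which it then inverts. Your extra remarks fill in details the paper leaves implicit: you work with Laurent monomials on $\Gr^\circ(k,n)$, since the cyclic $u_J$ are not torus-invariant, and you note that $H=\sum_{J\in\bncyc}u^t_J$ holds identically on $\R^{\binom{[n]}{k}}$.
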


\begin{proof}
	We first establish $\prod_{J \in \bncyc} u_J = \prod_{j=0}^{n-1} p_{\mathrm{cyc}(j)}/p_{\mathrm{gap}(j)}$ from Proposition~\ref{prop:u-product}.  By that proposition, $\prod_{J \in \binom{[n]}{k}} u_J = 1$, hence
	\[
	\prod_{J \in \bncyc} u_J
	\;=\;
	\prod_{J \text{ cyclic}} u_J^{-1}.
	\]
	For a cyclic interval $J = \mathrm{cyc}(j)$, the set of cyclic endpoints $I_J = \{j' \in J : j'+1 \notin J\}$ is the singleton $\{j+k\}$, so the cubical array $C_J$ from Definition~\ref{def:cubical-array} has exactly two elements: $J$ itself (corresponding to $M = \emptyset$) and $\mathrm{gap}(j)$ (corresponding to $M = \{j+k\}$).  The respective sign exponents $|J \cap M| - k - 1$ are $-1$ and $-2$, so $u_{\mathrm{cyc}(j)} = p_{\mathrm{cyc}(j)}^{-1} \cdot p_{\mathrm{gap}(j)} = p_{\mathrm{gap}(j)}/p_{\mathrm{cyc}(j)}$. Taking the product over the $n$ cyclic intervals gives the first equality.
	
\end{proof}

\begin{example}\label{ex:bridge-36}
	For $(k, n) = (3, 6)$, the product of the non-cyclic $u$-variables, computed via the cubical-array Pl\"ucker formula of Definition~\ref{def:u-variable}, equals
	\[
	\prod_{J \in \binom{[6]}{3}^{\ncyc}} u_J
	\;=\;
	\frac{p_{1,2,3}\, p_{1,2,6}\, p_{1,5,6}\, p_{2,3,4}\,
		p_{3,4,5}\, p_{4,5,6}}
	{p_{1,2,4}\, p_{1,3,6}\, p_{1,4,5}\, p_{2,3,5}\,
		p_{2,5,6}\, p_{3,4,6}}.
	\]
	Plugging in the ladder parameterization and simplifying gives
	\[
	\frac{x_{1,1} x_{1,2} x_{1,3} x_{2,1} x_{2,2} x_{2,3}}{\left(x_{1,1}+x_{1,2}+x_{1,3}\right) \left(x_{1,1} x_{2,1}+x_{1,1} x_{2,2}+x_{1,2} x_{2,2}\right) \left(x_{2,1}+x_{2,2}+x_{2,3}\right) \left(x_{1,2} x_{2,2}+x_{1,2} x_{2,3}+x_{1,3} x_{2,3}\right)}.
	\]
\end{example}

We follow the notation and conventions of \cite{CE24,E21}.
\begin{prop}\label{prop:bridge-closed-form}
On the positive parametrization $M(\x)$ of \cref{def:positive-param},
\begin{equation}\label{eq:bridge-closed-form}
\prod_{j=0}^{n-1} \frac{p_{\mathrm{cyc}(j)}}{p_{\mathrm{gap}(j)}}
\;=\;
\frac{\prod_{i=1}^{k-1}\prod_{t=1}^{n-k} x_{i,t}}
     {\bigl(\prod_{i=1}^{k-1} P_i(\x)\bigr)\,
      \bigl(\prod_{j=1}^{n-k-1} Q_j(\x)\bigr)},
\end{equation}
where
\begin{align}
P_i(\x) &:= \sum_{t=1}^{n-k} x_{i,t}, \label{eq:P-factor}\\
Q_j(\x) &:= \sum_{r=0}^{k-1}
   \Bigl(\prod_{i=1}^{k-1-r} x_{i,j}\Bigr)
   \Bigl(\prod_{i=k-r}^{k-1} x_{i,j+1}\Bigr).
   \label{eq:Q-factor}
\end{align}
The Newton polytope of $P_i$ is the standard $(n-k-1)$-simplex in the $i$-th copy of $\R^{n-k}$, and the Newton polytope of $Q_j$ is a $(k-1)$-simplex with vertices indexed by $r \in \{0, 1, \ldots, k-1\}$.
\end{prop}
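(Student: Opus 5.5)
We would compute each of the $2n$ Plücker coordinates $p_{\mathrm{cyc}(j)}$ and $p_{\mathrm{gap}(j)}$ explicitly on $M(\x)$, using Proposition~\ref{prop: pJ formula general} or direct cofactor expansion, and then read off the cancellation. Because the first $k$ columns of $M$ are the identity, each coordinate reduces to a small minor of the right block. The coordinates split into three regimes according to how the interval meets $[k]$.

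\textbf{Regime 1: intervals inside $[k]$ or $[k+1,n]$.}
\begin{itemize}
\item $\mathrm{cyc}(0)=[1,k]$ gives $p=1$.
\item For $\mathrm{gap}(0)=[1,k-1]\cup\{k+1\}$, the reduced minor is the single entry $1$ in the bottom row, so $p=1$.
\item For $j$ with $\mathrm{cyc}(j)\subset[k+1,n]$, i.e.\ $k\le j\le n-k$, the minor involves all $k$ sources. Here LGV with the ladder structure should give a single non-intersecting family whose weight is a product of consecutive-type sums. We would organize this by the standard observation that for consecutive sinks the non-intersecting families factor level by level.
\end{itemize}

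\textbf{Regime 2: intervals that wrap around, $j>n-k$.} These contain an initial segment $[1,a]$ and a terminal segment $[n-k+a+1,n]$. The active sources are then the last $k-a$ rows, feeding the last $k-a$ sinks. The path families are forced to be packed against the right end. The resulting minors should be products of the $P_i$-type sums (sources that must traverse an entire level) together with monomials $x_{i,n-k}$.

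\textbf{Regime 3: intervals that start in $[k]$, $1\le j\le k-1$.} These contain the sources $j+1,\dots,k$ as identity columns and the sinks $k+1,\dots,j+k$. The active sources are $1,\dots,j$, routed to the first $j$ sinks. The minimal positions are forced, giving monomials such as $\prod x_{i,1}$.

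\textbf{Gap intervals and the role of $Q_j$.} For each gap interval we get the same data with one sink shifted by one. Exactly one path now has two choices of where to descend, at position $t$ or $t+1$. Summing over which level performs the shift should produce precisely a $Q$-type sum, a sum over $r$ of products of $x_{i,j}$ on the upper levels and $x_{i,j+1}$ on the lower levels. When the shift happens at a wrap-around, the sum instead becomes a full $P_i$.

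\textbf{Telescoping.} With all factors in hand, we would form the ratio $p_{\mathrm{cyc}(j)}/p_{\mathrm{gap}(j)}$ and verify that the products telescope as $j$ runs over $0,\dots,n-1$. Each monomial factor should appear once in the numerator overall, giving $\prod x_{i,t}$. Each $Q_j$ for $1\le j\le n-k-1$ and each $P_i$ should appear exactly once in the denominator. As a consistency check, Example~\ref{ex:bridge-36} should be recovered for $(3,6)$.

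\textbf{Newton polytopes.} These are immediate. $P_i$ is a sum of distinct variables in one copy of $\R^{n-k}$. The $k$ monomials of $Q_j$ have exponent vectors $\sum_{i\le k-1-r}e_{i,j}+\sum_{i\ge k-r}e_{i,j+1}$. These are affinely independent because consecutive differences are $e_{i,j+1}-e_{i,j}$ for distinct $i$.

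\textbf{Main obstacle.} The hard step is the uniform bookkeeping of which factors each minor contributes across the three regimes, especially the gap minors straddling the boundary between $[k]$ and $[k+1,n]$. For these I would first try the cleaner route of proving the per-$j$ formula for the ratio $p_{\mathrm{cyc}(j)}/p_{\mathrm{gap}(j)}$ by a one-step LGV comparison, rather than computing each minor separately.
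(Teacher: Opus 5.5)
Your plan is essentially the paper's argument: evaluate each $p_{\mathrm{cyc}(j)}$ and $p_{\mathrm{gap}(j)}$ by Lindstr\"om--Gessel--Viennot on the ladder network, where a cyclic interval forces a unique family (a monomial) and the gap interval lets the top path move from position $j$ to $j+1$ at any one level (giving $Q_j$ times the same monomial for the other paths); the wrap-around terms supply the $P_i$, everything telescopes, and your affine-independence check supplies the Newton polytope claim, which the paper leaves implicit. Your per-$j$ ratio route is the right way to do the bookkeeping, but correct two predictions: the wrapping \emph{cyclic} minors are pure monomials with no $P_i$, and the ratios come out as $1$ for $j=0$, $\prod_i x_{i,j}/Q_j$ for $1\le j\le n-k-1$, $\prod_i x_{i,n-k}$ for $j=n-k$ (no $P_i$ here even though $\mathrm{gap}(n-k)$ wraps), and $1/P_a$ for $j=n-k+a$ with $1\le a\le k-1$, where source $a$ replaces source $a+1$ and descends freely on level $a$.
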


\begin{proof}
The identity is a direct consequence of the Lindstr\"om--Gessel--Viennot expansion of \cref{prop: pJ formula general}.  When $\mathrm{cyc}(j)$ does not wrap around, the cyclic interval pins all active sources at consecutive levels and forces a unique non-intersecting path family, so $p_{\mathrm{cyc}(j)}$ specializes to a monomial; the corresponding $p_{\mathrm{gap}(j)}$ admits exactly $k$ non-intersecting families, and their weights sum to a monomial multiple of $Q_j(\x)$.  When $\mathrm{cyc}(j)$ wraps around, $p_{\mathrm{cyc}(j)}/p_{\mathrm{gap}(j)}$ contributes a monomial multiple of $1/P_i$ instead.  Telescoping the monomial contributions across all $j$ produces $\prod_{(i,t)} x_{i,t}$, and the denominator collects the $k-1$ factors $P_i$ and the $n-k-1$ factors $Q_j$.  See also \cite[(10.16)]{E21}.
\end{proof}

\begin{example}[$(k,n) = (3,7)$]\label{ex:bridge-37}
For $(k,n) = (3,7)$ the closed form \eqref{eq:bridge-closed-form} reads
\[
\prod_{j=0}^{6} \frac{p_{\mathrm{cyc}(j)}}{p_{\mathrm{gap}(j)}}
\;=\;
\frac{\prod_{i=1}^{2}\prod_{t=1}^{4} x_{i,t}}
{\bigl(\sum_{t=1}^{4} x_{1,t}\bigr) \bigl(\sum_{t=1}^{4} x_{2,t}\bigr)\,
 Q_1\, Q_2\, Q_3},
\]
with the three $Q_j$ factors
\begin{align*}
Q_1 &= x_{1,1} x_{2,1} + x_{1,1} x_{2,2} + x_{1,2} x_{2,2}, \\
Q_2 &= x_{1,2} x_{2,2} + x_{1,2} x_{2,3} + x_{1,3} x_{2,3}, \\
Q_3 &= x_{1,3} x_{2,3} + x_{1,3} x_{2,4} + x_{1,4} x_{2,4}.
\end{align*}
\end{example}

\begin{example}[$(k,n) = (4,7)$]\label{ex:bridge-47}
For $(k,n) = (4,7)$ the closed form \eqref{eq:bridge-closed-form} reads
\[
\prod_{j=0}^{6} \frac{p_{\mathrm{cyc}(j)}}{p_{\mathrm{gap}(j)}}
\;=\;
\frac{\prod_{i=1}^{3}\prod_{t=1}^{3} x_{i,t}}
{\bigl(\sum_{t=1}^{3} x_{1,t}\bigr) \bigl(\sum_{t=1}^{3} x_{2,t}\bigr) \bigl(\sum_{t=1}^{3} x_{3,t}\bigr)\, Q_1\, Q_2},
\]
where for $j \in \{1, 2\}$,
\[
Q_j \;=\;
x_{1,j} x_{2,j} x_{3,j}
\,+\, x_{1,j} x_{2,j} x_{3,j+1}
\,+\, x_{1,j} x_{2,j+1} x_{3,j+1}
\,+\, x_{1,j+1} x_{2,j+1} x_{3,j+1}.
\]
\end{example}

\begin{lemma}\label{lem:H1}
For any $J \in \bncyc$, we have $H(\t_J) = 1$.
\end{lemma}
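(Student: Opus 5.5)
The plan is to combine the bridge identity with the duality theorem. By Proposition~\ref{prop:bridge-identity}, we have $\prod_{J\in\bncyc} u_J = \prod_j p_{\mathrm{cyc}(j)}/p_{\mathrm{gap}(j)}$ as rational functions on $X(k,n)$. Tropicalizing, the tropicalization of a product is the sum of the tropicalizations. On the parametrization side each $u_J^t$ is piecewise linear on $\Tkn$, and the identity of rational functions pulled back along $M(\x)$ holds termwise. Hence, as functions on $\Tkn$,
\[
H(y) = \sum_{j=0}^{n-1}\bigl(p^t_{\mathrm{cyc}(j)}(y) - p^t_{\mathrm{gap}(j)}(y)\bigr) = \sum_{J' \in \bncyc} u^t_{J'}(y).
\]
The point to be careful about is that $H(y)$ is defined as $H(\rho(y))$ with $H$ linear on $\R^{\binom{[n]}{k}}$, and $u^t_{J'}(y) = u^t_{J'}(\rho(y))$ by definition. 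So the identity holds pointwise, since both sides are linear functionals of the same tropical Plücker vector $\rho(y)$. Alternatively, one can simply note that $H = \sum_{J'} u_{J'}^t$ as linear functions on $\R^{\binom{[n]}{k}}$, which is the statement of Proposition~\ref{prop:bridge-identity}, and then evaluate at $\rho(\t_J)$.

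Next, evaluate at $y = \t_J$. By Theorem~\ref{thm:duality}, $u^t_{J'}(\t_J) = \delta_{J',J}$ for all $J' \in \bncyc$, so the sum collapses to the single term $J'=J$, giving $H(\t_J) = 1$.

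I do not expect a real obstacle here, since all the work is contained in Theorem~\ref{thm:duality} and Proposition~\ref{prop:bridge-identity}. The only subtlety is making sure the tropicalization of the multiplicative identity is legitimate. Both sides are Laurent monomials in Plücker coordinates, so their tropicalizations are the corresponding integer linear combinations of the $p^t_I$ and agree identically on $\R^{\binom{[n]}{k}}$; no positivity or cancellation issues arise. As a sanity check, one could alternatively compute $H(\t_J)$ directly from Proposition~\ref{prop:bridge-closed-form}. The tropicalized right-hand side is $\sum y_{i,t} - \sum_i \min_t y_{i,t} - \sum_j \min_r(\cdots)$, and for $y=\t_J$ this is (support size) minus the contributions of the $Q_j$ minima. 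That route is messier, so I would use the duality argument.
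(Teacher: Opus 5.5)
Your argument is correct and not circular: \cref{prop:bridge-identity} gives $H=\sum_{J'\in\bncyc}u^t_{J'}$ as lineality-invariant linear functions. Moreover, \cref{thm:duality} is proved in Section~\ref{sec:dualityproof} without any reference to $H$. So evaluating at $\rho(\t_J)$ collapses the sum to $1$.

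It is, however, a different route from the paper's. The paper's proof is essentially the ``sanity check'' you set aside: it tropicalizes the closed form of \cref{prop:bridge-closed-form} and evaluates it at $\t_J$. The level-$i$ support $[j_i-i+1,\,j_{i+1}-i-1]$ ends one column before the level-$(i+1)$ support begins. Hence the supports over all levels tile the column interval $[a,b]=[j_1,j_k-k]$, each column exactly once. Consequently:
\begin{itemize}
\item the monomial numerator contributes $b-a+1$;
\item the terms $P_i^t=\min_t y_{i,t}$ vanish, since a single level can carry a full row only if $J$ is cyclic;
\item $Q_j^t(\t_J)=1$ exactly when both columns $j,j+1$ lie in $[a,b]$.
\end{itemize}
This gives $(b-a+1)-(b-a)=1$.

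Your version is shorter, but it makes Lemma~\ref{lem:H1}, and therefore \cref{thm:f-pk-equals-nc}, depend on the duality theorem. That theorem is the most delicate input of the paper, resting on the ladder-network corank identity. The paper's computation uses only the closed form, which is needed anyway for the linearity of $H$ on noncrossing cones (\cref{lem:gt}, \cref{cor:H-bilinear}). So the paper obtains $\wt_\PK=\wt_\NC$ independently of \cref{thm:duality}, and the value $1$ acquires a concrete meaning: the number of columns minus the number of adjacent column pairs. Combined with the bridge identity, the paper's computation also yields an independent row-sum check $\sum_{J'}u^t_{J'}(\t_J)=1$ on the duality matrix, which your argument uses as input rather than corroborates.
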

\begin{proof}
Suppose $J = \{j_1 < j_2 < \cdots < j_k\} \in \bncyc$.  Then there exists a non-empty, proper interval $[a,b] = [j_1,j_k - k] \subsetneq [1,n-k]$ such that 
$$
\sum_{i=1}^{k-1} (\t_J)_{i,r} = \begin{cases} 1 & \mbox{if $r \in [a,b]$,} \\
0 & \mbox{otherwise.}
\end{cases}
$$
We tropicalize \eqref{eq:bridge-closed-form} and apply it to $\t_J$.  The numerator evaluates to $(b-a+1)$.  The tropicalization of the factors \eqref{eq:P-factor} evaluate to $1$.  We also have 
$$
Q_j^t(\t_J) = \begin{cases} 1 & \mbox{if $j \in [a,b-1]$,} \\
0 & \mbox{otherwise.}
\end{cases}
$$
It follows that $H(\t_J) = (b-a+1) - (b-a) = 1$.
\end{proof}

\begin{lem}\label{lem:gt}
For every $j \in [1, n-k-1]$, the tropicalization $Q^t_j$ is linear on every cone of the noncrossing fan.
\end{lem}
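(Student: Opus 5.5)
The plan is to reduce the statement to showing that, on each cone of the noncrossing fan, one and the same term of the tropical minimum defining $Q^t_j$ is attained at every generator. Tropicalizing \eqref{eq:Q-factor} gives
\[
Q^t_j(y) \;=\; \min_{0\le c\le k-1} f_c(y), \qquad f_c(y) := \sum_{i=1}^{c} y_{i,j} + \sum_{i=c+1}^{k-1} y_{i,j+1},
\]
where $c=k-1-r$. Each $f_c$ uses exactly one coordinate from each level, so it is well defined on $\Tkn$ up to the level-wise constants. This is consistent, since $Q_j$ is homogeneous of degree one in each level.

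The reduction goes as follows. Let $\cK$ be a face of $\NC_{k,n}$. Suppose some index $c$ minimizes $f_c(\vn_J)$ simultaneously for all $J\in\cK$. Then for $\vn=\sum_J\mu_J\vn_J$ with $\mu_J\ge 0$, superadditivity of a minimum of linear forms gives
\[
Q^t_j(\vn)\;\ge\;\sum_J\mu_J Q^t_j(\vn_J)\;=\;\sum_J \mu_J f_c(\vn_J)\;=\;f_c(\vn)\;\ge\; Q^t_j(\vn).
\]
Hence $Q^t_j=f_c$ on $C_\cK$, which is linear there. So it suffices to compute the minimizing sets for a single generator and then prove that they intersect across a noncrossing collection.

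For a single $\vn_J$, Lemma~\ref{lem: vJ support general} shows that the level supports $E_\ell=[j_\ell-\ell+1,\,j_{\ell+1}-\ell-1]$ are consecutive intervals, since $R_\ell+1=L_{\ell+1}$. Their union is the staircase $[a,b]=[j_1,j_k-k]$, and each position in it lies at exactly one level, with that level nondecreasing in the position. This is the same observation used in Lemma~\ref{lem:H1}. Hence column $j$ of $\vn_J$ is either $0$ or $e_{i}$, and column $j+1$ is either $0$ or $e_{i'}$, with $i\le i'$ when both are nonzero. Evaluating $f_c=[i\le c]+[i'>c]$ in the four cases gives the following minimizer sets:
\begin{itemize}
\item if both columns are nonzero, the minimum is $1$, attained exactly for $c<i$ or $c\ge i'$;
\item if only column $j$ is nonzero ($j=b$), the minimum is $0$, attained for $c<i$;
\item if only column $j+1$ is nonzero ($j+1=a$), the minimum is $0$, attained for $c\ge i'$;
\item if neither column is nonzero, every $c$ attains the minimum.
\end{itemize}
This recovers Lemma~\ref{lem:H1}'s values as a consistency check. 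In particular, $c=0$ works for every $J$ except those whose staircase begins at $j+1$, and $c=k-1$ works for every $J$ except those whose staircase ends at $j$.

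The main obstacle is the remaining compatibility. Suppose $\cK$ contains both a set $J$ of the second type (support ending at $j$ at level $i$) and a set $J'$ of the third type (support starting at $j+1$ at level $i'$). Then I need a common $c$ with $i'_{J'}\le c<i_J$ for all such pairs, which also satisfies the two-sided constraints coming from sets of the first type. I would translate each constraint back into the entries of $J$, using $j=j_k-k$ at level $i$ and the position of $j$ within $E_i$, and show that a violation, namely $i_J\le i'_{J'}$ or an interleaving with a first-type set, produces indices $a<b$ at which the subpairs are neither weakly separated nor differ in their interiors. That would contradict Definition~\ref{defn:weak-sep-nc}. I would first check this by hand on $\NC_{3,6}$, in particular for the non-weakly-separated pairs $(124,356)$ and $(145,236)$, to pin down which entries force the inequality. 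If the direct combinatorial translation becomes unwieldy, the fallback is to use Theorem~\ref{thm:NCfan}: the fan is complete, so it suffices to treat maximal cones, and it would be enough to verify the compatibility for the cyclic-interval-adjacent elements that maximal noncrossing collections always contain.
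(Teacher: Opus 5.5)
Your setup is the same as the paper's. The paper's $a(J)$ and $b(J)$ are your levels $i,i'$ of the nonzero entries of $\vn_J$ in columns $j$ and $j+1$, and its ``single'' sets are your second and third types. Your superadditivity argument is a cleaner justification than the paper gives for why a common minimizing index $c$ yields linearity on all of $C_\cK$.

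\textbf{The gap.} The proposal stops exactly where the noncrossing hypothesis has to be used. The compatibility of the minimizer sets is announced but never proved. The fallback does not reduce anything: maximal noncrossing collections contain arbitrary non-cyclic sets, so passing to maximal cones leaves the identical check. As written, you have linearity only on cones whose generators avoid your second type or avoid your third type.

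\textbf{The missing step.} It is short once the cases are unified.
\begin{enumerate}
\item Set $M_J=\{m\in[1,k]: j_m=j+m\}$. Since $j_m-m$ is weakly increasing, $M_J=[\alpha_J+1,\beta_J]$ is an interval, possibly empty.
\item Your four cases say exactly that $f_c(\vn_J)$ fails to be minimal precisely for $c\in F_J:=[\alpha_J,\beta_J-1]$, with $F_J=\emptyset$ if $M_J=\emptyset$. Moreover $F_J=[0,k-1]$ only when $J=\{j+1,\dots,j+k\}$, which is cyclic.
\item Suppose $F_J$ and $F_{J'}$ overlap or abut without being nested, say $\alpha<\alpha'\le\beta<\beta'$ (unprimed for $J$, primed for $J'$). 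Restrict to the indices $[\alpha',\beta+1]$. The interiors agree, since $j_m=j'_m=j+m$ for $\alpha'<m\le\beta$. At the ends, $j'_{\alpha'}\le j+\alpha'-1<j_{\alpha'}=j+\alpha'$ and $j'_{\beta+1}=j+\beta+1<j_{\beta+1}$. So the four endpoints interleave, both conditions of Definition~\ref{defn:weak-sep-nc} fail, and $J,J'$ cross.
\item Hence in a face of $\NC_{k,n}$ the inclusion-maximal $F_J$ are pairwise separated by gaps. Their union therefore cannot be $[0,k-1]$, and a common $c$ exists.
\end{enumerate}

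\textbf{Comparison with the paper.} The paper performs this same computation on $[a,b+1]$, but only for a pair consisting of one set of your second type and one of your third type, and then argues pairwise. Your worry about first-type sets is justified. Pairwise linearity does not imply linearity on higher-dimensional cones: for $k=4$, the forbidden sets $[0,0]$, $[1,2]$, $[3,3]$ are pairwise compatible but together cover $[0,3]$. What rules this out is the crossing argument above applied to abutting intervals such as $[0,0]$ and $[1,2]$. The paper's case (2) treats exactly this kind of pair as automatically fine.
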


\begin{proof}
Fix $j$.  For each $J \in \bncyc$, define $a(J)$ to be the index such that $(\t_J)_{a(J), j} = 1$ if such an index exists, and $a(J) = \infty$ otherwise.  Similarly define $b(J)$ so that $(\t_J)_{b(J), j+1} = 1$ if such an index exists, and $b(J) = 0$ otherwise.  

All terms of $Q^t_j$ take equal value on $\t_J$ if either $(a(J),b(J)) = (\infty,0)$ or $a(J) = b(J)$.  We may exclude these from the subsets we consider.  A subset $J$ is called \emph{single} if $a(J) > b(J)$.  Let $J, K \in \bncyc$.  Then $Q_j^t$ is linear on the cone spanned by $\t_J$ and $\t_K$ if 
\begin{enumerate}
\item both $J, K$ are single and we have $a(J) > b(K)$ and $a(K) > b(J)$, or
\item one of $J, K$ is single and the other is not, or
\item neither $J$ nor $K$ is single.
\end{enumerate}
It suffices to show that if $J$ and $K$ are noncrossing and both are single then we have $a(J) > b(K)$ and $a(K) > b(J)$.

Suppose that $a(J) > b(K)$ fails (the case $a(K) > b(J)$ is the same).  Then $a = a(J) \neq \infty$, $b(J) = 0$ and $a(K) = \infty$, $b = b(K) \neq 0$.  We have that 
$$
j_a \leq j + (a-2), \qquad j_{a+1} = j + a, \qquad j_{a+2} = j+ a + 1 \cdots 
$$
and
$$
\cdots, k_{b-2} = j+b-2, \qquad k_{b-1} = j+b - 1, \qquad k_b = j+ b, \qquad k_{b+1} \geq j+b+2
$$
Now, consider the noncrossing condition for the interval $[a, b+1]$.  We have $k_a = j+a$ and $j_{b+1} = j+b$.  Thus $j_a < k_a < j_{b+1} < k_{b+1}$ and we conclude that $J$ and $K$ are crossing, a contradiction.  Thus both $a(J) > b(K)$ and $a(K) > b(J)$ hold and the proof is complete.
\end{proof}

\begin{cor}\label{cor:H-bilinear}
The function $H$ is linear on every maximal cone of the noncrossing fan and on every maximal cone of the Pl\"ucker fan.
\end{cor}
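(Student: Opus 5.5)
The plan is to treat the two halves of the corollary separately, since they rest on different structure. In both cases the function is $H(y) = H(\rho(y))$ on $\Tkn$, or equivalently $H$ on $\Trop_{>0}X(k,n)$.

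\emph{Noncrossing fan.} Tropicalizing the closed form \eqref{eq:bridge-closed-form} of \cref{prop:bridge-closed-form}, and using \cref{prop:bridge-identity}, gives on $\Tkn$
\[
H(y) = \sum_{i,t} y_{i,t} - \sum_{i=1}^{k-1} P_i^t(y) - \sum_{j=1}^{n-k-1} Q_j^t(y).
\]
The first sum is linear everywhere, so it suffices to show that each $P_i^t$ and each $Q_j^t$ is linear on every cone $C_\cK$.

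For $Q_j^t$ this is \cref{lem:gt}. That lemma is phrased pairwise, so I must explain why pairwise suffices. Each $Q_j^t$ is a minimum of the $k$ linear forms attached to the vertices $r$ of its Newton simplex. On a generator $\t_J$ these forms take values in $\{0,1\}$, and they are described by the pair $(a(J),b(J))$. The pairwise conditions from the proof of \cref{lem:gt} say there is a single index $r$ achieving the minimum on every generator of $\cK$ simultaneously. Concretely, take $r$ in the intersection of the admissible intervals determined by the $(a(J),b(J))$ with $J \in \cK$. This intersection is nonempty because the intervals are pairwise intersecting, and intervals in a line have the Helly property. A min of linear forms is linear on a cone exactly when one form is minimal on all of its generators. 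The same argument handles $P_i^t = \min_t y_{i,t}$. For each generator $\t_J$, the level-$i$ support is a proper interval $E_i$ (\cref{lem: vJ support general}), so some coordinate $t$ outside $E_i$ attains the min $0$. I need a common such $t$ across $\cK$. Here I expect to use the normalization that $\t_J$ is defined modulo $\one$ at each level, with representatives chosen so that position $1$ or $n-k$ lies off the support. I would check that noncrossing forces the level-$i$ supports in a face to avoid a common endpoint; this is the step I am least sure of. If it fails directly, I will fall back on Lemma~\ref{lem:H1} plus linearity: show $H(\sum \mu_J \t_J) = \sum \mu_J$ by verifying that $P_i^t$ vanishes on nonnegative combinations, where $P_i^t=\min_t$ is evaluated on the chosen representatives.

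\emph{Plücker fan.} Here the claim is immediate. By \eqref{eq:bridge}, $H$ is a linear functional on $\R^{\binom{[n]}{k}}$, and by \cref{prop:bridge-identity} it equals $\sum_J u_J^t$, which is linear there. Hence it is linear on every cone of $\Trop_{>0}\Gr(k,n)$, and also on the secondary (Plücker) fan structure, with no further work needed. The one subtlety is compatibility under $\rho$. Linearity of $H$ on cones of $\Tkn$ pulled back via $\rho$ is the first part; linearity on cones in Plücker space is the second part. They are not conflated because $\rho$ is only piecewise linear.

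The main obstacle is the Helly-type upgrade from the pairwise statement of \cref{lem:gt} to whole faces, together with the analogous common-minimizer statement for the $P_i^t$.
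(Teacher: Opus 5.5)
Your decomposition is the paper's own: tropicalize \eqref{eq:bridge-closed-form}, treat $P_i^t$ and $Q_j^t$ separately, and get the Pl\"ucker half from $H=\sum_J u_J^t$ being linear on all of $\R^{\binom{[n]}{k}}$. That half is fine. You are also right that something is missing. Lemma~\ref{lem:gt} is only proved for pairs, and the paper simply asserts that the noncrossing fan refines the standard fan at each level. However, both arguments you offer to close these gaps fail.

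\textbf{The $Q_j^t$ step.} Index the terms of $Q_j^t$ by the cutoff $c\in\{0,\dots,k-1\}$, using $x_{i,j}$ for $i\le c$ and $x_{i,j+1}$ for $i>c$. On the indicator representative of $\vn_J$, the $c$-th term equals $\chi(a(J)\le c)+\chi(b(J)>c)$. Take $J=135$, $j=1$, where $a=1$ and $b=2$. Then the values are in $\{1,2\}$, not $\{0,1\}$, and the minimizing set $\{0,2\}$ is not an interval. In general, when $a(J)<b(J)$ both exist, the minimizing set is $\{0,\dots,a-1\}\cup\{b,\dots,k-1\}$. Complements of intervals have no Helly property: the complements of $\{1\},\{2\},\{3\}$ in $\{1,2,3\}$ meet pairwise but not jointly. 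So the pairwise output of Lemma~\ref{lem:gt} does not yield a common term.

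What does work is laminarity. Put $p=c+1$. The minimizing set on $\vn_J$ is $[1,k]\setminus Z_J$, where $Z_J=\{p: j_p=j+p\}$ is an interval, proper because $J$ is non-cyclic.
\begin{itemize}
\item If $Z_J=[s,t]$ and $Z_K=[t+1,u]$, then $k_t<j_t<k_{t+1}<j_{t+1}$. This is a crossing on the sub-pair $(t,t+1)$.
\item If $Z_K=[s',t']$ with $s<s'\le t<t'$, the sub-pairs on positions $s'-1,\dots,t+1$ have equal interiors $\{j+s',\dots,j+t\}$. Their outer elements satisfy $k_{s'-1}<j_{s'-1}<k_{t+1}<j_{t+1}$, so they are not weakly separated.
\end{itemize}
Hence the $Z_J$ in a face are pairwise nested, or disjoint and non-adjacent. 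Such proper subintervals cannot cover $[1,k]$, which gives a common minimizing term.

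\textbf{The $P_i^t$ step.} Your ``common endpoint'' claim is false. In the maximal collection $(124,256,346,356)$ of $\NC_{3,6}$, the nonempty level-$2$ supports are $\{1\}$ and $\{3\}$. So the only common minimizer is the interior column $t=2$. Your fallback is the same statement in disguise: $\min_t\sum_J\mu_J e_{E^{(J)}}$ with $\mu_J>0$ is zero if and only if some column avoids every support.

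The fix is again laminarity. The level-$i$ support of $\vn_J$ is $[j_i-i+1,\,j_{i+1}-i-1]$. The sub-pair $(i,i+1)$ has empty interiors, so noncrossing forces $\{j_i,j_{i+1}\}$ and $\{k_i,k_{i+1}\}$ not to interleave. This says exactly that nonempty supports are nested or separated by at least one column. Moreover, a support equals $[1,n-k]$ only when $J$ is a cyclic interval. So the supports in a face miss some column, and that column is the needed common minimizer.
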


\begin{proof}
By \cref{prop:bridge-identity}, $H = \sum_{J \in \bncyc} u^t_J$ as functions on $\Tkn$.  Each $u^t_J$ is a $\Z$-linear combination of tropical Pl\"ucker coordinates and is therefore linear on every maximal cone of the Pl\"ucker fan, so $H$ is too.

For linearity on the noncrossing fan, tropicalize \eqref{eq:bridge-closed-form} to obtain:
\[
H(y) \;=\; \sum_{(i, t)} y_{i, t}
       \;-\; \sum_{i=1}^{k-1} P^t_i(y)
       \;-\; \sum_{j=1}^{n-k-1} Q^t_j(y).
\]
The monomial term is linear on $\Tkn$.  Each $P^t_i(y) = \min_{t \in [1, n-k]} y_{i, t}$ is linear on the cones of the standard fan in the $i$-th copy of $\R^{n-k}$, hence on every refinement; in particular, on every cone of the noncrossing fan.  Each $Q^t_j$ is linear on every cone of the noncrossing fan by \cref{lem:gt}.  Therefore $H$ is linear on every maximal cone of the noncrossing fan.
\end{proof}

\subsection{Proof of Theorem~\ref{thm:f-pk-equals-nc}}\label{sec:f-pk-nc}

Let $\vn$ lie in the relative interior of a maximal cone of the noncrossing fan, and write $\vn = \sum_{K \in \cK} c_K\, \vn_K$ with $c_K > 0$ and $\cK$ a pairwise noncrossing collection (which is maximal of size $(k-1)(n-k-1)$ by Theorem~\ref{thm:NCfan}, or smaller on a non-maximal cone with the same argument applying).  By definition of $\wt_\NC$ (Definition~\ref{defn:f-extended-nc}), $\wt_\NC(\vn) = \sum_{K \in \cK} c_K$.  On the other hand, by the planar basis expansion (Theorem~\ref{thm:planarbasis}(4)) together with Definition~\ref{defn:f-pk-weight} of $\wt_\PK$,
\[
\wt_\PK(\rho(\vn))
\;=\;
\sum_{J \in \bncyc} u^t_J(\vn).
\]
We then have 
\[
\sum_{J \in \bncyc} u^t_J(\vn)
\;=\; H(\vn)
\;=\; \sum_{K \in \cK} c_K\, H(\vn_K)
\;=\; \sum_{K \in \cK} c_K.
\]
The first equality follows from \cref{prop:bridge-identity}; the second is linearity of $H$ on the noncrossing cone containing $\vn$ (\cref{cor:H-bilinear}); the third is \cref{lem:H1}.  We conclude $\wt_\PK(\rho(\vn)) = \wt_\NC(\vn)$, which is Theorem~\ref{thm:f-pk-equals-nc}.\qed

\section{Weight Stratification}\label{sec: weight strat}

Recall the definition of $\partial_\ell$ from Section \ref{sec:face-restriction}.  Also define the projection $\D_1: \Tkn \to \T^{k-1,n-1}$ by
\[
e_{i,j} \mapsto \begin{cases}
	0, & i = 1 \\
	e_{i-1,j}, & i \geq 2.
\end{cases}
\]

Let $\Psi_1: \R^{\binom{[n-1]}{k-1}}/L_{k-1,n-1} \to \T^{k-2,n-1}$ denote the $(k-1,n-1)$-analogue of $\Psi$.

\begin{lemma}\label{lem:boundary-projection}
	We have $\Psi_1 \circ \partial_1 = \D_1 \circ \Psi$.
\end{lemma}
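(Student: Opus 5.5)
Both sides of $\Psi_1 \circ \partial_1 = \D_1 \circ \Psi$ are linear maps defined on $\R^{\binom{[n]}{k}}$ (the left side makes sense once we know $\partial_1$ sends $L_{k,n}$ into $L_{k-1,n-1}$). By Theorem~\ref{thm:planarbasis}(1) the $\h_J$ form a basis of $\R^{\binom{[n]}{k}}$, so it suffices to check the identity on each $\h_J$.

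\emph{Reduction to a combinatorial identity.} The map $\partial_1$ carries $L_{k,n}$ into $L_{k-1,n-1}$: the generator $\sum_{I\ni i} e^I$ maps to $\sum_{I'\ni i} e^{I'}$ in $\R^{\binom{[n-1]}{k-1}}$ when $i\neq 1$, and to the sum of all basis vectors when $i=1$; the latter lies in the lineality space since it is $\frac{1}{k-1}\sum_i\sum_{I'\ni i}e^{I'}$. For cyclic $J$ we have $\h_J\in L_{k,n}$ and $\vn_J=0$, so both sides vanish. For $J\in\bncyc$, Lemma~\ref{lem:pl} with $\ell=1$ gives $\partial_1(\h_J)\equiv \h^{(1)}_{J'}$, where $J'=J\setminus\{j^*\}$ and $j^*$ is the first element of $J$ cyclically at or after $1$. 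Since $J\subseteq[n]$, this is simply $j^*=j_1=\min J$. Hence $\Psi_1\partial_1(\h_J)=\vn_{J'}$, where $J'=\{j_2,\dots,j_k\}$ is relabeled into $[n-1]$ by $j\mapsto j-1$. (If $J'$ is cyclic in $[n-1]$, then $\vn_{J'}=0$.) The lemma therefore reduces to the identity $\D_1(\vn_J)=\vn_{J'}$.

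\emph{Verifying the identity on $\t$-vectors.} From the defining formula,
\[
\vn_J=\sum_{i=1}^{k-1}\sum_{s=j_i-(i-1)}^{j_{i+1}-(i+1)} e_{i,s}.
\]
Applying $\D_1$ kills the $i=1$ row and sends row $i$ to row $i-1$, giving
\[
\sum_{i'=1}^{k-2}\sum_{s=j_{i'+1}-i'}^{j_{i'+2}-(i'+2)} e_{i',s}.
\]
Now write $J'=\{j'_1<\dots<j'_{k-1}\}$ with $j'_m=j_{m+1}-1$. In the $(k-1,n-1)$ definition, row $i'$ runs over $s$ from $j'_{i'}-(i'-1)=j_{i'+1}-i'$ to $j'_{i'+1}-(i'+1)=j_{i'+2}-(i'+2)$. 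These ranges agree exactly, and the ambient tori also match: $\T^{k-1,n-1}=(\T^{n-k-1})^{k-2}$.

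\emph{Main obstacle.} The computation itself is straightforward; the delicate points are conventions. First, one must confirm that Lemma~\ref{lem:pl} really selects $j_1$ when $\ell=1$, which holds because no wraparound is possible from $1$. Second, the identification $[n-1]\cong[n]\setminus\{1\}$ must be the shift $j\mapsto j-1$, consistent with how $\vn_{J'}$ is indexed. Third, the degenerate cases need care: when $J'$ is cyclic in $[n-1]$, both sides must vanish, with the left side equal to $0$ because $\h_{J'}\in L$. This is consistent, since the computed $\D_1(\vn_J)$ formula coincides with $\vn_{J'}$ in all cases, and the boundary conventions $e_{i,s}=0$ for $s<1$ carry over identically.
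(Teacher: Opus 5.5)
Your proof is correct and follows the paper's argument exactly: you reduce to planar basis elements, apply Lemma~\ref{lem:pl} with $\ell=1$ to get $J'=\{j_2-1,\dots,j_k-1\}$, and verify $\D_1(\vn_J)=\vn_{J'}$ by comparing index ranges. You also supply details the paper leaves implicit, namely $\partial_1(\Lkn)\subseteq L_{k-1,n-1}$, the cyclic and degenerate cases, and the explicit range computation, and all of them check out.
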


\begin{proof}
	By Theorem~\ref{thm:planarbasis}, it suffices to verify the identity on planar basis elements $\mathfrak{h}_J$.	 Let $J = \{j_1 < j_2 < \cdots < j_k\}$.  By Lemma \ref{lem:pl}, we have $\partial_1(\mathfrak{h}_J) \equiv \mathfrak{h}^{(1)}_{J'} \pmod{\text{lineality}}$, where $J' = \{j_2-1, j_3-1, \ldots, j_k-1\} \subset [n-1]$.  The identity $\vn_{J'} = \D_1(\vn_J)$ is then checked directly.
\end{proof}

\begin{proposition}\label{prop:iota-preserves-nc}
	Let $\cJ$ be a face of $\NC_{k,n}$. Define
	\[
	\cJ'_{\ncyc} = \{K' : K \in \cJ, \, K' \text{ is non-cyclic}\},
	\]
	where for $K = \{k_1 < \cdots < k_k\}$, we set $K' = \{k_2 - 1, \ldots, k_k - 1\} \subset [n-1]$. Then $\cJ'_{\ncyc}$ is a face of $\NC_{k-1,n-1}$.
\end{proposition}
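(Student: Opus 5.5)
The plan is to argue pair by pair. A face of $\NC_{k-1,n-1}$ is a collection of non-cyclic $(k-1)$-subsets that are pairwise noncrossing. By construction every member of $\cJ'_{\ncyc}$ is non-cyclic, so it suffices to show: if $J,K \in \binom{[n]}{k}$ are noncrossing and both $J'$ and $K'$ are non-cyclic, then $J'$ and $K'$ are noncrossing in $\binom{[n-1]}{k-1}$. (If $J' = K'$ there is nothing to check, since equal sets are noncrossing; the multiset collapses harmlessly.)

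\textbf{Index bookkeeping.} Write $J = \{j_1<\cdots<j_k\}$ and $K=\{k_1<\cdots<k_k\}$. Then $J' = \{j'_1<\cdots<j'_{k-1}\}$ with $j'_i = j_{i+1}-1$, and similarly for $K'$. For $1 \le a < b \le k-1$, the subpair indexed by $[a,b]$ in $(J',K')$ is the image of the subpair indexed by $[a+1,b+1]$ in $(J,K)$ under the order-preserving shift $x \mapsto x-1$ on $\{2,\ldots,n\}$. Both sides of the dichotomy in Definition~\ref{defn:weak-sep-nc} are invariant under such a shift:
\begin{enumerate}
\item the interiors differ before the shift iff they differ after it;
\item weak separation of $(\{j_{a+1},\ldots,j_{b+1}\},\{k_{a+1},\ldots,k_{b+1}\})$ is equivalent to that of the shifted pair.
\end{enumerate}
For the second point, the step to watch is that weak separation is read cyclically on $e_I - e_J$. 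Here $I$ and $J$ have the same size, so the sign changes are a statement about the interleaving pattern of $I\setminus J$ and $J\setminus I$ along the line. Whether that pattern is read on $[n]$ or on $[n-1]$ does not matter, because the pattern is determined by relative order alone. So I would first record that weak separation of two equal-size sets depends only on the linear order of their symmetric difference; this holds because the cyclic sign-change count of an equal-size difference equals the number of alternation blocks of the word in $\{+,-\}$, read cyclically.

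\textbf{Conclusion.} Since $(J,K)$ is noncrossing, condition (1) or (2) holds for the index interval $[a+1,b+1]$, and transporting it through the shift gives the corresponding condition for $[a,b]$ in $(J',K')$. Hence $(J',K')$ is noncrossing.

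The main obstacle I expect is the careful justification that the cyclic weak-separation condition is insensitive to deleting ambient elements and relabeling, and that the definition uses the sub-subsets as abstract sets in $[n]$ rather than in some window. If the cyclic reading caused trouble, I would fall back on the linear characterization: two equal-size sets are weakly separated iff, after removing common elements, one set of the difference lies in a cyclic interval avoiding the other. That characterization is manifestly preserved by order-preserving injections between the linearly ordered sets $[n-1] \hookrightarrow [n]$.
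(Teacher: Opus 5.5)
Your proof is correct and follows the same approach as the paper, which only says ``one checks'' that $(I',J')$ is noncrossing. The reindexing $[a,b]\mapsto[a+1,b+1]$, together with the fact that the cyclic sign changes of $e_I - e_J$ depend only on the relative order of the symmetric difference, is exactly the verification that sentence leaves implicit.
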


\begin{proof}
	One checks that if $I, J \in \cJ$ and $I', J'$ are non-cyclic, then $(I', J')$ is noncrossing.  
\end{proof}

\begin{proposition}\label{prop: PK weight general}
	Suppose that $\tropP_\bullet = \sum_J c_J \h_J \in \Trop_{>0}\Gr(k,n)$ with $c_J \in \mathbb{R}$. Then:
	\begin{enumerate}
		\item $\wt_{PK}(\tropP_\bullet) \geq \wt_{PK}(\partial_\ell(\tropP_\bullet))$ for all $\ell = 1, \ldots, n$.
		\item $\wt_{PK}(\tropP_\bullet) \geq 0$ with equality if and only if $\tropP_\bullet \equiv 0$ modulo lineality.
		\item Suppose that $\tropP_\bullet$ is integral.   Then $\wt_{PK}(\tropP_\bullet) = 1$ if and only if $\tropP_\bullet \equiv \h_J$ modulo lineality for some $J \in \bncyc$.
	\end{enumerate}
\end{proposition}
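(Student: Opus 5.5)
The plan is to transport everything to $\Tkn$ through $\Psi$ and use \cref{thm:f-pk-equals-nc}, so that $\wt_\PK$ becomes the NC weight of a point in the noncrossing fan. Write $\vn = \Psi(\tropP_\bullet)$. By \cref{thm:noncrossingmain}, $\rho(\vn) \equiv \tropP_\bullet$. By \cref{thm:NCfan}, $\vn = \sum_{K\in\cK}\mu_K\vn_K$ for a unique face $\cK$ of $\NC_{k,n}$ with all $\mu_K>0$. Then \cref{thm:f-pk-equals-nc} gives $\wt_\PK(\tropP_\bullet)=\sum_K\mu_K$.

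For (2), nonnegativity is immediate from this expression. If the weight is zero, every $\mu_K$ vanishes, so $\vn=0$. Hence $\tropP_\bullet \equiv \rho(0) \equiv 0$, using that $\rho(0)=\sum_J u^t_J(0)\h_J=0$.

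For (3), first take $\tropP_\bullet\equiv\h_J$. Then $\wt_\PK=1$ directly from \cref{defn:f-pk-weight}. Conversely, suppose $\tropP_\bullet$ is integral. Then $\vn\in\Tkn_\Z$ by the integrality part of \cref{thm:noncrossingmain}. Unimodularity in \cref{thm:NCfan} forces the $\mu_K$ to be nonnegative integers, so $\vn=\vn_\cJ$ for a noncrossing tableau $\cJ$. Weight one means $\cJ=\{J\}$ is a single set. Then $\tropP_\bullet\equiv\rho(\vn_J)=\sum_{J'}u^t_{J'}(\vn_J)\h_{J'}=\h_J$ by \cref{thm:duality}.

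For (1), I would first reduce to $\ell=1$ by cyclic symmetry. This needs a check that $\wt_\PK$, $\Trop_{>0}\Gr$ and the planar basis are compatible with the cyclic shift; this holds because $d(e_I,e_J)$ is cyclically invariant, so the shift permutes the $\h_J$ and preserves cyclicity. Next, note that $\partial_1(\tropP_\bullet)$ is again a positive tropical Plücker vector, because it is the restriction to a face of $\Delta(k,n)$ and the relations \eqref{eq:posTropPlucker} with $1\in S$ are exactly the relations for $(k-1,n-1)$. We may then apply \cref{lem:boundary-projection} to get $\Psi_1(\partial_1\tropP_\bullet)=\D_1(\vn)$. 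By linearity this equals $\sum_K\mu_K\D_1(\vn_K)$.

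The proof of \cref{lem:boundary-projection} shows $\D_1(\vn_K)=\vn_{K'}$, which is zero when $K'$ is cyclic. \cref{prop:iota-preserves-nc} says the non-cyclic $K'$ form a face of $\NC_{k-1,n-1}$. So $\D_1(\vn)=\sum_{K:\,K'\ \text{noncyclic}}\mu_K\vn_{K'}$ is an expansion on a single cone, with possible repetitions $K'_1=K'_2$ that merge coefficients. Applying \cref{thm:f-pk-equals-nc} for $(k-1,n-1)$ gives $\wt_\PK(\partial_1\tropP_\bullet)=\sum_{K'\text{ noncyc}}\mu_K\le\sum_K\mu_K$.

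The main obstacle is this last step: making sure the collapsed expansion is genuinely a nonnegative combination on one cone, so that NC weight can be read off. One point to settle is whether merged duplicates are allowed in a face; the coefficients simply add, which is harmless. The other is the degenerate case $k-1=1$, where the target space is trivial and the inequality is automatic.
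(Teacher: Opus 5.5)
Your proof is correct. For parts (1) and (3) it is the paper's argument. In (1) you reduce to $\ell=1$, push the noncrossing decomposition of $\Psi(\tropP_\bullet)$ through $\D_1$ using \cref{lem:boundary-projection} and \cref{prop:iota-preserves-nc}, and read off weights with \cref{thm:f-pk-equals-nc}. In (3) you use integrality plus unimodularity to reduce weight one to a single $\vn_J$.

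You also supply two checks the paper leaves implicit, and both are valid. The first is that $\partial_1(\tropP_\bullet)$ is again a positive tropical Pl\"ucker vector. This is genuinely needed: $\rho_1\circ\Psi_1$ is the identity only on positive points, so \cref{thm:f-pk-equals-nc} could not otherwise be applied at level $(k-1,n-1)$. The second is that coinciding sets $K'$ merely merge coefficients.

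Where you genuinely differ is the equality case of (2). The paper argues by induction on $k$. Weight zero forces, via (1), each $\partial_\ell(\tropP_\bullet)$ to have weight zero, hence to vanish modulo lineality by the inductive hypothesis. The paper then asserts that $\tropP_\bullet$ is determined modulo $\Lkn$ by its face restrictions.

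You argue directly instead. On the unique cone containing $\vn$ in its relative interior, $\wt_\NC(\vn)=\sum_{K\in\cK}\mu_K$ with every $\mu_K>0$. So weight zero forces $\cK=\emptyset$ and $\vn=0$, and then $\tropP_\bullet\equiv\rho(0)=0$ by the bijection of \cref{thm:noncrossingmain}.

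Your route is shorter and uses only inputs already needed for the inequality. It also avoids the paper's unargued gluing step: passing from $\partial_\ell(\tropP_\bullet)\in L_{k-1,n-1}$ for all $\ell$ to $\tropP_\bullet\in\Lkn$ requires checking that the local lineality representatives are compatible. The paper's induction saves no hypotheses in exchange, since its base case and its use of (1) rely on the same bijection at smaller $k$.
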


\begin{proof}
	We prove (1).  By cyclic symmetry, it suffices to prove the case $\ell = 1$. 
	By Lemma~\ref{lem:boundary-projection}, $\Psi(\partial_1(\tropP_\bullet)) = \D_1(\Psi(\tropP_\bullet))$.  Applying Theorem~\ref{thm:f-pk-equals-nc} in the $(k-1, n-1)$ setting, we have
	\[
	\wt_{\PK}(\partial_1(\tropP_\bullet)) = \wt_{\NC}(\Psi(\partial_1(\tropP_\bullet))) = \wt_{\NC}(\D_1(\Psi(\tropP_\bullet))).
	\]
	
	Thus it suffices to prove: for any $\vn \in \T^{k-1,n-k-1}$,
	\begin{equation}\label{eq:ineq}
		\wt_{\NC}(\vn) \geq \wt_{\NC}(\D_1(\vn)).
	\end{equation}
	Let $\vn = \sum_{J \in \cJ}  a_J \vn_J$ be the noncrossing decomposition of $\vn$ (Theorem~\ref{thm:NCfan}).  Applying $\D_1$ termwise,
	\[
	\D_1(\vn) = \sum_{J \in \cJ} a_J D_1(\vn_J) = \sum_{J \in \cJ'_\ncyc} a_J \vn_{J'},
	\]
	where in the last equality we have used the calculation in the proof of Lemma~\ref{lem:boundary-projection}.  By Proposition~\ref{prop:iota-preserves-nc}, $\cJ'_\ncyc$ is still noncrossing.  The inequality \eqref{eq:ineq} follows.
	
	\noindent We prove (2).  The inequality $\wt_{PK}(\tropP_\bullet) \geq 0$ follows from \cref{thm:f-pk-equals-nc}.  We prove the equality case by induction on $k$. The base case $k = 2$ holds because every point in $\Trop_{>0} \Gr(2, n)$ is a nonnegative linear combination of a pairwise noncrossing collection of $\h_{ij}$'s.
	
	For general $k$, if $\wt_{\PK}(\tropP_\bullet) = 0$, then $\wt_{\PK}(\partial_\ell(\tropP_\bullet)) = 0$ for all $\ell \in [n]$ by part (1), forcing $\partial_\ell(\tropP_\bullet) \equiv 0$ by the inductive hypothesis. Since $\tropP_\bullet$ is determined by $\partial_1(\tropP_\bullet), \partial_2(\tropP_\bullet),\ldots$, we conclude $\tropP_\bullet \equiv 0$.
	
	\noindent We prove (3).  If $\tropP_\bullet$ is integral, then $c_J \in \mathbb{Z}$.  Thus (3) follows from Theorem~\ref{thm:f-pk-equals-nc}: $\wt_{\PK}(\tropP_\bullet) = 1$ if and only if $\wt_{\NC}(\Psi(\tropP_\bullet)) = 1$ if and only if $\Psi(\tropP_\bullet) = \vn_J$ for some non-cyclic $J$, equivalently $\tropP_\bullet \equiv \h_J$ modulo lineality.
\end{proof}

We show that \cref{conj:weighttwo} holds if $\Trop_{>0} X(k,n)$ has the following integrality property:
\begin{hypothesis}\label{hyp:integrality} Any integer point $\tropP_\bullet \in \Trop_{>0} X(k,n)(\Z)$ that is not on a ray is decomposable as a nontrivial sum of integer points $\tropP'_\bullet, \tropP''_\bullet \in \Trop_{>0} X(k,n)(\Z)$.
\end{hypothesis}

\begin{proposition}\label{prop:weighttwo}
Assume \cref{hyp:integrality}.  Then \cref{conj:weighttwo} holds.
\end{proposition}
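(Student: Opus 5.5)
Assuming \cref{hyp:integrality}, we prove the two halves of \cref{conj:weighttwo} entirely on the noncrossing side. By \cref{thm:noncrossingmain}, $\Psi$ and $\rho$ are inverse bijections between $\Trop_{>0}X(k,n)$ and $\Tkn$ that preserve integer points. By \cref{thm:f-pk-equals-nc}, an integral point $\tropP_\bullet$ has weight $\wt_\PK(\tropP_\bullet)$ equal to the number of columns of the noncrossing tableau $\cJ$ with $\Psi(\tropP_\bullet)=\vn_\cJ$. So it suffices to understand which weight-two tableaux $\{I,J\}$ give primitive generators of rays of $\Trop_{>0}X(k,n)$. Here $I \neq J$ is needed for primitivity, and $I=J$ gives $2\h_J$, which is not primitive.

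First, the second statement. Let $\mathbf{v}$ be the primitive generator of a ray of weight $2$. It is integral, so $\Psi(\mathbf{v})=\vn_I+\vn_J$ for a noncrossing tableau $\{I,J\}$, which is unique by \cref{thm:noncrossingmain}. If $I=J$, then $\mathbf{v}\equiv 2\h_I$, contradicting primitivity. If $(I,J)$ is weakly separated, then $\rho(\vn_I+\vn_J)$ would be $\h_I+\h_J$, if $\rho$ is linear on the cone $C_{\{I,J\}}$. By \cref{thm:weak-separation-blade-arrangement}, no positive combination $\alpha\h_I+\beta\h_J$ is a ray, a contradiction.

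The linearity of $\rho$ on $C_{\{I,J\}}$ for weakly separated pairs needs care. I would argue that $\h_I+\h_J\in\Trop_{>0}\Gr(k,n)$ (by \cref{thm:weak-separation-blade-arrangement}) and that $\Psi$ is linear. Hence $\Psi(\h_I+\h_J)=\vn_I+\vn_J$, and since $\Psi$ is a bijection on $\Trop_{>0}X(k,n)$, we get $\rho(\vn_I+\vn_J)=\h_I+\h_J$. So every weight-two ray comes from a noncrossing, non-weakly-separated pair, and that pair is unique.

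Next, the first statement: for $I,J$ noncrossing but not weakly separated, $\tropP_\bullet:=\rho(\vn_I+\vn_J)$ lies on a ray. It is integral of weight $2$. Suppose it is not on a ray. By \cref{hyp:integrality}, $\tropP_\bullet=\tropP'_\bullet+\tropP''_\bullet$ with both summands nonzero integer points of $\Trop_{>0}X(k,n)$. Since $\wt_\PK$ is linear, and by \cref{prop: PK weight general}(2) both weights are positive integers, each summand has weight $1$. By \cref{prop: PK weight general}(3), $\tropP'_\bullet\equiv\h_{I'}$ and $\tropP''_\bullet\equiv\h_{J'}$ for some $I',J'\in\bncyc$. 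Then $\h_{I'}+\h_{J'}\in\Trop_{>0}\Gr(k,n)$, so $(I',J')$ is weakly separated by \cref{thm:weak-separation-blade-arrangement}, and $\Psi(\tropP_\bullet)=\vn_{I'}+\vn_{J'}$. Weakly separated pairs are noncrossing, so $\{I',J'\}$ is a noncrossing tableau. By uniqueness in \cref{thm:noncrossingmain}, $\{I',J'\}=\{I,J\}$, which contradicts the assumption that $(I,J)$ is not weakly separated. Therefore $\tropP_\bullet$ lies on a ray. Its primitive generator has weight dividing $2$ and is not $\h_K$ (weight one would force $\tropP_\bullet\equiv 2\h_K$, whose tableau is $\{K,K\}$), so the weight-two ray is accounted for.

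The main obstacle is the careful use of the uniqueness in the bijection of \cref{thm:noncrossingmain}. The vectors $\vn_I+\vn_J$ and $\vn_{I'}+\vn_{J'}$ must be compared as decompositions in a single cone of the unimodular fan of \cref{thm:NCfan}. This requires both pairs to be faces of $\NC_{k,n}$, which holds here, so uniqueness of the cone containing the point and unimodularity give equality of the multisets.
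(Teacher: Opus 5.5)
Your proposal is correct and follows essentially the same argument as the paper. Integrality plus the hypothesis splits a non-ray point into two weight-one summands $\h_{I'},\h_{J'}$; weak separation of $(I',J')$ then follows from \cref{thm:weak-separation-blade-arrangement}; and uniqueness of decompositions in the simplicial noncrossing fan gives the contradiction. The converse uses, as in the paper, that a weakly separated pair yields $\h_I+\h_J$, which cannot be a ray. You only reverse the order of the two halves and make explicit some points the paper leaves implicit: why each summand has weight exactly one, the case $I=J$, and why $\rho(\vn_I+\vn_J)=\h_I+\h_J$ for weakly separated pairs.
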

\begin{proof}
Suppose that $I,J \in \bncyc$ are noncrossing but not weakly separated.  We show that $\tropP_\bullet := \rho(\vn_I + \vn_J)$ is a ray of $\Trop_{>0} X(k,n)(\Z)$.  By \cref{thm:f-pk-equals-nc}, we have $\wt_\PK(\tropP_\bullet) = 2$.  By \cref{thm:duality}, $\tropP_\bullet$ is an integer point.  By \cref{hyp:integrality}, if $\tropP_\bullet$ is not a ray then we have a nontrivial decomposition $\tropP_\bullet = \tropP'_\bullet + \tropP''_\bullet$, and we must have $\wt(\tropP'_\bullet) = \wt(\tropP''_\bullet) = 1$.  By \cref{prop: PK weight general}(3), we have that $\tropP'_\bullet \equiv \h_{I'}$ and $\tropP''_\bullet \equiv \h_{J'}$.  According to \cref{thm:weak-separation-blade-arrangement}, we must have that $I',J'$ are weakly separated.  But then applying $\Psi$ we obtain
$$
\vn_I + \vn_J = \Psi(\tropP_\bullet) = \vn_{I'} + \vn_{J'}.
$$
Both $(I,J)$ and $(I',J')$ are noncrossing, so this contradicts \cref{thm:NCfan}.  We conclude that $\rho(\vn_I+\vn_J)$ is a ray.

Now, suppose that $\tropP_\bullet$ is the primitive integer point on a ray of $\Trop_{>0}X(k,n)$ and we have $\wt_\PK(\tropP_\bullet) = 2$.  Since we have already classified all rays (in fact, all points) of PK weight one in \cref{prop: PK weight general}(3), we have $\vn = \Psi(\tropP_\bullet) = \vn_I + \vn_J$ for a unique noncrossing pair $I \neq J$.  If $I, J$ are weakly separated, then $\tropP_\bullet = \h_I + \h_J$, contradicting \cref{thm:weak-separation-blade-arrangement} and the assumption that $\tropP_\bullet$ is a ray.  Thus $I,J$ are noncrossing but not weakly separated.
\end{proof}

\section{Central Tropical Linear Spaces}
\label{sec:trop-linear-spaces}
\subsection{Tropical Linear Spaces and Polypositroids}
\label{subsec:trop-linear-spaces}

Recall that for a tropical Pl\"ucker vector $\tropP_\bullet$, we denote by $L(\tropP_\bullet)$ its tropical linear space (Definition \ref{def:tropical}); see \cite{Spe} for more details.  In this section we will consider the matroid complex structure on $L(\tropP_\bullet)$.  Each face $F \subset L(\tropP_\bullet)$ is associated to a loop-free matroid $M_F$ of rank $k$ on the ground set~$[n]$, with matroid polytope $P_F$.  The affine span of $F$ is determined by the connected components of $M_F$: if $M_F$ has $\ell$ connected components forming a partition $[n] = S_1 \sqcup \cdots \sqcup S_\ell$, then the affine span of $F$ has dimension $\ell - 1$.  In particular, the maximal bounded faces have dimension $k - 1$ (associated to matroids with $k$ connected components), while vertices correspond to matroids with a single connected component.  We denote by $B(\tropP_\bullet) \subset L(\tropP_\bullet)$ the \emph{bounded subcomplex}, consisting of the bounded faces of $L(\tropP_\bullet)$.

	\begin{proposition}[{\cite[Proposition 2.3]{Spe}}]\label{prop:Spe}
		We have $\x \in L(\tropP_\bullet)$ if and only if $M(\tropP^{\x}_\bullet)$ is loopless.  We have that $\x$ lies in a bounded face of the matroid complex structure of $L(\tropP_\bullet)$ if and only if $M(\tropP^{\x}_\bullet)$ is loopless and coloopless.
	\end{proposition}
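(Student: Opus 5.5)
The plan is to derive both statements from the regular-subdivision picture, using the dictionary between points of $\R^n/\one$ and cells of $\Delta(\tropP_\bullet)$. For $\x\in\R^n$, the matroid $M(\tropP^{\x}_\bullet)$ is the matroid whose bases are the $I$ minimizing $\tropP_I-\sum_{i\in I}x_i$. Equivalently, its polytope $P_{M(\tropP^\x_\bullet)}$ is the face of the lifted hypersimplex $\conv\{(e_I,\tropP_I)\}$ selected by the linear functional $(\mathbf{y},h)\mapsto h-\langle \x,\mathbf{y}\rangle$, projected back to $\Delta(k,n)$. This descends to $\R^n/\one$ because every vertex of $\Delta(k,n)$ satisfies $\langle\one,e_I\rangle=k$.

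For the first assertion, I would use the tropical circuit description in \eqref{eq:tropical-linear-space}. An element $i$ is a loop of $M(\tropP^\x_\bullet)$ if and only if no minimizer $I$ contains $i$. Suppose $i$ is a loop. Take a minimizer $I$ and set $\tau=I\cup\{i\}$. In the $\tau$-minimum, the term $j=i$ is $\tropP_I+x_i$. For any $j\neq i$, the term $\tropP_{\tau\setminus j}+x_j$ is strictly larger than $\tropP_I+x_i$, because $\tau\setminus j$ contains $i$ and so is not a minimizer. After subtracting $\sum_{\tau}x$, this strictness is exactly the comparison of $\tropP^\x$-values. So the minimum is attained only once, and $\x\notin L(\tropP_\bullet)$.

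Conversely, suppose some $\tau$ has a unique minimizing $i$. Then I would show $i$ is a loop by contradiction. If a basis $B$ contained $i$, apply the tropical Plücker (exchange) relations to $B$ and $\tau\setminus i$. These produce some $j\in\tau\setminus i$ with $\tropP^\x_{\tau\setminus j}\le\tropP^\x_{\tau\setminus i}$, which contradicts uniqueness. This is the standard argument from \cite{Spe}, relying on the fact that the $\tropP^\x_\bullet$-minimizers form a matroid (a valuated matroid property).

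For the second assertion, I would use the face structure. The cell of $L(\tropP_\bullet)$ containing $\x$ in its relative interior has recession cone spanned by directions $e_S$ such that moving $\x$ to $\x+\epsilon e_S$ keeps the matroid loopless. The key equivalence to establish is that $M$ has a coloop $c$ exactly when the ray $\x+t e_c$, $t\ge0$, stays in the same closed cell. The reasoning: increasing $x_c$ only favors sets containing $c$, and every basis already contains $c$, so the minimizers do not change. Conversely, if the cell is unbounded, take a recession direction $e_S$ (in a regular subdivision these can be taken as $0/1$ vectors modulo $\one$). Moving far along it forces the bases to maximize $|I\cap S|$, and that maximum can be at most $k$. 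Constancy of the matroid along the ray then gives $|B\cap S|=\mathrm{rk}(S)$ for all bases $B$. Combined with looplessness, this forces $M$ to be disconnected with $S$ a union of components. I would then need to extract a coloop from this, or else show directly that a bounded direction argument fails.

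I expect that last step to be the main obstacle, because "unbounded" does not literally mean "has a coloop" in general. The cleaner route is the statement in \cite{Spe}: faces of $L$ are dual to interior faces of the subdivision. A loopless and coloopless matroid polytope is not contained in any facet $x_i=0$ or $x_i=1$ of $\Delta(k,n)$, which is exactly the condition for the polytope to be interior. Interior cells of a regular subdivision have bounded dual cells. So I would prove this duality by identifying the normal cone of the cell in the lifted polytope with the closure of the region of $\x$ yielding that matroid. That normal cone is bounded modulo $\one$ precisely when the cell avoids the boundary facets $\{x_i=0\}$ and $\{x_i=1\}$, and on a loopless cell only the $\{x_i=1\}$ facets can be hit, which is the coloop case.
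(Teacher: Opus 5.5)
The paper gives no proof of this statement; it quotes \cite{Spe}. Your argument for the first assertion is correct. In the converse direction, make the exchange explicit. Apply the valuated exchange axiom to $B\ni i$ and $\tau\setminus i$, exchanging out $i$. This gives $v\in(\tau\setminus i)\setminus B$ with $\tropP^{\x}_B+\tropP^{\x}_{\tau\setminus i}\ge\tropP^{\x}_{B'}+\tropP^{\x}_{\tau\setminus v}$, where $B'=(B\setminus\{i\})\cup\{v\}$. Since $\tropP^{\x}_{B'}\ge\tropP^{\x}_B$, this yields $\tropP^{\x}_{\tau\setminus v}\le\tropP^{\x}_{\tau\setminus i}$ with $v\neq i$. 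This step needs more than the $3$-term relations \eqref{eq:posTropPlucker}. It is available because positive tropical Pl\"ucker vectors lie in $\Trop\,\Gr(k,n)$.

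The gap is in the second assertion. Your final route asserts that the dual cell of $P_M$ is bounded exactly when $P_M$ lies in no facet of $\Delta(k,n)$. That equivalence is the entire content of the claim, and you do not prove it. (Also, ``avoids the boundary facets'' must be read as ``is not contained in any boundary facet'': every cell touches $\partial\Delta(k,n)$ at its vertices.) Meanwhile, the obstacle you hit in the direct attempt is not real.

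Here is the missing step. Let $M=M(\tropP^{\x}_\bullet)$. The face carrying $\x$ in its relative interior is $F=\{\x' : \text{every basis } B \text{ of } M \text{ minimizes } \tropP^{\x'}_\bullet\}$. It is cut out by $\langle\x',e_I-e_B\rangle\le\tropP_I-\tropP_B$. So its recession cone consists of those $w$ for which every basis of $M$ maximizes $\langle w,e_I\rangle$ over \emph{all} $I\in\binom{[n]}{k}$. Every $k$-subset competes here because every $\tropP_I$ is finite. Your $e_S$ computation discarded exactly this information: it gives $|B\cap S|=\min(k,|S|)$ for every basis, not merely $|B\cap S|=\mathrm{rk}(S)$.

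Now take $w\not\equiv 0$ modulo $\one$, and let $\beta$ be its $k$-th largest entry. The maximizers are exactly the $I$ with $\{i:w_i>\beta\}\subseteq I\subseteq\{i:w_i\ge\beta\}$. Since $w$ is nonconstant, one of two things happens:
\begin{enumerate}
\item the first set is nonempty, and its elements are coloops of $M$; or
\item the second set is proper, and its complement consists of loops.
\end{enumerate}
Conversely, a coloop $c$ gives $M(\tropP^{\x+te_c}_\bullet)=M$ for all $t\ge 0$, so $F$ is unbounded. Hence for loopless $M$, $F$ is bounded if and only if $M$ is coloopless. Finally, $\x$ lies in some bounded face if and only if $F$ is bounded, since $F$ is a face of every closed face containing $\x$.

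Your worry that unboundedness need not produce a coloop only arises for valuated matroids with infinite coordinates. There the support polytope has facets other than $\{x_i=0\}$ and $\{x_i=1\}$. The loop/coloop versus facet dictionary you use is the same one the paper uses in the proof of Proposition~\ref{prop:bounded-subdiff}. That proof also relies on the present statement for the passage to boundedness.
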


When $\tropP_\bullet \in \Trop_{>0}\Gr(k,n)$, the matroids $M_F$ are positroids and we say that $L(\tropP_\bullet)$ is a \emph{positive tropical linear space}.  The following result follows from \cite{ARW},\cite[Section 7]{LP}.

\begin{proposition}\label{prop:noncrossing-partitions}
	Let $\tropP_\bullet \in \Trop_{>0}\Gr(k,n)$.  For any face $F$ of $L(\tropP_\bullet)$, the partition of $[n]$ into connected components of $M_F$ is noncrossing.  In particular, the maximal bounded faces are associated to noncrossing $k$-block partitions of~$[n]$.
\end{proposition}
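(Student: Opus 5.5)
We need to prove that for a face $F$ of $L(\tropP_\bullet)$ with $\tropP_\bullet$ positive, the connected components of $M_F$ form a noncrossing partition of $[n]$, and that maximal bounded faces correspond to noncrossing $k$-block partitions.

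The plan is to reduce everything to the structure of positroids. By Proposition~\ref{prop:Spe}, a point $\x$ in the relative interior of $F$ gives the matroid $M_F = M(\tropP^{\x}_\bullet)$, whose polytope $P_F$ is a cell of the regular subdivision $\Delta(\tropP_\bullet)$. Since $\tropP_\bullet \in \Trop_{>0}\Gr(k,n)$, every cell of $\Delta(\tropP_\bullet)$ is a positroid polytope, by the results of \cite{Spe,ALS,SW21} recalled before Definition~\ref{def:tropical}. So $M_F$ is a positroid, and the first claim becomes a statement about positroids alone: the connected components of a positroid form a noncrossing partition of $[n]$ with respect to the cyclic order. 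This is the result of Ardila--Rinc\'on--Williams \cite{ARW}, and I would cite it directly.

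If a self-contained argument is wanted, I would proceed as follows. Suppose two components $A$ and $B$ cross, so there are $a<b<c<d$ cyclically with $a,c\in A$ and $b,d\in B$. Use that a positroid is cut out by cyclic-interval rank conditions; equivalently, every Pl\"ucker relation with positive terms holds. Connectedness of $A$ gives a circuit through $a$ and $c$, and connectedness of $B$ gives one through $b$ and $d$. A three-term Pl\"ucker relation in the positive setting then forces a basis exchange mixing $A$ and $B$. This contradicts the direct-sum decomposition $M_F = M|_A \oplus M|_B \oplus \cdots$.

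The main obstacle in this route is carrying out the circuit exchange cleanly. I would not grind it out, but would rely on \cite{ARW} (see also \cite[Section 7]{LP}, where faces of positive tropical linear spaces are shown to be polypositroids whose affine spans are governed by noncrossing partitions).

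The second claim then follows from the dimension count recalled just before Proposition~\ref{prop:Spe}. A face whose matroid has $\ell$ components has affine span of dimension $\ell-1$. A bounded face is coloopless, so every component has rank at least one. The component ranks sum to $k$, hence $\ell \le k$. The maximal bounded faces have dimension $k-1$, which forces $\ell = k$ with each component of rank one. Combined with the first claim, the partition of $[n]$ into these $k$ blocks is noncrossing.
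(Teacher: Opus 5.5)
Your proposal matches the paper, which also just notes that $M_F$ is a positroid and cites the Ardila--Rinc\'on--Williams theorem \cite{ARW} (with \cite[Section 7]{LP}) that the connected components of a positroid form a noncrossing partition; your dimension count for the second claim is also fine. One small correction: each component has rank at least one because $M_F$ is loopless, which holds for every face of $L(\tropP_\bullet)$, not because it is coloopless.
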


An \emph{alcoved polytope} \cite{LP07} is a convex polytope in an affine subspace of $\R^n / \one$ defined by a system of inequalities $w_i - w_j \le c_{ij}$ and equalities $w_i - w_j = b_{ij}$.  \emph{Polypositroid polytopes} \cite{LP} are alcoved polytopes whose edges are parallel to indicator vectors $e_{[a+1,b]}$ of cyclic intervals $[a+1,b] \subset [n]$ for $a \ne b$.  (We caution that our coordinate conventions differ from those in \cite{LP}.)

\begin{proposition}
The bounded faces of a tropical linear space $L(\tropP_\bullet)$ are alcoved polytopes. For a positive tropical linear space, the bounded faces are polypositroids.
\end{proposition}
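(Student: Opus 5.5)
The plan is to show that each bounded face $F$ of $L(\tropP_\bullet)$ is cut out, inside its affine span, by inequalities of the form $x_i - x_j \le c_{ij}$. Then I will identify the edge directions in the positive case.

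\emph{Step 1 (description of a face).} Fix a face $F$ with associated matroid $M_F$, which is the matroid $M(\tropP^{\x}_\bullet)$ for $\x$ in the relative interior of $F$. By \cref{prop:Spe} and the definition of the regular subdivision, $F$ is the set of $\x$ for which the minimum of $\tropP_I - \sum_{i\in I} x_i$ over $I$ is attained exactly on a set containing the bases of $M_F$. Fix a basis $B_0$ of $M_F$. The point $\x$ lies in the closure of $F$ if and only if two conditions hold.
\begin{enumerate}
\item For all bases $B$ of $M_F$, $\tropP_B - e_B\cdot \x = \tropP_{B_0} - e_{B_0}\cdot\x$.
\item For all $I$, $\tropP_I - e_I\cdot \x \ge \tropP_{B_0} - e_{B_0}\cdot \x$.
\end{enumerate}
These are linear conditions on $\x$ involving the vectors $e_I - e_{B_0}$.

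\emph{Step 2 (reduction to root directions).} The key point is that only conditions with $I$ adjacent to a basis matter, that is, $I = B \setminus\{i\}\cup\{j\}$ for some basis $B$. This uses the local-to-global property of regular matroid subdivisions: the height function is minimized on a face of $\Delta(k,n)$ if and only if it is locally minimized along the edges of the hypersimplex leaving that face (Speyer's criterion). Edges of $\Delta(k,n)$ are in directions $e_j - e_i$, so each such condition reads $x_j - x_i \le \tropP_I - \tropP_B$ or an equality of the same shape. This shows $F$ is alcoved.

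The main obstacle is justifying this reduction carefully. It amounts to showing that a linear functional restricted to the subdivision is minimized exactly on a cell when it is edge-locally minimal, which follows from the convexity of the regular subdivision restricted to the star of $P_F$. Alternatively, one can view $F$ as the set of $\x$ for which $P_F$ is a cell of $\Delta(\tropP^{\x}_\bullet)$ containing the lowest point, and invoke that matroid polytopes have all edges in directions $e_i - e_j$.

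\emph{Step 3 (positive case).} When $\tropP_\bullet$ is positive, every $M_F$ is a positroid, and the connected components give a noncrossing partition by \cref{prop:noncrossing-partitions}. An edge of a bounded face $F$ joins two vertices whose positroids refine one another. Along the edge exactly one pair of components merges, so the edge direction is the indicator $e_S$ of a union of components, modulo $\one$. I will argue that $S$ can be taken to be a cyclic interval, using the dual description of faces of the subdivision: the edge is dual to a codimension-one wall between adjacent positroid polytopes. Such walls are hyperplanes $\sum_{i\in[a+1,b]} x_i = c$ by the positroid inequality description in terms of cyclic intervals. The normal $e_{[a+1,b]}$ is therefore the edge direction, giving the polypositroid property of \cite{LP}.
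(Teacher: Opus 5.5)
Your proposal follows the paper's proof essentially step for step. You describe $\overline{F}$ by requiring the minimum to be attained on every basis of $M_F$, reduce to single-exchange difference constraints using the root-direction edges of the matroid subdivision around $P_{M_F}$, and in the positive case identify each edge direction with the cyclic-interval normal $e_{[a+1,b]}$ of the dual wall between adjacent positroid polytopes. The reduction you flag as the main obstacle is the step the paper simply asserts (``the normal cone is cut out entirely by such edge inequalities''); it is justified by the valuated-matroid fact that optimality under single basis exchanges implies global optimality.
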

\begin{proof}
	We prove the two claims sequentially.
	
	Let $F$ be a bounded face of the tropical linear space $L(\tropP_\bullet)$. By Proposition \ref{prop:Spe}, $F$ corresponds to a loopless and coloopless matroid $M_F$. The closure $\overline{F}$ consists of all points $x \in \mathbb{T}^{n-1}$ such that the minimum $\min_{I \in \binom{[n]}{k}} (\tropP_I - \sum_{i \in I} x_i)$ is achieved on all bases $I \in M_F$.
	
	By the basis exchange property of matroids, the edges of the matroid polytope $P_{M_F}$ are of the form $e_j - e_i$ for some $I, J \in M_F$ with $J = (I \setminus \{i\}) \cup \{j\}$. The condition that $I$ and $J$ both achieve the minimum yields the equality
	\[
	\tropP_I - \sum_{m \in I} x_m = \tropP_J - \sum_{m \in J} x_m \implies x_j - x_i = \tropP_J - \tropP_I.
	\]
	For any basis $I \in M_F$ and non-basis $K \notin M_F$ such that $e_K - e_I = e_l - e_i$, the condition that $I$ achieves the minimum better than or equal to $K$ yields the inequality
	\[
	\tropP_I - \sum_{m \in I} x_m \le \tropP_K - \sum_{m \in K} x_m \implies x_l - x_i \le \tropP_K - \tropP_I.
	\]
	The normal cone to the matroid polytope $P_{M_F}$, which defines the face $\overline{F}$ up to translation, is cut out entirely by such edge inequalities. Thus, $\overline{F}$ is defined by a system of difference equations $x_j - x_i = b_{ij}$ and difference inequalities $x_l - x_k \le c_{kl}$. Because $F$ is a bounded face, $\overline{F}$ is a compact polyhedron defined by two-variable difference bounds, which is precisely the definition of an alcoved polytope.

	Now assume $\tropP_\bullet \in \Trop_{>0}\Gr(k,n)$. The tropical Pl\"ucker vector $\tropP_\bullet$ induces a regular positroid subdivision $\Delta(\tropP_\bullet)$ of the hypersimplex $\Delta(k,n)$.  A fundamental property of positroid polytopes \cite[Theorem 2.1]{LP} is that their facets are supported by hyperplanes of the form $\sum_{m \in [a+1,b]} y_m = r$ for some cyclic interval $[a+1,b] \subset [n]$ and integer $r$.  The normal vector to such a hyperplane is the indicator vector $e_{[a+1,b]}$; these are exactly the possible edge directions of $\overline{F}$.  By definition, $\overline{F}$ is a polypositroid.
\end{proof}

\subsection{Central Tropical Linear Spaces}
\label{subsec:central-roof}

As we discussed in Section \ref{ssec:tropGr}, a tropical Pl\"ucker vector can be viewed as a height function on the vertices of $\Delta(k,n)$, and by interpolation, a piecewise linear function on $\R^n/\one$.

	Let $(\r, \mathbf{S}) = ((r_1, S_1), \ldots, (r_d, S_d))$ be a decorated ordered set partition with $\sum r_a = k$ and $\bigsqcup S_a = [n]$.  For each cyclic rotation $a \in \{1, \ldots, d\}$, define the vector 
	\begin{equation}\label{eq:weyl-vector}
		W_a := \sum_{p=1}^{d}
		\Bigl(\sum_{q=1}^{p} r_{a+q} e_{S_{a+p}}\Bigr),
	\end{equation}
	where all indices are taken cyclically modulo~$d$.  
	
	\begin{definition}\label{def:central-roof}
		Let $J \in \bncyc$ and $(\r, \mathbf{S}) = (\r_J, \mathbf{S}_J)$ be the decorated ordered set partition of  Definition \ref{def:DOSP}.  The \emph{central roof function} is the piecewise-linear function $\eta_{(\r,\mathbf{S})} : \R^n \to \R$ defined by
		\begin{equation}\label{eq:central-roof}
			\eta_{(\r,\mathbf{S})}(x) = \eta^J(x)
			:= -\frac{1}{k}\,\min_{1 \le a \le d}\, W_a \cdot x.
		\end{equation}
		The \emph{central Pl\"ucker vector} $\eta^J_\bullet$ is given by $\eta^J_I :=\eta^J(e_I)$ for $I \in \binom{[n]}{k}$.
	\end{definition}

\begin{rem}
We call $\eta^J(x)$ \emph{central} because it is a piecewise-linear function, as opposed to a piecewise-affine linear function for a generic representative.
\end{rem}
	
	\begin{example}\label{ex:central-2block}
		For $d = 2$ with blocks $(S_1, S_2)$ and decoration $(r_1, r_2)$, we have
		$$
		W_1 = r_2\, e_{S_2} + k\, e_{S_1}, \ \ W_2 = r_1\, e_{S_1} + k\, e_{S_2}.
		$$
		Writing $y_1 = \sum_{i \in S_1} x_i$ and $y_2 = \sum_{i \in S_2} x_i$ with $y_1 + y_2 = k$, the central roof function becomes
		$$
		\eta(x) = -\frac{1}{k}\min\bigl(r_2 y_2 + k y_1,\; r_1 y_1 + k y_2\bigr),
		$$
		with a single crease at $r_2 y_2 + k y_1 = r_1 y_1 + k y_2$, that is, $r_2 y_1 = r_1 y_2$.  Using $y_1 + y_2 = k$, the crease sits at $y_1 = r_1$.
	\end{example}
	
	\begin{proposition}\label{prop:central-equals-corank}
		The central Pl\"ucker vector $\eta^J_\bullet$ equals the planar basis element $\h_J$ modulo $\Lkn$.
	\end{proposition}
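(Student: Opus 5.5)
The plan is to reduce to Theorem~\ref{thm:h=corank}: since $\h_J \equiv \corank_{(\r,\mathbf{S})} \pmod{\Lkn}$, it suffices to show that $\eta^J_\bullet - \corank_{(\r,\mathbf{S})}$ lies in $\Lkn$. Concretely, I will look for a vector $c \in \R^n$ with
\[
\eta^J(e_I) = \corank_{(\r,\mathbf{S})}(I) + \sum_{i\in I} c_i \quad \text{for all } I \in \tbinom{[n]}{k}.
\]
Write $y_p := |I \cap S_p|$, so that $\sum_p y_p = k$ and both sides depend on $I$ only through $(y_1,\dots,y_d)$ plus a linear term. This reduces the claim to an identity of piecewise-linear functions of $y$ on the simplex $\{y \ge 0,\ \sum y_p = k\}$. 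Using \eqref{eq:corank-max}, the corank is $\max\bigl(0,\max_{a<d}(\rho_a - \sum_{p\le a} y_p)\bigr)$. I will rewrite this cyclically-symmetrically as $\max_{0\le a \le d-1}(\rho_a - \sum_{p\le a}y_p)$ with $\rho_0=0$, the $a=0$ term giving the $0$.

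Next I will expand $-\frac1k W_a\cdot x$ at $x=e_I$. The coefficient of $y_{a+p}$ in $W_a$ is $\sum_{q=1}^p r_{a+q} = \rho_{a+p}-\rho_a$ (read cyclically, with the appropriate wrap by $k$). So $-\frac1k W_a\cdot e_I = -\frac1k\sum_p(\rho_{a+p}-\rho_a)y_{a+p}$. The plan is to show that each such affine function differs from $\rho_a - \sum_{p\le a} y_p$ by a single term $L(y) = \sum_p \lambda_p y_p$ that does \emph{not} depend on $a$. For that I will compute the difference between the expressions for $a$ and $a-1$. Using $\sum y = k$, both sides of the target identity should change by the same amount, namely $r_a - y_a$ up to sign: the $W$-side because the shift of base point changes each coefficient by $r_a$ except at the block $S_a$, where it jumps by $k$; the corank side because one prefix term is added. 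Then I take $\lambda_p = -\frac1k(\rho_p - 0)$, i.e.\ the $a=0$ (equivalently $a=d$) instance, and set $c_i := \lambda_p$ for $i\in S_p$.

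Once this is checked, $\eta^J(e_I) = \max_a(\text{affine}_a) = \corank(I) + L(y)$. Here $L(y)=\sum_{i\in I}c_i$ is in $\Lkn$, because the min over $-W_a$ becomes a max after negation. Combined with Theorem~\ref{thm:h=corank}, this gives the proposition. I expect the main obstacle to be the bookkeeping of the cyclic indices: which of $\rho_{a+p}-\rho_a$ needs $+k$ when wrapping past $d$, and the sign and normalization by $-\frac1k$. I would first verify the identity in the two-block case against Example~\ref{ex:central-2block}, where the crease at $y_1=r_1$ matches the single corank constraint $|I\cap S_1|\ge r_1$, and then do the general telescoping.
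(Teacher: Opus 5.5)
Your proposal is correct, but it takes a different route from the paper.

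\textbf{The paper's route.} The paper argues entirely on the dual side. A class in $\R^{\binom{[n]}{k}}/\Lkn$ is determined by its tropical cross-ratios, and the paper simply asserts that one checks $u^t_I(\eta^J_\bullet)=\delta_{I,J}$. The mixed finite-difference computation on the roof function is left to the reader.

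\textbf{Your route.} You prove a pointwise identity on the primal side. With $y_b=|I\cap S_b|$, the coefficient of $y_b$ in $W_a$ is $\rho_b-\rho_a$ for $b>a$ and $\rho_b-\rho_a+k$ for $b\le a$. Using $\sum_p y_p=k$, this gives
\[
-\tfrac{1}{k}\,W_a\cdot e_I=\Bigl(\rho_a-\sum_{b\le a}y_b\Bigr)-\tfrac{1}{k}\sum_{p=1}^{d}\rho_p\,y_p .
\]
Maximizing over $a$, with the term $a=d$ supplying the $0$ in \eqref{eq:corank-max}, yields
\[
\eta^J_I=\corank_{(\r,\mathbf{S})}(I)-\tfrac1k\sum_{p}\rho_p\,|I\cap S_p| .
\]
Theorem~\ref{thm:h=corank} then finishes the proof. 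Your telescoping from $a-1$ to $a$ (both sides change by $r_a-y_a$) is exactly right, as is your anchor $c_i=-\rho_p/k$ for $i\in S_p$. The resulting formula also reproduces both cells of Example~\ref{ex:gradient-2block}.

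\textbf{What each route buys.} Your route reuses the cross-ratio computation already done for the corank function instead of repeating it. It also exhibits the explicit lineality shift between the central and corank representatives. Since you only used $\sum_p y_p=k$, it is an identity of piecewise-linear functions on all of $\{\sum_i x_i=k\}$, which makes the creases $\{\sum_{i\in T_a}x_i=\rho_a\}$ visible. The paper's route is shorter and does not depend on Theorem~\ref{thm:h=corank}, but it never writes the verification down.

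\textbf{A wording point.} The function $I\mapsto\sum_{i\in I}c_i$ lies in $\Lkn$ because it equals $\sum_i c_i\sum_{I\ni i}e^I$. The min/max switch you cite for this is a separate fact: it is what turns $\eta^J$ into $\max_a$ of your affine functions.
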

	
	\begin{proof}
		Both $\eta^J_\bullet$ and $\h_J$ are determined modulo $\Lkn$ by their tropical cross-ratios.  One checks that 
		\begin{equation*}u^t_I\left(\sum_{M \in \binom{\lbrack n\rbrack}{k}}\eta^J(e_M)e^M\right) = \delta_{I,J}. \hfill \qedhere \end{equation*}
	\end{proof}
	
	For a general positive tropical Pl\"ucker vector $\tropP_\bullet$, we define the \emph{central representative}
	\begin{equation}\label{eq:central-pi}
		\hp_\bullet :=
		\sum_{J \in \bncyc} u_J^t(\tropP_\bullet)\, \eta^{J}_\bullet.
	\end{equation}
	By Proposition~\ref{prop:central-equals-corank}, $\hp_\bullet \equiv \tropP_\bullet$.  We shall view $\hp_\bullet$ both as a tropical Pl\"ucker vector in $\R^{\binom{[n]}{k}}$ and as a piecewise-linear function $\hp(x): \R^n \to \R$.

	\subsection{The gradient of the roof function}
	\label{subsec:gradient}

	On the relative interior of each maximal cell $C$ of $\Delta(\tropP_\bullet)$, the gradient of $\hp(x)$ is a constant vector $v_C := \nabla \hp|_C \in \R^n / \one$.  This defines a piecewise-constant map
	\begin{equation}\label{eq:gradient-map}
		\nabla \hp(x) : \operatorname{int}(\Delta(k,n)) \longrightarrow \R^n / \one,\ \ x \;\mapsto\; v_C \text{ for } x \in \operatorname{relint}(C),
	\end{equation}
	defined on the complement of the codimension-$1$ walls.  The image of this map is the finite set of \emph{vertices} of $B(\hp_\bullet)$.	
	\begin{example}\label{ex:gradient-2block}
		Continuing Example~\ref{ex:central-2block}, for decoration $(2,1)$ on blocks $(123, 456)$ in $(3,6)$, we have with $x_S:= \sum_{s\in S} x_s$,
		\begin{itemize}
			\item Left cell ($x_{123} < 2$): $\hp(x) = -\frac{1}{3}(x_{456} + 3 x_{123})
			= -\frac{1}{3}x_{456} - x_{123}$, 
			\item Right cell ($x_{123} > 2$): $\hp(x) = -\frac{1}{3}(2 x_{123} + 3 x_{456})
			= -\frac{2}{3}x_{123} - x_{456}$,		\end{itemize}
		so 
		\begin{align*}
		v_1 &= (-1, -1, -1, -\frac{1}{3},
			-\frac{1}{3}, -\frac{1}{3}) \sim (\frac{1}{6},\frac{1}{6},\frac{1}{6},\frac{5}{6},\frac{5}{6},\frac{5}{6}) \in \R^6/\one,  \qquad \text{and} \\
			v_2 &= (-\frac{2}{3}, -\frac{2}{3},
			-\frac{2}{3}, -1, -1, -1) \sim (\frac{2}{3},\frac{2}{3},\frac{2}{3},\frac{1}{3},\frac{1}{3},\frac{1}{3}) \in \R^6/\one,
		\end{align*}
		are the two vertices of $B(\hp_\bullet)$.  
		
		For decoration $(1,2)$, the crease moves to $x_{123} = 1$, and the vertices become 
		$$v_1 = (-\frac{1}{3}, -\frac{1}{3}, -\frac{1}{3}, -1, -1,-1)\ \text{and}\ v_2 = (-1, -1, -1, -\frac{2}{3}, -\frac{2}{3}, -\frac{2}{3}).$$
		The two bounded edges are at different positions in $\R^6/\one$.
	\end{example}

	\subsection{The subdifferential and the bounded complex}
	\label{subsec:subdifferential}
	
	The gradient map~\eqref{eq:gradient-map} is undefined at the walls of $\Delta(\tropP_\bullet)$.  The \emph{subdifferential}, a construction common in convex geometry, extends it to a set-valued map defined everywhere; see \cite{Rockafellar} for details.
	\begin{definition}\label{def:subdifferential}
		For a convex function $\kappa : \Delta(k,n) \to \R$, the subdifferential at
		$x \in \Delta(k,n)$ is
		$$
		\widetilde\partial \kappa(x) = \bigl\{ w \in \R^n/\one :	\kappa(y) - \kappa(x) \ge \langle w, y - x \rangle\  \text{for all } y \in \Delta(k,n) \bigr\}.
		$$
	\end{definition}
	
	In our context $\kappa$ is piecewise-linear and convex; in this case the subdifferential interpolates between the gradient values. Indeed, at a generic point of a maximal cell $C$, $\widetilde\partial\hp(x) = \{v_C\}$, a single point (the gradient).  At a codimension-$1$ wall between cells $C_1$ and $C_2$, we have $\widetilde\partial\hp(x) = \conv(v_{C_1}, v_{C_2})$, the segment joining the two gradient values.  More generally, at a face where cells $C_1, \ldots, C_{m+1}$ meet, we have $\widetilde\partial\hp(x) = \conv(v_{C_1}, \ldots, v_{C_{m+1}})$.

	\begin{figure}[h!]
		\centering
		\includegraphics[width=0.7\linewidth]{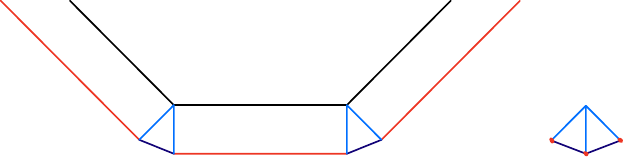}
		\caption{Left: above, the black line is the graph of a piecewise-linear function; below, the (red) tip of the (light blue) gradient vectors and the (dark blue) convex hulls across bend loci.  The subdifferential (right) interpolates between the generic values of the gradient to construct the tropical linear space, which in this case consists of two (dark blue) line segments and three (red) vertices.}
		\label{fig:subdifferential}
	\end{figure}
	
\begin{proposition}\label{prop:bounded-subdiff}
	The geometric realization of the bounded complex is the image of the subdifferential restricted to the interior of $\Delta(k,n)$,
	\begin{equation}\label{eq:bounded-subdiff}
		B(\tropP_\bullet) =
		\bigcup_{x \in \operatorname{int}(\Delta(k,n))}
		\widetilde\partial\hp(x).
	\end{equation}
\end{proposition}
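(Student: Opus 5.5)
The plan is to identify both sides of \eqref{eq:bounded-subdiff} with the same set, the points $\x \in \R^n/\one$ for which $M(\hp^{\x}_\bullet)$ is loopless and coloopless. By Proposition~\ref{prop:Spe}, $B(\tropP_\bullet)$ is exactly this set, and since $\hp_\bullet \equiv \tropP_\bullet$ modulo $\Lkn$ the two linear spaces agree up to the translation by the lineality vector, so we may work with $\hp_\bullet$ throughout (in fact the statement should be read with $B(\hp_\bullet)$ on the left).

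\textbf{Step 1: read $\hp(x)$ as the lower convex envelope.} For $x \in \Delta(k,n)$ we set
\[
\hp(x) = \min\Bigl\{ \sum_I \lambda_I \hp_I : \lambda \ge 0,\ \sum_I \lambda_I = 1,\ \sum_I \lambda_I e_I = x\Bigr\}.
\]
This is convex, it agrees with the central interpolation on each cell of $\Delta(\tropP_\bullet)$, and it is linear (not merely affine) on each cell, by the centrality in Definition~\ref{def:central-roof}. With the sign conventions of Example~\ref{ex:gradient-2block}, the gradients $v_C$ are the vertices of $L$. For a given $w$, the linear function $\hp - \langle w,\cdot\rangle$ is minimized over the vertices $e_I$ exactly on the bases of $M(\hp^{w}_\bullet)$, up to the sign convention relating $w$ and $\x$, which I would fix once. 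Standard convex duality for piecewise-linear envelopes then gives
\[
w \in \widetilde\partial\hp(x) \iff x \in P_{M(\hp^{w}_\bullet)}.
\]
That is, the supporting hyperplane of slope $w$ touches the graph over the face of the regular subdivision equal to that matroid polytope.

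\textbf{Step 2: use the interior condition.} Taking the union over $x \in \operatorname{int}\Delta(k,n)$ gives the set of $w$ whose matroid polytope $P_{M(\hp^w_\bullet)}$ meets the interior of $\Delta(k,n)$. A matroid polytope lies in the facet $x_i=0$ exactly when $i$ is a loop, and in $x_i=1$ exactly when $i$ is a coloop. Since $P_M$ is a face of the subdivision, it meets the interior iff it is not contained in any facet $\{x_i=0\}$ or $\{x_i=1\}$. This is equivalent to $M$ being loopless and coloopless, and Proposition~\ref{prop:Spe} then identifies the union with the bounded faces of $L(\hp_\bullet)$. Note also that a polytope in the subdivision that is not contained in the boundary must meet the interior, since the cells are convex subsets of $\Delta(k,n)$.

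\textbf{Main obstacle.} The delicate point is Step 1: we must check that the sign and dual conventions match Definition~\ref{def:tropical}, with $\hp_I + x_i$ versus $\hp_I - \sum_{i\in I} x_i$, and that the minimizing face for slope $w$ really is the full matroid polytope, not a subface. I would handle this by writing the subgradient inequality on vertices, $\hp_I - \langle w, e_I\rangle \ge \hp(x) - \langle w,x\rangle$, with equality on all $I$ in the support of an optimal $\lambda$, and then invoking the regular-subdivision description of \cite{Spe}. The condition that $x$ lie in the interior of $\Delta(k,n)$ is exactly what excludes the unbounded faces.
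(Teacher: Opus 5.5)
Your proposal is correct and follows the paper's proof essentially step for step. Both arguments read $\hp$ on $\Delta(k,n)$ as the lower convex envelope of the heights and minimize vertex by vertex to get $w \in \widetilde\partial\hp(x) \iff x \in P_{M(\hp^w_\bullet)}$. Both then identify "meets the interior" with "loopless and coloopless" and finish with Proposition~\ref{prop:Spe}.

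Your remark that the left-hand side should be $B(\hp_\bullet)$ is well taken. The paper's proof identifies $M(\hp^w_\bullet)$ with $M(\tropP^w_\bullet)$ on the grounds that $\hp_\bullet \equiv \tropP_\bullet$, but if $\hp_\bullet = \tropP^{\y}_\bullet$ then in fact $M(\hp^w_\bullet) = M(\tropP^{w+\y}_\bullet)$. So the stated equality with $B(\tropP_\bullet)$ holds only up to translation by $\y$.
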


\begin{proof}
	By definition, $\hp: \R^n \to \R$ is the convex piecewise-linear function inducing the positroid subdivision $\Delta(\hp_\bullet) = \Delta(\tropP_\bullet)$. Thus, it coincides on $\Delta(k,n)$ with the lower convex envelope of the height function evaluated at the vertices, $e_I \mapsto \hp_I$.
	
	Let $w \in \R^n/\one$. By Definition \ref{def:subdifferential} (which aligns with the standard definition of the subdifferential in convex analysis, e.g., \cite[Section~23]{Rockafellar}), $w \in \widetilde\partial\hp(x)$ if and only if for all $y \in \Delta(k,n)$,
	\[ 
	\hp(y) - \hp(x) \ge \langle w, y - x \rangle. 
	\]
	Rearranging this inequality, $w \in \widetilde\partial\hp(x)$ if and only if $x$ achieves the global minimum of the function $y \mapsto \hp(y) - \langle w, y \rangle$ over $y \in \Delta(k,n)$.
	
	To characterize these minimizers, we evaluate the objective function using the convex envelope property. For any $y \in \Delta(k,n)$, $\hp(y)$ can be expressed as the minimum over all valid convex combinations:
	\[
	\hp(y) = \min \left\{ \sum_{I \in \binom{[n]}{k}} \lambda_I \hp_I : \sum_{I \in \binom{[n]}{k}} \lambda_I e_I = y, \;\; \lambda_I \ge 0, \;\; \sum_{I \in \binom{[n]}{k}} \lambda_I = 1 \right\}.
	\]
	Substituting this into our objective function and using the linearity of the inner product yields:
	\[
	\hp(y) - \langle w, y \rangle = \min_{\lambda} \sum_{I \in \binom{[n]}{k}} \lambda_I \big( \hp_I - \langle w, e_I \rangle \big).
	\]
	Recall from Section \ref{ssec:tropGr} that the lineality action of $w$ on $\hp_\bullet$ is given by $\hp^w_I = \hp_I - \sum_{i \in I} w_i = \hp_I - \langle w, e_I \rangle$. Let $\alpha(w) := \min_I \hp^w_I$. Because each term in the sum satisfies $\hp^w_I \ge \alpha(w)$, we obtain the universal lower bound:
	\[
	\hp(y) - \langle w, y \rangle \ge \sum_{I \in \binom{[n]}{k}} \lambda_I \alpha(w) = \alpha(w).
	\]
	This minimum value $\alpha(w)$ is achieved if and only if the convex combination strictly assigns positive weight only to vertices $e_I$ for which $\hp^w_I = \alpha(w)$. By definition, the set of indices achieving this minimum forms the bases of the matroid $M(\hp^w) = M(\tropP^w_\bullet)$ (since $\hp_\bullet \equiv \tropP_\bullet \pmod{\Lkn}$). 
	
	Therefore, $x$ is a global minimizer if and only if it can be written as a convex combination of vertices $\{e_I \mid I \in M(\tropP^w_\bullet)\}$. This establishes a fundamental geometric equivalence connecting the subdifferential directly to the matroid polytope:
	\begin{equation} \label{eq:subgrad_equiv}
		w \in \widetilde\partial\hp(x) \iff x \in P_{M(\tropP^w_\bullet)}.
	\end{equation}
	
	We now evaluate the union over the interior of the hypersimplex. A vector $w$ belongs to $\bigcup_{x \in \operatorname{int}(\Delta(k,n))} \widetilde\partial \hp(x)$ if and only if there exists some $x \in \operatorname{int}(\Delta(k,n))$ such that $x \in P_{M(\tropP^w_\bullet)}$. This holds if and only if the matroid polytope $P_{M(\tropP^w_\bullet)}$ intersects the relative interior of $\Delta(k,n)$.
	
	The relative interior of $\Delta(k,n)$ is defined by the strict inequalities $0 < x_i < 1$ for all $i \in [n]$. A closed sub-polytope of $\Delta(k,n)$ intersects the interior if and only if it is not contained entirely in any boundary facet $\{x_i = 0\}$ or $\{x_i = 1\}$. Evaluating this for the matroid polytope $P_{M(\tropP^w_\bullet)}$:
	\begin{itemize}
		\item $P_{M(\tropP^w_\bullet)} \subseteq \{x_i = 0\}$ if and only if $i \notin I$ for all $I \in M(\tropP^w_\bullet)$, which means $i$ is a loop of $M(\tropP^w_\bullet)$.
		\item $P_{M(\tropP^w_\bullet)} \subseteq \{x_i = 1\}$ if and only if $i \in I$ for all $I \in M(\tropP^w_\bullet)$, which means $i$ is a coloop of $M(\tropP^w_\bullet)$.
	\end{itemize}
	Indeed, if $M(\tropP^w_\bullet)$ is loopless and coloopless, its barycenter $y^* = \frac{1}{|M(\tropP^w_\bullet)|} \sum_{I \in M(\tropP^w_\bullet)} e_I$ has coordinates strictly between $0$ and $1$, so $y^* \in \operatorname{int}(\Delta(k,n))$. Therefore, $P_{M(\tropP^w_\bullet)}$ intersects $\operatorname{int}(\Delta(k,n))$ if and only if the matroid $M(\tropP^w_\bullet)$ is both loopless and coloopless. 
	
	By Proposition \ref{prop:Spe}, a vector $w$ lies in the tropical linear space $L(\tropP_\bullet)$ if and only if $M(\tropP^w_\bullet)$ is loopless. Moreover, $w$ belongs to a bounded face of the matroid complex structure of $L(\tropP_\bullet)$ if and only if $M(\tropP^w_\bullet)$ is both loopless and coloopless. Thus, $M(\tropP^w_\bullet)$ being both loopless and coloopless is precisely the condition that $w$ belongs to the bounded complex $B(\tropP_\bullet)$. 
	
	Consequently, $w \in \bigcup_{x \in \operatorname{int}(\Delta(k,n))} \widetilde\partial \hp(x)$ if and only if $w \in B(\tropP_\bullet)$, completing the proof.
\end{proof}
	\begin{example}\label{example: tree 25}
		Figure \ref{fig:tree-25} depicts the bounded complex of $L(\tropP_\bullet)$ in the case $\tropP = \mathfrak{h}_{(12_1 345_1)} + \mathfrak{h}_{(123_1 45_1)}$, which is in the relative interior of a maximal cone of $\Trop_{>0}X(2,5)$.
	\end{example}
	\begin{figure}[ht]
		\centering
		\begin{tikzpicture}[scale=1.2]
			
			\coordinate (L) at (0,0);
			\coordinate (M) at (3,0);
			\coordinate (R) at (6,0);
			
			\draw[very thick] (L) -- (M) node[midway,above] {$e_{12}$};
			\draw[very thick] (M) -- (R) node[midway,above] {$e_{123}$};
			
			\fill (L) circle (2.5pt) node[below=4pt] {$v_A$};
			\fill (M) circle (2.5pt) node[below=4pt] {$v_B$};
			\fill (R) circle (2.5pt) node[below=4pt] {$v_C$};
			
			\draw[dashed,->] (L) -- (-1.2,0.8);
			\draw[dashed,->] (L) -- (-1.2,-0.8);
			\draw[dashed,->] (M) -- (3,1.5);
			\draw[dashed,->] (R) -- (7.2,0.8);
			\draw[dashed,->] (R) -- (7.2,-0.8);
			
			\node[gray,below] at (1.5,-0.5) {\small partition $(12 \mid 345)$};
			\node[gray,below] at (4.5,-0.5) {\small partition $(123 \mid 45)$};
			
		\end{tikzpicture}
		\caption{The bounded complex of $L(\tropP_\bullet)$ for	$\tropP = \h_{(12_1,\, 345_1)} + \h_{(123_1,\, 45_1)}$ in $\Trop_{>0}\Gr(2,5)$: a tree with three vertices and two edges.  The edge directions $e_{12}$ and $e_{123}$ are indicator vectors of cyclic intervals, and the unbounded rays extend from the boundary vertices.}
		\label{fig:tree-25}
	\end{figure}
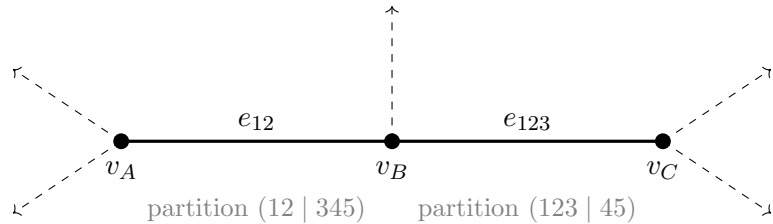
	\begin{example}\label{example: 3-split Gradient}
	Definition \ref{def:central-roof} gives
	$$\eta_{(12_1 34_1 56_1)}(x) = -\frac{1}{3}\min\left(x_{12}+2x_{34} + 3x_{56},x_{34}+2x_{56} + 3x_{12},x_{56}+2x_{12} + 3x_{34}\right).$$
	Let $\eta_\bullet$ denote the corresponding central representative.  The matroid subdivision $\Delta(\eta_\bullet)$ has three maximal faces, respectively
	$$\left\{x\in \Delta_{3,6}: x_{12}\ge 1,\ x_{1234} \ge 2\right\},\ \left\{x\in \Delta_{3,6}: x_{34}\ge 1,\ x_{3456} \ge 2\right\},\ \left\{x\in \Delta_{3,6}: x_{56}\ge 1,\ x_{5612} \ge 2\right\}.$$
	The three vertices $V(B(\tropP_\bullet)) = \{v_1,v_2,v_3\}$ of the bounded complex $B(\eta_\bullet)$ are the values of the gradients of $\eta$ over the interiors of these three maximal faces:
	\begin{eqnarray*}
		v_1 = \left(-\frac{1}{3},-\frac{1}{3},-\frac{2}{3},-\frac{2}{3},-1,-1\right),\ v_2 = \left(-1,-1,-\frac{1}{3},-\frac{1}{3},-\frac{2}{3},-\frac{2}{3}\right),\ v_3 = \left(-\frac{2}{3},-\frac{2}{3},-1,-1,-\frac{1}{3},-\frac{1}{3}\right).
	\end{eqnarray*}
	\begin{figure}[h!]
		\centering
		\includegraphics[width=0.30\linewidth]{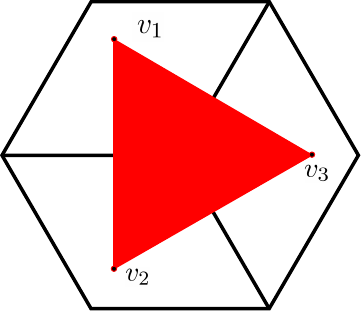}
		\caption{The subdivided black hexagon is the image of the 3-split subdivision of $\Delta_{3,6}$ under the projection $x\mapsto (x_1+x_2,x_3+x_4,x_5+x_6)$.  The red triangle is the image of $\Delta_{3,6}$ under the subdifferential, i.e. by Proposition \ref{prop:bounded-subdiff} this is the bounded complex of the tropical linear space.}
		\label{fig:3splithexagon}
	\end{figure}
	
\end{example}
	
	\subsection{The roof function identity}
	\label{subsec:roof-identity}
	
	The planar basis expansion $\tropP_\bullet \equiv \sum_{J \in \bncyc} u_J^t(\tropP_\bullet)\, \h_J \pmod{\Lkn}$ lifts to an identity of central roof functions on~$\Delta(k,n)$.
	
	\begin{lemma}\label{lem:roof-identity}
		Let $\tropP_\bullet \in \Trop_{>0}\Gr(k,n)$ have central representative $\hp_\bullet = \sum_{J \in \bncyc} u_J^t(\tropP_\bullet)\, \eta^J_\bullet$ (see  \eqref{eq:central-pi}).  Then
		\begin{equation}\label{eq:roof-identity}
			\hp(x) \;=\;
			\sum_{J \in \bncyc} u_J^t(\tropP_\bullet)\, \eta^J(x)
		\end{equation}
		for all $x \in \Delta(k,n)$.
	\end{lemma}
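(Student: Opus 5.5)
The approach is to compare both sides of \eqref{eq:roof-identity} alcove by alcove on the alcoved triangulation of $\Delta(k,n)$. Write $F(x) := \sum_{J \in \bncyc} u_J^t(\tropP_\bullet)\,\eta^J(x)$. By the definition \eqref{eq:central-pi} of $\hp_\bullet$, $F(e_I) = \sum_J u_J^t(\tropP_\bullet)\,\eta^J_I = \hp_I$ for every $I \in \binom{[n]}{k}$. So $F$ and $\hp(x)$ agree on all vertices of $\Delta(k,n)$. It remains to show that both are affine on a common triangulation whose only vertices are the $e_I$, since an affine function on a simplex is determined by its vertex values.

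The triangulation I would use is the standard alcoved triangulation of $\Delta(k,n)$. Its maximal simplices are the alcoves cut out by the hyperplanes $x_i + x_{i+1} + \cdots + x_j \in \Z$ for cyclic intervals $[i,j]$. Its vertices are exactly the $e_I$.

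First I handle $\hp$. By construction (as in the proof of Proposition~\ref{prop:bounded-subdiff}), $\hp$ restricted to $\Delta(k,n)$ is the lower convex envelope of the heights $e_I \mapsto \hp_I$, so it is affine on each maximal cell of $\Delta(\tropP_\bullet)$. These cells are positroid polytopes. Positroid polytopes are alcoved \cite{LP07,LP}: their facets lie on hyperplanes $\sum_{m\in[a+1,b]} x_m = r$ with $r \in \Z$ (this is also used in the polypositroid proposition above). Hence each cell is a union of alcoves, and $\hp$ is affine on every alcove.

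Second I handle $F$. Each $\eta^J$ is a minimum of the linear functions $W_a \cdot x$ from \eqref{eq:weyl-vector}, scaled by $-\tfrac1k$. I will show that, on the hyperplane $\sum_i x_i = k$, every crease locus $W_a\cdot x = W_b\cdot x$ lies in a union of hyperplanes of the form $x_{T} = \rho$. Here $T = S_{a+1}\cup\cdots\cup S_b$ is a cyclic interval and $\rho$ is the corresponding partial sum of the $r$'s, which is an integer. Example~\ref{ex:central-2block} is the prototype: there the crease is exactly $y_1 = r_1$. To prove this in general I would compute $W_a - W_{a+1}$ from \eqref{eq:weyl-vector}. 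Shifting the rotation index should change the coefficients by $r_{a+1}$ on all blocks except $S_{a+1}$, where the coefficient drops from $k$. That gives
\[
(W_a - W_{a+1})\cdot x \;=\; r_{a+1}\textstyle\sum_i x_i - k\, x_{S_{a+1}} \;=\; k\bigl(r_{a+1} - x_{S_{a+1}}\bigr)
\]
on $\sum_i x_i = k$, possibly up to sign conventions. Telescoping then expresses $(W_a - W_b)\cdot x$ as $k(\rho - x_T)$ for the cyclic interval $T$ spanned by the intermediate blocks. Consequently each $\eta^J$ is affine on every alcove, and so is the linear combination $F$, whatever the signs of the coefficients $u_J^t(\tropP_\bullet)$.

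With both steps in place, $F$ and $\hp$ are affine on every alcove and agree at its vertices, so they coincide on every alcove and hence on all of $\Delta(k,n)$. The main obstacle I expect is the second step: carrying out the index bookkeeping in \eqref{eq:weyl-vector} carefully, including the cyclic wrap-around, to confirm that the creases of $\eta^J$ are integral cyclic-interval hyperplanes. As a fallback, Proposition~\ref{prop:central-equals-corank} identifies $\eta^J_\bullet$ with $\h_J$, and the regular subdivision $\Delta(\h_J)$ is a positroid (hence alcoved) subdivision. This would give the same conclusion, provided I also check that $\eta^J$ is itself the convex envelope of its vertex values, which holds because $\eta^J = \max_a\bigl(-\tfrac1k W_a\cdot x\bigr)$ is convex.
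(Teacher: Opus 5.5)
Your proposal is correct and follows the paper's proof: both sides agree at the vertices $e_I$. The function $\hp$ is affine on the positroid cells of $\Delta(\tropP_\bullet)$, which are unions of alcoves. The creases of each $\eta^J$ lie on integral cyclic-interval hyperplanes, so both sides are affine on every alcove and therefore coincide.

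Your computation $W_a - W_{a+1} = r_{a+1}\one - k\,e_{S_{a+1}}$ is correct and supplies the crease verification that the paper only asserts. The fallback route is unnecessary, and as written it is incomplete: convexity of $\eta^J$ alone only gives $\eta^J$ at most the envelope of its vertex values. Equality needs affineness on cells whose vertices are among the $e_I$.
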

	
	\begin{proof}
		Both sides are piecewise-linear functions on $\Delta(k,n)$ that agree at every vertex $e_I$, since $\hp_I = \sum_J u_J^t(\tropP_\bullet)\, \eta^J_I$ by definition.  Each $\eta^J$ has creases along hyperplanes $\sum_{i \in A} x_i = r$ for cyclic intervals $A$ and integers~$r$; these are the walls of the alcoved triangulation of~$\Delta(k,n)$. Therefore the right-hand side is piecewise-linear on the alcoved triangulation, regardless of the signs of the coefficients $u_J^t(\tropP_\bullet)$.  On the other hand, since $\tropP_\bullet$ is positive, $\Delta(\tropP_\bullet) = \Delta(\hp_\bullet)$ is a positroid subdivision, and thus a coarsening of the alcoved triangulation.  Since both sides are linear on alcoves and agree at all vertices, they are equal as functions.	\end{proof}

\section{Diameter of a Tropical Linear Space}
\label{sec:global-bounds}

In this section, we use the tools we have developed thus far to compute the diameter of a positive tropical linear space. We show that the PK weight provides an upper bound on the diameter of the bounded complex of a positive tropical linear space, embedding it inside the fundamental alcoved simplex dilated by $\wt_\PK(\tropP_\bullet)$.

Because the tropical linear space $L(\tropP_\bullet)$ is invariant under the lineality action up to translation in $\R^n/\one$, its position and the bounding alcoved simplex depend on the lineality representative of the Pl\"ucker vector. The tightest bounding into the untranslated fundamental alcoved simplex $\Delta_{\mathrm{std}}$ requires fixing the translation properly, which we accomplish by anchoring the space with a \emph{balanced} central representative $\hp_\bullet$.

\begin{definition}\label{def:std-simplex}
	The \emph{fundamental alcoved simplex} $\Delta_{\mathrm{std}} \subset \R^n/\one$ is the polytope
	\[
	\Delta_{\mathrm{std}}
	= \bigl\{\x \in \R^n/\one : x_u - x_v \le 1 \text{ for all } u, v \in [n]\bigr\}.
	\]
	For $C \ge 0$, the dilated simplex $C \cdot \Delta_{\mathrm{std}}$ is the region $\max_{u,v}(x_u - x_v) \le C$.
\end{definition}

To establish the bound, we break the problem into two steps. First, we identify a uniform non-negativity property for the balanced central representative. Second, we apply the Grassmann necklace to the shifted Pl\"ucker vector to bound the coordinates of any point in the complex directly from the positive tropical Pl\"ucker relations.

\subsection{The balanced central representative}

For any representative $\tropP_\bullet$, the sum of the cyclic-gap differences around the cycle is independent of lineality. Specifically, the total sum evaluates to the bridge function:
\[
\sum_{j=0}^{n-1} \bigl(\tropP_{\mathrm{cyc}(j)} - \tropP_{\mathrm{gap}(j)}\bigr) = H(\tropP_\bullet) = \wt_{\PK}(\tropP_\bullet).
\]
Because the PK weight of a positive tropical Pl\"ucker vector is non-negative, the sum of the differences is non-negative. 

\begin{lemma}\label{lem:balanced-central}
	Modulo lineality, every positive tropical linear space $L(\tropP_\bullet)$ admits a \emph{balanced central representative} $\hp_\bullet$ such that the cyclic-gap differences are uniformly distributed and strictly non-negative:
	\[
	\hp_{\mathrm{cyc}(j)} - \hp_{\mathrm{gap}(j)} = \frac{1}{n} \wt_{\PK}(\tropP_\bullet) \;\ge\; 0
	\]
	for all $j = 0, \ldots, n-1$.
\end{lemma}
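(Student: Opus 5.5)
We will obtain $\hp_\bullet$ by adjusting the central representative \eqref{eq:central-pi} with a lineality translation $\tropP^{\x}_\bullet$. The adjustment is chosen to equalize the $n$ cyclic-gap differences. Two facts drive the argument.

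First, we compute the effect of a lineality shift on a single difference. Since $\mathrm{cyc}(j)$ and $\mathrm{gap}(j)$ differ only by replacing $j+k$ with $j+k+1$, we get
\[
\tropP^{\x}_{\mathrm{cyc}(j)} - \tropP^{\x}_{\mathrm{gap}(j)} = \bigl(\tropP_{\mathrm{cyc}(j)} - \tropP_{\mathrm{gap}(j)}\bigr) - x_{j+k} + x_{j+k+1}.
\]
Hence lineality changes the vector of differences $D_j := \tropP_{\mathrm{cyc}(j)} - \tropP_{\mathrm{gap}(j)}$, for $j \in \Z/n$, by an arbitrary vector of the form $(x_{j+k+1}-x_{j+k})_j$. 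These are exactly the vectors in $\R^n$ with total sum zero, since the cyclic discrete derivative surjects onto that hyperplane. This also shows that $\sum_j D_j = H(\tropP_\bullet)$ is lineality invariant.

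Second, $H = \sum_{J\in\bncyc} u^t_J$ by \cref{prop:bridge-identity}. By the planar basis expansion and \cref{defn:f-pk-weight}, this sum equals $\wt_\PK(\tropP_\bullet)$. It is nonnegative by \cref{thm:f-pk-equals-nc}, because $\wt_\NC \ge 0$.

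Given these, we solve for $\x$. Starting from the central representative $\sum_J u^t_J(\tropP_\bullet)\eta^J_\bullet$, set $c_j := \frac1n \wt_\PK(\tropP_\bullet) - D_j$, which has total sum zero. Solve $x_{j+k+1} - x_{j+k} = c_j$ cyclically by taking partial sums; this is consistent precisely because $\sum c_j = 0$. The resulting $\x$ is unique modulo $\one$, and translating by $\one$ only shifts all Pl\"ucker coordinates by the constant $k$. Then $\hp_\bullet$ is the shifted vector, and every difference equals $\frac1n\wt_\PK(\tropP_\bullet) \ge 0$.

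The main obstacle is the word "central". A lineality shift adds the linear function $-\langle \x,\cdot\rangle$ to the roof function. It therefore keeps it piecewise linear rather than piecewise affine, which is the sense of the Remark after \cref{def:central-roof}. So the balanced representative is still central, and \cref{lem:roof-identity} continues to hold after adding this linear term. We must state this interpretation explicitly.

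We also need that the tropical linear space is only translated, by $\x$ in $\R^n/\one$, so the bounded complex keeps its shape. This is immediate from \cref{def:tropical}.
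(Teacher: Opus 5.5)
Your proof is correct and follows essentially the same route as the paper's. Both compute how a lineality shift changes each cyclic-gap difference, use that the total of these differences is the lineality-invariant quantity $H=\wt_\PK\ge 0$, and solve the resulting cyclic difference system, which is consistent because the prescribed differences sum to zero; your extra remark that a linear shift preserves centrality covers a point the paper's proof leaves implicit (it simply writes $\hp=\tropP^{\y}$).
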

\begin{proof}
	We seek a lineality shift $\y \in \R^n$ such that the shifted representative $\hp = \tropP^\y$ satisfies the condition. By definition, $\hp_I = \tropP_I - \sum_{i \in I} y_i$. The cyclic-gap difference transforms as:
	\[
	\hp_{\mathrm{cyc}(j)} - \hp_{\mathrm{gap}(j)} = \tropP_{\mathrm{cyc}(j)} - \tropP_{\mathrm{gap}(j)} - y_{j+k} + y_{j+k+1}.
	\]
	Setting this equal to $\frac{1}{n} \wt_{\PK}(\tropP_\bullet)$ yields a system of cyclic difference equations:
	\[
	y_{j+k} - y_{j+k+1} = \tropP_{\mathrm{cyc}(j)} - \tropP_{\mathrm{gap}(j)} - \frac{1}{n} \wt_{\PK}(\tropP_\bullet).
	\]
	A cyclic system $y_m - y_{m+1} = c_m$ has a solution if and only if $\sum c_m = 0$. Here, the sum of the right-hand side over all $j$ is $\wt_{\PK}(\tropP_\bullet) - n \cdot \frac{1}{n} \wt_{\PK}(\tropP_\bullet) = 0$. Thus, a valid shift $\y$ always exists, making the balanced central representative well-defined.
\end{proof}

\subsection{Local coordinate bounds via the Grassmann necklace}

The second nonnegativity property is structural and applies to the shifted Pl\"ucker vector representing any bounded face of the tropical linear space.

\begin{lemma}\label{lem:cyc-gap-shifted-nonneg}
	Let $\nu_\bullet \in \R^{\binom{[n]}{k}}$ be a positive tropical Pl\"ucker vector such that the matroid $M(\nu_\bullet) = \{I \in \binom{[n]}{k} : \nu_I = \min_J \nu_J\}$ is a loopless and coloopless positroid. Then $\nu_{\mathrm{cyc}(j)} \ge \nu_{\mathrm{gap}(j)}$ for all $j=0, 1, \dots, n-1$.
\end{lemma}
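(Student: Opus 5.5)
Fix $j$ and write $S = \{j+1,\ldots,j+k-1\}$, $a = j+k$, $b = j+k+1$, so that $\mathrm{cyc}(j) = Sa$ and $\mathrm{gap}(j) = Sb$. Normalize so that $\min_J \nu_J = 0$; then $\nu \ge 0$ and $M := M(\nu_\bullet)$ is the zero set of $\nu$. The goal is $\nu_{Sa} \ge \nu_{Sb}$. The natural tool is the three-term positive tropical Pl\"ucker relation \eqref{eq:posTropPlucker}. The cases $\nu_{Sa} = \infty$ or $\nu_{Sb}$ large do not arise, since all values are finite. The obvious case is when $Sb \in M$: then $\nu_{Sb} = 0 \le \nu_{Sa}$ and we are done. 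So assume $\nu_{Sb} > 0$.

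The plan is to induct on $\nu_{Sb}$, or more precisely to use the local structure of $\nu$ near its minimum. Because $M$ is coloopless, $b$ is not a coloop, and because it is loopless, $a$ is not a loop. I would pick a basis $B \in M$ and consider the relation with $S' \subset S$ of size $k-2$ together with four indices $c<d<e<f$ in cyclic order. The relevant arrangement has $a,b$ adjacent and the remaining index $s \in S$ together with some $x \notin S\cup\{a,b\}$. Since the quadruple $(s,a,b,x)$ is cyclically ordered with $a,b$ consecutive, \eqref{eq:posTropPlucker} reads
\[
\nu_{S'sb}+\nu_{S'ax} = \min\bigl(\nu_{S'sa}+\nu_{S'bx},\ \nu_{S'sx}+\nu_{S'ab}\bigr)\quad(\text{up to the cyclic labelling}).
\]
From this one gets $\nu_{Sb}+\nu_{S'ax} \le \nu_{Sa} + \nu_{S'bx}$. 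The idea is to choose $s,x$ so that $\nu_{S'bx} \le \nu_{S'ax}$, which would give $\nu_{Sb}\le\nu_{Sa}$. Here the cyclic-interval nature of $S$ should matter: the sets $S'ax, S'bx$ are ``closer'' to bases. In particular, $a$ being a non-loop and $b$ a non-coloop should let me find such $x$ via the positroid (Grassmann necklace) structure of $M$.

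A cleaner route, which I would try first, goes through the Grassmann necklace. Let $I_{j+1}$ be the lex-minimal basis of $M$ in the order starting at $j+1$. Because $M$ is loopless, $I_{j+1} \ni j+1$, and iterating the positroid condition shows that $I_{j+1}$ is lex-below $\mathrm{cyc}(j)$. I would then argue by induction on the distance from $Sa$ to $M$, using the tropical exchange relations \eqref{eq:posTropPlucker} applied to $\nu$ along a chain from $Sa$ toward the necklace element. Equivalently, viewing $\nu$ as convex on the hypersimplex (the regular subdivision is positroidal), convexity along the edge direction $e_b - e_a$ gives the needed monotonicity: the function $t \mapsto \nu(e_S + (1-t)e_a + te_b)$ is convex. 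It suffices that its slope at $t=0$ is $\le 0$, i.e.\ that the minimum of $\nu$ on the face $\{x_S = 1\}$ lies ``toward $b$''. Coloopless/loopless together with the cyclic shape of $S$ should supply this.

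The main obstacle is making this last step rigorous. One has to show that the restriction of $\nu$ to the face $x_{S}=1$ (a $\Delta(1,n-k+1)$ simplex) is nonincreasing from $a$ to $b$. The most promising way is to reduce via the face restriction $\partial_\iota$ to the case $k=1$ on ground set $[n]\setminus S$, where a positive tropical Pl\"ucker vector is just a vector $(\nu_x)$ and the claim becomes $\nu_a \ge \nu_b$. There I must use the looplessness of $a$ and the colooplessness of $b$ in $M$, transported through the relation with the minimizing basis, and this transfer is the delicate part.
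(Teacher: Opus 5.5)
There is a genuine gap. The step you call ``delicate'' is the whole content of the lemma, and the routes you sketch for it do not work. The convexity reformulation is circular. $[e_{Sa},e_{Sb}]$ is an edge of $\Delta(k,n)$ with no interior lattice points, so the lower envelope is affine on it, and ``slope $\le 0$ at $t=0$'' is literally $\nu_{Sb}\le\nu_{Sa}$. Restricting to the face $\{x_i=1,\ i\in S\}$ discards what you need. For $k=1$ every vector is a tropical Pl\"ucker vector, and the minimum of $\nu$ on that face is in general not $\min_J\nu_J$, so no loop or coloop information about $M$ survives.

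The hypotheses you plan to transport are also the wrong ones: looplessness of $a$ and colooplessness of $b$ cannot suffice. Take $(k,n)=(2,4)$ and $j=0$, so $S=\{1\}$, $a=2$, $b=3$. Set $\nu_{12}=\nu_{23}=\nu_{24}=0$ and $\nu_{13}=\nu_{14}=\nu_{34}=1$. The only relation, $\nu_{13}+\nu_{24}=\min(\nu_{12}+\nu_{34},\nu_{14}+\nu_{23})$, holds. Here $2$ is not a loop and $3$ is not a coloop, yet $\nu_{\mathrm{cyc}(0)}=0<1=\nu_{\mathrm{gap}(0)}$; the failing hypothesis is that $2$ is a coloop. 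What actually drives the inequality is a minimizing basis $B$ with $b\in B$ and $a\notin B$. Such a $B$ exists because $b$ is not a loop and $a$ is not a coloop. For instance, take the necklace element $I_b$: greedy in the order $b<b+1<\cdots<a$ picks $b$ first and completes a basis before reaching $a$. The element $I_{j+1}$ you mention is not the relevant one. It is also weakly Gale-above $\mathrm{cyc}(j)$, the Gale-minimum for that order, not below it.

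The missing idea is half-present in your first paragraph. Your inequality $\nu_{Sa}-\nu_{Sb}\ge\nu_{S'xa}-\nu_{S'xb}$ is exactly the building block of the paper's proof. Rather than hoping a single exchange already makes the right side nonnegative, iterate it:
\begin{itemize}
\item Put $D=B\setminus\{b\}$ and walk from $X_0=S$ to $X_m=D$ by single exchanges. Each exchange removes an element of $S\setminus D$ and inserts an element of $D\setminus S$.
\item $S$ is the cyclic interval immediately preceding $a$. So every removed element lies on the arc just before $a$, and every inserted element lies on the arc $\{j+k+2,\dots,j\}$ after $b$.
\item Hence each exchange $x\mapsto y$ has $x,a,b,y$ in cyclic order, and the three-term relation gives $\nu_{X_{i-1}a}-\nu_{X_{i-1}b}\ge\nu_{X_ia}-\nu_{X_ib}$.
\end{itemize}
Telescoping yields $\nu_{Sa}-\nu_{Sb}\ge\nu_{Da}-\nu_{Db}=\nu_{Da}-\min_J\nu_J\ge 0$. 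This is the paper's argument, with $j=0$ and $D=I_{k+1}\setminus\{k+1\}$. It is also exactly where the interval shape of $\mathrm{cyc}(j)$ is used.
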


\begin{proof}
	By cyclic symmetry, we may assume $j=0$. Let $C = \mathrm{cyc}(0) = \{1, \dots, k\}$ and $G = \mathrm{gap}(0) = \{1, \dots, k-1, k+1\}$. We want to show $\nu_C \ge \nu_G$.
	
	Let $\alpha = \min_J \nu_J$. Consider the Grassmann necklace $(I_1, \dots, I_n)$ of the positroid $M(\nu)$. By definition, $I_{k+1}$ is the lexicographically minimal basis of $M(\nu)$ with respect to the cyclically shifted order $k+1 < k+2 < \dots < n < 1 < \dots < k$. 
	
	Because $M(\nu_\bullet)$ is loopless, $k+1$ is not a loop, meaning it is contained in at least one basis. The greedy algorithm constructing $I_{k+1}$ considers $k+1$ first and will therefore select it, so $k+1 \in I_{k+1}$. Because $M(\nu_\bullet)$ is coloopless, $k$ is not a coloop, meaning the set of all other elements $[n] \setminus \{k\}$ spans the matroid. Since $k$ is the absolute last element in this cyclic order, the greedy algorithm will have already completed a basis before reaching $k$. Thus, $k \notin I_{k+1}$.
	
	Let $D = I_{k+1} \setminus \{k+1\}$. We have $|D| = k-1$ and $D \cap \{k, k+1\} = \emptyset$. Since $I_{k+1} \in M(\nu)$, we have $\nu_{D \cup \{k+1\}} = \alpha$. Because $\alpha$ is the global minimum over all $k$-subsets, we trivially have $\nu_{D \cup \{k\}} \ge \alpha$. Consequently:
	\begin{equation}\label{eq:grassmann_diff}
		\nu_{D \cup \{k\}} - \nu_{D \cup \{k+1\}} \ge 0.
	\end{equation}
	
	We will now walk from $X = \{1, \dots, k-1\}$ to $D$ by exchanging elements, using the $3$-term positive tropical Pl\"ucker relation to show that the difference between the $k$ and $k+1$ evaluations monotonically bounds the starting difference. 
	
	Let $X \setminus D = \{x_1, \dots, x_m\}$ and $D \setminus X = \{y_1, \dots, y_m\}$. Note that every $x_i \in X = \{1, \dots, k-1\}$ and every $y_i \in D \setminus X \subseteq \{k+2, \dots, n\}$.
	
	For $i=0, \dots, m$, define the intermediate sets:
	\[
	X_i = (X \cap D) \cup \{y_1, \dots, y_i, x_{i+1}, \dots, x_m\},
	\]
	so that $X_0 = X$ and $X_m = D$. For $i=1, \dots, m$, define the $(k-2)$-subset $S_i = X_i \setminus \{y_i\} = X_{i-1} \setminus \{x_i\}$. 
	
	Notice that the four elements $x_i, k, k+1, y_i$ are disjoint from $S_i$ and appear in exactly that cyclic order ($1 \le x_i < k < k+1 < y_i \le n$). We apply the positive tropical Pl\"ucker relation \eqref{eq:posTropPlucker} on $S_i$ and these four elements:
	\[
	\nu_{S_i \cup \{x_i, k+1\}} + \nu_{S_i \cup \{k, y_i\}} = \min\bigl(\nu_{S_i \cup \{x_i, k\}} + \nu_{S_i \cup \{k+1, y_i\}},\ \nu_{S_i \cup \{x_i, y_i\}} + \nu_{S_i \cup \{k, k+1\}} \bigr).
	\]
	Because the left-hand side is the minimum, it is bounded above by the first argument:
	\[
	\nu_{S_i \cup \{x_i, k+1\}} + \nu_{S_i \cup \{k, y_i\}} \le \nu_{S_i \cup \{x_i, k\}} + \nu_{S_i \cup \{k+1, y_i\}}.
	\]
	Substituting $X_{i-1} = S_i \cup \{x_i\}$ and $X_i = S_i \cup \{y_i\}$, this inequality evaluates to:
	\[
	\nu_{X_{i-1} \cup \{k+1\}} + \nu_{X_i \cup \{k\}} \le \nu_{X_{i-1} \cup \{k\}} + \nu_{X_i \cup \{k+1\}}.
	\]
	Rearranging yields:
	\[
	\nu_{X_{i-1} \cup \{k\}} - \nu_{X_{i-1} \cup \{k+1\}} \ge \nu_{X_i \cup \{k\}} - \nu_{X_i \cup \{k+1\}}.
	\]
	Summing this inequality from $i=1$ to $m$ creates a telescoping sum:
	\[
	\nu_{X_0 \cup \{k\}} - \nu_{X_0 \cup \{k+1\}} \ge \nu_{X_m \cup \{k\}} - \nu_{X_m \cup \{k+1\}}.
	\]
	Since $X_0 = X$ and $X_m = D$, this evaluates exactly to:
	\[
	\nu_C - \nu_G \ge \nu_{D \cup \{k\}} - \nu_{D \cup \{k+1\}}.
	\]
	Applying \eqref{eq:grassmann_diff}, we conclude $\nu_C - \nu_G \ge 0$. Thus, $\nu_{\mathrm{cyc}(0)} \ge \nu_{\mathrm{gap}(0)}$, as desired.
\end{proof}

\subsection{Global diameter bounds}

With these two properties, the diameter bound for the central tropical linear space follows naturally by telescoping around the cycle.

\begin{theorem}\label{thm:global_bounding_simplex}
	For any $\tropP_\bullet \in \Trop_{>0}\Gr(k,n)$, let $\hp_\bullet$ be its balanced central representative. We have
	\begin{equation}\label{eq:global-bound}
		B(\hp_\bullet) \subseteq \wt_{\PK}(\tropP_\bullet) \cdot \Delta_{\mathrm{std}}.
	\end{equation}
\end{theorem}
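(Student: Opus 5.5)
Fix $\x \in B(\hp_\bullet)$ and set $\nu_\bullet := \hp^{\x}_\bullet$, so $\nu_I = \hp_I - \sum_{i\in I} x_i$. By Proposition~\ref{prop:Spe}, the matroid $M(\nu_\bullet)$ is a loopless and coloopless positroid. Since $\nu_\bullet$ is again a positive tropical Pl\"ucker vector, Lemma~\ref{lem:cyc-gap-shifted-nonneg} gives, for every $j$,
\[
\nu_{\mathrm{cyc}(j)} - \nu_{\mathrm{gap}(j)} \ge 0 .
\]
Because $\mathrm{cyc}(j)$ and $\mathrm{gap}(j)$ differ only by swapping $j+k$ for $j+k+1$, we have
\[
\nu_{\mathrm{cyc}(j)} - \nu_{\mathrm{gap}(j)} = \bigl(\hp_{\mathrm{cyc}(j)} - \hp_{\mathrm{gap}(j)}\bigr) - x_{j+k} + x_{j+k+1}.
\]
By Lemma~\ref{lem:balanced-central}, the first term equals $w/n$, where $w := \wt_\PK(\tropP_\bullet)$. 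Reindexing with $m = j+k$, we obtain the local bound
\[
x_m - x_{m+1} \le \frac{w}{n} \qquad \text{for all } m \in [n] \text{ (cyclically).}
\]

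Next we telescope. For any $u,v\in[n]$, walk cyclically forward from $u$ to $v$ through $u, u+1, \ldots, v$, which takes $s \le n-1$ steps. Then
\[
x_u - x_v = \sum_{t=0}^{s-1} (x_{u+t} - x_{u+t+1}) \le s\,\frac{w}{n} \le \frac{n-1}{n}\, w \le w .
\]
This uses $w \ge 0$, which follows from Theorem~\ref{thm:f-pk-equals-nc} since $\wt_\NC \ge 0$. Hence $\max_{u,v}(x_u-x_v) \le w$, so $\x \in w\cdot\Delta_{\mathrm{std}}$ by Definition~\ref{def:std-simplex}. This bound is independent of the representative of $\x$ in $\R^n/\one$.

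The step that needs the most care is the sign and indexing bookkeeping in the first paragraph. We must check that the lineality shift enters as $-x_{j+k}+x_{j+k+1}$, so that it matches the convention $\tropP^\x_I = \tropP_I - \sum_{i\in I}x_i$ used in Lemma~\ref{lem:balanced-central} and in Definition~\ref{def:tropical}. If the sign were reversed, the inequality would bound $x_{m+1}-x_m$ instead. The conclusion would still hold by walking the cycle in the opposite direction, since the telescoping argument is symmetric. We must also check that the balanced central representative $\hp_\bullet$ is itself a positive tropical Pl\"ucker vector, so that $\nu_\bullet$ satisfies the hypotheses of Lemma~\ref{lem:cyc-gap-shifted-nonneg}. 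This holds because $\hp_\bullet \equiv \tropP_\bullet$ modulo $\Lkn$ and positivity is preserved under lineality shifts. Finally, since the argument actually produces the factor $(n-1)/n$, it yields a slightly sharper bound than the one stated.
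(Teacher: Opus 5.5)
Your proposal is correct and follows the paper's proof. You shift by $\x$, apply Lemma~\ref{lem:cyc-gap-shifted-nonneg} to get $x_{j+k}-x_{j+k+1}\le \hp_{\mathrm{cyc}(j)}-\hp_{\mathrm{gap}(j)}$, and telescope around the cycle. The only difference is that you use the exact balanced value $w/n$ of each local term rather than just its nonnegativity, which yields the slightly sharper bound $\frac{n-1}{n}\,w$ where the paper bounds the partial cyclic sum by the full one.
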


\begin{proof}
	Let $\x \in B(\hp_\bullet)$. We show that $x_u - x_v \le \wt_{\PK}(\tropP_\bullet)$ for all $u, v \in [n]$. 
	
	Define the shifted vector $\nu = \hp^{\x}_\bullet$, given by $\nu_I = \hp_I - \sum_{i \in I} x_i$.  By Proposition~\ref{prop:Spe}, since $\x$ belongs to a bounded face of the matroid complex structure of $L(\hp_\bullet)$, the matroid $M(\nu_\bullet)$ is a loopless and coloopless positroid.
	
	By Lemma~\ref{lem:cyc-gap-shifted-nonneg}, $\nu_{\mathrm{cyc}(j)} \ge \nu_{\mathrm{gap}(j)}$ for all $j$. Expanding $\nu$ back into $\hp_\bullet$ and $\x$ gives:
	\[
	\hp_{\mathrm{cyc}(j)} - \sum_{m \in \mathrm{cyc}(j)} x_m \ge \hp_{\mathrm{gap}(j)} - \sum_{m \in \mathrm{gap}(j)} x_m.
	\]
	Since $\mathrm{cyc}(j) \setminus \mathrm{gap}(j) = \{j+k\}$ and $\mathrm{gap}(j) \setminus \mathrm{cyc}(j) = \{j+k+1\}$ (with indices taken cyclically modulo $n$), the shared coordinates cancel to leave exactly the local boundary gradients:
	\begin{equation}\label{eq:cyclic_bounds}
		x_{j+k} - x_{j+k+1} \le \hp_{\mathrm{cyc}(j)} - \hp_{\mathrm{gap}(j)}.
	\end{equation}
	
	For arbitrary $u, v \in [n]$, we can express the difference $x_u - x_v$ as a telescoping sum of consecutive cyclic differences around the circle from $u$ to $v-1$:
	\[
	x_u - x_v = \sum_{m=u}^{v-1} (x_m - x_{m+1}).
	\]
	Applying our local bound \eqref{eq:cyclic_bounds} to each step $m = j+k$, we obtain:
	\[
	x_u - x_v \le \sum_{m=u}^{v-1} \bigl(\hp_{\mathrm{cyc}(m-k)} - \hp_{\mathrm{gap}(m-k)}\bigr).
	\]
	By Lemma~\ref{lem:balanced-central}, every term $\hp_{\mathrm{cyc}} - \hp_{\mathrm{gap}}$ in this sum is strictly non-negative. Therefore, the partial sum along the cycle from $u$ to $v-1$ is strictly bounded above by the total sum around the entire cycle:
	\[
	x_u - x_v \le \sum_{j=0}^{n-1} \bigl(\hp_{\mathrm{cyc}(j)} - \hp_{\mathrm{gap}(j)}\bigr).
	\]
	This cyclic sum evaluates to $\wt_{\PK}(\tropP_\bullet)$. Since $u$ and $v$ were arbitrary, it follows that
	\[
	\max_{u,v} (x_u - x_v) \le \wt_{\PK}(\tropP_\bullet).
	\]
	This system of inequalities defines exactly the dilated fundamental alcoved simplex, proving $\x \in \wt_{\PK}(\tropP_\bullet) \cdot \Delta_{\mathrm{std}}$.
\end{proof}

\begin{example}
	The bounded complex in Example \ref{example: tree 25} has two line segments, and its Pl\"ucker vector $\tropP_\bullet = \h_{25} + \h_{35}$ has PK weight 2; therefore the bounded complex of its central representative $B(\hp_\bullet)$ embeds into the second dilate of the fundamental alcoved simplex $2\Delta_{\mathrm{std}}$. 
\end{example}

\begin{example}
	The bounded complex in Example \ref{example: 3-split Gradient} is a single triangle, and its tropical Pl\"ucker vector has PK weight 1.  It is \emph{equal} to the fundamental alcoved simplex.
\end{example}

\begin{example}\label{example: 312 embedding}
	Consider $\tropP_\bullet \in \Trop_{>0}\Gr(3,12)$ with planar basis expansion
	\[\tropP_\bullet= -\h_{1,4,11}+\h_{1,4,12}-\h_{1,8,10}+\h_{1,8,11}+\h_{1,9,10}-\h_{2,4,7}+\h_{2,4,11}+\h_{2,5,7}+\h_{3,4,7}-\h_{5,7,10}+\h_{5,8,10}+\h_{6,7,10},\]
	computed from the strand triples of the scaffold on the left of Figure \ref{fig:312canvas}.  By Theorem \ref{thm:noncrossingmain}, $\tropP_\bullet$ is, up to lineality, uniquely characterized by its projection
	\[\Psi(\tropP_\bullet) = \vn_{1,9,10}+\vn_{2,5,7}+\vn_{3,4,12}+\vn_{6,8,11},\]
	which, by Theorem \ref{thm:f-pk-equals-nc}, has PK weight four, the number of columns in its noncrossing tableau.  This is also the size of the bounding simplex for the bounded complex of the central tropical linear space, see Figure \ref{fig:312canvas}, and compare with our companion paper \cite{EL}.
\end{example}

\end{document}